\newcommand{\q}[1]{``#1''}
\newtheorem{theorem}{Theorem}[section]
\newtheorem{lemma}[theorem]{Lemma}
\newtheorem{definition}[theorem]{Definition}
\newtheorem{notation}[theorem]{Notation}
\newtheorem{remark}[theorem]{Remark}
\newtheorem{corollary}[theorem]{Corollary}
\newtheorem{theorem/definition}[theorem]{Satz/Definition}
\newtheorem{proposition}[theorem]{Proposition}
\newtheorem*{rep@theorem}{\rep@title}
\newcommand{\newreptheorem}[2]{%
\newenvironment{rep#1}[1]{%
 \def\rep@title{#2 \ref{##1}}%
 \begin{rep@theorem}}%
 {\end{rep@theorem}}}
\newcommand{\R}{\mathbb R}
\newcommand{\N}{\mathbb{N}}
\newcommand{\C}{{\cal C}}
\newcommand{\op}{\operatorname}
\newcommand{\supp}{\operatorname{supp}}
\newcommand{\norm}[1]{\left\lVert#1\right\rVert}
\newcommand{\ep}{\varepsilon}
\renewcommand{\)}{\right)}
\newcommand{\Id}{\op{Id}}
\newcommand{\overbar}[1]{\mkern 1.5mu\overline{\mkern-1.5mu#1\mkern-1.5mu}\mkern 1.5mu}
\numberwithin{equation}{section}
\begin{document}

\title{$C_{loc}^{1,1}$ optimal pairs in the dual optimal transport problem for a Lorentzian cost along displacement interpolations}
\author{{\sc Alec Metsch$^{1}$} \\[2ex]
      $^{1}$ Universit\"at zu K\"oln, Institut f\"ur Mathematik, Weyertal 86-90, \\
      D\,-\,50931 K\"oln, Germany \\
      email: ametsch@math.uni-koeln.de \\[1ex]
      {\bf Key words:} Optimal transport, regularity,
      Lorentzian geometry}

\maketitle

\begin{abstract}
\noindent
We consider the optimal transportation problem on a globally hyperbolic spacetime with a cost function $c$, which corresponds to the optimal transportation problem on a complete Riemannian manifold where the cost function is given by the squared Riemannian distance. Adapting a proof of Bernard \cite{Bernard} and building upon methods of weak KAM theory, we will establish the existence of $C_{loc}^{1,1}$ optimal pairs for the dual optimal transport problem for probability measures along displacement interpolations.
\end{abstract}

\section{Introduction} 
Let $(M,g)$ be a globally hyperbolic spacetime, and let $h$ be a complete Riemannian metric on $M$. According to Theorem 3 in \cite{Bernard/Suhr}, there exists a smooth manifold $N$ and a splitting $M\cong N\times \R$ such that the mapping $\tau:M\to \R,\ x\cong (z,t)\mapsto t$, satisfies the growth condition
\begin{align}
    d_x\tau(v)\geq \max\{|v|_h,2|v|_g\} \text{ for each } v\in \C_x. \label{splitting}
\end{align}
Here, $|v|_h$ and $|v|_g:=\sqrt{|g_x(v,v)|}$ denote the $h$-norm and the $g$-norm of the vector $v$, respectively, and $\C_x$ is the set of all future-directed causal vectors, including $0$. We now consider the Lagrangian action 
\begin{align*}
L:TM\to \R\cup \{\infty\},\ L(x,v):=
\begin{cases}
\(d_x\tau(v)-|v|_g\)^2, \ &v\in \C_x,
\\
\infty,\ &\text{ otherwise}.
\end{cases}
\end{align*}
The associated family of cost functions $c_t:M\times M\to [0,\infty]$, which arise as the minimal time-$t$-action of the Lagrangian above, is given by
\begin{align}  
c_t(x,y):=
\begin{cases}
\frac 1t(\tau(y)-\tau(x)-d(x,y))^2,\ &(x,y)\in J^+,
\\\\
\infty,\ &(x,y)\notin J^+, \label{haihduaisoafafdfsfdfsfdfdsfsfdsfdsfs}
\end{cases}
\end{align}
see Section \ref{sec2}.
Here, $d$ denotes the Lorentzian distance function (or time separation) and $J^+$ denotes the set of all causally related points in $M\times M$. Let us denote $c:=c_1$.

Now, denoting by ${\cal P}(M)$ the set of Borel probability measures on $M$, let $\mu_0,\mu_1\in {\cal P}(M)$ and assume that the total cost of the optimal transportation problem associated with $c$, 
\begin{align}
C(\mu_0,\mu_1):=\inf\bigg\{\int_{M\times M} c(x,y)\, d\pi(x,y)\mid \pi \in  \Gamma(\mu_0,\mu_1)\bigg\}, \label{bbbb}
\end{align}
is finite. As usual, $\Gamma(\mu_0,\mu_1)$ denotes the set of all couplings between $\mu_0$ and $\mu_1$, i.e.\ all Borel probability measures $\pi$ on $M\times M$ with marginals $\mu_0$ and $\mu_1$. 
Using well-established methods (as in Chapter 7 of \cite{Villani}, see also Lemma \ref{a0}), one can show that if $\pi\in \Gamma_o(\mu_0,\mu_1)$, i.e.\ $\pi\in \Gamma(\mu_0,\mu_1)$ is an optimal coupling in the sense that it minimizes \eqref{bbbb}, then there exists a Borel probability measures 
$\Pi\in {\cal P}(\Gamma)$ ($\Gamma$ denotes the Polish space consisting of all future-directed minimizing curves $\gamma:[0,1]\to M$ for the Lagrangian action $L$, see Corollary \ref{eeef}) that satisfies
\[(e_0,e_1)_\#\Pi=\pi.\]
Here, $(e_0,e_1)_\#\Pi$ denotes the push-forward measure of $\Pi$ w.r.t.\ the Borel map $(e_0,e_1):\Gamma\to M\times M,\ \gamma\mapsto (\gamma(0),\gamma(1))$. One says that $\Pi$ is associated with $\pi$.
The displacement interpolation associated with $\Pi$ is defined as the curve of probability measures 
\[
(\mu_t)_{0\leq t\leq 1} \text{ where }
\mu_t:=(e_t)_\#\Pi.
\]
 Let $0<s<t<1$ be two given intermediate times. We consider the optimal transportation problem $C^{t-s}(\mu_s,\mu_t)$ for the intermediate measures $\mu_s$, $\mu_t$ w.r.t. the cost function
$c_{t-s}$. It is not difficult to prove that $\pi_{s,t}:=(e_s,e_t)_\#\Pi$ constitutes an optimal coupling. Moreover, using methods as in \cite{Villani}, one can show that this coupling is unique (but we won't need this fact).

Next, we consider the dual formulation of optimal transport. For the probability measures $\mu_0$ and $\mu_1$, this takes the form
\begin{align}
C(\mu_0,\mu_1)= \sup\bigg\{\int_M \psi(y)\, d\mu_1(y)-\int_M \varphi(x)\, d\mu_0(x)\bigg\},\label{z}
\end{align}
where the supremum is taken over all functions $\psi\in L^1(\mu_1)$ and $\varphi\in L^1(\mu_0)$ satisfying the constraint $\psi(y)-\varphi(x)\leq c(x,y)$ for all $x,y\in M$ (see \cite{Ambrosio}, Theorem 6.1.1).
A pair $(\varphi,\psi)\in L^1(\mu_0) \times L^1(\mu_1)$ is said to be optimal if it maximizes the expression on the right-hand side of the above identity. 

Our main theorem in this paper deals with the existence of a $C^{1,1}_{loc}$ optimal pair for intermediate times. More precisely, denoting by $I^+$ the set of all chronologically related points, we have:

\begin{theorem}\label{main}
Let $\mu_0,\mu_1\in {\cal P}(M)$ and assume that $C(\mu_0,\mu_1)$ is finite. Let $\pi \in \Gamma_o(\mu_0,\mu_1)$ and $\Pi$ be a dynamical optimal coupling associated with $\pi$. Fix $0<s<t<1$.   Suppose that $(\varphi,\psi)$ is an optimal pair in \eqref{z} and that $\pi(I^+)=1$. Then, there exists an optimal pair $(\Phi_s,\Psi_t)$ for the dual formulation of the optimal transportation problem for the measures $\mu_s$ and $\mu_t$ associated with the cost function $c_{t-s}$, i.e.
	\begin{align*}
	C^{t-s}(\mu_s,\mu_t)=\int_M \Psi_t(y)\, d\mu_t(y)-\int_M \Phi_s(x)\, d\mu_s(x),
	\end{align*}
	such that $\Phi_s$ is $C_{loc}^{1,1}$ on an open set of full $\mu_s$-measure and $\Psi_t$ is $C_{loc}^{1,1}$ on an open set of full $\mu_t$-measure.
\end{theorem}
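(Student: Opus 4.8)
The plan is to carry out Bernard's weak-KAM interpolation construction \cite{Bernard} in the Lorentzian setting. Since the Lagrangian $L$ is autonomous and positively $2$-homogeneous, its minimal-action costs obey the dynamic-programming identity $c_{\theta+\theta'}(x,z)=\inf_{y\in M}\bigl(c_{\theta}(x,y)+c_{\theta'}(y,z)\bigr)$ for $\theta,\theta'>0$, with the infimum attained along $L$-minimisers from $x$ to $z$; moreover the restriction $\gamma|_{[a,b]}$ of an $L$-minimiser $\gamma$ is again $L$-minimising and realises $c_{b-a}(\gamma(a),\gamma(b))$. After replacing $(\varphi,\psi)$ by a $c$-conjugate pair with the same value in \eqref{z} (so that $\psi=\varphi^{c}$ and $\varphi=\psi_{c}$, which does not affect optimality), I define the forward and backward Lax--Oleinik interpolants
\[
 \Phi^-_r(x):=\inf_{w\in M}\bigl(\varphi(w)+c_r(w,x)\bigr),\qquad
 \Phi^+_r(x):=\sup_{z\in M}\bigl(\psi(z)-c_{1-r}(x,z)\bigr)\qquad(0\le r\le1),
\]
and put $\Phi_s:=\Phi^-_s$ and $\Psi_t:=\Phi^+_t$. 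With this normalisation $\Phi^-_0=\Phi^+_0=\varphi$, $\Phi^-_1=\Phi^+_1=\psi$, and from the semigroup identity together with $\psi-\varphi\le c$ one gets $\Phi^+_r\le\Phi^-_r$ on $M$ for every $r$.

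\emph{Step 1: $(\Phi_s,\Psi_t)$ is an optimal dual pair for $c_{t-s}$ between $\mu_s$ and $\mu_t$.} Writing $\mathcal A(\gamma|_{[a,b]}):=\int_a^b L(\gamma(r),\dot\gamma(r))\,dr$, one first checks the domination inequality $\Psi_t(y)-\Phi_s(x)=\Phi^+_t(y)-\Phi^-_s(x)\le c_{t-s}(x,y)$ for all $x,y\in M$, by chaining a path $w\to x\to y\to z$ over times $s,t-s,1-t$ and using $c_1\ge\psi-\varphi$. By Corollary~\ref{eeef} (see also Lemma~\ref{a0}) the measure $\Pi$ is carried by $L$-minimisers $\gamma$ that are \emph{calibrated} for $(\varphi,\psi)$, i.e.\ $\psi(\gamma(1))-\varphi(\gamma(0))=c(\gamma(0),\gamma(1))=\mathcal A(\gamma|_{[0,1]})$. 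Along such a $\gamma$ every inequality above is saturated, so $\Phi^-_r(\gamma(r))=\Phi^+_r(\gamma(r))=\varphi(\gamma(0))+\mathcal A(\gamma|_{[0,r]})$ for all $r$, whence $\Psi_t(\gamma(t))-\Phi_s(\gamma(s))=\mathcal A(\gamma|_{[s,t]})=c_{t-s}(\gamma(s),\gamma(t))$ for $\Pi$-a.e.\ $\gamma$. Since $\pi_{s,t}=(e_s,e_t)_\#\Pi$ is an optimal coupling for $c_{t-s}$ (as recalled in the introduction), integrating this equality against $\Pi$ gives $\int_M\Psi_t\,d\mu_t-\int_M\Phi_s\,d\mu_s=C^{t-s}(\mu_s,\mu_t)$; combined with the domination inequality and the integrability bounds $|\Phi_s(\gamma(s))|\le|\varphi(\gamma(0))|+\mathcal A(\gamma|_{[0,1]})$ and $|\Psi_t(\gamma(t))|\le|\psi(\gamma(1))|+\mathcal A(\gamma|_{[0,1]})$ (which put $\Phi_s\in L^1(\mu_s)$, $\Psi_t\in L^1(\mu_t)$), this shows $(\Phi_s,\Psi_t)$ is optimal.

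\emph{Step 2: local $C^{1,1}$-regularity.} Here the hypothesis $\pi(I^+)=1$ enters: then $\pi$-a.e.\ endpoint pair is chronologically related, so each calibrated $\gamma\in\supp\Pi$ is a timelike geodesic lying in the open region where $\exp$ is a local diffeomorphism and where, for $\theta>0$, the cost $c_\theta$ is $C^{1,1}_{loc}$ with semiconcavity (one-sided quadratic) bounds that are locally uniform in each of its two arguments; hence $x\mapsto c_\theta(w,x)$ and $x\mapsto c_\theta(x,z)$ are locally semiconcave, uniformly for $w$, resp.\ $z$, in compact parts of that region. Consequently $\Phi_s=\Phi^-_s$, being locally a finite infimum of such functions, is locally semiconcave there. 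For the reverse one-sided bound, swapping infima in the semigroup identity yields, for all $y$, $\Phi^-_t(y)=\inf_{x}\bigl(\Phi^-_s(x)+c_{t-s}(x,y)\bigr)$; on the set $\Omega_s$ of points lying at the non-endpoint time $s$ on a forward $L$-minimiser issued from the domain of $\varphi$ --- which by the flow regularity above can be taken open and of full $\mu_s$-measure, since $\mu_s$-a.e.\ point is $\gamma(s)$ for a calibrated $\gamma$ --- this infimum is attained at an interior $x$, which upgrades to $\Phi_s(x)=\sup_y\bigl(\Phi^-_t(y)-c_{t-s}(x,y)\bigr)$ on $\Omega_s$, exhibiting $\Phi_s$ as a supremum of functions locally semiconvex uniformly in $y$, hence locally semiconvex on $\Omega_s$. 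A function simultaneously locally semiconcave and locally semiconvex on $\Omega_s$ is $C^{1,1}_{loc}(\Omega_s)$. The assertion for $\Psi_t=\Phi^+_t$ is the time-reversed statement: $\Phi^+_t$ is locally semiconvex by its own definition, while the inequality $\Phi^+_t(y)\le\Phi^+_s(x)+c_{t-s}(x,y)$ (valid for all $x,y$ by the semigroup property) becomes an equality at an interior $x$ for $y$ in the analogous open set $\Omega_t$ of full $\mu_t$-measure, making $\Phi^+_t$ locally semiconcave --- hence $C^{1,1}_{loc}$ --- on $\Omega_t$.

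The main difficulty lies in Step 2, and in two points in particular. First, one must establish the uniform local semiconcavity of $c_\theta$ on chronological regions for $\theta>0$ --- the Lorentzian analogue of the semiconcavity of the Riemannian squared distance and of Lax--Oleinik value functions --- which rests on second-order control of the Lorentzian distance $d$ away from the light cone together with the structure of the timelike geodesic flow. Secondly, one must show that $\Omega_s$ and $\Omega_t$ can be taken genuinely \emph{open} while retaining full $\mu_s$-, resp.\ $\mu_t$-measure: equivalently, that the coincidence of the forward and backward interpolants and the attainment of the defining infima at interior points persist on whole neighbourhoods of $\supp\mu_s$ and $\supp\mu_t$, which is an implicit-function/local-diffeomorphism argument for the geodesic flow restricted to the chronological region. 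The non-compactness of $M$ and the fact that $c_\theta$ takes the value $+\infty$ off $J^+$ mean that all of this has to be organised on suitably chosen open subsets on which $\varphi$ and $\psi$ are finite and the interpolating measures are concentrated.
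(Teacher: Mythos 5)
Your Step 1 (establishing that $(\Phi^-_s,\Phi^+_t)$ is an optimal dual pair and lies in the right $L^1$ spaces) is essentially correct and follows the paper's Lemma \ref{jjjjj}/\ref{h1} and the corollary after Theorem \ref{n}. The problem is Step 2, which contains a genuine gap that undermines the whole strategy.

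You claim that $\Phi_s=\varphi_s$ can be exhibited as $\Phi_s(x)=\sup_{y}\bigl(\Phi^-_t(y)-c_{t-s}(x,y)\bigr)=\hat T_{t-s}(T_{t-s}\Phi_s)(x)$ on an open set $\Omega_s$ of full $\mu_s$-measure, and conclude local semiconvexity from this supremum representation. But the attainment of the infimum $\Phi^-_t(y)=\inf_x\bigl(\Phi^-_s(x)+c_{t-s}(x,y)\bigr)$ at interior points only forces the equality $\Phi_s(x)=\hat T_{t-s}(T_{t-s}\Phi_s)(x)$ at the \emph{images} of those attaining $x$'s under calibrated curves, i.e.\ on the set $A_s$, which has full $\mu_s$-measure but is in general not open. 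In general one only has the strict inequality $\hat T_{t-s}(T_{t-s}\Phi_s)<\Phi_s$ off $A_s$ (see Lemma \ref{ineq}), and there is no reason the two sides should agree on a neighbourhood of $A_s$. Consequently the supremum representation does not hold on any open set, and the claimed semiconvexity of $\varphi_s$ near $A_s$ is unfounded. If $\varphi_s$ were itself $C^{1,1}_{loc}$ near $A_s$, Bernard's double-regularisation trick would be superfluous in this setting; in fact the forward Lax--Oleinik evolution is in general only locally semiconcave, not semiconvex, and its gradient genuinely develops jumps off the calibrated set.

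This is precisely the obstruction that Bernard's argument (and the paper) addresses by replacing $\varphi_s$ with $\hat T_\tau T_{s+\tau}\varphi$ for small $\tau>0$ depending on the calibrated curve $\gamma$: $\hat T_\tau$ applied to anything is automatically locally semiconvex (Corollary \ref{s}), and the hard part --- missing entirely from your proposal --- is to show that the local semiconcavity of $T_{s+\tau}\varphi$ \emph{survives} the backward regularisation $\hat T_\tau$ for $\tau$ small. That requires the flow-box/implicit-function-theorem analysis of the Hamiltonian flow carried out in Lemma \ref{llllll} and Proposition \ref{o}, plus Lemma \ref{r} to approximate $\psi_s$ by smooth $C^2$-bounded families, and then a partition-of-unity gluing since the admissible $\tau$ depends on $\gamma$. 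None of this is contained in, or replaceable by, the "swapping infima" step; without it the proof does not close.
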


The assumption $\pi(I^+)=1$ implies that the dynamical optimal coupling $\Pi$ is concentrated on the subset of timelike minimizers. This property is crucial, and we will explain its significance in more detail later. Note that, based on \cite{Kell}, in \cite{Metsch}, conditions on the measures $\mu_0$ and $\mu_1$ were established to ensure that any optimal coupling is concentrated on $I^+$.

Obviously, any optimal pair $(\varphi,\psi)$ in \eqref{z} satisfies
\[
\psi(y)-\varphi(x)=c(x,y) \text{ for $\pi$-a.e.\ } (x,y).
\]
This leads to the definition of a calibrated pair, adapted (with a slightly different definition) from \cite{Fathi/Figalli}. 
\begin{definition}\rm
If $\pi$ is an optimal coupling (between its marginals), a pair of functions $\varphi,\psi:M\to \R\cup\{\pm\infty\}$ is called \emph{$(c,\pi)$-calibrated} if
$\psi(y)-\varphi(x)\leq c(x,y)$ for all $(x,y)$ and 
\begin{align}
\psi(y)-\varphi(x)=c(x,y) \text{ for $\pi$-a.e. } (x,y). \label{f1}
\end{align}
Here, we use the convention $\pm\infty\mp\infty :=-\infty$.
\end{definition}

Due to the relatively explicit construction of the optimal pair $(\Phi_s,\Psi_t)$ in the proof of Theorem \ref{main}, Theorem \ref{main} follows from the following intermediate result.

\begin{theorem}\label{main2}
Let $\mu_0,\mu_1\in {\cal P}(M)$ and assume that $C(\mu_0,\mu_1)$ is finite. Let $\pi \in \Gamma_o(\mu_0,\mu_1)$ and $\Pi$ be a dynamical optimal coupling associated with $\pi$. Fix $0<s<t<1$. Suppose that the pair $(\varphi,\psi)$ is $(c,\pi)$-calibrated and that $\pi(I^+)=1$. Then, there exists a $(c_{t-s},\pi_{t-s})$-calibrated pair $(\Phi_s,\Psi_t)$ such that $\Phi_s$ is $C_{loc}^{1,1}$ on an open set of full $\mu_s$-measure and $\Psi_t$ is $C_{loc}^{1,1}$ on an open set of full $\mu_t$-measure.
\end{theorem}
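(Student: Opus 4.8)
The plan is to build $(\Phi_s,\Psi_t)$ by propagating the given calibrated potentials with Lax--Oleinik-type operators and then to extract the regularity from the interplay of forward and backward propagation, in the spirit of weak KAM theory. Writing $\mathbb{A}_a^b(\gamma):=\int_a^b L(\gamma,\dot\gamma)$ for the action, so that $c_{b-a}(x,y)$ is the infimum of $\mathbb{A}_a^b(\gamma)$ over curves with $\gamma(a)=x$, $\gamma(b)=y$, I would first record the two structural facts that make everything run: (i) the concatenation inequality $c_{a+b}(x,z)\le c_a(x,y)+c_b(y,z)$ for all $x,y,z$ and all $a,b>0$, with equality whenever $y$ lies on a minimizer from $x$ to $z$ (restrictions and concatenations of minimizers being again minimizers); and (ii) for $\pi$-a.e.\ $(x_0,x_1)$ there is a curve $\gamma\in\supp\Pi$ with $\gamma(0)=x_0$, $\gamma(1)=x_1$ realizing $\psi(x_1)-\varphi(x_0)=c(x_0,x_1)=\mathbb{A}_0^1(\gamma)$, and since $\pi(I^+)=1$ this $\gamma$ is timelike, so its velocity stays in the interior of the cone, where $L$ is smooth and strictly fibrewise convex.

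Then I would set
\[
\Phi_s(x):=\inf_{z\in M}\{\varphi(z)+c_s(z,x)\},\qquad \Psi_t(y):=\sup_{w\in M}\{\psi(w)-c_{1-t}(y,w)\}
\]
(the forward evolution of $\varphi$ to time $s$ and the backward evolution of $\psi$ to time $t$). Using (i) twice together with the calibration inequality $\psi-\varphi\le c$, picking near-optimal $z^*,w^*$ and estimating $c_1(z^*,w^*)\le c_s(z^*,x)+c_{t-s}(x,y)+c_{1-t}(y,w^*)$, one gets $\Psi_t(y)-\Phi_s(x)\le c_{t-s}(x,y)$ for all $x,y$. For the equality, take $\gamma$ as in (ii) and put $x=\gamma(s)$, $y=\gamma(t)$; combining $\psi(\gamma(1))-\varphi(\gamma(0))=\mathbb{A}_0^1(\gamma)$ with the equality case of (i) one deduces $\Phi_s(\gamma(s))=\varphi(\gamma(0))+\mathbb{A}_0^s(\gamma)$ and $\Psi_t(\gamma(t))=\psi(\gamma(1))-\mathbb{A}_t^1(\gamma)$, whence $\Psi_t(\gamma(t))-\Phi_s(\gamma(s))=\mathbb{A}_s^t(\gamma)=c_{t-s}(\gamma(s),\gamma(t))$. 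Since $\pi_{t-s}=(e_s,e_t)_\#\Pi$, this shows $(\Phi_s,\Psi_t)$ is $(c_{t-s},\pi_{t-s})$-calibrated.

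For the regularity of $\Phi_s$, set $E_s:=\{\gamma(s):\gamma\in\supp\Pi\ \text{as in (ii)}\}$, a set of full $\mu_s$-measure. Near $E_s$ the relevant competitors $z$ in the infimum defining $\Phi_s$ lie in a fixed compact set and are joined to $x$ by a timelike minimizer which, because $0<s<1$, does not reach its cut point; hence $c_s(z,\cdot)$ is smooth near the corresponding $x$ with locally uniform bounds, so $\Phi_s$, a local infimum of uniformly semiconcave functions, is locally semiconcave near $E_s$. Symmetrically (using $s>0$), the backward evolution $\widetilde\Phi_s(x):=\sup_w\{\psi(w)-c_{1-s}(x,w)\}$ is locally semiconvex near $E_s$, $\widetilde\Phi_s\le\Phi_s$ everywhere, and tracking along the minimizers gives $\widetilde\Phi_s=\Phi_s$ on $E_s$. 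Thus at each $x_0\in E_s$ the two $C^\infty$ functions $x\mapsto\varphi(z_0)+c_s(z_0,x)$ and $x\mapsto\psi(w_0)-c_{1-s}(x,w_0)$ sandwich $\Phi_s$ and touch it at $x_0$, with Hessians bounded uniformly over $x_0\in E_s$; this yields that $\Phi_s$ is differentiable on $E_s$ with $|\Phi_s(x)-\Phi_s(x_0)-\langle d\Phi_s(x_0),x-x_0\rangle|\le C|x-x_0|^2$ locally uniformly, and comparing these jets at two nearby points of $E_s$ also gives $|d\Phi_s(x_0)-d\Phi_s(x_0')|\le C|x_0-x_0'|$. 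By the Whitney extension theorem for $C^{1,1}$ this $1$-jet data extends to a function $\Phi_s^*\in C^{1,1}_{loc}(O_s)$ on an open neighbourhood $O_s$ of (a full-measure subset of) $E_s$, agreeing with $\Phi_s$ to first order on $E_s$; one then defines $\Psi_t^*(y):=\inf_x\{\Phi_s^*(x)+c_{t-s}(x,y)\}$, which automatically obeys the calibration inequality, is handled near $E_t$ by the symmetric argument, and coincides with $\Psi_t$ where it must, so that $(\Phi_s^*,\Psi_t^*)$ is the desired pair.

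The main obstacle is exactly this last step: one has to ensure that the $C^{1,1}$ extension is chosen compatibly with the semiconvex lower envelope $\widetilde\Phi_s$ (and its analogue built from $\Psi_t$) so that the global one-sided inequality defining calibration survives — which may force $O_s$ to be shrunk near the complement of $E_s$ — and that the uniformity over $E_s$ of all second-order estimates genuinely holds. This is precisely where the hypothesis $\pi(I^+)=1$ is indispensable: on the light cone $L$ degenerates and the time separation $d$ loses smoothness, so without timelikeness neither the semiconcavity of $\Phi_s$ near $E_s$ nor the smooth sandwiching argument is available.
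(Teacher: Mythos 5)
Your first two paragraphs are essentially correct and closely track Lemma \ref{jjjjj} and Theorem \ref{m}: the pair $(\varphi_s,\psi_t)=(T_s\varphi,\hat T_{1-t}\psi)$ is indeed $(c_{t-s},\pi_{s,t})$-calibrated, $\varphi_s$ is locally semiconcave near $A_s$, $\psi_s=\hat T_{1-s}\psi$ is locally semiconvex there, and the sandwich at a point $\gamma(s)$ with $\gamma\in A$ correctly yields differentiability of $\varphi_s$ \emph{at} such a point together with local second-order control of the touching smooth functions. However, the regularity argument does not close, and the gap is exactly where you flag it.

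The sandwich only controls the 1-jet of $\Phi_s=\varphi_s$ \emph{on} the set $E_s$ (which is merely of full $\mu_s$-measure, not open), so $\Phi_s$ itself is not $C^{1,1}$ on any open neighbourhood of $E_s$; being locally semiconcave there is not enough, because there is no reason for $\varphi_s$ to be also locally semiconvex off $E_s$. The Whitney extension produces a new function $\Phi_s^*\in C^{1,1}_{loc}$ that agrees with $\Phi_s$ only to first order on $E_s$, but you have no control on the sign of $\Phi_s^*-\Phi_s$ away from $E_s$. This is fatal for calibration: in the calibrating equality $\Psi_t^*(\gamma(t))=\inf_x\{\Phi_s^*(x)+c_{t-s}(x,\gamma(t))\}$ you need the infimum to be attained at $x=\gamma(s)$, i.e.\ $\Phi_s^*(x)+c_{t-s}(x,\gamma(t))\geq \Phi_s^*(\gamma(s))+c_{t-s}(\gamma(s),\gamma(t))$ for all $x$, but this fails as soon as $\Phi_s^*<\Phi_s$ somewhere; and replacing $\Phi_s^*$ by $\max(\Phi_s^*,\Phi_s)$ destroys $C^{1,1}$. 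You correctly identify this as the main obstacle but do not resolve it.

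The paper avoids the obstruction in a structurally different way. Rather than trying to show that $\varphi_s$ itself is $C^{1,1}$, it works with the \emph{doubly regularized} potentials $\hat T_\tau T_{s+\tau}\varphi=\hat T_\tau\varphi_{s+\tau}$ and $T_\tau\hat T_{1-t+\tau}\psi=T_\tau\psi_{t-\tau}$. Lemma \ref{h1}(iii) shows that, for any admissible $\tau_1,\tau_2\geq 0$, the pair $(\hat T_{\tau_2}\varphi_{s+\tau_2},T_{\tau_1}\psi_{t-\tau_1})$ is automatically a $(c_{t-s},\pi_{s,t})$-calibrated pair, so one never has to retrofit calibration after regularization. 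The regularity is then Bernard's mechanism (Proposition \ref{o} and Lemma \ref{llllll}): after decomposing $\psi_{t_0-\tau}$ locally near $\gamma(t_0)$ as a supremum of a family of smooth functions with $C^2$-bounded Hessians and with gradients staying uniformly away from the dual light cone (Lemma \ref{r}), one shows via the Hamiltonian flow that for $\tau$ small the forward Lax--Oleinik operator $T_\tau$ maps this family to another locally $C^2$-bounded family whose supremum equals $T_\tau\psi_{t_0-\tau}$; hence this is simultaneously semiconcave (by Corollary \ref{s}) and semiconvex, so $C^{1,1}_{loc}$. Finally, since the required $\tau$ depends on $\gamma\in A$, the paper glues the resulting local $C^{1,1}$ functions with a smooth partition of unity, and the subsolution inequality survives the convex combination because it holds for every pair of $\gamma$'s separately by Lemma \ref{h1}(iii). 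This is what your Whitney-extension route cannot deliver, since an arbitrary extension has no reason to respect the one-sided inequality that encodes calibration.
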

The proof follows an approach inspired by weak KAM theory, specifically the work of Bernard \cite{Bernard}. There, the author proves the existence of a $C^{1,1}$ sub-solution to the Hamilton-Jacobi equation
\[
H_T(x,d_xu)=c
\]
for the critical value $c=c(H_T)$, where $H_T$ is a Tonelli Hamiltonian on a compact manifold $M_T$. Let $L_T:TM\to \R$ denote the Lagrangian associated with $H_T$. Recall that the forward and backward Lax-Oleinik evolutions of an arbitrary function $u:M_T\to \overbar \R$ are defined as
\begin{align*}
   &T_{t}u(x):=\inf\{u(y)+h_t(y,x)\mid y\in M_T\},\
   \\[10pt]
   &\hat T_{t}u(x):=\sup\{u(y)-h_t(x,y)\mid y\in M_T\}, 
\end{align*}
where $h_t(x,y)$ denotes the minimal time-$t$-action joining $x$ to $y$. Since $T_{t}$ and $\hat T_{t}$ preserve the set of sub-solutions, it follows that if $u$ is a sub-solution, then $\hat T_s(T_tu)$ is also a sub-solution for any $s,t>0$. 
Moreover, it is well-known that if $v$ is continuous, then $T_tv$ and $\hat T_tv$ are locally semiconcave and locally semiconvex, respectively (see \cite{Fathi}). Thus, $\hat T_s(T_tu)$ is locally semiconvex. The key challenge in \cite{Bernard} is to show that the local semiconcavity of $T_tu$ \q{survives} the backward regularization $\hat T_s(T_tu)$ for small values of $s$, so that the resulting function remains both locally semiconcave and locally semiconvex. This, in turn, implies that $\hat T_s(T_tu)$ is $C^{1,1}$ (see \cite{Fathi}).
 Figalli \cite{FigalliPHD}, in his PhD thesis, and Fathi, Figalli and Rifford \cite{Fathi/Figalli/Rifford} adapted this method to the setting of a non-compact Tonelli Hamiltonian system. 

To demonstrate how we intend to exploit this method, we begin with the following well-known fact, which is easy to verify (\cite{Villani}, Theorem 7.36):

If $(\varphi,\psi)$ is a $(c,\pi)$-calibrated pair (recall that $\pi\in \Gamma_o(\mu_0,\mu_1)$), then the following functions
  \begin{align*}
    &\varphi_s(x):=\inf_{y\in M} \varphi(y)+c_s(y,x) \text{ and }
    \\[10pt]
    &\psi_t(y):=\sup_{x\in M} \psi(x)-c_{1-t}(y,x)
\end{align*}
form a $(c_{t-s},\pi_{s,t})$-calibrated pair (see also Lemma \ref{jjjjj} and Lemma \ref{h1}). Notice that $\varphi_s$ and $\psi_t$ evolve according to the forward and backward Lax-Oleinik semigroups $T_s$ and $\hat T_{1-t}$, respectively, where we replaced the minimal action $h_t$ in the definition  with the corresponding Lorentzian action. That is, 
\begin{align*}
    \varphi_s=T_s\varphi \text{ and } \psi_t =\hat T_{1-t} \psi.
\end{align*}
Following the ideas in \cite{Bernard}, the first attempt would be to define, for small $\tau>0$,
\begin{align}
    \Phi_s:=\hat T_\tau(T_\tau\varphi_s)=\hat T_\tau T_{s+\tau}\varphi \text{ and } \Psi_t:=T_\tau(\hat T_\tau \psi_t)=T_\tau\hat T_{1-t+\tau}\psi. \label{y11}
\end{align}
Note that the times, over which we regularize the function $\varphi_s$ (and analogously for $\psi_t$) first forward and then backward must be equal, so that the resulting pair $(\Phi_s,\Psi_t)$ is indeed $(c_{t-s},\pi_{s,t})$-calibrated, see Lemma \ref{h1}.

However, unfortunately, this approach does not work out. One issue arises from the lack of regularity results for $\varphi$ or $\varphi_{s+\tau}$. This is not a problem in \cite{Bernard} or in \cite{FigalliPHD}, as sub-solutions of the Hamilton-Jacobi equation for Tonelli Hamiltonians are always Lipschitz continuous. However, there are even results concerning the regularity of the Lax-Oleinik evolution for arbitrary functions in the case of possibly non-compact Tonelli systems \cite{FathiHJ}:

If $u:M_T\to \overbar \R$ is any function and $T_{t_0}u(x_0)$ is finite at some $(t_0,x_0)\in (0,\infty)\times M_T$, then $T_{t}u$ is locally semiconcave on $(0,t_0)\times M_T$. 

This result is sufficient to prove that Theorem 1.1 and Theorem 1.3 hold in the case of Tonelli Lagrangian systems with $\Phi_s$ and $\Psi_t$ given by \eqref{y11}.\footnote{Of course we mean the corresponding variants of the theorems. In particular, there is no counterpart for the assumption $\pi(I^+)=1$, and we have to replace $c$ with $h_1$, the minimal time-$1$-action.} Indeed, arguing for $\Psi_t$, we first see that the above mentioned result shows that $\Psi_t=T_\tau \hat T_{1-t+\tau}\psi$ is locally semiconcave on $M_T$, as $T_{1-t+\tau}\hat T_{1-t+\tau}\psi(y)$ is finite, where $y\in M_T$ is such that $\psi(y)-\varphi(x)=h_1(x,y)$. Recall that $h_1$ is the minimal time-$1$-action and such a $y$ exists thanks to the $(h_1,\pi)$-calibration. Moreover, since the functions $(\hat T_{1-t+\tau}\psi)_{\tau}$ are locally equi-Lipschitz and uniformly locally semiconvex for small $\tau$ thanks to the above result (applied to the backward semigroup), Appendix B in \cite{Fathi/Figalli/Rifford} shows that, for sufficiently small $\tau>0$, $\Psi_t$ is also locally semiconvex. Hence, $\Psi_t$ is $C_{loc}^{1,1}$.

Unfortunately, these methods cannot be applied to our case, at least not without additional reasoning. The issue, of course, stems from the lack of regularity of the Lagrangian $L$ compared to a Tonelli Lagrangian. It is easy to construct a counterexample in which the mentioned result of \cite{FathiHJ} fails in the Lorentzian case. 

Our attempt is the following: Since the dynamical optimal coupling $\Pi$ must be concentrated on timelike minimizers (because $\pi(I^+)=1$), there exists a Borel set $A\subseteq \Gamma$, consisting of timelike minimizers $\gamma:[0,1]\to M$ such that $(\gamma(0),\gamma(1))$ satisfies the identity in \eqref{f1}, with $\Pi(A)=1$. By definition, $\mu_t$ is concentrated on the set 
\begin{align*}
A_t:=\{\gamma(t)\mid \gamma \in A\}.
\end{align*}
In a first step we prove that, if $\gamma \in A$, then $\varphi_s$ and $\psi_t$ are locally semiconcave and locally semiconvex on small open neighbourhoods of $\gamma(s)$ and $\gamma(t)$, respectively. Note that the timelikeness of $\gamma$ is crucial. With the help of calibrated curves (playing the role of $\gamma$), in the proof we adapt and extend the methods of \cite{FathiHJ} concerning regularity of the Lax-Oleinik evolution to the Lorentzian case. In particular, $\varphi_s$ and $\psi_t$ are locally semiconcave and locally semiconvex on small open neighbourhoods of $A_s$ and $A_t$, respectively. 
The same proof (with calibrated curves $\gamma_{|[0,s+\tau]}$ and $\gamma_{|[t-\tau,1]}$) shows that $\hat T_\tau \varphi_{s+\tau}$ and $T_\tau\psi_{t-\tau}$ are locally semiconvex and locally semiconcave on open neighbourhoods of $A_s$ and $A_t$, respectively.

In a second step, we adapt the proof of \cite{Bernard} and prove that, if $\gamma \in A$, then for small $\tau>0$ the functions
\begin{align*}
    \hat T_\tau T_{s+\tau}\varphi \text{ and } T_\tau\hat T_{1-t+\tau}\psi
\end{align*}
constitute a $(c_{t-s},\pi_{s,t})$-calibrated pair and are also locally semiconcave and locally semiconvex on open neighbourhoods of $\gamma(s)$ and $\gamma(t)$, respectively. In particular, both functions are $C^{1,1}_{loc}$ on these sets. Note that $\tau$ depends on $\gamma$. However, with a partition of unity argument, we will show how to obtain the stated $(c_{t-s},\pi_{s,t})$-calibrated functions $\Phi_s$, $\Psi_t$ which are $C_{loc}^{1,1}$ on open neighbourhoods of $A_s$ and $A_t$, respectively.
\bigskip

This paper is organized as follows: In Section 2, we investigate the existence, regularity and compactness properties of minimizing curves for the Lagrangian $L$, as well as various properties of the minimal action and $L$ itself. In Section 3 we introduce and study the Lax-Oleinik evolution of a function (playing the role of $\varphi$ or $\psi$) with the assumption that a calibrated curve (corresponding to $\gamma \in A$) exists. We will prove the first step as mentioned above, i.e.\ the local semiconcavity and local semiconvexity of $\varphi_s$ and $\psi_t$ on open neighbourhoods of $A_s$ and $A_t$, respectively. Section 4 deals with the regularity of the Lax-Oleinik evolution of a $C^1$ Lipschitz function, whose gradient is not future-directed causal. This study will be needed in Section 5, where we are in position to extend the results obtained by Bernard to the Lorentzian case, and where we prove our main Theorems \ref{main} and \ref{main2}.

\section{The Lagrangian} \label{sec2}

From now on until the appendix, let $(M,g)$ always denote a globally hyperbolic spacetime. We refer to the appendix for a brief review of the basic concepts of Lorentzian geometry. Let $h$ be any complete Riemannian metric, and let $\tau$ be a function satisfying \eqref{splitting}. We study the Lagrangian
\begin{align*}
    L:TM\to\R\cup \{\infty\},\ L(x,v):=
    \begin{cases}
        (d_x\tau(v)-|v|_g)^2,\ & \text{if } v\in \C_x,
        \\\\
        +\infty,\ & \text{else.}
    \end{cases}
\end{align*}
Note that $L$ is smooth only on $\op{int}(\C)$, where $\C:=\{(x,v)\in TM\mid v\in \C_x\}$. To begin, we introduce some basic notations and results. 

\begin{notation}\rm
    If not otherwise specified, a curve $\gamma:[a,b]\to M$ is always assumed to be absolutely continuous. When we say that $\gamma$ is causal, we always mean future-directed causal, i.e.\ $\dot \gamma(t)\in \C_{\gamma(t)}$ for a.e.\ $t\in [a,b]$. We use the notion timelike only for $C^1$-curves and always mean future-directed timelike, i.e.\ $\dot \gamma(t)\in \op{int}(\C_{\gamma(t)})$ for all $t\in [a,b]$.
\end{notation}

\newcommand{\LL}{\mathbb{L}}

\begin{definition}\rm
The \emph{action} of a curve $\gamma:[a,b]\to M$ is defined by
\[
\LL(\gamma):=\int_a^b L(\gamma(t),\dot \gamma(t))\, dt\in [0,\infty].
\]
Observe that if $\LL(\gamma)$ is finite, then $\gamma$ must be causal (recall that $0$ is causal in our definition). The converse implication does not hold.
\end{definition}
\begin{definition}\rm
\begin{enumerate}[(a)]
\item
    A curve $\gamma:[a,b]\to M$ is called a \emph{minimizer} if, for any other curve $\tilde \gamma:[a,b]\to M$ with the same start- and endpoint as $\gamma$, we have
    \begin{align*}
        \mathbb{L}(\gamma)\leq \mathbb{L}(\tilde \gamma).
    \end{align*}
    \item For $t>0$, we denote by $\Gamma^t$ the set of all causal minimizers $\gamma:[0,t]\to M$.
    \end{enumerate}
\end{definition}

\begin{lemma}
    The second derivative along the fibers, $\frac{\partial^2 L}{\partial v^2}(x,v)$, is positive definite at each point $(x,v)\in \op{int}(\C)$.
\end{lemma}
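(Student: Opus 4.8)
The plan is to reduce the claim to a computation of the fiber Hessian of $L$ in local coordinates and then exploit the strict convexity built into the squaring operation together with the concavity properties of the two building blocks $v\mapsto d_x\tau(v)$ and $v\mapsto |v|_g$ on the interior of the causal cone. Write $f(x,v):=d_x\tau(v)-|v|_g$, so that $L(x,v)=f(x,v)^2$ on $\op{int}(\C)$. Differentiating twice along the fiber directions,
\begin{align*}
\frac{\partial^2 L}{\partial v^2}(x,v)=2\,\frac{\partial f}{\partial v}\otimes \frac{\partial f}{\partial v}+2f(x,v)\,\frac{\partial^2 f}{\partial v^2}(x,v).
\end{align*}
The first summand is positive semidefinite, so it suffices to control the sign of $f\cdot \partial^2_{vv}f$. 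Since $v\mapsto d_x\tau(v)$ is linear, $\partial^2_{vv}f=-\partial^2_{vv}|v|_g$, and on $\op{int}(\C_x)$ we have $g_x(v,v)<0$, so $|v|_g=\sqrt{-g_x(v,v)}$ is a smooth, \emph{concave} function of $v$ along the cone (this is the standard reverse Cauchy--Schwarz / reverse triangle inequality phenomenon for timelike vectors; I would either cite it or verify it by the explicit Hessian computation below). Hence $-\partial^2_{vv}|v|_g$ is positive semidefinite. Moreover $f(x,v)=d_x\tau(v)-|v|_g>0$ on $\op{int}(\C_x)$: indeed the growth condition \eqref{splitting} gives $d_x\tau(v)\geq 2|v|_g$, so $f(x,v)\geq |v|_g>0$ for $v\neq 0$ (and $v=0\notin\op{int}(\C_x)$). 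Therefore both terms on the right-hand side are positive semidefinite.

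It remains to upgrade "positive semidefinite" to "positive definite", i.e.\ to rule out a common null direction of the two terms. Suppose $w\in T_xM$ satisfies $\frac{\partial^2 L}{\partial v^2}(x,v)(w,w)=0$. Then both $\big(\partial_v f(x,v)\cdot w\big)^2=0$ and $f(x,v)\,\partial^2_{vv}f(x,v)(w,w)=0$; since $f>0$ the latter forces $\partial^2_{vv}|v|_g(w,w)=0$. Computing the Hessian of $v\mapsto \sqrt{-g_x(v,v)}$ explicitly, with $q:=-g_x(v,v)>0$,
\begin{align*}
\partial^2_{vv}\sqrt{q}\,(w,w)=-\frac{g_x(w,w)}{\sqrt q}-\frac{g_x(v,w)^2}{q^{3/2}}=-\frac{1}{q^{3/2}}\Big(q\,g_x(w,w)+g_x(v,w)^2\Big).
\end{align*}
By the reverse Cauchy--Schwarz inequality for the timelike vector $v$, the quantity $-\big(q\,g_x(w,w)+g_x(v,w)^2\big)=g_x(v,v)g_x(w,w)-g_x(v,w)^2$ is $\geq 0$, with equality \emph{iff} $w$ is parallel to $v$. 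Hence $\partial^2_{vv}|v|_g(w,w)=0$ forces $w=\lambda v$ for some $\lambda\in\R$. But then $\partial_v f(x,v)\cdot w=\lambda\big(d_x\tau(v)-|v|_g\big)=\lambda f(x,v)$, using that $d_x\tau$ is linear and $\partial_v|v|_g\cdot v=|v|_g$ by homogeneity; so the first term contributes $2\lambda^2 f(x,v)^2$, which vanishes only if $\lambda=0$, i.e.\ $w=0$. This proves positive definiteness.

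The main obstacle is the bookkeeping in the last step: one must verify carefully that the unique null direction of the concave piece $-\partial^2_{vv}|v|_g$ is exactly $\R v$ (the equality case of reverse Cauchy--Schwarz), and then check that this direction is genuinely detected by the rank-one term $\partial_v f\otimes \partial_v f$, which is where the strict positivity $f(x,v)>0$ — itself a consequence of the growth condition \eqref{splitting} — is essential. Everything else is a routine coordinate computation, and the argument is manifestly coordinate-independent since it is phrased in terms of the well-defined vertical Hessian on $T_xM$.
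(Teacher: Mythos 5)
The paper does not actually supply a proof here; it simply cites \cite{Metsch}, Corollary 7.17, so there is no argument in the paper to compare yours against. Judged on its own merits, your decomposition $\partial^2_{vv}L=2\,\partial_v f\otimes\partial_v f+2f\,\partial^2_{vv}f$ with $f=d_x\tau-|\cdot|_g$, the use of the growth condition \eqref{splitting} to get $f>0$, the reduction to the concavity of $|\cdot|_g$ on the timelike cone, and the identification of the null direction $\R v$ via the equality case, followed by the homogeneity identity $\partial_v f\cdot v=f$, constitute a complete and correct route to positive definiteness.

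There is, however, one sign error you should fix, because read literally it makes your argument internally inconsistent. You assert that for timelike $v$ the quantity
\[
g_x(v,v)\,g_x(w,w)-g_x(v,w)^2
\]
is $\geq 0$. It is in fact $\leq 0$: writing $w=\alpha v+w^\perp$ with $g_x(v,w^\perp)=0$, the orthogonal complement of a timelike vector is spacelike, so $g_x(w^\perp,w^\perp)\geq 0$ and
\[
g_x(v,v)g_x(w,w)-g_x(v,w)^2 = g_x(v,v)\,g_x(w^\perp,w^\perp)\leq 0,
\]
with equality iff $w^\perp=0$, i.e.\ $w\in\R v$. Plugging your own Hessian formula back in, the correct sign gives
\[
\partial^2_{vv}\sqrt{-g_x(v,v)}\,(w,w)=\frac{g_x(v,v)g_x(w,w)-g_x(v,w)^2}{(-g_x(v,v))^{3/2}}\leq 0,
\]
which is exactly the concavity of $|v|_g$ that you correctly invoke a few lines earlier; with your $\geq 0$ the same formula would make $|v|_g$ convex and would destroy the positive-semidefiniteness of the term $2f\,\partial^2_{vv}f$. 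Fortunately, the only thing you actually use downstream is the equality case (which is the same regardless of the sign), so the final conclusion is unaffected — but you should correct the inequality so that the intermediate step agrees with the concavity claim it is supposed to justify.
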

\begin{proof}
    This is an easy calculation that was already carried out in \cite{Metsch}, Corollary 7.17.
\end{proof}

\begin{remark}\rm
Every timelike minimizer $\gamma:[a,b]\to M$ satisfies, in local coordinates, the Euler-Lagrange equation
\[
\frac{d}{dt}\frac{\partial L}{\partial v}(\gamma(t),\dot \gamma(t))=\frac{\partial L}{\partial x}(\gamma(t),\dot \gamma(t)),\ t\in[a,b].
\]
Observe that this holds only for timelike curves since $L$ is not differentiable on the entire set $\C$.
     Since the second derivative along the fibers of $L$ is positive definite on $\op{int}(\C)$, it follows that there exists a smooth (local) Euler-Lagrange flow $\phi_t$ on $\op{int}(\C)$. It is well-known that the speed curve $(\gamma(t),\dot \gamma(t))$ of any timelike minimizer is part of an orbit of the flow, cf. \cite{Fathi}.
\end{remark}

\begin{lemma}\label{minimizer}
    A causal curve $\gamma:[a,b]\to M$ is a minimizer if and only if $\gamma$ is a maximizer for the Lorentzian lenght functional and $L(\gamma(t),\dot \gamma(t))$ is constant.

    In particular, every minimizer is smooth, and for any $(x,y)\in J^+$ and $t>0$, there exists a minimizer connecting $x$ with $y$ in time $t$.
\end{lemma}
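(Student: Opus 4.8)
The plan is to use the explicit formula for the cost $c_t$ and the structure of $L$ as a perfect square. First I would observe that for any causal curve $\gamma:[a,b]\to M$ we have, by the reverse triangle (Cauchy–Schwarz) inequality for the integral,
\[
\mathbb{L}(\gamma)=\int_a^b\bigl(d_{\gamma(t)}\tau(\dot\gamma(t))-|\dot\gamma(t)|_g\bigr)^2\,dt\ \geq\ \frac{1}{b-a}\left(\int_a^b d_{\gamma(t)}\tau(\dot\gamma(t))-|\dot\gamma(t)|_g\,dt\right)^2
=\frac{1}{b-a}\bigl(\tau(\gamma(b))-\tau(\gamma(a))-\mathrm{Len}_g(\gamma)\bigr)^2,
\]
where $\mathrm{Len}_g(\gamma)=\int_a^b|\dot\gamma(t)|_g\,dt$ is the Lorentzian length and I used that $\int d_{\gamma}\tau(\dot\gamma)\,dt=\tau(\gamma(b))-\tau(\gamma(a))$ depends only on the endpoints. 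Since $\mathrm{Len}_g(\gamma)\le d(\gamma(a),\gamma(b))$ with equality iff $\gamma$ is a length-maximizer, and the function $r\mapsto (\tau(y)-\tau(x)-r)^2$ is decreasing in $r$ on the relevant range $0\le r\le \tau(y)-\tau(x)$ (which holds because $d_x\tau(v)\ge 2|v|_g$ forces $\tau(y)-\tau(x)\ge \mathrm{Len}_g(\gamma)\ge 0$ along any causal curve), we get $\mathbb{L}(\gamma)\ge \frac{1}{b-a}(\tau(\gamma(b))-\tau(\gamma(a))-d(\gamma(a),\gamma(b)))^2 = c_{b-a}(\gamma(a),\gamma(b))$. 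This is the lower bound; equality in the first (Cauchy–Schwarz) step requires $d_{\gamma(t)}\tau(\dot\gamma(t))-|\dot\gamma(t)|_g$ to be constant in $t$, i.e.\ $L(\gamma(t),\dot\gamma(t))$ constant, and equality in the second step requires $\gamma$ to be a $g$-length maximizer.

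Next I would establish the converse and the "in particular" claims. If $\gamma$ is a Lorentzian length maximizer with $L(\gamma,\dot\gamma)$ constant, then both inequalities above are equalities, so $\mathbb{L}(\gamma)=c_{b-a}(\gamma(a),\gamma(b))$, which by the lower bound (applied to any competitor $\tilde\gamma$ with the same endpoints) is the minimal possible value; hence $\gamma$ is a minimizer. For existence, given $(x,y)\in J^+$ and $t>0$: by global hyperbolicity there is a length-maximizing causal geodesic $\sigma$ from $x$ to $y$ (the time-separation is realized); reparametrize it so that $d_{\sigma}\tau(\dot\sigma)-|\dot\sigma|_g$ is constant — this is possible by a standard reparametrization argument since $\tau$ strictly increases along any nonconstant causal curve by \eqref{splitting} (one solves an ODE for the parameter change; in the degenerate null case where $|\dot\sigma|_g\equiv 0$ one simply makes $d_\sigma\tau(\dot\sigma)$ constant, i.e.\ affine parametrization with respect to $\tau$), and rescale the parameter interval to $[0,t]$. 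This yields a curve attaining the lower bound, hence a minimizer. Finally, smoothness: a minimizer is a length maximizer, hence a causal geodesic up to reparametrization, and the constancy of $L(\gamma,\dot\gamma)$ pins down the parametrization to be smooth (in the timelike case it is the affine geodesic parametrization up to scale; in the null case the parametrization is governed by a smooth ODE in $\tau$).

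The main obstacle I anticipate is handling the \emph{null} (lightlike) pieces carefully: on such pieces $|\dot\gamma|_g\equiv 0$, $L$ is not differentiable there, the reparametrization argument degenerates, and one must check that a length-maximizing null geodesic can still be reparametrized so that $d_\gamma\tau(\dot\gamma)$ is constant and the result is genuinely smooth and a minimizer — and also that mixed causal minimizers (timelike on part of the interval, null on another) cannot occur unless forced, since constancy of $L$ together with length-maximization is quite rigid. A secondary technical point is justifying the equality case of Cauchy–Schwarz at the level of absolutely continuous (not a priori $C^1$) curves, and confirming that the competitor curves in the definition of minimizer may be taken causal without loss of generality (non-causal curves have infinite action). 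I would expect the paper to dispatch the smoothness and null-case subtleties by invoking standard Lorentzian causality theory (existence and regularity of maximizing geodesics in globally hyperbolic spacetimes) together with the already-proven positive-definiteness of $\partial^2 L/\partial v^2$ on $\op{int}(\C)$ for the timelike case.
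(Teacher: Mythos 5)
Your argument is correct and is essentially the same as the paper's: the paper introduces the auxiliary Lagrangian $L'(x,v)=d_x\tau(v)-|v|_g$, invokes Hölder's inequality (your Cauchy--Schwarz/Jensen step, $\int (L')^2\ge\frac1{b-a}(\int L')^2$ with equality iff $L'$ constant) to reduce minimizing $L$ to minimizing $L'$ with $L$ constant, and then observes $\mathbb{L}'(\gamma)=\tau(\gamma(b))-\tau(\gamma(a))-\ell_g(\gamma)$ so that minimizing $L'$ is precisely Lorentzian length maximization. Your smoothness and existence arguments (reparametrize a maximizing geodesic, cite the pregeodesic regularity result) also match the paper's, which cites O'Neill for the maximizing geodesic and Minguzzi's Theorem~2.12 for the pregeodesic property.
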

\begin{proof}[Sketch of proof]
  For a detailed proof, we refer the reader to \cite{Metsch}, Proposition 3.6. Consider the Lagrangian
    \begin{align*}
    L':TM\to\R\cup \{\infty\},\ L'(x,v):=
    \begin{cases}
        d_x\tau(v)-|v|_g,\ & \text{if } v\in \C_x,
        \\\\
        +\infty,\ & \text{else.}
    \end{cases}
\end{align*}
Using H\"older's inequality, one can easily check that a causal curve $\gamma:[a,b]\to M$ is a minimizer for $L$ if and only if it is a minimizer for $L'$ and $L(\gamma(t),\dot \gamma(t))$ is constant a.e. 

However, since 
\[
\LL'(\gamma)=\tau(\gamma(b))-\tau(\gamma(a))-\ell_g(\gamma)
\]
($\ell_g$ denoting the Lorentzian length), it is obvious that $\gamma$ is a minimizer for $L'$ if and only if $\gamma$ maximizes the Lorentzian length functional. This proves the first part of the lemma.

Since any two points $(x,y)\in J^+$ can be joined by a maximizing geodesic (see \cite{ONeill}, Chapter 14, Proposition 19), which can be reparametrized smoothly such that $L(\gamma(t),\dot \gamma(t))$ is constant, it follows that there always exists a minimizer (for $L$) connecting $x$ with $y$. 

The fact that every minimizer is smooth simply follows from the well-known fact that any maximizing curve for the Lorentzian length functional is a pregeodesic (\cite{Minguzzi}, Theorem 2.12). Hence, every minimizer $\gamma$ is a reparametrization of a geodesic such that $L(\gamma(t),\dot \gamma(t))$ is constant. Thus, $\gamma$ is smooth.
\end{proof}

\begin{definition}\rm
The \emph{minimal time-$t$-action} to go from $x$ to $y$ in time $t$ is defined as
\begin{align*}
   c_t(x,y)&:=\inf\bigg\{\mathbb{L}(\gamma)\mid  \gamma:[0,t]\to M \text{ curve, }\gamma(0)=x,\ \gamma(t)=y\bigg\}
   \\[10pt]
   &= \begin{cases}
       \frac 1t (\tau(y)-\tau(x)-d(x,y))^2,\ & \text{if } y\in J^+(x),
       \\\\
       +\infty,\ & \text{else.}
   \end{cases}
\end{align*}
\end{definition}
\begin{proof}[Proof of the second equality]
    Follows from the preceding lemma.
\end{proof}

The following proposition was proved in \cite{Metsch}, Proposition 3.9:

\begin{proposition}\label{flow}
 There exists a relatively open set ${\cal D}_L\subseteq \C\times \R$ and a continuous (local) flow
 \begin{align*}
     \phi:{\cal D}_L\to \C\subseteq TM,\ (x,v,t)\mapsto \phi_t(x,v),
 \end{align*}
 such that the following properties hold:
 \begin{enumerate}[(i)]
 \item For any $(x,v)\in \C$, the map $\{t\in \R\mid (x,v,t)\in {\cal D}_L\}\to TM,\ t\mapsto \phi_t(x,v),$ is smooth and of the form $(\gamma(t),\dot \gamma(t))$. 
 \item 
 If $x\leq y$ and $\gamma:[a,b]\to M$ is a minimizer connecting $x$ with $y$, then $\gamma$ is part of an orbit of this flow, i.e.\ if $t,s\in [a,b]$ we have $(\gamma(t),\dot \gamma(t))=\phi_{t-s}(\gamma(s),\dot \gamma(s))$.
 \item 
 For any $x\in M$ and $0\neq v\in \C_x$, the curve $t\mapsto \pi \circ \phi_t(x,v)$ is inextendible, i.e.\ if $I$ is the domain of definition of $\phi_\cdot(x,v)$, then the limits $\phi_t(x,v)$ as $t\to \sup{I}$ or $t\to \inf{I}$ do not exist. Here, $\pi:TM\to M$ denotes the canonical projection.
 \item The flow, when restricted to the interior
 $\op{int}(\C),$  is the Euler-Lagrange flow. This means that if $(x,v)\in \op{int}(\C)$,  the orbits of both the flow $\phi$ and the Euler-Lagrange flow passing through $(x,v)$ at time $t=0$ have the same domain of definition and agree throughout that domain.
 \end{enumerate}
 \end{proposition}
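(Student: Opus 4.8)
The idea I would follow is that, although $L$ fails to be differentiable along $\partial\C$, it is the square of the Lagrangian $L'$ from the sketch of Lemma~\ref{minimizer} --- which is $1$-homogeneous in $v$ and smooth on $\op{int}(\C)$ --- and $L'$ records nothing but the $g$-length (up to the boundary term $\tau$). Hence every orbit of the flow one wants ought to be a $g$-geodesic traversed with one specific, intrinsically determined parametrisation, and the $g$-geodesic flow is smooth on all of $TM$. Concretely, for $(x,v)\in\C$ with $v\neq0$ I would let $\sigma=\sigma_{(x,v)}$ be the maximal $g$-geodesic with $\sigma(0)=x$, $\dot\sigma(0)=v$; since a future causal geodesic stays future causal and $g(\dot\sigma,\dot\sigma)$ is constant, $|\dot\sigma(r)|_g\equiv|v|_g=:k$ and, by \eqref{splitting}, $d_{\sigma(r)}\tau(\dot\sigma(r))-k>0$ throughout. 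With $A:=d_x\tau(v)-|v|_g=\sqrt{L(x,v)}>0$ I would define the strictly increasing smooth function
\[
\Theta_{(x,v)}(r):=\frac1A\int_0^r\bigl(d_{\sigma(\xi)}\tau(\dot\sigma(\xi))-k\bigr)\,d\xi ,
\]
put $\rho_{(x,v)}:=\Theta_{(x,v)}^{-1}$ and $\gamma_{(x,v)}:=\sigma_{(x,v)}\circ\rho_{(x,v)}$, and set
\[
\phi_t(x,v):=\bigl(\gamma_{(x,v)}(t),\dot\gamma_{(x,v)}(t)\bigr)=\bigl(\sigma(\rho(t)),\,\rho'(t)\dot\sigma(\rho(t))\bigr),
\]
with $\phi_t(x,0):=(x,0)$ and $\mathcal D_L$ the maximal domain of definition; note $\phi_t(x,v)\in\C$ since $\rho'>0$. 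A one-line computation gives $d\tau(\dot\gamma)-|\dot\gamma|_g\equiv A$, hence $L(\gamma_{(x,v)},\dot\gamma_{(x,v)})\equiv L(x,v)$, and $\rho'(0)=1$, so $\phi_0=\operatorname{Id}$.

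Property (i) is then built in. For the flow property I would use that $\dot\gamma_{(x,v)}(s)$ is a positive multiple of $\dot\sigma(\rho(s))$, so the $g$-geodesic through $\phi_s(x,v)$ is an affine reparametrisation of $\sigma$, and that $d\tau(\dot\gamma)-|\dot\gamma|_g$ takes the same value $A$ all along the orbit; a short computation then gives $\rho_{\phi_s(x,v)}(t)=\bigl(\rho_{(x,v)}(s+t)-\rho_{(x,v)}(s)\bigr)/\rho'_{(x,v)}(s)$, hence $\phi_t\circ\phi_s=\phi_{s+t}$. For (ii), Lemma~\ref{minimizer} says a minimizer $\gamma:[a,b]\to M$ maximises Lorentzian length and has $L(\gamma,\dot\gamma)$ constant; being a length maximiser it is a $g$-pregeodesic, so $\gamma=\sigma_{(\gamma(a),\dot\gamma(a))}\circ\beta$ for a reparametrisation $\beta$ with $\beta(a)=0$, and ``$L$ constant'' forces $\beta$ to solve $\beta'=A/(d\tau(\dot\sigma\circ\beta)-k)$, whose unique solution with value $0$ at $a$ is $\rho_{(\gamma(a),\dot\gamma(a))}$; thus $(\gamma(t),\dot\gamma(t))=\phi_{t-a}(\gamma(a),\dot\gamma(a))$ (the case $\dot\gamma\equiv0$ being trivial).

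For (iii): if $\phi_t(x,v)\to(p,w)$ as $t$ approaches an endpoint of its interval of definition $I$, then $d_p\tau(w)-|w|_g=A>0$ forces $w\in\C_p\setminus\{0\}$, so $\phi_\cdot(p,w)$ is defined near $0$ and the flow property extends the orbit past that endpoint, contradicting maximality of $I$. For (iv): on $\op{int}(\C)$ the Lagrangian $L$ is smooth with positive-definite fibre Hessian and $2$-homogeneous in $v$, so its energy equals $L$ and $L$ is conserved along the Euler--Lagrange flow; an Euler--Lagrange orbit with $L$ constant satisfies the Euler--Lagrange equation of $L'$, i.e.\ is a timelike $g$-pregeodesic with $d\tau(\dot\gamma)-|\dot\gamma|_g$ constant, which by the uniqueness argument above is exactly an orbit of $\phi$; conversely each $\phi$-orbit in $\op{int}(\C)$ is such an extremal. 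Since $\phi$ and the Euler--Lagrange flow are therefore both the flow of one and the same vector field on $\op{int}(\C)$, their maximal domains coincide.

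The one genuinely delicate point, and where I expect the effort to concentrate, is continuity of $\phi$ across $\partial\C$ and at the zero section, precisely because $L$ is only continuous there. The construction is arranged so that $\phi_t(x,v)=\bigl(\sigma(\rho(t)),\rho'(t)\dot\sigma(\rho(t))\bigr)$ depends on $(x,v)$ only through the smooth $g$-geodesic flow and through $\rho_{(x,v)}=\Theta_{(x,v)}^{-1}$, and the only non-smooth ingredient of $\Theta_{(x,v)}$ is $k=|v|_g=\sqrt{-g_x(v,v)}$, which is continuous --- but not $C^1$ --- on $\C$. Because \eqref{splitting} keeps $d\tau(\dot\sigma)-k$ bounded away from $0$ (it is $\ge k$ when $k>0$, and $\ge|\dot\sigma|_h>0$ when $k=0$), $\Theta_{(x,v)}$ is smooth and strictly increasing in $r$ and depends continuously on $(x,v)$, so $\rho_{(x,v)}(t)$ is continuous in $(x,v,t)$ and smooth in $t$; continuity of $\phi$ and relative openness of $\mathcal D_L$ follow, and the same bound shows $\phi_t(x,v)\to(x,0)$ as $v\to0$ uniformly for $t$ in compact sets. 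Everything else is bookkeeping with the $g$-geodesic flow and the growth condition \eqref{splitting}.
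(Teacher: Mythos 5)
Your construction is correct and follows essentially the same route as the paper's own proof, which (citing \cite{Metsch}) reparametrizes the Lorentzian geodesic flow so that $L(\gamma,\dot\gamma)$ is constant along each orbit (Lemma \ref{minimizer}) and then verifies the listed properties. The time change $\Theta_{(x,v)}$, the growth condition \eqref{splitting} guaranteeing that $\Theta'$ is bounded away from $0$ (hence continuity of $\phi$ up to $\partial\C$ and the zero section), the cocycle computation for the flow property, the inextendibility argument in (iii), and the energy-conservation / degenerate-$L'$ argument for (iv) are all correct and are the expected ingredients.
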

 \begin{proof}
     For a detailed proof (except for (iii) and (iv)), we refer the reader to \cite{Metsch}. The proof is not difficult. Indeed, one reparametrizes the geodesic flow on $M$ according to Lemma \ref{minimizer}. Then one has to check all the stated properties above. The proofs of (iii) and (iv) are easy.  
 \end{proof}

\begin{remark}\rm
Because of part (iv) of the above proposition we will denote both the Euler-Lagrange flow as well as the flow from the proposition with $\phi$.
\end{remark}

\begin{corollary}\label{eeef}
The set of causal minimizers $\Gamma^t$ is a closed subset of $C([0,t],M)$. In particular, $\Gamma^t$ is a complete and separable metric space.
\end{corollary}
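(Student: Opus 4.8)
\emph{Plan.} The \q{in particular} is soft: since $h$ is complete, $(M,d_h)$ (where $d_h$ is the Riemannian distance of $h$) is a complete, separable metric space, hence so is $C([0,t],M)$ with the metric of uniform convergence, and a closed subset of a complete separable metric space is again complete and separable. So the whole content is that $\Gamma^t$ is \emph{closed}: given $\gamma_n\in\Gamma^t$ with $\gamma_n\to\gamma$ uniformly, I must show $\gamma\in\Gamma^t$. The right tool is Proposition \ref{flow}: I will push the reparametrized geodesic flow to the limit.

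\emph{Uniform estimates.} Each $\gamma_n$ is causal, so $(\gamma_n(0),\gamma_n(t))\in J^+$; as $J^+$ is closed in the globally hyperbolic $M$, also $(\gamma(0),\gamma(t))\in J^+$. By Lemma \ref{minimizer} and the definition of $c_t$ one has $\mathbb L(\gamma_n)=c_t(\gamma_n(0),\gamma_n(t))<\infty$ and $L(\gamma_n(s),\dot\gamma_n(s))\equiv c_n:=\tfrac1t\,c_t(\gamma_n(0),\gamma_n(t))$. Since the time separation $d$ is continuous on a globally hyperbolic spacetime, $c_t$ is continuous on $J^+$, so $c_n\to c_\infty:=\tfrac1t\,c_t(\gamma(0),\gamma(t))$. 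From \eqref{splitting} one gets $d_x\tau(v)-|v|_g\geq\tfrac12\,d_x\tau(v)\geq\tfrac12|v|_h$ for $v\in\C_x$, hence $\tfrac12|\dot\gamma_n(s)|_h\leq\sqrt{c_n}$ for all $s$; thus the $\gamma_n$ are uniformly $d_h$-Lipschitz and $\gamma$ is Lipschitz. If $c_\infty=0$ this already finishes: then $|\dot\gamma_n|_h\to0$ uniformly, so $\gamma$ is constant, and a constant curve is a minimizer (action $0=c_t(x,x)$), so $\gamma\in\Gamma^t$.

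\emph{The main case, $c_\infty>0$.} On a compact neighbourhood $K$ of $\{\gamma_n(t/2)\}_n\cup\{\gamma(t/2)\}$ one also has $d_x\tau(v)-|v|_g\leq d_x\tau(v)\leq C|v|_h$, hence $\sqrt{c_n}\leq C|\dot\gamma_n(t/2)|_h$. Therefore the vectors $(\gamma_n(t/2),\dot\gamma_n(t/2))$ lie in the compact set $\{(x,v)\in\C:x\in K,\ |v|_h\leq C_0\}$ of $TM$ (note $\C$ is closed), with $h$-norm bounded below by $\sqrt{c_n}/C$; passing to a subsequence, $(\gamma_n(t/2),\dot\gamma_n(t/2))\to(\gamma(t/2),w)$ with $w\in\C_{\gamma(t/2)}$ and $|w|_h\geq\sqrt{c_\infty}/C>0$, so $w\neq0$. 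By Proposition \ref{flow}(ii), $(\gamma_n(s),\dot\gamma_n(s))=\phi_{s-t/2}(\gamma_n(t/2),\dot\gamma_n(t/2))$ for $s\in[0,t]$; by continuity of $\phi$ on the relatively open set ${\cal D}_L$, the orbit $\eta(s):=\pi\circ\phi_{s-t/2}(\gamma(t/2),w)$ satisfies $\eta(s)=\gamma(s)$ wherever $\eta$ is defined and $s\in[0,t]$, because there $\gamma_n(s)\to\eta(s)$ while also $\gamma_n(s)\to\gamma(s)$. Since $w\neq0$, Proposition \ref{flow}(iii) makes this orbit inextendible, so if its domain did not contain $[0,t]$ the continuity of $\gamma$ on $[0,t]$ would furnish a continuous extension of $\eta$ at a finite boundary point of its domain lying in $[0,t]$ --- a contradiction. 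Hence $[0,t]$ lies in the domain, $\gamma=\eta$ is smooth and causal, and $\dot\gamma_n(s)\to\dot\gamma(s)$ for every $s$. As $L$ is continuous on $\C$, $L(\gamma(s),\dot\gamma(s))=\lim_n L(\gamma_n(s),\dot\gamma_n(s))=\lim_n c_n=c_\infty$, so $\mathbb L(\gamma)=t\,c_\infty=\lim_n\mathbb L(\gamma_n)=\lim_n c_t(\gamma_n(0),\gamma_n(t))=c_t(\gamma(0),\gamma(t))$. Since $c_t(\gamma(0),\gamma(t))$ is by definition the infimum of $\mathbb L$ over all curves with these endpoints, every such curve has action $\geq\mathbb L(\gamma)$; thus $\gamma$ is a causal minimizer, i.e.\ $\gamma\in\Gamma^t$.

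\emph{Expected obstacle.} The delicate point is the limit at the level of the flow: one must exclude a degenerate limiting velocity ($w=0$), which is why the dichotomy $c_\infty=0$ vs.\ $c_\infty>0$ and the two-sided bound $\tfrac12|v|_h\leq\sqrt{L(x,v)}\leq C|v|_h$ on compacta are needed, and then one must invoke the inextendibility in Proposition \ref{flow}(iii) to be sure the limit orbit is still defined on all of $[0,t]$. An alternative route, bypassing the flow, would be to prove lower semicontinuity of $\gamma\mapsto\mathbb L(\gamma)$ under uniform convergence (using that $L\geq0$, is convex in $v$, and is quadratically coercive in $|v|_h$ on $\C$) and then to squeeze $\mathbb L(\gamma)$ between $c_t(\gamma(0),\gamma(t))$ and $\liminf_n\mathbb L(\gamma_n)=c_t(\gamma(0),\gamma(t))$; but the flow argument has the advantage of using only results already established above.
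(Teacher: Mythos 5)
Your proof is correct and follows essentially the same route as the paper: extract a convergent subsequence of velocity vectors via the bound from \eqref{splitting}, use the flow $\phi$ and its inextendibility (Proposition \ref{flow}(iii)) to show the limit curve is defined on all of $[0,t]$ and is smooth, and then pass to the limit in the action. The only differences are cosmetic: the paper works with the velocity at $s=0$ and, rather than your $c_\infty=0$ vs.\ $c_\infty>0$ dichotomy, it simply observes that if the limiting velocity $v$ vanishes then the flow orbit through $(x,0)$ is defined for all time, so the two-sided bound on $|v|_h$ on compacta is not actually needed.
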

\begin{proof}
Let $(\gamma_k)\subseteq \Gamma^t$ be a sequence converging to $\gamma \in C([0,t],M)$. 
We need to show that $\gamma \in \Gamma^t$.

Since $\gamma_k$ are minimizers, we have $\gamma_k(s)=\pi\circ \phi_s(x_k,v_k)$ for $x_k:=\gamma_k(0)$ and $v_k:=\dot \gamma_k(0)$. As $\gamma_k\to \gamma$, we have $(x_k,\gamma_k(t))\to (\gamma(0),\gamma(t))$ and the latter lies in $J^+$, since $J^+$ is closed. By continuity of $c_t$ on $J^+$, we obtain
\begin{align*}
C:=\sup_{k\in \N}c_t(x_k,\gamma_k(t))<\infty.
\end{align*}
Now, using \eqref{splitting}, we see that, for each $k\in \N$, we have
\begin{align*}
|v_k|_h\leq d\tau(v_k)\leq 2 L(x_k,v_k)^{1/2}=2t^{-1/2}c_t(x_k,\gamma_k(t))^{1/2}\leq 2t^{-1/2}C^{1/2},
\end{align*}
where we have used that $L(\gamma_k(s),\dot \gamma_k(s))$ is constant.
Thus, the sequence $(x_k,v_k)$ is relatively compact in $TM$, and for a subsequence that we do not relabel, $(x_k,v_k)\to (x,v)\in \C$ as $k\to \infty$. By continuity of the flow, we clearly have
\begin{align*}
    \pi \circ \phi_s(x,v)=\gamma(s) \text{ for all } s\in [0,t] \text{ such that the first curve is defined.}
\end{align*}
If $v=0$, then the first curve is defined for all $s\in \R$ due to Proposition \ref{flow}(ii). If $v\neq 0$, then $s\mapsto \pi \circ \phi_s(x,v)$ is inextendible. Thus, $\phi_s(x,v)$ must be defined for all $s\in [0,t]$ and $\gamma(s)=\pi \circ \phi_s(x,v)$ for all these $s$. In particular, $\gamma$ is smooth, and due to the continuity of $\phi$, it holds $\gamma_k\to \gamma$ in $C^1([0,t],M)$. Then we conclude the proof as follows:
By the continuity of $c_t$ on $J^+$,
\begin{align*}
    c_t(x,\gamma(t))=\lim_{k\to \infty} c_t(x_k,\gamma_k(t))= \lim_{k\to \infty} \LL(\gamma_k)=\LL(\gamma).
\end{align*}
Thus, $\gamma$ minimizes $\LL$ and, hence, belongs to $\Gamma^t$. 

In particular, $\Gamma^t$ is complete and separable as $C([0,1],M)$ is a complete separable space.
\end{proof}

\begin{corollary}\label{ee}
    Let $(x_k)$ and $(y_k)$ be sequences with $y_k\in J^+(x_k)$ converging to $x$ and $y$, respectively. Let $t_k>0$ with $t_k\to t\in (0,\infty)$. For each $k$, suppose that $\gamma_k:[0,t_k]\to M$ is a minimizer connecting $x_k$ with $y_k$. Then, for a subsequence, $\gamma_k$ can be extended to smoothly to $[0,t]$ (if $t_k<t$) and $\gamma_k:[0,t]\to M$ converges in the $C^1$-topology to a minimizing curve $\gamma:[0,t]\to M$ connecting $x$ with $y$. 
\end{corollary}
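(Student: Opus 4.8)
The plan is to rerun the argument of Corollary~\ref{eeef}, the only genuinely new feature being the moving time parameter $t_k$. Write $v_k:=\dot\gamma_k(0)$, so that $\gamma_k$ is a flow line, $(\gamma_k(\sigma),\dot\gamma_k(\sigma))=\phi_\sigma(x_k,v_k)$ for $\sigma\in[0,t_k]$, by Proposition~\ref{flow}(ii). Since $J^+$ is closed we have $(x,y)\in J^+$, and since $(\sigma,p,q)\mapsto c_\sigma(p,q)=\tfrac1\sigma(\tau(q)-\tau(p)-d(p,q))^2$ is continuous on $(0,\infty)\times J^+$ (the Lorentzian distance $d$ is continuous on a globally hyperbolic spacetime and $t_k\to t>0$), the number $C:=\sup_k c_{t_k}(x_k,y_k)=\sup_k\LL(\gamma_k)$ is finite. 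By Lemma~\ref{minimizer}, $L(\gamma_k(\sigma),\dot\gamma_k(\sigma))$ equals the constant $t_k^{-1}\LL(\gamma_k)$, so \eqref{splitting} gives
\[
|\dot\gamma_k(\sigma)|_h\le d\tau(\dot\gamma_k(\sigma))\le 2L(\gamma_k(\sigma),\dot\gamma_k(\sigma))^{1/2}\le 2t_k^{-1/2}C^{1/2}
\]
uniformly in $k$ and $\sigma\in[0,t_k]$. Hence $(x_k,v_k)$ is relatively compact in $TM$, and after passing to a subsequence $(x_k,v_k)\to(x,v)\in\C$.

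The next step is to show that $\sigma\mapsto\phi_\sigma(x,v)$ is defined on all of $[0,t]$. If $v=0$ this is the constant curve at $x$, defined for every $\sigma$. If $v\ne0$ the curve $\sigma\mapsto\pi\circ\phi_\sigma(x,v)$ is inextendible by Proposition~\ref{flow}(iii); I would argue by contradiction, assuming its forward interval of definition has supremum $\beta\le t$. For fixed $\beta'<\beta$ and all large $k$ one has $\beta'<t_k$, so $\phi_{\beta'}(x_k,v_k)$ is defined, and by openness of ${\cal D}_L$ and continuity of $\phi$ we get $\phi_{\beta'}(x_k,v_k)\to\phi_{\beta'}(x,v)$; passing the bound of the first step to the limit shows that $\sigma\mapsto\pi\circ\phi_\sigma(x,v)$ has $h$-bounded velocity, hence is $h$-Lipschitz, on $[0,\beta)$. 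As $h$ is complete, such a curve has a limit as $\sigma\to\beta$, contradicting inextendibility. Thus $\beta>t$, and $\gamma:=\pi\circ\phi_\cdot(x,v)$ is a smooth curve on $[0,t]$. For large $k$, openness of ${\cal D}_L$ and continuity of $\phi$ let us view $\gamma_k$ as defined on $[0,t]$ via $\sigma\mapsto\pi\circ\phi_\sigma(x_k,v_k)$ (extending it smoothly if $t_k<t$, restricting it if $t_k>t$), and then $\gamma_k\to\gamma$ in $C^1([0,t],M)$.

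It remains to identify the endpoints of $\gamma$ and to verify minimality. Clearly $\gamma(0)=\lim_k x_k=x$; and since $\gamma_k(t_k)=y_k\to y$ while $|\gamma_k(t)-\gamma_k(t_k)|\to0$ by the uniform velocity bound (which remains valid on the extended part, as $L$ stays constant along the flow orbit) together with $|t-t_k|\to0$, also $\gamma_k(t)\to y$, i.e.\ $\gamma(t)=y$. Because $L$ is continuous on $\C$, the $C^1$-convergence gives $\LL(\gamma_k|_{[0,t]})\to\LL(\gamma)$; and since $L$ is the constant $t_k^{-1}\LL(\gamma_k|_{[0,t_k]})$ along the orbit through $(x_k,v_k)$, the actions $\LL(\gamma_k|_{[0,t]})$ and $\LL(\gamma_k|_{[0,t_k]})$ differ by $\tfrac{|t-t_k|}{t_k}\LL(\gamma_k|_{[0,t_k]})\le\tfrac{|t-t_k|}{t_k}C\to0$. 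Consequently
\[
\LL(\gamma)=\lim_{k\to\infty}\LL(\gamma_k|_{[0,t_k]})=\lim_{k\to\infty}c_{t_k}(x_k,y_k)=c_t(x,y)=c_t(\gamma(0),\gamma(t)),
\]
using minimality of $\gamma_k$ and then continuity of $c_{\cdot}$, so $\gamma$ is a minimizer connecting $x$ with $y$.

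The main obstacle, in my view, is the second step: showing that the limiting flow line $\phi_\cdot(x,v)$ does not break down before time $t$. This is exactly where the moving time and endpoints really matter, and it is handled by combining the uniform $h$-velocity bound from \eqref{splitting} with completeness of $h$ and the inextendibility statement of Proposition~\ref{flow}(iii). The rest — extending $\gamma_k$ along the flow for large $k$, estimating the action on the short leftover interval between $t_k$ and $t$, and taking $C^1$-limits — is then routine, exactly as in Corollary~\ref{eeef}.
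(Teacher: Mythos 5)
Your proof is correct, and the core machinery is the same as the paper's: the $h$-velocity bound from \eqref{splitting} gives relative compactness of the initial data $(x_k,v_k)$, and then Proposition \ref{flow}(iii) together with completeness of $h$ is what guarantees the limiting flow line lives long enough. The main organizational difference is that the paper first produces the limit curve on each compact subinterval $[0,t-\ep]$ via Arzel\`a--Ascoli (reducing to Corollary \ref{eeef}), then extracts a diagonal subsequence to get $\gamma$ on $[0,t)$, and only then extends to $[0,t]$ using uniform continuity and inextendibility; you instead pass at once to a subsequential limit $(x,v)$ of the initial data and argue directly, by contradiction, that the orbit $\phi_\cdot(x,v)$ is defined on all of $[0,t]$, which shortcuts the $\ep$-slicing and diagonal extraction. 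Your version is a bit more self-contained (it re-proves rather than invokes the ``inextendibility + completeness'' step inside Corollary \ref{eeef}), and you are more explicit than the paper about the two closing points that it dismisses as obvious: that $\gamma_k(t)\to y$ follows from the velocity bound on the extended part, and that minimality is preserved because $\LL(\gamma_k|_{[0,t]})$ and $\LL(\gamma_k|_{[0,t_k]})=c_{t_k}(x_k,y_k)$ differ by $\tfrac{|t-t_k|}{t_k}\LL(\gamma_k|_{[0,t_k]})\to 0$, using that $L$ is constant along the flow orbit. Both routes reach the same place; the paper's buys reuse of Corollary \ref{eeef}, while yours is a cleaner single pass.
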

\begin{proof}
    The same proof as above shows that, for $k\in \N$ and $s\in [0,t_k]$,
    \begin{align*}
|\dot \gamma_k(s)|_h\leq d\tau(\dot \gamma_k(s))\leq 2 L(\gamma_k(0),\dot \gamma_k(s))^{1/2}
&=2t_k^{-1/2}c_{t_k}(\gamma_k(0),\gamma_k(t_k))^{1/2}
\\[10pt]
&\leq 2\sup_{k\in \N} t_k^{-1/2}c_{t_k}(x_k,y_k)^{1/2}.
\end{align*}
In particular, the sequence of curves $\gamma_k$ is equi-continuous. 

Now, let $\ep>0$. Since the curves are equi-continuous and $(\gamma_k(0))=(x_k)$ is relatively compact, the Arzelà-Ascoli theorem gives that $(\gamma_k)$ is precompact in $C([0,t-\ep],M)$. The preceding lemma and its proof tell us that, along a subsequence that we do not relabel, $\gamma_k$ converges in the $C^1$-topology to a minimizing curve $\gamma:[0,t-\ep]\to M$. 

Iterating this proof for the sequence $\ep_m=1/m\to 0$ and extracting a diagonal sequence, it follows that there exists a curve $\gamma:[0,t)\to M$ such that $\gamma_k$ converges to $\gamma$ in the $C^1$-topology on any compact subinterval. In particular, thanks to Proposition \ref{flow},
\begin{align*}
    \gamma(s)=\pi \circ \phi_s(x,\dot \gamma(0)) \text{ for } s\in [0,t) \text{ and } \gamma_k(s)=\pi \circ \phi_s(x_k,\dot \gamma_k(0)) \text{ for } s\in [0,t_k].
\end{align*}
However, since the sequence $\gamma_k$ is equi-continuous, it follows that $\gamma$ is uniformly continuous, so that $\gamma$ can be extended to $[0,t]$. Thus, by Proposition \ref{flow}(iii), the orbit $\phi_s(x,\dot \gamma(0))$ is defined up some $T>t$. Since ${\cal D}_L$ is open, also $\phi_s(x_k,\dot \gamma_k(0))$ is defined for all $s\in [0,t]$ for large $k$, so that $\gamma_k$ can indeed be extended smoothly on $[0,t]$. Clearly, $\gamma_k$ converges to $\gamma$ in the $C^1([0,t],M)$-topology. Using the continuity of $c$ as a map from $(0,\infty)\times J^+$ and the fact that $\gamma_k:[0,t_k]\to M$ is minimizing, it easily follows that also $\gamma$ is a minimizer. Furthermore, it is obvious that $\gamma(t)=y$.
\end{proof}

Exploiting the fact that the Lorentzian distance is locally semiconvex on $I^+$ (\cite{McCann2}, Proposition 3.4) and, as a consequence, the fact that $c=c_1$ is locally semiconvex on $I^+$, the following theorem extends a well-known result from the theory of Tonelli Lagrangians (see e.g.\ \cite{FathiHJ}, Proposition 4.19) to our setting. We keep the proof and the notation similar. Do not confuse the following notation ${\cal C}$ with the the future cone. 
For definitions and further properties on semiconcavity, see Subsection \ref{semiconcavity} in the appendix.

\begin{theorem} \label{A}
    The mapping
    \[
    {\cal C}:(0,\infty)\times I^+,\ (t,x,y)\mapsto c_t(x,y),
    \]
    is locally semiconcave. If $\gamma:[0,t]\to M$ is a minimizing curve with $\gamma(0)=x$ and $\gamma(t)=y$, then 
    \begin{align}
    \left(\partial_t c_t(x,y),-\frac{\partial L}{\partial v}(x,\dot \gamma(0)),\frac{\partial L}{\partial v}(y,\dot \gamma(t)\right)\in \partial^+{\cal C}(t,x,y). \label{mnbvcdfghj}
    \end{align}
    Moreover, the set of all super-differentials is given by the closure of the convex hull of covectors of the form \eqref{mnbvcdfghj}.
   In particular, ${\cal C}$ is differentiable at some point $(t,x,y)$ if and only if there is a unique minimizing curve connecting $x$ with $y$ in time $t$.
\end{theorem}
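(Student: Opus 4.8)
The plan is to follow the proof of Proposition 4.19 of \cite{FathiHJ} for Tonelli Lagrangians as closely as possible; the only genuinely new ingredient is coping with the fact that $L$ is smooth only on $\op{int}(\C)$. The decisive point is that this causes no trouble on $I^+$: if $x\ll y$, a length-maximizing geodesic from $x$ to $y$ is timelike, so by Lemma \ref{minimizer} every minimizer $\gamma:[0,t]\to M$ with $\gamma(0)=x$, $\gamma(t)=y$ has speed curve $(\gamma,\dot\gamma)$ inside the open set $\op{int}(\C)$, where $L$ is smooth and the Euler--Lagrange flow $\phi$ of Proposition \ref{flow} is defined. (This is exactly why the hypothesis $\pi(I^+)=1$ is imposed in Theorems \ref{main} and \ref{main2}: on $J^+\setminus I^+$ the statements below fail.) For the local semiconcavity of $\C$ I would use the explicit formula $c_t(x,y)=\tfrac1t\bigl(\tau(y)-\tau(x)-d(x,y)\bigr)^2$ together with the known local semiconvexity (and hence local Lipschitzness) of $d$ on $I^+$: integrating \eqref{splitting} along a causal curve gives $\tau(y)-\tau(x)\ge 2\,d(x,y)$, so $g(x,y):=\tau(y)-\tau(x)-d(x,y)$ is \emph{strictly positive} on $I^+$; being a smooth function minus a locally semiconvex one, $g$ is locally semiconcave, and the square of a strictly positive locally semiconcave (hence locally Lipschitz) function is again locally semiconcave, as is its product with the smooth positive factor $1/t$. (Alternatively one can proceed as in \cite{FathiHJ}: for each minimizer $\gamma_0$ from $x_0$ to $y_0$ in time $t_0$ one builds, using the flow $\phi$ and local charts near the endpoints, a competitor curve $\gamma_z:[0,t]\to M$ from $x$ to $y$ for $z=(t,x,y)$ near $z_0=(t_0,x_0,y_0)$ that perturbs $\gamma_0$ only near its ends, stays timelike, coincides with $\gamma_0$ at $z_0$, and whose action $\Theta_{z_0,\gamma_0}(z):=\LL(\gamma_z)$ is smooth in $z$ with $C^2$-norm locally equibounded; then $\C=\inf_{z_0,\gamma_0}\Theta_{z_0,\gamma_0}$ is locally semiconcave.)

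For \eqref{mnbvcdfghj} I would take exactly the barrier $\Theta_{z_0,\gamma}$ attached to a given minimizer $\gamma$ from $x$ to $y$ in time $t$: it is $C^2$, lies above $\C$, and touches it at $z_0=(t,x,y)$, whence $d\Theta_{z_0,\gamma}(z_0)\in\partial^+\C(z_0)$. Since $\gamma$ is a smooth Euler--Lagrange solution contained in $\op{int}(\C)$, the classical first variation of the action identifies this differential: its $x$- and $y$-components are $-\frac{\partial L}{\partial v}(x,\dot\gamma(0))$ and $\frac{\partial L}{\partial v}(y,\dot\gamma(t))$, and its $t$-component is $\partial_t c_t(x,y)$ — which is meaningful because $c_t(x,y)$ is smooth in $t$ by the explicit formula, and equals minus the energy of $\gamma$. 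For the description of \emph{all} super-differentials: $\partial^+\C(z_0)$ is always closed and convex, so it contains the closed convex hull of the covectors \eqref{mnbvcdfghj}; for the reverse inclusion one uses the representation of $\C$ near $z_0$ as an infimum of the $C^2$ barriers $\Theta$ and the standard lemma that the superdifferential of such a marginal function at $z_0$ is the closed convex hull of $\{d\Theta(z_0):\Theta(z_0)=\C(z_0)\}$. The barriers active at $z_0$ are precisely those attached to minimizers from $x_0$ to $y_0$ in time $t_0$, and their differentials are the covectors \eqref{mnbvcdfghj}; since by Corollary \ref{eeef} and Corollary \ref{ee} the set of these minimizers is compact in the $C^1$-topology and $\gamma\mapsto d\Theta(z_0)$ is continuous, no further covectors appear in the limit, so $\partial^+\C(z_0)$ is exactly that closed convex hull.

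Finally, a locally semiconcave function is differentiable at a point iff its superdifferential there is a singleton; combined with the above, $\C$ is differentiable at $(t,x,y)$ iff all minimizers from $x$ to $y$ in time $t$ yield the same covector \eqref{mnbvcdfghj}. As $\frac{\partial^2 L}{\partial v^2}(x,\cdot)$ is positive definite on the convex cone $\op{int}(\C_x)$, the Legendre map $v\mapsto\frac{\partial L}{\partial v}(x,v)$ is injective there, so equality of the $x$-components forces $\dot\gamma(0)$ to be the same for all such minimizers; since a minimizer is determined by $(\gamma(0),\dot\gamma(0))$ via $\phi$ (Proposition \ref{flow}), it is unique. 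I expect the real work to sit in the first paragraph: checking that on $I^+$ minimizers stay away from $\partial\C$, which is what allows the barrier construction (endpoints may be perturbed freely within the open timelike cone) and the first-variation computation to go through verbatim as in \cite{FathiHJ}; the remaining steps are then the classical ones, with the compactness input provided by Corollaries \ref{eeef} and \ref{ee} replacing the usual Tonelli compactness.
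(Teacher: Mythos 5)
Your proposal is correct and follows essentially the same approach as the paper: reduce via the scaling $c_t=c/t$, obtain local semiconcavity of $c$ on $I^+$ from the local semiconvexity of $d$ together with the strict positivity of $\tau(y)-\tau(x)-d(x,y)$, identify the covector \eqref{mnbvcdfghj} as a super-differential via first variation of the action along a (necessarily timelike) minimizer, and obtain the full super-differential and differentiability criterion using the $C^1$-compactness of minimizers (Corollary \ref{ee}) and the injectivity of the Legendre map. The only cosmetic difference is that the paper outsources the forward inclusion \eqref{fffff} and the semiconvexity of $d$ to citations (\cite{Metsch}, \cite{McCann2}) and phrases the reverse inclusion via reaching gradients, whereas you reconstruct these steps directly via $C^2$ barriers and the marginal-function characterization — both are standard and interchangeable once one has, as you correctly stress, that minimizers between chronologically related points stay inside $\op{int}(\C)$, where $L$ is smooth.
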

\begin{proof}
    We have
    $c_t(x,y)=c(x,y)/t$ for $(x,y)\in I^+$ and $t>0$. Moreover, by Lemma \ref{minimizer}, minimizing curves for $c_t$ are precisely affine reparametrisations of the form $[0,t]\ni s\mapsto \gamma(s/t)$ of minimizing curves $\gamma$ for $c$. Hence, it suffices to check that $c$ is locally semiconcave on $I^+$, that
     \begin{align}
     \partial^+{ c}(x,y)=\overline{\op{conv}\bigg\{\left(-\frac{\partial L}{\partial v}(x,\dot \gamma(0)),\frac{\partial L}{\partial v}(y,\dot \gamma(1))\right)\bigg\}}, \label{ffffff}
    \end{align}
    (where we take all minimizing curves $\gamma:[0,1]\to M$ connecting $x$ with $y$),
    and that $c$ is differentiable at $(x,y)\in I^+$ if and only if there exists a unique minimizer connecting $x$ with $y$ (in time $1$).

Using the explicit formulation for $c$, the local semiconvexity follows easily from the well-known fact that the Lorentzian distance function $d$ is locally semiconvex on $I^+$ (see \cite{McCann2}, Proposition 3.4). An elementary proof, based on the methods in \cite{Fathi/Figalli}, can be found in \cite{Metsch}, Proposition 7.13. Moreover, as a byproduct of the proof in \cite{Metsch}, one has
\begin{align}
     \left(-\frac{\partial L}{\partial v}(x,\dot \gamma(0)),\frac{\partial L}{\partial v}(y,\dot \gamma(1))\right)\in \partial^+c(x,y)
     \label{fffff}
\end{align}
for all minimizing curves $\gamma:[0,1]\to M$ connecting $x$ with $y$. Since $\partial^+c(x,y)$ is easily seen to be closed and convex, the inclusion $\supseteq$ in \eqref{ffffff} follows.
     Next, recall the well-known fact that the set of super-differentials of a locally semiconcave function is the closure of the convex hull of reaching gradients (see \cite{Philippis}, Appendix A). This means that $\partial^+ c(x,y)$ is the closure of the convex hull of all covectors $(p,q)\in  T_x^*M\times T_y^*M$ for which there exists a sequence of differentiability points $(x_k,y_k)\in  I^+$ for $c$ that converge to $(x,y)\in I^+$ and such that
     \[
     (p,q)=\lim_{k\to \infty} d_{(x_k,y_k)}c \text{ in the topology of the cotangent bundle.}
     \]
     Let $\gamma_k:[0,1]\to M$ be minimizing (necessarily timelike) curves connecting $x_k$ with $y_k$. Since $c$ is differentiable at $(x_k,y_k)$, there must be a unique super-differential, so that \eqref{fffff} gives
     \[
     d_{(x_k,y_k)}c=\left(-\frac{\partial L}{\partial v}(x_k,\dot \gamma_k(0)),\frac{\partial L}{\partial v}(y_k,\dot \gamma_k(1))\right).
     \]
    Since the sequences $(x_k)$ and $(y_k)$ are relatively compact, it follows from the above corollary that, up to some subsequence, $\gamma_k$ converges in the $C^1$-topology to some minimizing curve $\gamma:[0,1]\to M$. In particular, $\gamma(0)=x$ and $\gamma(1)=y$ and 
     \[
     \dot \gamma(0)=\lim_{k\to \infty} \dot \gamma_k(0),\ \dot \gamma(1)=\lim_{k\to \infty} \dot \gamma_k(1).
     \]
     Since $(x,y)\in I^+$, $\gamma$ is necessarily timelike. Therefore, 
      \[
     (p,q)=\lim_{k\to \infty} \left(-\frac{\partial L}{\partial v}(x_k,\dot \gamma_k(0)),\frac{\partial L}{\partial v}(y_k,\dot \gamma_k(1))\right)=
     \left(-\frac{\partial L}{\partial v}(x,\dot \gamma(0)),\frac{\partial L}{\partial v}(y,\dot \gamma(1))\right).
     \]
     This proves \eqref{ffffff}. Since differentiability is equivalent to unique super-differentiability, the last part of the theorem follows from \eqref{ffffff} and the fact that, if $\gamma_1$ and $\gamma_2$ are two minimizing curves connecting $x$ to $y$ and ${\cal L}(x,\dot \gamma_1(0))={\cal L}(x,\dot \gamma_2(0))$, then $\dot \gamma_1(0)=\dot \gamma_2(0)$ since ${\cal L}$ is a diffeomorphism. Thus, since both minimizers $\gamma_1$ and $\gamma_2$ have the same position and the same speed at $t=0$, we must have $\gamma_1=\gamma_2$.
\end{proof}

\begin{remark}\rm
 Since minimizing curves are precesily certain reparametrizations of maximizing geodesics, it follows that ${\cal C}$ is differentiable at some $(t,x,y)$ if and only if there is a unique maximizing geodesic between $x$ and $y$.   
\end{remark}

\begin{definition}\rm
The \emph{dual future cone} $\C_x^*\subseteq T_x^*M$ is defined as the image of $\C_x$ under the canonical isomorphism
\begin{align*}
    T_xM\to T_x^*M,\ v\mapsto g(v,\cdot).
\end{align*}
    We also call
       $\C_*:=\{(x,v)\in T^*M\mid v\in \C_x^*\}$
    the dual future cone.
\end{definition}

\begin{remark}\rm \label{dddd}
   It is not hard to check that $\op{int}(\C_x^*)$ is the image of $\op{int}(\C_x)$ under the  the canonical isomorphism given above. Here, we take the interior w.r.t to the subspace topology in $T^*_xM$ resp. $T_xM$. 

   It is easy to see that the dual future cone $\C_x^*$ (resp.\ $\op{int}(\C_x^*)$) is characterized by the fact that $pv\leq 0$ (resp.\ $<0$) for all $v\in \C_x$.
\end{remark}

\begin{definition}\rm
The \emph{Legendre transform} of $L$ is defined as the map
\begin{align*}
   {\cal L}:\op{int}(\C) \to T^*M,\ (x,v)\mapsto \left(x,\frac{\partial L}{\partial v}(x,v)\right).
\end{align*}
\end{definition}

\begin{lemma}\label{hhhhh}
    The Legendre transform is a diffeomorphism as a map from $\op{int}(\C)$ to $T^*M\backslash \C^*$.
\end{lemma}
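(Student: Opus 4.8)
The plan is to establish four properties of the smooth map ${\cal L}\colon\op{int}(\C)\to T^*M$: (i) it is a local diffeomorphism; (ii) it is injective; (iii) $\Bild({\cal L})\subseteq T^*M\setminus\C^*$; (iv) $\Bild({\cal L})\supseteq T^*M\setminus\C^*$. Properties (i)--(iii) are soft. For (i): $L$ is smooth on $\op{int}(\C)$, and in a chart the differential of ${\cal L}$ is block lower-triangular with diagonal blocks $\Id$ and $\partial^2L/\partial v^2$, hence invertible since $\partial^2L/\partial v^2$ is positive definite on $\op{int}(\C)$; thus ${\cal L}$ is a local diffeomorphism, in particular open. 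For (ii): it is enough that $v\mapsto\frac{\partial L}{\partial v}(x,v)$ is injective on $\op{int}(\C_x)$ for fixed $x$, and this holds because $\op{int}(\C_x)$ is convex (being the interior of the convex cone $\C_x$), $L(x,\cdot)$ is strictly convex there (its Hessian is positive definite), and the gradient of a strictly convex function on a convex set is injective. By (i) and (ii), ${\cal L}$ is a diffeomorphism onto the open set $\Bild({\cal L})$. For (iii): $L(x,\cdot)$ is positively $2$-homogeneous, so Euler's identity gives $\frac{\partial L}{\partial v}(x,v)\,v=2L(x,v)$, which is $>0$ when $v\in\op{int}(\C_x)$ because $v\neq0$ and, by \eqref{splitting}, $d_x\tau(v)-|v|_g\geq\tfrac12\,d_x\tau(v)\geq\tfrac12|v|_h>0$; since $v\in\C_x$, the characterization of $\C_x^*$ in Remark \ref{dddd} then shows $\frac{\partial L}{\partial v}(x,v)\notin\C_x^*$.

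The substance is (iv). Fix $x$ and $p\in T_x^*M\setminus\C_x^*$, and define $F(v):=p(v)-L(x,v)$ for $v\in\C_x$. From \eqref{splitting} one has $L(x,v)\geq\tfrac14|v|_h^2$, so $F(v)\leq\|p\|_h\,|v|_h-\tfrac14|v|_h^2\to-\infty$; hence $F$, being continuous and coercive on the closed set $\C_x$ with $F(0)=0$, attains its maximum at some $v_\ast\in\C_x$. I claim $v_\ast\in\op{int}(\C_x)$. First $v_\ast\neq0$: since $p\notin\C_x^*$ and $\op{int}(\C_x)$ is dense in $\C_x$, there is $w\in\op{int}(\C_x)$ with $p(w)>0$, and $F(tw)=t\,p(w)-t^2L(x,w)>0$ for small $t>0$, so $\max F>0=F(0)$. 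Second $v_\ast\notin\partial\C_x$: if $v_\ast=n\in\partial\C_x\setminus\{0\}$, pick any $u\in\op{int}(\C_x)$ and set $v_\ep:=n+\ep u$, which lies in $\op{int}(\C_x)$ since $\C_x+\op{int}(\C_x)\subseteq\op{int}(\C_x)$. Using $g_x(n,n)=0$ and $g_x(n,u)\neq0$ (a future null and a future timelike vector are never $g$-orthogonal) one gets $|v_\ep|_g=\sqrt{2\ep\,|g_x(n,u)|}+O(\ep^{3/2})$, whence $L(x,v_\ep)=L(x,n)-2\,d_x\tau(n)\sqrt{2\ep\,|g_x(n,u)|}+O(\ep)$, and therefore
\[
F(v_\ep)=F(n)+2\,d_x\tau(n)\,\sqrt{2\ep\,|g_x(n,u)|}+O(\ep)>F(n)
\]
for $\ep>0$ small, because $d_x\tau(n)\geq|n|_h>0$; this contradicts the maximality of $n$. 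Hence $v_\ast\in\op{int}(\C_x)$, where $F$ is smooth, so $dF(v_\ast)=0$, i.e.\ $p=\frac{\partial L}{\partial v}(x,v_\ast)$ and $(x,p)={\cal L}(x,v_\ast)\in\Bild({\cal L})$.

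Combining (i)--(iv), ${\cal L}$ is a bijective local diffeomorphism from $\op{int}(\C)$ onto $T^*M\setminus\C^*$, hence a diffeomorphism. I expect the only real obstacle to be step (iv), and within it the fact that the maximizer $v_\ast$ of $F$ cannot escape to the light cone $\partial\C_x$: this is precisely where the non-smoothness of $L$ along $\partial\C$ — equivalently, the blow-up of the fiber derivative of $v\mapsto|v|_g$ near the cone — must be used quantitatively, through the $\sqrt{\ep}$-expansion above, and cannot be replaced by a soft argument. (Alternatively, (iv) can be obtained by showing that $\Bild({\cal L})$ is relatively closed in $T^*M\setminus\C^*$ — using that $\frac{\partial L}{\partial v}(x,v)\to\infty$ as $v\to\partial\C_x\setminus\{0\}$ together with the $1$-homogeneity of $\frac{\partial L}{\partial v}(x,\cdot)$ — and invoking connectedness of $T^*M\setminus\C^*$; in either route the boundary blow-up is the key input.)
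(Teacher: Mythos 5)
Your proof is correct and follows essentially the same route as the paper's: positive definiteness of $\partial^2L/\partial v^2$ gives the local diffeomorphism and fiberwise injectivity, Euler's identity shows $\Bild({\cal L})\subseteq T^*M\setminus\C^*$, and surjectivity is obtained by maximizing $F(v)=p(v)-L(x,v)$ over $\C_x$ and ruling out a boundary maximizer by a perturbation off the null cone. The only difference is one of explicitness: where the paper dispatches the boundary case with ``working in an orthonormal basis of $T_xM$, it is easy to check that there is $v_2$ close to $v_1$ with $pv_2-L(x,v_2)>pv_1-L(x,v_1)$,'' you carry out the $\sqrt{\varepsilon}$-expansion of $|n+\varepsilon u|_g$ in full, and you make the global injectivity argument (strict convexity of $L(x,\cdot)$ on the convex cone $\op{int}(\C_x)$) explicit rather than leaving it implicit in the appeal to the inverse function theorem.
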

\begin{proof}
   Since the second derivative along the fibers of $L$ is positive definite, the Legendre transform is injective and a diffeomorphism onto its image, thanks to the inverse function theorem. Thus, the only remaining task is to verify that the image of ${\cal L}$ is precisely the complement of the closed dual future cone. If $v\in \op{int}(\C_x)$, we have
    \begin{align*}
      \frac{\partial L}{\partial v}(x,v)(v)=2 L(x,v)>0. 
    \end{align*}
   By the characterisation of $\C_x$ given in Remark \ref{dddd}, it follows that $ {\cal L}(x,v)\in T^*M\backslash \C^*$. Conversely, let $(x,p)\in T^*M\backslash \C^*$. Then there is $v_0\in \C_x$ with $pv_0>0$. Then we have
   \begin{align}
       p(\lambda v_0)-L(x,\lambda v_0)= \lambda pv_0-\lambda^2 L(x,v_0)>0 \text{ for small } \lambda>0. \label{dasdadsa}
   \end{align}
   Thus,
   \begin{align*}
       H(x,p):=\sup_{v\in \C_x} (pv-L(x,v))>0. 
   \end{align*}
   Moreover, using the notation of Definition \ref{e1}, we have
   \begin{align*}
       pv-L(x,v)\leq |p|_h |v|_h-(|p|_h+1)|v|_h +C(|p|_h+1) \xrightarrow{|v|_h\to \infty} -\infty.
   \end{align*}
   Therefore the continuity of the map $\C_x\to \R,\ v\mapsto pv-L(x,v),$ shows that there is $v_1\in \C_x$ such that
   \begin{align*}
       0<H(x,p)= pv_1-L(x,v_1)\leq pv_1
   \end{align*}
   In particular we must have $pv_1>0$ and $v_1\neq 0$. We show that $v_1\in \op{int}(\C_x)$. Indeed, if this was not the case then $v_1\in \partial\C_x\backslash \{0\}$. Then, working in an orthonormal basis of $T_xM$, it is easy to check that there is $v_2$ close to $v_1$ with
   \begin{align*}
       pv_2-L(x,v_2)>pv_1-L(x,v_1).
   \end{align*}
   So, $v_1\in \op{int}(\C_x)$ and since $L(x,\cdot)$ is smooth on the open future cone, we obtain by differentiation
   \begin{align*}
       0=p-\frac{\partial L}{\partial v}(x,v_1) \Rightarrow p={\cal L}(x,v_1).
   \end{align*}
   This ends the proof of the lemma.
\end{proof}

\begin{definition}\rm
The \emph{Hamiltonian} is the map
\[
H:T^*M\to \R,\ H(x,p):=\sup_{v\in T_xM} pv-L(x,v).
\]
\end{definition}

\begin{remark}\rm \label{lllll}
The proof of the above lemma shows that, for $(x,v)\in \op{int}(\C)$, we have
\[
H(x,{\cal L}(x,v))={\cal L}(x,v)(v)-L(x,v).
\]
Since the Legendre transform ${\cal L}:\op{int}(\C)\to T^*M\backslash \C^*$ is a diffeomorphism, it follows that $H$ is smooth on $T^*M\backslash \C^*$. Moreover, it is well-known that the Euler-Lagrange flow $\phi_t$ on $\op{int}(\C)$ is conjugated, via ${\cal L}$, to the Hamiltonian flow $\psi^H_t$ on $T^*M\backslash \C^*$. Here, $T^*M\backslash \C^*$ is equipped with its natural symplectic structure.
\end{remark}

\section{Lax-Oleinik semigroup}
\begin{definition}\rm
The \emph{forward Lax-Oleinik semigroup} is the family of maps $(T_t)_{ t\geq 0}$, defined on the space of functions $u:M\to \overbar \R$ by
\begin{align*}
    T_tu:M\to \overbar \R,\ T_tu(x):=\inf\{u(y)+c_t(y,x)\mid y\in M\},
\end{align*}
where we use the convention $-\infty+\infty:=+\infty$.
The \emph{backward Lax-Oleinik semigroup} is the family of maps $(\hat T_t)_{t\geq 0}$, defined on the space of functions $u:M\to \overbar \R$ by
\begin{align*}
    \hat T_tu:M\to \overbar \R,\ \hat T_tu(x):=\sup\{u(y)-c_t(x,y)\mid y\in M\},
\end{align*}
where we use the convention $\infty-\infty:=-\infty$.
If $u:M\to \overbar \R$ is any function, we also write
\begin{align*}
    Tu:&\ [0,\infty)\times M\to \overbar \R, \ (t,x)\mapsto T_tu(x),
\end{align*}
and we use the analogous notation $\hat Tu$.
\end{definition}

\begin{remark}\rm
Note that, according to our convention,
\begin{align*}
    &T_tu(x)=\inf\{u(y)+c_t(y,x)\mid y\in J^-(x)\} \text{ and } 
    \\
    &\hat T_tu(x)=\sup\{u(y)-c_t(x,y)\mid y\in J^+(x)\}
\end{align*}
This eliminates the need to specify a convention for the sum $\mp\infty\pm\infty$.
\end{remark}

\begin{lemma} \label{y}
    The Lax-Oleinik semigroups indeed form a semigroup, meaning that for $u:M\to \overbar \R$ and $t,s\geq 0$, we have the identities $T_{t+s}u=T_t(T_su)$ and $\hat T_{t+s}u=\hat T_t(\hat T_su)$.
\end{lemma}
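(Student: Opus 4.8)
The plan is to reduce both identities to a single \emph{concatenation} (dynamic programming) identity for the minimal action,
\[
c_{s+t}(p,q)=\inf_{w\in M}\bigl(c_s(p,w)+c_t(w,q)\bigr)\qquad\text{for all }p,q\in M,\ s,t>0,
\]
and to prove this identity directly from the variational characterisation $c_t(x,y)=\inf\{\LL(\gamma)\mid\gamma:[0,t]\to M,\ \gamma(0)=x,\ \gamma(t)=y\}$.

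For the reduction I would work throughout with the restricted forms $T_tu(x)=\inf\{u(y)+c_t(y,x)\mid y\in J^-(x)\}$ and $\hat T_tu(x)=\sup\{u(y)-c_t(x,y)\mid y\in J^+(x)\}$ from the Remark following the definition of the semigroups, so that the ambiguous sum $\mp\infty\pm\infty$ never occurs: for $z\in J^-(x)$ the constant $c_t(z,x)$ is finite and may be freely pulled in and out of infima. Unwinding $T_t(T_su)(x)$ this way gives the iterated infimum $\inf\{u(y)+c_s(y,z)+c_t(z,x)\mid y\le z\le x\}$; grouping it as $\inf_{y\le x}\bigl(u(y)+\inf_{y\le z\le x}(c_s(y,z)+c_t(z,x))\bigr)$ (the inner infimum is finite — take $z=x$), and noting that $c_s(y,z)+c_t(z,x)=+\infty$ as soon as $z$ is not causally between $y$ and $x$, the concatenation identity collapses the inner infimum to $c_{s+t}(y,x)$, leaving $\inf_{y\le x}(u(y)+c_{s+t}(y,x))=T_{s+t}u(x)$. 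The backward case is the mirror image: $\hat T_t(\hat T_su)(x)$ unwinds to $\sup\{u(y)-\bigl(c_t(x,z)+c_s(z,y)\bigr)\mid x\le z\le y\}$, and the same identity (now with $x,y$ in the roles of $p,q$) collapses it to $\sup\{u(y)-c_{s+t}(x,y)\mid y\in J^+(x)\}=\hat T_{s+t}u(x)$.

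To prove the concatenation identity, both inequalities are elementary. For \q{$\ge$}: given $w\in M$ and any competitors $\gamma_1:[0,s]\to M$ for $c_s(p,w)$ and $\gamma_2:[0,t]\to M$ for $c_t(w,q)$, their concatenation $\gamma:[0,s+t]\to M$ ($\gamma(r)=\gamma_1(r)$ on $[0,s]$, $\gamma(r)=\gamma_2(r-s)$ on $[s,s+t]$) is absolutely continuous, joins $p$ to $q$ in time $s+t$, and has $\LL(\gamma)=\LL(\gamma_1)+\LL(\gamma_2)$, so $c_{s+t}(p,q)\le\LL(\gamma_1)+\LL(\gamma_2)$; taking infima over $\gamma_1,\gamma_2$ and then over $w$ gives \q{$\ge$}. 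For \q{$\le$}: if $c_{s+t}(p,q)=+\infty$ there is nothing to prove, and otherwise any finite-action curve $\gamma:[0,s+t]\to M$ from $p$ to $q$ (e.g.\ a minimizer, which exists by Lemma \ref{minimizer}) may be cut at time $s$; since $L\ge 0$ the action splits additively over $[0,s]$ and $[s,s+t]$, and since $L$ is time-independent the second piece translates to $[0,t]$ without changing its action, so with $w:=\gamma(s)$ one gets $c_s(p,w)+c_t(w,q)\le\LL(\gamma)$, and taking the infimum over $\gamma$ gives \q{$\le$}.

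I do not expect a genuine obstacle here. The only two points requiring attention are the extended-real bookkeeping — which is why it is convenient to use the $J^\pm$-restricted forms of the semigroups — and the observation that, although the square in the definition of $L$ makes $\LL$ fail to be invariant under general reparametrisations, it remains invariant under translation of the time parameter, which is precisely what the cutting-and-concatenating argument uses.
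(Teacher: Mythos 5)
Your proposal is correct and takes essentially the same route as the paper: reduce the semigroup property to the concatenation identity $c_{s+t}(p,q)=\inf_w\bigl(c_s(p,w)+c_t(w,q)\bigr)$, prove \q{$\ge$} by concatenating competitors and \q{$\le$} by cutting a minimizer at time $s$ (the paper cites \cite{Fathi} for the reduction step and spells out only the concatenation identity, while you carry out the extended-real bookkeeping in the reduction explicitly, but the mathematical content is the same).
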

\begin{proof}
    This follows as in \cite{Fathi}, where the result is proven for Tonelli Lagrangians on a compact manifold. The only property required in the proof is that, for $x,y\in M$ and $t,s\geq 0$, we have
    \begin{align*}
        c_{t+s}(y,x)=\inf_{z\in M} c_t(y,z)+c_s(z,x).
    \end{align*}
    The inequality \q{$\leq$} follows directly from the definition of the minimal action. The reverse inequality holds because, for any pair $(y,x)\in M\times M$, we can find a minimizer $\gamma:[0,t+s]\to M$ that connects $y$ to $x$. We then set $z:=\gamma(t)$.
\end{proof}

The following lemma is well-known but we provide a proof since we have not encountered this exact formulation in a work where the minimal action can attain the value $+\infty$. A very similar situation is presented in \cite{Ambrosio}, Definition 6.1.2.

\begin{lemma}\label{ineq}
    For $u:M\to \overbar \R$ and $t>0$, we have $\hat T_t(T_tu)\leq u$ and also $T_t(\hat T_tu)\geq u$.
\end{lemma}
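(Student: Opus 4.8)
The plan is to prove each of the two inequalities by testing the inner optimization problem with the ``trivial'' competitor $z=x$. Throughout I would use the reformulation recorded in the remark preceding Lemma \ref{y}, namely
\[
T_t v(x)=\inf\{v(y)+c_t(y,x)\mid y\in J^-(x)\},\qquad
\hat T_t v(x)=\sup\{v(y)-c_t(x,y)\mid y\in J^+(x)\},
\]
which restricts the infimum/supremum to points $y$ for which the relevant cost is finite and so avoids any bookkeeping with the conventions $\pm\infty\mp\infty$. I would also note at the outset that $x\in J^+(x)\cap J^-(x)$ and $c_t(x,x)=0$ (global hyperbolicity forbids closed causal curves, hence $d(x,x)=0$), so that the index sets appearing below are non-empty and the statement is not vacuous.

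For the first inequality, fix $x\in M$ and let $y\in J^+(x)$ be arbitrary, so that $x\in J^-(y)$ and $c_t(x,y)<\infty$. Taking $z=x$ in the infimum defining $T_t u(y)$ gives $T_t u(y)\leq u(x)+c_t(x,y)$, i.e.\ $T_t u(y)-c_t(x,y)\leq u(x)$; this is a valid inequality in $\overbar{\R}$ because $c_t(x,y)$ is finite, and the degenerate cases $u(x)=+\infty$ and $u(x)=-\infty$ are immediate. Taking the supremum over $y\in J^+(x)$ yields $\hat T_t(T_t u)(x)\leq u(x)$, which is the first claim.

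The second inequality is entirely symmetric. Fix $x\in M$ and let $y\in J^-(x)$, so $x\in J^+(y)$ and $c_t(y,x)<\infty$. Taking $z=x$ in the supremum defining $\hat T_t u(y)$ gives $\hat T_t u(y)\geq u(x)-c_t(y,x)$, hence $\hat T_t u(y)+c_t(y,x)\geq u(x)$; taking the infimum over $y\in J^-(x)$ gives $T_t(\hat T_t u)(x)\geq u(x)$.

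I do not expect any genuine obstacle here: the heart of the argument is the one-line estimate obtained from the trivial competitor, and the only point requiring a little care is the arithmetic with the extended reals, which is exactly why I would work with the $J^\pm(x)$-formulation, where the cost term stays finite.
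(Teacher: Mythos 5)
Your proof is correct and is essentially the same as the paper's: both arguments use the $J^\pm$-reformulation and test the inner optimization with the trivial competitor $z=x$ to obtain $T_t u(y)-c_t(x,y)\leq u(x)$ before taking the supremum over $y$. The only difference is presentational — you unfold the nested sup-inf into two steps and add the (helpful) observation that $c_t(x,x)=0$ guarantees the index sets are nonempty.
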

\begin{proof}
    Let $x\in M$. We have
    \[
    \hat T_t(T_tu)(x)=\sup_{y\in J^+(x)} \inf_{z\in J^-(y)} u(z)+c_t(z,y)-c_t(x,y)
    \]
    Choosing $z=x$, we obtain $\hat T_t(T_tu)(x)\leq u(x)$. The argument for the second inequality is similar.
\end{proof}

\begin{remark}\rm \label{vv}
 For Tonelli Lagranians on a compact manifold $N$, it is well known that $Tu$ is locally semiconcave for any continuous function $u$, as stated in \cite{Fathi}. This result can be extended to the case of Lipschitz functions on non-compact manifolds, see the proof of Theorem 3.3 (4) in \cite{FathiWK}. We will extend this result (under additional assumptions) to the Lorentzian case in the next section. Even in the case for Tonelli Lagrangians on a non-compact manifold and arbitrary functions $u:N\to \overbar \R$, it was shown in \cite{FathiHJ}, Theorem 6.2(iii), that if $Tu$ is finite at some point $(T,X)\in (0,\infty)\times N$, then $Tu$ is locally semiconcave on $(0,T)\times N$. 
\medskip

However, such a result cannot be generalized to the Lorentzian case: Consider a function $u:M\to \overbar \R$ and two points $x_0,y_0\in M$ such that $u(x_0)$ and $T_1u(y_0)$ are finite. The only information we have about $Tu$ is on the set $(0,1)\times (J^+(x_0)\cap J^-(y_0)$. Nonetheless, one might still hope that $u$ is at least locally semiconcave on the interior of this set. 

Our main result in this section asserts that the Lax-Oleinik evolution $Tu$ is locally semiconcave on an open neighbourhood of the relative interior of the graph of calibrated curves. For the precise formulation, see below. Let us note that our proofs utilize several ideas from \cite{FathiHJ}, Theorem 6.2, and we have adapted both the notation and the formulation of the results. 
\end{remark}

\begin{theorem}\label{m}
\begin{enumerate}[(i)]
    \item Let $u:M\to \overbar \R$ be any function, and let $y_0\in I^+(x_0)$ and $t_0>0$. Suppose that $u(x_0)$ and $T_{t_0}u(y_0)$ are finite. Additionally, assume that 
    $T_{t_0}u(y_0)=u(x_0)+c_{t_0}(x_0,y_0)$. Let $\gamma:[0,t_0]\to M$ be a minimizer connecting $x_0$ with $y_0$ and let $t_1\in (0,t_0)$. Then the Lax-Oleinik semigroup evolution $Tu$ is locally semiconcave on a neighborhood of $(t_1,\gamma(t_1))$.

      \item Let $u:M\to \overbar \R$ be any function, and let $y_0\in I^-(x_0)$ and $t_0>0$. Suppose that $u(x_0)$ and $\hat T_{t_0}u(y_0)$ are finite. Additonally, assume that 
    $\hat T_{t_0}u(y_0)=u(x_0)-c_{t_0}(y_0,x_0)$. Let $\gamma:[0,t_0]\to M$ be a minimizer connecting $y_0$ with $x_0$ and let $t_1\in (0,t_0)$. Then the Lax-Oleinik semigroup evolution $\hat Tu$ is locally semiconvex on a neighborhood of $(t_1,\gamma(t_0-t_1))$.
    \end{enumerate}
\end{theorem}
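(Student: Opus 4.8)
The plan is to prove part (i); part (ii) then follows by a time-reversal / symmetry argument (replacing the spacetime by its time-dual, interchanging $T$ and $\hat T$, and reparametrising $\gamma$ by $s \mapsto \gamma(t_0 - s)$).

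\medskip

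\emph{Step 1: Reduce local semiconcavity to an upper bound by a $C^2$ function from above.} Recall (see the appendix referenced in the excerpt) that a function is locally semiconcave near a point iff, near that point, it is locally bounded above by smooth functions whose second derivatives are controlled, or equivalently iff it can be locally written as an infimum of a locally equi-$C^{1,1}$ family; in particular it suffices to show that for every point $(t',z')$ in a neighbourhood of $(t_1,\gamma(t_1))$ there is a $C^2$ function $\Psi$ defined near $(t',z')$ with $Tu \le \Psi$ and $Tu(t',z') = \Psi(t',z')$, with uniform bounds on $\|\Psi\|_{C^2}$. The natural candidate is built from the cost: since $T_{t'}u(z') \le u(w) + c_{t'}(w,z')$ for every $w$, fixing a good "source point" $w$ gives $Tu(t',z') \le u(w) + c_{t'}(w,z')$, and the right-hand side, as a function of $(t',z')$, is $C^2$ wherever $(w,z') \in I^+$ and $c$ is smooth there — which, by Theorem \ref{A} and the remark after it, happens exactly when there is a unique maximizing geodesic from $w$ to $z'$.

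\medskip

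\emph{Step 2: Use the calibrating minimizer to produce such a source point with the required touching property.} This is where the hypotheses $T_{t_0}u(y_0) = u(x_0) + c_{t_0}(x_0,y_0)$, the timelikeness $y_0 \in I^+(x_0)$, and $t_1 \in (0,t_0)$ all enter. Put $w_0 := \gamma(0) = x_0$ and note, by the semigroup property (Lemma \ref{y}) together with the calibration equality, that $Tu(t_1,\gamma(t_1)) = u(x_0) + c_{t_1}(x_0,\gamma(t_1))$, i.e. the infimum defining $T_{t_1}u(\gamma(t_1))$ is attained at $y = x_0$ along the calibrated curve. More is true: for $(t',z')$ near $(t_1,\gamma(t_1))$ one wants a source point $w = w(t',z')$ close to $x_0$, depending nicely (say $C^1$, or at least such that $u(w)$ stays controlled) on $(t',z')$, such that $Tu(t',z') = u(w) + c_{t'}(w,z')$ — or at least $\le$ with equality at the base point. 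The idea, exactly as in \cite{FathiHJ}, is: since $\gamma|_{[0,t_1]}$ is a minimizer and $t_1 < t_0$, the curve $\gamma$ can be extended past $t_1$ as a minimizer, so $\gamma(t_1)$ is an interior point of a minimizing segment; by Corollary \ref{ee}, minimizers from points near $x_0$ to points near $\gamma(t_1)$ exist, vary $C^1$-continuously, and are timelike (since $\gamma$ is timelike and timelikeness is open). Hence near $(x_0,\gamma(t_1))$ the cost $c_{t_1}$ is smooth, and one can choose the source point, e.g., by shooting the Euler–Lagrange/geodesic flow backward from $(z', \text{velocity close to } \dot\gamma(t_1))$ for time $t'$; this defines $w(t',z')$ smoothly, with $w(t_1,\gamma(t_1)) = x_0$, and along this family the candidate upper bound $\Psi(t',z') := u(w(t',z')) + c_{t'}(w(t',z'),z')$ touches $Tu$ at the base point. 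Using that $\gamma|_{[0,t_1]}$ is itself a minimizer realising $c_{t_1}(w,z')$ for $(w,z')$ on this family, one gets $\Psi(t',z') = u(w(t',z')) + c_{t'}(w(t',z'),z') \ge T_{t'}u(z')$ with equality at $(t_1,\gamma(t_1))$, but in fact one only needs $Tu \le \Psi$ near the base point (immediate from the definition of $T$) together with equality at the base point, plus a uniform $C^2$ bound on $\Psi$, to conclude local semiconcavity — but crucially $\Psi$ must dominate $Tu$ on a whole neighbourhood, which it does since $Tu(t',z') \le u(w(t',z')) + c_{t'}(w(t',z'),z') = \Psi(t',z')$ for all $(t',z')$ where $w$ is defined.

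\medskip

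\emph{Step 3: Uniform $C^2$ control and conclusion.} The map $(t',z') \mapsto c_{t'}(w(t',z'),z')$ is a composition of the smooth map $\mathcal{C}$ of Theorem \ref{A} (smooth here because the geodesic between $w(t',z')$ and $z'$ is the unique maximizer — it is timelike and short, lying in a geodesically convex neighbourhood) with the smooth map $(t',z') \mapsto (t', w(t',z'), z')$, so it is $C^\infty$ with derivatives bounded uniformly on a fixed small neighbourhood. The only non-smooth ingredient is $u(w(t',z'))$; but here the point is that $w(t',z')$ ranges over a small piece of an \emph{embedded submanifold} (the image of the backward flow map), and on that piece we can reparametrise so that the candidate family $\{\Psi\}$, ranging over the choices of extension velocity, is an equi-$C^{1,1}$ (indeed equi-$C^2$ in $(t',z')$ for each fixed source) family whose pointwise infimum is $\le Tu$ and agrees with $Tu$ on the neighbourhood — this is precisely the inf-of-equi-$C^{1,1}$ characterisation of local semiconcavity. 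The main obstacle is Step 2: ensuring that the source points $w(t',z')$ stay in the region where $c$ is smooth (uniqueness of the connecting maximizer) and that the resulting family of upper bounds genuinely dominates $Tu$ on a full neighbourhood rather than just along a curve — this is exactly the place where timelikeness of $\gamma$, the strict inequality $t_1 < t_0$ (so that $\gamma(t_1)$ is interior to a minimizing segment and the flow can be followed backward), and the compactness/continuity statements of Corollary \ref{ee} and Theorem \ref{A} must be combined carefully, and it is the Lorentzian analogue of the subtle "opening up the infimum" step in \cite{FathiHJ}.
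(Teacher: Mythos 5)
Your overall strategy---localize the infimum defining $T_tu$ to a compact set of sources for which the cost $c$ is smooth, and then conclude semiconcavity from the infimum-of-a-uniformly-semiconcave-family characterization---is indeed the same backbone as the paper's proof. But there are two genuine gaps, one structural and one that you yourself flag but never close.

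First, you build your upper bounds out of $u(w)+c_{t'}(w,z')$ with $w$ a \emph{time-zero} source, whereas the paper works with $T_su(y)+c_{t-s}(y,x)$ for \emph{intermediate} times $s\in[t_1-\delta,t_1-2\ep]$. This is not a cosmetic change. Since $u:M\to\overbar\R$ is arbitrary (possibly $+\infty$ everywhere except at $x_0$), there is in general no compact set $K\subseteq I^-(z')$ of time-zero sources over which $\inf_w\{u(w)+c_{t'}(w,z')\}$ equals $T_{t'}u(z')$, nor any reason for the minimizing source (if it exists) to stay near $x_0$ as $(t',z')$ varies. The paper sidesteps this entirely: the intermediate values $T_su(y)$ are finite and locally bounded on $(0,t_0)\times(J^+(x_0)\cap J^-(y_0))$ by Lemma \ref{ww}, and the semigroup property lets one restrict to $(s,y)$ in a small compact set around $(t_1,\gamma(t_1))$. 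Your Step 2, by contrast, tries to pin down a time-zero $w(t',z')$ by flowing backward from a velocity ``close to $\dot\gamma(t_1)$,'' but this only produces a candidate source, not one realizing the infimum at $(t',z')$; thus your $\Psi$ dominates $Tu$ but touches it only along the calibrated curve, which gives super-differentiability on a curve, not semiconcavity on an open set.

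Second, the content of the theorem is precisely the localization step, and you leave it as an acknowledged ``main obstacle.'' The Lorentzian difficulty (as explained in Remark \ref{yy}) is that the family $(t,x)\mapsto c_{t-s}(y,x)$ is not uniformly semiconcave---it is not even finite or continuous---unless $y$ stays uniformly in $I^-(x)$, i.e.\ bounded away from the null boundary $\partial J^-(x)$. The paper devotes three lemmas to establishing exactly this: first confining $(s,y)$ to a small coordinate box $[t_1-\delta,t_1-2\ep]\times\overbar B_\delta(\gamma(t_1))$, then pushing $y$ away from $\gamma(t_1)$ in the $h$-metric, and finally (the crucial step, using that concatenating a null segment with a timelike one cannot be a minimizer of $L$) pushing $y$ away from $\gamma(t_1)$ in the \emph{Lorentzian} distance $d$. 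None of this appears in your proposal, and without it the ``uniform $C^2$ bounds'' you invoke in Step 3 are unjustified. As written, the proposal correctly identifies the shape of the argument and where the hypotheses $y_0\in I^+(x_0)$, $t_1<t_0$, calibration and Corollary \ref{ee} must be used, but it does not constitute a proof.
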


\begin{remark}\rm 
\begin{enumerate}[(a)]
\item
We will prove only the first part of the theorem, as the proofs follow a similar structure. To proceed, we require some preparatory lemmas, which we will state (almost) exclusively for the forward Lax-Oleinik semigroup. However, it is important to note that each of these lemmas has a corresponding version for the backward Lax-Oleinik semigroup.
\item Except for minor modifications, the next two Lemmas are borrowed from \cite{FathiHJ}.  
\end{enumerate}
\end{remark}

\begin{lemma}[\cite{FathiHJ}, Theorem 6.2] \label{ww}
    Let $u:M\to \overbar \R$ be any function, and let $x_0,y_0\in M$ and $t_0>0$. Suppose that $u(x_0)$ and $T_{t_0}u(y_0)$ are finite. Then the function $Tu$ is locally bounded on the set $(0,t_0)\times (J^+(x_0)\cap J^-(y_0)).$
\end{lemma}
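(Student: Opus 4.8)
We want to show that $Tu$ is locally bounded on the open set $(0,t_0)\times\bigl(J^+(x_0)\cap J^-(y_0)\bigr)$. The strategy is to bound $T_su(z)=\inf_{w} u(w)+c_s(w,z)$ from above and below separately for $(s,z)$ in a small neighborhood of any fixed $(s_1,z_1)$ with $s_1\in(0,t_0)$ and $z_1\in J^+(x_0)\cap J^-(y_0)$.

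\textbf{Upper bound.} For the upper bound one simply feeds in a good competitor. Since $z_1\in J^+(x_0)$, there is a minimizer $\sigma$ from $x_0$ to $z_1$; and since $z_1\in J^-(y_0)$ there is a minimizer from $z_1$ to $y_0$. More robustly, I would argue as follows: fix $s_1<t_0$. For $z$ near $z_1$ we have (using that $J^+$ is closed and $z_1\in I$-type relations, or by a short causal-geometry argument) that $z\in J^+(x_0)$, so choosing $w=x_0$ in the infimum gives
\[
T_su(z)\le u(x_0)+c_s(x_0,z).
\]
Because $x_0\le z$ and $c$ is continuous on $(0,\infty)\times J^+$ (Corollary-type continuity of $c_t$ established earlier), $c_s(x_0,z)$ stays bounded for $(s,z)$ in a small compact neighborhood of $(s_1,z_1)$ contained in $(0,t_0)\times J^+(x_0)$. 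Hence $T_su(z)$ is bounded above near $(s_1,z_1)$.

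\textbf{Lower bound.} This is the substantive half. We know $T_{t_0}u(y_0)$ is finite, i.e. $\inf_w u(w)+c_{t_0}(w,y_0)=T_{t_0}u(y_0)>-\infty$. Use the semigroup property (Lemma \ref{y}): for $z\le y_0$ and $s<t_0$,
\[
T_{t_0}u(y_0)=\inf_{z'\in M}\bigl(T_su(z')+c_{t_0-s}(z',y_0)\bigr)\le T_su(z)+c_{t_0-s}(z,y_0),
\]
so
\[
T_su(z)\ge T_{t_0}u(y_0)-c_{t_0-s}(z,y_0).
\]
Again $z\le y_0$ gives $(t_0-s,z,y_0)\in(0,\infty)\times J^+$, and by continuity of $c$ the term $c_{t_0-s}(z,y_0)$ is bounded on a compact neighborhood of $(s_1,z_1)$ sitting inside $\{s<t_0\}\times J^-(y_0)$. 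Therefore $T_su(z)$ is bounded below near $(s_1,z_1)$, completing the proof.

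\textbf{Main obstacle.} The only delicate point is making sure that a whole neighborhood of $(s_1,z_1)$ inside $(0,t_0)\times\bigl(J^+(x_0)\cap J^-(y_0)\bigr)$ actually consists of points $z$ that remain causally between $x_0$ and $y_0$ (so that $c_s(x_0,z)$ and $c_{t_0-s}(z,y_0)$ are finite and, via continuity on $J^+$, locally bounded), and that $s$ stays in $(0,t_0)$ with $t_0-s$ bounded away from $0$. Since we are working inside the open set $(0,t_0)\times\bigl(J^+(x_0)\cap J^-(y_0)\bigr)$ — with the interior understood appropriately — any point has a neighborhood (a product of a small interval in $s$ and a small causal diamond around $z_1$) with these properties, and then the continuity of $c_t$ on $(0,\infty)\times J^+$ (used already in Corollary \ref{eeef} and Corollary \ref{ee}) does the rest. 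I would phrase the argument so that it applies verbatim, with the obvious sign changes, to $\hat Tu$.
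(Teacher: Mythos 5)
Your proposal is correct and follows essentially the same approach as the paper: upper-bound $T_su(z)$ by the competitor $w=x_0$, lower-bound it via the semigroup inequality $T_{t_0}u(y_0)\le T_su(z)+c_{t_0-s}(z,y_0)$, and then invoke continuity of $c$ on $(0,\infty)\times J^+$ together with compactness. One minor caveat: $(0,t_0)\times\bigl(J^+(x_0)\cap J^-(y_0)\bigr)$ is not an open set (the causal diamond is closed), so the cleanest phrasing is the paper's, namely to fix an arbitrary compact $K$ inside this set and take suprema over $K$, rather than speaking of neighborhoods inside an ``open'' set.
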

\begin{proof}
    Let $K\subseteq (0,t_0)\times (J^+(x_0)\cap J^-(y_0))$ be a compact set. The definition of the Lax-Oleinik semigroup yields
    \begin{align*}
        T_tu(x)\leq u(x_0)+c_t(x_0,x),\ (t,x)\in (0,\infty)\times M.
    \end{align*}
    By continuity of the mapping $(0,\infty)\times J^+,\ (t,x,y)\mapsto c_t(x,y)$, and compactness of $K$ it follows that $Tu$ is bounded above on $K$ by
    \begin{align}
         u(x_0)+\sup_{(t,x)\in K} c_t(x_0,x). \label{qwert1}
    \end{align}
    On the other hand, by the same argument, we have for $(t,x)\in K$
    \begin{align}
        T_{t_0}u(y_0)\leq T_tu(x)+c_{1-t}(x,y_0)\leq T_tu(x)+\sup_{(\tilde t,\tilde x)\in K} c_{1-\tilde t}(\tilde x,y_0). \label{qwert}
    \end{align}
    Combining the inequalities \eqref{qwert1} and \eqref{qwert}, we arrive at the following bounds for $T_tu(x)$, $(t,x)\in K$:
    \begin{align*}
       T_{t_0}u(y_0)- \sup_{(\tilde t,\tilde x)\in K} c_{1-\tilde t}(\tilde x,y_0)\leq  T_tu(x) \leq  u(x_0)+\sup_{(\tilde t,\tilde x)\in K} c_{\tilde t}(x_0,\tilde x)
    \end{align*}
    Since, by assumption, $u(x_0)$ and $T_{t_0}u(y_0)$ are finite, the lemma follows.
\end{proof}

\begin{lemma}[\cite{FathiHJ}, Proposition 7.5] \label{w}
    Let $u:M\to \overbar \R$ be any function, and let $y_0\in J^+(x_0)$ and $t_0>0$. Suppose that $u(x_0)$ and $T_{t_0}u(y_0)$ are finite. Additionally, assume that 
    $T_{t_0}u(y_0)=u(x_0)+c_{t_0}(x_0,y_0)$. Let $\gamma:[0,t_0]\to M$ be a minimizer connecting $x_0$ with $y_0$.
    Then
    \begin{align*}
 T_tu(\gamma(t))=T_su(\gamma(s))+c_{t-s}(\gamma(s),\gamma(t)) \text{ for any } 0\leq s \leq t \leq t_0.
    \end{align*}
    We shall say that $\gamma$ is \emph{$Tu$-calibrated}.
\end{lemma}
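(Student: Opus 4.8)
The plan is the classical dynamic-programming (calibration) argument: the inequality ``$\le$'' will be automatic from the semigroup property, while ``$\ge$'' follows by sandwiching $T_{t_0}u(y_0)$ between two expressions built along $\gamma$ and then cancelling a finite term. First I would record what the minimality of $\gamma$ gives. Since $\gamma:[0,t_0]\to M$ is a minimizer connecting $x_0$ to $y_0$, one has $\LL(\gamma)=c_{t_0}(x_0,y_0)=T_{t_0}u(y_0)-u(x_0)$, which is finite by hypothesis; consequently every restriction $\gamma|_{[a,b]}$ is again a minimizer (replacing that sub-arc by a cheaper curve would strictly lower $\LL(\gamma)$), so $c_{b-a}(\gamma(a),\gamma(b))=\LL(\gamma|_{[a,b]})$ for all $0\le a\le b\le t_0$, and additivity of the action integral yields, for every $0\le s\le t\le t_0$,
\begin{align*}
c_{t_0}(x_0,y_0)=c_s(x_0,\gamma(s))+c_{t-s}(\gamma(s),\gamma(t))+c_{t_0-t}(\gamma(t),y_0);
\end{align*}
in particular each of the three summands is finite, the conventions $c_0(x,x)=0$ and $T_0u=u$ taking care of the degenerate cases $s=0$, $t=s$, $t=t_0$.

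Next I would write down the general upper bounds, valid for an arbitrary $u:M\to\overbar\R$. Using the semigroup identity of Lemma \ref{y} and the definition of the Lax--Oleinik evolution,
\begin{align*}
&T_tu(\gamma(t))=T_{t-s}(T_su)(\gamma(t))\le T_su(\gamma(s))+c_{t-s}(\gamma(s),\gamma(t)),\\
&T_{t_0}u(y_0)=T_{t_0-t}(T_tu)(y_0)\le T_tu(\gamma(t))+c_{t_0-t}(\gamma(t),y_0),\\
&T_su(\gamma(s))\le u(x_0)+c_s(x_0,\gamma(s)),\qquad T_tu(\gamma(t))\le u(x_0)+c_t(x_0,\gamma(t)),
\end{align*}
the first line being exactly ``$\le$''. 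For ``$\ge$'' I would feed the decomposition of $c_{t_0}(x_0,y_0)$ and the third bound into the hypothesis $T_{t_0}u(y_0)=u(x_0)+c_{t_0}(x_0,y_0)$:
\begin{align*}
T_{t_0}u(y_0)&=\bigl(u(x_0)+c_s(x_0,\gamma(s))\bigr)+c_{t-s}(\gamma(s),\gamma(t))+c_{t_0-t}(\gamma(t),y_0)\\
&\ge T_su(\gamma(s))+c_{t-s}(\gamma(s),\gamma(t))+c_{t_0-t}(\gamma(t),y_0),
\end{align*}
and combine this with the second bound above to sandwich $T_{t_0}u(y_0)$:
\begin{align*}
T_su(\gamma(s))+c_{t-s}(\gamma(s),\gamma(t))+c_{t_0-t}(\gamma(t),y_0)\le T_{t_0}u(y_0)\le T_tu(\gamma(t))+c_{t_0-t}(\gamma(t),y_0).
\end{align*}
Since $c_{t_0-t}(\gamma(t),y_0)$ is finite it may be cancelled, which leaves $T_su(\gamma(s))+c_{t-s}(\gamma(s),\gamma(t))\le T_tu(\gamma(t))$; together with ``$\le$'' this is the asserted equality, which is precisely the statement that $\gamma$ is $Tu$-calibrated.

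I do not expect a genuine obstacle here; the only thing to be careful about is the arithmetic in $\overbar\R$, i.e.\ making sure the cancelled term really is finite. The sandwich display already forces $T_tu(\gamma(t))>-\infty$, the last upper bound gives $T_tu(\gamma(t))<\infty$, and then the final equality forces $T_su(\gamma(s))$ to be finite as well, so the cancellation is legitimate; the degenerate endpoint cases are handled by the conventions fixed above for $c_0$ and $T_0$.
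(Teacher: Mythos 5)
Your proof is correct and follows essentially the same route as the paper's. Both arguments rest on the same three semigroup inequalities (which give ``$\le$'') together with the hypothesis $T_{t_0}u(y_0)=u(x_0)+c_{t_0}(x_0,y_0)$ and the decomposition of $c_{t_0}(x_0,y_0)$ along the minimizer; the paper adds the three inequalities and observes the resulting chain must collapse to equalities, whereas you present the same content as a sandwich around $T_{t_0}u(y_0)$ and cancel the finite term $c_{t_0-t}(\gamma(t),y_0)$. Your extra care about the $\overbar\R$ arithmetic and the degenerate endpoint cases is sound and matches what the paper leaves implicit.
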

\begin{proof}
    Using Lemma \ref{y} we obtain the following inequalities
    \begin{align}
        &T_su(\gamma(s))\leq u(x_0)+c_t(x_0,\gamma(s)), \nonumber
        \\[10pt]
        & T_tu(\gamma(t))\leq T_su(\gamma(s))+c_{t-s}(\gamma(s),\gamma(t)), \label{x}
        \\[10pt]
        & T_{t_0}u(y_0)\leq T_tu(\gamma(t))+c_{t_0-t}(\gamma(t),y_0). \nonumber
    \end{align}
    Adding these three inequalities and canceling terms (noting that all involved quantities are finite), we get
    \begin{align*}
        T_{t_0}u(y_0)&\leq u(x_0)+c_s(x_0,\gamma(s))+c_{t-s}(\gamma(s),\gamma(t))+c_{t_0-t}(\gamma(t),y_0)
        \\[10pt]
        &=u(x_0)+c_{t_0}(x_0,y_0),
    \end{align*}
    where the last inequality follows from the fact that $\gamma$ is a minimizer,
    By assumption, this inequality must actually hold as an equality, which implies that equality must also hold in \eqref{x} (and also in the other two inequalities). This completes the proof of the lemma.
\end{proof}

\begin{lemma} \label{ddd}
\begin{enumerate}[(i)]
    \item 
   Let $u:M\to \overbar \R$ be any function, and let $y_0\in I^+(x_0)$ and $t_0>0$. Suppose that $u(x_0)$ and $T_{t_0}u(y_0)$ are finite. Additionally, assume that 
    $T_{t_0}u(y_0)=u(x_0)+c_{t_0}(x_0,y_0)$.
    Then the function $Tu$ is super-differentiable at $(t_0,y_0)$, and
    \begin{align}
    (\partial_t c_{t_0}(x_0,y_0),\frac{\partial L}{\partial v}(y_0,\dot \gamma(t_0))) \in \partial^+ Tu(t_0,y_0) \label{zz}
    \end{align}
    where $\gamma:[0,t_0]\to M$ is a (necessarily timelike) minimizer connecting $x_0$ to $y_0$ in time $t_0$.
    Moreover, 
    \begin{align}
    \frac{\partial L}{\partial v}(y_0,\dot \gamma(0))\in \partial^- u(x_0). \label{zzz}
    \end{align}

    \item Let $u:M\to \overbar \R$ be any function, and let $y_0\in I^-(x_0)$ and $t_0>0$. Suppose that $u(x_0)$ and $\hat T_{t_0}u(y_0)$ are finite. Additionally, assume that 
    $\hat T_{t_0}u(y_0)=u(x_0)-c_{t_0}(y_0,x_0)$.
    Then the function $\hat Tu$ is sub-differentiable at $(t_0,y_0)$, and
    \[
    (-\partial_t c_{t_0}(y_0,x_0),\frac{\partial L}{\partial v}(y_0,\dot \gamma(0)))\in \partial^- Tu(t_0,y_0)
    \]
    where $\gamma:[0,t_0]\to M$ is a (necessarily timelike) minimizer connecting $y_0$ to $x_0$ in time $t_0$.
     Moreover,
    \[
    \frac{\partial L}{\partial v}(x_0,\dot \gamma(t_0))\in \partial^+ u(x_0).
    \]
    \end{enumerate}
\end{lemma}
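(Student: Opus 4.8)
The plan is to prove part (i); part (ii) follows by the time-reversed argument (working with $\hat T$ and the minimizer traversed backwards). The strategy is the standard weak-KAM "one-sided estimate" technique: exhibit an explicit function that touches $Tu$ from above at $(t_0,y_0)$ and is smooth there, which forces the indicated covector to be a super-differential, and similarly exhibit a function touching $u$ from below at $x_0$.

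First I would establish \eqref{zz}. Since $y_0 \in I^+(x_0)$ and $T_{t_0}u(y_0) = u(x_0) + c_{t_0}(x_0,y_0)$ with all quantities finite, Lemma \ref{w} applies: the minimizer $\gamma$ connecting $x_0$ to $y_0$ is $Tu$-calibrated, and in particular it is timelike (a minimizer with chronologically related endpoints is timelike by Lemma \ref{minimizer}). For $(t,x)$ in a neighbourhood of $(t_0,y_0)$ — which we may take inside $(0,\infty)\times I^+(x_0)$, since $I^+(x_0)$ is open — the semigroup property (Lemma \ref{y}) gives
\[
Tu(t,x) = T_t u(x) \leq T_0 u(x_0) + c_t(x_0,x) = u(x_0) + c_t(x_0,x),
\]
wait, more precisely $T_tu(x) \le u(x_0) + c_t(x_0,x)$ directly from the definition of the forward semigroup. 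At $(t_0,y_0)$ this inequality is an equality by hypothesis. Hence the function $(t,x)\mapsto u(x_0) + c_t(x_0,x)$ touches $Tu$ from above at $(t_0,y_0)$. By Theorem \ref{A}, the map $(t,x,y)\mapsto c_t(x,y)$ is locally semiconcave on $(0,\infty)\times I^+$, so in particular $(t,x)\mapsto c_t(x_0,x)$ is locally semiconcave near $(t_0,y_0)$, and $\left(\partial_t c_{t_0}(x_0,y_0), \tfrac{\partial L}{\partial v}(y_0,\dot\gamma(t_0))\right) \in \partial^+\left[c_\bullet(x_0,\bullet)\right](t_0,y_0)$ by \eqref{mnbvcdfghj} (restricting the covector in \eqref{mnbvcdfghj} to the $(t,y)$-variables, i.e.\ dropping the $x$-slot). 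Since a function lying below a locally semiconcave function and agreeing with it at a point inherits that point's super-differentials, $Tu$ is super-differentiable at $(t_0,y_0)$ and \eqref{zz} holds.

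For \eqref{zzz} I would argue symmetrically at the left endpoint. Again using the calibration and the semigroup property, for every $z \in M$ we have $u(x_0) + c_{t_0}(x_0,y_0) = T_{t_0}u(y_0) \le u(z) + c_{t_0}(z,y_0)$, i.e.
\[
u(z) \ge u(x_0) + c_{t_0}(x_0,y_0) - c_{t_0}(z,y_0)
\]
for all $z$, with equality at $z = x_0$. Thus $z \mapsto u(x_0) + c_{t_0}(x_0,y_0) - c_{t_0}(z,y_0)$ touches $u$ from below at $x_0$. Now $y_0 \in I^+(x_0)$, so for $z$ near $x_0$ we still have $y_0 \in I^+(z)$, and by Theorem \ref{A} the map $z \mapsto c_{t_0}(z,y_0)$ is locally semiconcave near $x_0$; hence $z\mapsto -c_{t_0}(z,y_0)$ is locally semiconvex, so this touching-from-below function is locally semiconvex and smooth-enough to feed a sub-differential to $u$. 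By \eqref{mnbvcdfghj} (now reading off the $x$-slot of the super-differential of $\mathcal C$, with the opposite sign built into that slot) the relevant sub-differential of $z\mapsto -c_{t_0}(z,y_0)$ at $x_0$ is $\tfrac{\partial L}{\partial v}(x_0,\dot\gamma(0))$; but along the timelike minimizer $\gamma$ the Legendre-transformed momentum is transported by the Hamiltonian flow, so $\tfrac{\partial L}{\partial v}(x_0,\dot\gamma(0)) = \tfrac{\partial L}{\partial v}(y_0,\dot\gamma(0))$ in the sense intended in the statement (this is just notation: $\dot\gamma(0)\in T_{x_0}M$, so the covector lives in $T^*_{x_0}M$ — I would double-check the statement's typography here, it should read $\tfrac{\partial L}{\partial v}(x_0,\dot\gamma(0))$). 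This yields \eqref{zzz}.

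The main obstacle is not any single estimate but making sure the semiconcavity transfer is clean: one must verify that "a function dominated by a locally semiconcave function, with equality at a point, is super-differentiable there with the larger function's super-differential available" — this is elementary from the definition of $\partial^+$ but needs the touching function to actually be defined and finite on a full neighbourhood, which is why the openness of $I^+(x_0)$ (and hence $y_0\in I^+(z)$ for nearby $z$, $y_0\in I^+(x_0)$ stable) is used repeatedly. A secondary care point is bookkeeping the three slots of $\partial^+\mathcal C(t,x,y)$ in \eqref{mnbvcdfghj} and extracting the correct two-variable restrictions with correct signs; the timelikeness of $\gamma$ (guaranteed by $y_0\in I^+(x_0)$ via Lemma \ref{minimizer}) is what makes $\tfrac{\partial L}{\partial v}$ well-defined along $\gamma$ in the first place, since $L$ is only differentiable on $\operatorname{int}(\mathcal C)$.
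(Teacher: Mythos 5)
Your proof is correct and follows the same touching-from-above/below argument the paper uses, invoking Theorem \ref{A} for semiconcavity of $\mathcal C$ and reading off the relevant slots of \eqref{mnbvcdfghj}. Your suspicion about \eqref{zzz} is right --- it should read $\tfrac{\partial L}{\partial v}(x_0,\dot\gamma(0))$ since $\dot\gamma(0)\in T_{x_0}M$; this is simply a typo, and the Hamiltonian-transport parenthetical you offer is a red herring (transport along the flow produces $\tfrac{\partial L}{\partial v}(y_0,\dot\gamma(t_0))$, which is \eqref{zz}, not any identity relating $\dot\gamma(0)$ at the two endpoints).
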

\begin{proof}
Let us only prove (i), as (ii) follows by a similar argument. Since $y_0\in I^+(x_0)$, it is clear that any minimizer connecting $x_0$ to $y_0$ must be timelike. Let us prove \eqref{zz}.

    We have $T_tu(y)\leq u(x_0)+c_t(x_0,y)$ for all $(t,y)$, with equality at $(t,y)=(t_0,y_0)$. Since $y_0\in I^+(x_0)$, Theorem \ref{A} ensures that the map $(t,y)\mapsto c_t(x_0,y)$ is super-differentiable at $(t_0,y_0)$, with a super-differential given by the vector in \eqref{zz}. Using these two facts, it is now easy to prove \eqref{zz}. 
    
    To establish \eqref{zzz}, we apply the same reasoning. Indeed, we observe that $u(x)\geq T_{t_0}u(y_0)-c_{t_0}(x,y_0)$ with equality at $x=x_0$. Since $x\mapsto -c_{t_0}(x,y)$ is sub-differentiable at $x_0$, with a sub-differential given by the vector in \eqref{zzz}, this yields the second statement.  
\end{proof}

\begin{corollary}\label{xx}
       Let $u:M\to \overbar \R$ be any function, and let $y_0\in I^+(x_0)$ and $t_0>0$. Suppose that $u(x_0)$ and $T_{t_0}u(y_0)$ are finite. Additionally, assume that 
    $T_{t_0}u(y_0)=u(x_0)+c_{t_0}(x_0,y_0)$. Let $\gamma:[0,t_0]\to M$ be a minimizer connecting $x_0$ with $y_0$. 
Then the function $Tu$ is differentiable at $(t_1,\gamma(t_1))$ for any $t_1\in (0,t_0)$.
\end{corollary}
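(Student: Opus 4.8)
The plan is to show that $Tu$ possesses, at the point $(t_1,\gamma(t_1))$, a non-empty super-differential \emph{and} a non-empty sub-differential; by the standard fact that a function which is simultaneously super- and sub-differentiable at a point is differentiable there (see Subsection \ref{semiconcavity}), this gives the claim. Both halves are obtained from the calibration property of $\gamma$ (Lemma \ref{w}) together with earlier results, the only subtlety being that the sub-differentiability has to be extracted \q{by hand} from the semigroup identity rather than from a ready-made lemma.

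For the super-differential, I would invoke Lemma \ref{w}: since $\gamma$ is $Tu$-calibrated, its sub-arc $\gamma_{|[0,t_1]}$ is a minimizer connecting $x_0$ with $\gamma(t_1)$ and satisfies $T_{t_1}u(\gamma(t_1))=u(x_0)+c_{t_1}(x_0,\gamma(t_1))$. Both terms on the right are finite ($u(x_0)$ by hypothesis, $c_{t_1}(x_0,\gamma(t_1))$ because $(x_0,\gamma(t_1))\in I^+$, as $y_0\in I^+(x_0)$ forces $\gamma$ to be timelike), hence $T_{t_1}u(\gamma(t_1))$ is finite. Applying Lemma \ref{ddd}(i) with $t_0$ replaced by $t_1$ and $y_0$ replaced by $\gamma(t_1)$ then shows that $Tu$ is super-differentiable at $(t_1,\gamma(t_1))$, with $\big(\partial_t c_{t_1}(x_0,\gamma(t_1)),\frac{\partial L}{\partial v}(\gamma(t_1),\dot\gamma(t_1))\big)\in\partial^+Tu(t_1,\gamma(t_1))$ — although only non-emptiness will be used.

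For the sub-differential, I would use the remaining half of the calibration together with the semigroup property. By Lemma \ref{y} one has $T_{t_0}u=T_{t_0-t}(T_tu)$, and evaluating the resulting infimum at a single point gives, for every $(t,x)$ with $0<t<t_0$,
\[
T_tu(x)\ \ge\ T_{t_0}u(y_0)-c_{t_0-t}(x,y_0),
\]
with equality at $(t_1,\gamma(t_1))$ by the calibration identity of Lemma \ref{w} (taken with $s=t_1$, $t=t_0$). Now $\gamma(t_1)\in I^-(y_0)$, so the point $(t_0-t_1,\gamma(t_1),y_0)$ lies in $(0,\infty)\times I^+$, and the right-hand side above is the composition of the locally semiconcave map ${\cal C}$ of Theorem \ref{A} with the smooth map $(t,x)\mapsto(t_0-t,x,y_0)$; hence $(t,x)\mapsto -c_{t_0-t}(x,y_0)$ is locally semiconvex, in particular sub-differentiable, near $(t_1,\gamma(t_1))$. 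Since $Tu$ lies above this function with equality at $(t_1,\gamma(t_1))$, it follows that $Tu$ is sub-differentiable at $(t_1,\gamma(t_1))$.

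Combining the two conclusions, $Tu$ is differentiable at $(t_1,\gamma(t_1))$ for every $t_1\in(0,t_0)$. I do not expect a genuine obstacle here; the one place that needs a little care is the sub-differentiability step, where the timelikeness of $\gamma$ (equivalently $y_0\in I^+(x_0)$) is what allows us to place the relevant point inside $(0,\infty)\times I^+$ and thereby apply the local semiconcavity of ${\cal C}$ from Theorem \ref{A}.
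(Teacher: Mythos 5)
Your proof is correct and follows essentially the same route as the paper: both halves establish super- and sub-differentiability at $(t_1,\gamma(t_1))$ using Lemma \ref{w} plus the semiconcavity of ${\cal C}$ from Theorem \ref{A}. The only cosmetic difference is in the sub-differential step: the paper routes through the auxiliary function $\hat T_{t_0-t}T_{t_0}u(x)$ and invokes Lemma \ref{ddd}(ii), whereas you unwind that lemma and work directly with the single touching function $(t,x)\mapsto T_{t_0}u(y_0)-c_{t_0-t}(x,y_0)$, which is the exact minorant that Lemma \ref{ddd}(ii) would produce internally — so it is the same argument, just slightly more self-contained.
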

\begin{proof}
    From Lemma \ref{w} we know $T_{t_1}u(\gamma(t_1))=u(x_0)+c_{t_1}(x_0,\gamma(t_1))$. Thus, by part (i) of the above lemma, the Lax-Oleinik evolution $Tu$ is super-differentiable at $(t_1,\gamma(t_1))$.  Moreover, by Lemma \ref{y} and Lemma \ref{ineq}, we have for all $t\in (0,t_0)$ and all $x\in M$ that
    \begin{align*}
    \hat T_{t_0-t}T_{t_0}u(x)=\hat T_{t_0-t}T_{t_0-t}T_{t}u(x)\leq T_{t}u(x),
    \end{align*}
    and, utilizing the fact that $\gamma$ is a minimizer, it is straightforward to verify that equality holds at $(t_1,\gamma(t_1))$. Moreover, one checks that
    \[ \hat T_{t_0-t_1}T_{t_0}u(\gamma(t_1))=T_{t_0}u(y_0)-c_{t_0-t_1}(\gamma(t_1),\gamma(t_0)).\]
    Thus, by (ii) of the above lemma, the mapping 
    \[
    (t,x)\mapsto (t_0-t,x)\mapsto \hat T_{t_0-t}T_{t_0}u(x)
    \]
    is sub-differentiable at $(t,x)=(t_1,\gamma(t_1))$ as the composition of a sub-differentiable function with a smooth function. These facts easily imply that also $Tu$ is sub-differentiable at $(t_1,\gamma(t_1))$ with the same sub-differential.

    Since $Tu$ is both super- and sub-differentiable at $(t_1,\gamma(t_1))$, it follows that $Tu$ is differentiable at that point.
\end{proof}

\begin{remark}\rm
The results presented so far are are well-known in the case for Tonelli-Lagrangians, and the proofs are essentially the same. Of course, there is no counterpart for the assumption $y_0\in I^+(x_0)$. The next Lemma is borrowed from \cite{FathiHJ}, Theorem 6.2, where the result is stated for Tonelli-Lagrangians. The proof is essentially the same, but we provide it here again for the reader's convenience.
\end{remark}

\begin{lemma}
Let $u:M\to \overbar \R$ be any function, and let $y_0\in I^+(x_0)$ and $t_0>0$. Suppose that $u(x_0)$ and $T_{t_0}u(y_0)$ are finite. Suppose that $\delta>0$ and $(t_1,x_1)\in \R \times M$ are such that
\begin{align*}
    [t_1-\delta,t_1+\delta]\times \overbar B_{\delta}(x_1)\subseteq (0,t_0)\times (J^+(x_0)\cap J^-(y_0)).
\end{align*}
Then there exists $\ep>0$ with $2\ep<\delta$, such that, for all $(t,x)\in [t_1-\ep,t_1+\ep]\times \overbar B_{\ep}(x_1)$, we have
\begin{align*}
    T_tu(x)=\inf\{T_su(y)+c_{t-s}(y,x)\mid (s,y)\in [t_1-\delta,t_1-2\ep]\times \overbar B_{\delta}(x_1)\}.
\end{align*}
\end{lemma}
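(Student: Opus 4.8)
The plan is to localize the infimum defining $T_tu(x)$ to the prescribed small spatial and temporal window around $(t_1,x_1)$, using the calibration hypothesis to control what happens \emph{outside} this window. First I would record the trivial inequality $T_tu(x)\leq \inf\{T_su(y)+c_{t-s}(y,x)\mid (s,y)\in[t_1-\delta,t_1-2\ep]\times\overbar B_\delta(x_1)\}$, which follows immediately from the semigroup property (Lemma \ref{y}): for any $s<t$ we have $T_tu(x)=\inf_{y}T_su(y)+c_{t-s}(y,x)$, and restricting $y$ to a subset only increases the infimum. So the real content is the reverse inequality.

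For the reverse inequality, fix $(t,x)\in[t_1-\ep,t_1+\ep]\times\overbar B_\ep(x_1)$ and suppose $T_su(y)+c_{t-s}(y,x)$ is very close to $T_tu(x)$ for some $(s,y)$; I must show that the near-optimal $(s,y)$ can be taken inside $[t_1-\delta,t_1-2\ep]\times\overbar B_\delta(x_1)$. The key point is that any near-minimizing $y$ with $c_{t-s}(y,x)<\infty$ lies in $J^-(x)$, and since $x\in\overbar B_\ep(x_1)$ this does not a priori keep $y$ near $x_1$; the honest argument must instead go through the calibrated curve. Write $m:=T_{t_0}u(y_0)=u(x_0)+c_{t_0}(x_0,y_0)$ and let $\gamma:[0,t_0]\to M$ be the minimizer from $x_0$ to $y_0$, which is $Tu$-calibrated by Lemma \ref{w}. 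Because $(t,x)\in(0,t_0)\times(J^+(x_0)\cap J^-(y_0))$, we have $x_0\leq x\leq y_0$ along a causal chain, and I would use the finiteness of $T_{t_0}u(y_0)$ together with Lemma \ref{ww} (local boundedness of $Tu$ on $(0,t_0)\times(J^+(x_0)\cap J^-(y_0))$) to get a uniform bound $|T_\sigma u(z)|\leq B$ for $(\sigma,z)$ ranging over a fixed compact neighbourhood containing the whole window $[t_1-\delta,t_1+\delta]\times\overbar B_\delta(x_1)$.

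The main obstacle — and the heart of the argument — is ruling out near-minimizers $(s,y)$ with $s$ close to $t$ (so $t-s$ small) or with $y$ far from $x_1$. For the temporal part: if $t-s$ is small, then $c_{t-s}(y,x)=\frac{1}{t-s}(\tau(x)-\tau(y)-d(y,x))^2$ blows up unless $\tau(x)-\tau(y)-d(y,x)\to 0$ correspondingly fast, i.e.\ unless $y$ converges to $x$ along an almost-maximizing curve; combined with the lower bound $T_su(y)\geq -B$, one sees $T_su(y)+c_{t-s}(y,x)\geq -B+c_{t-s}(y,x)$, and I would show that for $s\in(t_1-2\ep,t]$ the quantity $c_{t-s}(y,x)$ forces $T_su(y)+c_{t-s}(y,x)$ strictly above $T_tu(x)$ by using the \emph{upper} bound $T_tu(x)\leq B$ and choosing $\ep$ small enough that $c_{t-s}(y,x)>2B$ whenever the Cauchy-Schwarz-type estimate on $\tau$ (from \eqref{splitting}) and compactness give a positive lower bound on $c_{t-s}(y,x)$ for such short times — here one exploits that $x$ stays in a fixed compact set bounded away from $y_0$ in the $\tau$-direction only if $y$ also stays controlled, and otherwise $c_{t-s}(y,x)=\infty$. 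For the spatial part: points $y\in\overbar B_\delta(x_1)\setminus\overbar B_\ep(x_1)$... actually the claim allows $y$ in all of $\overbar B_\delta(x_1)$, so the only exclusions needed are $s<t_1-\delta$, $s>t_1-2\ep$, and $y\notin\overbar B_\delta(x_1)$. The exclusion $s<t_1-\delta$ is handled dually, splitting the path at an intermediate time in $[t_1-\delta,t_1-2\ep]$ via the semigroup. The exclusion $y\notin\overbar B_\delta(x_1)$ when $s\in[t_1-\delta,t_1-2\ep]$: here $t-s\geq t_1-\ep-(t_1-2\ep)=\ep>0$ is bounded below, $x$ ranges in a compact set, and $c_{t-s}(y,x)<\infty$ forces $y\in J^-(x)$, a compact-in-$\tau$ set; on the compact set of such $y$ with $d(y,x_1)\geq\delta$ one uses continuity and the strict positivity/structure of $c$ together with the uniform bound $B$ to get a contradiction with near-minimality — choosing $\ep$ small makes $T_tu(x)$ close to $T_{t_1}u(x_1)$, which by calibration equals $T_su(y^\ast)+c_{t_1-s}(y^\ast,x_1)$ for $y^\ast=\gamma(\cdot)$ on the calibrated curve, a point genuinely inside $\overbar B_\delta(x_1)$ for $s$ near $t_1$, hence the infimum over the small window is already nearly attained. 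I expect the delicate bookkeeping of how small $\ep$ must be chosen (depending on $\delta$, on the bound $B$, and on moduli of continuity of $c$ on the relevant compact sets) to be the only real work; conceptually it is the same localization argument as in \cite{FathiHJ}, Theorem 6.2, with $c_t$ in place of the Tonelli action $h_t$.
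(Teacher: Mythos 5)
Your proposal has a genuine gap, and the route you take would not work. The first problem is that you invoke a hypothesis the lemma does not have: you write ``$m:=T_{t_0}u(y_0)=u(x_0)+c_{t_0}(x_0,y_0)$'' and then take the minimizer $\gamma:[0,t_0]\to M$ from $x_0$ to $y_0$ and call it $Tu$-calibrated by Lemma~\ref{w}. But this lemma only assumes that $u(x_0)$ and $T_{t_0}u(y_0)$ are \emph{finite}; the calibration identity $T_{t_0}u(y_0)=u(x_0)+c_{t_0}(x_0,y_0)$ is the extra hypothesis of the \emph{subsequent} lemmas, and Lemma~\ref{w} needs it. There is no calibrated curve available here, and the paper's proof does not use one. (Moreover, even if one assumed calibration, that curve passes through $\gamma(t_1)$ at time $t_1$, not through $x_1$, so it gives no control over $T_{t_1}u(x_1)$.)

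The second and more fundamental problem is that you are missing the actual mechanism. The argument does not attempt to rule out each ``bad'' $(s,y)$ a priori; instead, for each $(t,x)$ in the small window and each $\eta>0$, one picks a near-optimal $y$ with $u(y)+c_t(y,x)\leq T_tu(x)+\eta$, takes a minimizer $\gamma:[0,t]\to M$ from $y$ to $x$, and observes that the curve $s\mapsto (s,\gamma(s))$ starts outside the box $[t_1-\delta,t_1]\times \overbar B_\delta(x_1)$ (because $t_1-\delta>0$) and ends inside it at $(t,x)$. So it must cross the boundary at some $(s,\gamma(s))$, and since $\gamma$ is a minimizer one has $T_su(\gamma(s))+c_{t-s}(\gamma(s),x)\leq T_tu(x)+\eta$. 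Either the exit is at $s=t_1-\delta$, in which case $s\leq t_1-2\ep$ automatically, or the exit is spatial, $d_h(\gamma(s),x_1)=\delta$, whence $d_h(\gamma(s),x)\geq \delta-\ep$ and the splitting bound \eqref{splitting} gives $c_{t-s}(\gamma(s),x)\geq (\delta-\ep)^2/(4(t-s))$; combined with the uniform bound from Lemma~\ref{ww} and the choice $(\delta-\ep)^2/(12\ep)>2\sup_K|Tu|+1$, this forces $t-s\geq 3\ep$, hence again $s\leq t_1-2\ep$. Your sketch, by contrast, tries to bound $c_{t-s}(y,x)$ from below for $s$ near $t$, but this fails when $y$ is close to $x$ (the action can be arbitrarily small then); it is precisely the boundary-crossing point $\gamma(s)$, which is \emph{forced} to be at spatial distance $\geq\delta-\ep$ from $x$ in the second case, that makes the action blow up and saves the argument. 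Without this choice of the witness $(s,\gamma(s))$ along the near-optimal minimizer, the estimate does not close.
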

\begin{notation}\rm\label{notation}
    Throughout this paper, we denote by $B_r(x)$ the open ball of radius $r$ centered at $x$, either w.r.t.\ the complete metric $h$ on $M$ (when $x\in M$), or w.r.t.\  the Euclidean metric on $\R^n$ (when $x\in \R^n$).
\end{notation}
\begin{proof}
    Denote $K:=[t_1-\delta,t_1+\delta]\times \overbar B_{\delta}(x_1)$ and choose $\delta/2>\ep>0$ such that
    \begin{align}
        \frac{(\delta-\ep)^2}{12\ep}> 2\sup_{(t,x)\in K} |T_tu(x)|+1. \label{szuaisjaiooa}
    \end{align}
    The finiteness of the supremum on the right-hand side follows from Lemma \ref{ww}.
    Let $(t,x)\in [t_1-\ep,t_1+\ep]\cap \overbar B_\ep( x_1)$ be given, and let $0<\eta<1$ be arbitrary. We can find $y\in M$ such that
    \begin{align*}
        u(y)+c_t(y,x)\leq T_tu(x)+\eta.
    \end{align*}
    Now, let $\gamma:[0,t]\to M$ be a minimizer with $\gamma(0)=y$ and $\gamma(t)=x$.
Since $\gamma$ is a minimizer, we have for $s\in [0,t]$
\begin{align*}
    c_t(\gamma(0),x)=c_s(\gamma(0),\gamma(s))+c_{t-s}(\gamma(s),x).
\end{align*}
Substituting this into the inequality above gives
\begin{align*}
    u(y)+c_s(y,\gamma(s))+c_{t-s}(\gamma(s),x)\leq T_tu(x)+\eta.
\end{align*}
Using the fact that, by definition of $T_su(\gamma(s))$, we have $T_su(\gamma(s))\leq u(y)+c_s(y,\gamma(s))$, we obtain 
\begin{align}
    T_su(\gamma(s))+c_{t-s}(\gamma(s),x)\leq T_tu(x)+\eta. \label{qwehj}
\end{align}
Now, we pick $s$ such that $(s,\gamma(s))\in \partial([t_1-\delta,t_1]\times \overbar B_{\delta}(x_1))$. This is possible since $(0,\gamma(0))\notin [t_1-\delta,t_1]\times \overbar B_{\delta}(x_1)$ (because $t_1-\delta>0$), and $(t,\gamma(t))=(t,x)\in [t_1-\ep,t_1+\ep]\times \overbar B_{\ep}(x_1)$). It follows that either $s=t_1-\delta$ or $d_h(\gamma(s),x_1)=\delta$. We want to show that in both cases $s\leq t_1-2\ep$.

In the first case, this holds since $\ep<\delta/2$.

Now, let us turn to the second case, where $d_h(\gamma(s),x_1)=\delta$. The triangle inequality yields
\begin{align*}
    d_h(\gamma(s),x)\geq d_h(\gamma(s),x_1)-d_h(x_1,x)\geq \delta-\ep.
\end{align*}
Due to \eqref{splitting}, the following inequality holds:
\begin{align*}
    c_{t-s}(\gamma(s),x)\geq \frac{d_h(\gamma(s),x)^2}{4(t-s)} \geq \frac{(\delta-\ep)^2}{4(t-s)}.
\end{align*}
Inserting this into \eqref{qwehj} gives
\begin{align*}
     T_su(\gamma(s))+\frac{(\delta-\ep)^2}{4(t-s)}\leq T_tu(x)+\eta.
\end{align*}
Comparing this with \eqref{szuaisjaiooa} shows that $t-s\geq 3\ep$, so that $t_1-s\geq 2\ep$. Thus, in both cases, we have $s\leq t_1-2\ep$. 

In particular, we have proven that
\begin{align*}
    \inf\{T_su(y)+c_{t-s}(y,x)\mid (s,y)\in [t_1-\delta,t_1-2\ep]\times \overbar B_{\delta}(x_1)\}\leq T_tu(x)+\eta.
\end{align*}
Since $\eta$ was arbitrary, the proof of the lemma is complete.
\end{proof}

\begin{remark}\rm \label{yy}
The preceding lemma shows that, in the case of a Tonelli-Lagrangian system, using the same notation as above (with $h_t$ replacing $c_t$ for the minimal action), the Lax-Oleinik evolution of $u$ is locally semiconcave on the set $(t_1-\ep,t_1+\ep)\times B_\ep(x_1)$. The reason is that the family of functions $(t,x)\mapsto h_{t-s}(x,y)$, as $(s,y)$ varies over the compact set $[t_1-\delta,t_1-2\ep]\times \overbar B_\delta(x_1)$, is uniformly locally semiconcave on this set in the sense of \cite{Fathi/Figalli}, Definition A15. Consequently, the infimum (which, by the lemma, coincides with  $T_tu(x)$) is indeed locally semiconcave (\cite{Fathi/Figalli}, Proposition A16).

In the Lorentzian case, however, the difficulty arises from the fact that the family of mappings $(t_1-\ep,t_1+\ep)\times B_\ep(x_1)\to \overbar \R,\ (t,x)\mapsto c_{t-s}(x,y)$, as $(s,y)$ varies over the compact set $[t_1-\delta,t_1-2\ep]\times \overbar B_\delta(x_1)$, is not uniformly locally semiconcave. In fact, these functions are not even continuous or everywhere finite. We only know that $c_t(x,y)$ is locally semiconcave on $(0,\infty)\times I^+$. 

Thus, in order to obtain uniform semiconcavity results, we have to show that, locally for $x$, the infimum in the definition of $T_tu(x)$ can be taken over all $y$ that remain uniformly bounded away from $\partial J^-(x)$. The first step into this direction is to show that $y$ can be chosen uniformly bounded away from $x$. This is the content of the following lemma.
\end{remark}

\begin{lemma}
 Let $u:M\to \overbar \R$ be any function, and let $y_0\in I^+(x_0)$ and $t_0>0$. Suppose that $u(x_0)$ and $T_{t_0}u(y_0)$ are finite. Additionally, assume that 
    $T_{t_0}u(y_0)=u(x_0)+c_{t_0}(x_0,y_0)$. Let $\gamma:[0,t_0]\to M$ be a minimizer connecting $x_0$ with $y_0$,
     and let $t_1\in (0,t_0)$. Suppose that $\delta>0$ is such that
\begin{align*}
    [t_1-\delta,t_1+\delta]\times \overbar B_{\delta}(\gamma(t_1))\subseteq (0,t_0)\times (J^+(x_0)\cap J^-(y_0)).
\end{align*}
Then there exists $\ep>0$ with $2\ep<\delta$, such that, for all $(t,x)\in [t_1-\ep,t_1+\ep]\times \overbar B_{\ep}(\gamma(t_1))$, we have
\begin{align*}
    T_tu(x)=\inf\{&T_su(y)+c_{t-s}(y,x)\mid 
    \\
    &(s,y)\in [t_1-\delta,t_1-2\ep]\times (\overbar B_{\delta}(\gamma(t_1))\backslash  B_\ep(\gamma(t_1)))\}.
\end{align*}
\end{lemma}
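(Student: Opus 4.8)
The plan is to bootstrap from the previous lemma, which already allows us to restrict the infimum defining $T_tu(x)$ to source points $(s,y)$ with $s\le t_1-2\ep$ lying in the compact annulus-free set $[t_1-\delta,t_1-2\ep]\times \overbar B_\delta(\gamma(t_1))$. What remains is to excise the small ball $B_\ep(\gamma(t_1))$ from the spatial component, i.e.\ to rule out that the near-optimal source point $y$ (equivalently $\gamma(s)$, for $\gamma$ the connecting minimizer) lies very close to $\gamma(t_1)$. First I would fix $(t,x)\in[t_1-\ep,t_1+\ep]\times\overbar B_\ep(\gamma(t_1))$, take an arbitrary $\eta>0$, choose $y\in M$ with $u(y)+c_t(y,x)\le T_tu(x)+\eta$, and let $\sigma:[0,t]\to M$ be a minimizer from $y$ to $x$. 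As in the previous proof, splitting $\sigma$ at the parameter $s$ where $(s,\sigma(s))$ first hits $\partial([t_1-\delta,t_1]\times\overbar B_\delta(\gamma(t_1)))$ gives $T_su(\sigma(s))+c_{t-s}(\sigma(s),x)\le T_tu(x)+\eta$, and the previous lemma guarantees $s\le t_1-2\ep$. So $(s,\sigma(s))$ already lies in $[t_1-\delta,t_1-2\ep]\times\overbar B_\delta(\gamma(t_1))$; the only thing to arrange is $d_h(\sigma(s),\gamma(t_1))\ge\ep$, i.e.\ $\sigma(s)\notin B_\ep(\gamma(t_1))$.

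The key new idea is a speed lower bound coming from the assumed timelikeness of the calibrating curve $\gamma$. Since $y_0\in I^+(x_0)$, $\gamma$ is timelike, so on the compact interval $[t_1-\delta,t_1+\delta]$ (whose image is a compact subset of $\op{int}\C$ under $(\gamma,\dot\gamma)$) we have a uniform lower bound $|\dot\gamma|_h\ge \kappa>0$ and hence $L(\gamma,\dot\gamma)=:\lambda>0$ is a fixed positive constant (it is constant along a minimizer by Lemma \ref{minimizer}). Now observe that for the point $\gamma(s)$ on the calibrated curve we have, by Lemma \ref{w}, $T_su(\gamma(s))=u(x_0)+c_s(x_0,\gamma(s))$ and $T_tu(\gamma(t))=T_su(\gamma(s))+c_{t-s}(\gamma(s),\gamma(t))$ for the relevant parameters; more importantly, the competitor $\sigma$ produces the point $\sigma(s)$ which, if it were inside $B_\ep(\gamma(t_1))$, would force $x=\sigma(t)\in\overbar B_\ep(\gamma(t_1))$ to be reachable from $\sigma(s)$ in time $t-s\le 3\ep$ while staying within distance $2\ep$ of $\gamma(t_1)$ — but that makes $c_{t-s}(\sigma(s),x)$ tiny (bounded by $(2\ep)^2$ times a harmless factor from $d_x\tau$, or even comparable to $0$), so $T_su(\sigma(s))\le T_tu(x)+\eta-c_{t-s}(\sigma(s),x)$ gives essentially no contradiction by itself. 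The real contradiction must instead come from comparing with the \emph{minimum at scale $\ep$}: I would use that $c_{t-s}(\sigma(s),x)$ combined with $c_s(x_0,\sigma(s))$ overshoots $c_t(x_0,x)$ unless $\sigma(s)$ is close to $\gamma(s)$ on the calibrated curve, but a point close to $\gamma(s)$ (with $s\le t_1-2\ep$) is at $h$-distance at least roughly $\kappa\cdot 2\ep$ from $\gamma(t_1)$ by the speed bound, hence outside $B_\ep(\gamma(t_1))$ once $\ep$ is small enough that $2\kappa\ep>\ep$, which is automatic — so the cleaner route is: shrink $\ep$ so that \emph{any} minimizer from a point of $\overbar B_\delta(\gamma(t_1))$ reaching $\overbar B_\ep(\gamma(t_1))$ in time $\le 3\ep$, if it were $Tu$-near-optimal, would have to start within $B_\ep(\gamma(t_1))$ of $x$, and then rule that out by noting such $\sigma(s)$ would be a competitor contradicting the minimality already established, or by a direct quantitative estimate on $c_{t-s}$.

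Let me restate the mechanism I actually expect to work, since the above is the crux. Suppose toward a contradiction that $\sigma(s)\in B_\ep(\gamma(t_1))$. Then $\sigma$ restricted to $[s,t]$ is a minimizer joining two points within $\overbar B_{2\ep}(\gamma(t_1))$ in time $t-s\ge 3\ep$, so its $h$-speed (which is constant-ish, or at least its action is) satisfies $\int_s^t|\dot\sigma|_h\le$ (diameter) $\le 4\ep$ while $t-s\ge 3\ep$; combined with $d_x\tau(\dot\sigma)\ge|\dot\sigma|_h$ and the Cauchy–Schwarz estimate $L(\sigma,\dot\sigma)\ge c$-type lower bound, one deduces $c_{t-s}(\sigma(s),x)=(t-s)^{-1}(\cdots)^2$ is extremely small — of order $\ep$ or smaller — and in particular $T_su(\sigma(s))\ge T_tu(x)+\eta - o(\ep)$. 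But now use that $\gamma(s)$ is $Tu$-calibrated so $T_su(\gamma(s))=u(x_0)+c_s(x_0,\gamma(s))$, and also $c_t(x_0,x)\le c_s(x_0,\gamma(s))+c_{t-s}(\gamma(s),x)$; since $\gamma(s)$ is at $h$-distance $\ge\kappa\cdot(t_1-s)\ge 2\kappa\ep$ from $\gamma(t_1)$ hence (for small $\ep$) outside $\overbar B_\ep(\gamma(t_1))$, the point $y:=\gamma(s)$ is an \emph{admissible} competitor for the right-hand infimum, and we would get the desired inequality with $y=\gamma(s)$ directly — bypassing $\sigma$ entirely. In other words, the clean argument is: it suffices to exhibit \emph{one} admissible near-minimizer, and $\gamma(s)$ for $s$ slightly less than $t_1-2\ep$ does the job, using calibration (Lemma \ref{w}) to compute its value and the timelike speed bound to place it outside $B_\ep(\gamma(t_1))$; the previous lemma's $\ep$ only needs to be shrunk further so that $2\kappa\ep\ge\ep$ and so that $\gamma([t_1-\delta,t_1-2\ep])$ actually stays inside $\overbar B_\delta(\gamma(t_1))$ (true for $\delta$ small by continuity). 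The main obstacle is organizing this so that the same single $\ep$ works uniformly in $(t,x)$ and $\eta$; I would handle the uniformity in $(t,x)$ by choosing the splitting parameter $s=s(t,x)$ via the boundary-hitting argument of the previous lemma and then replacing it by $\max(s,t_1-2\ep)$ if needed, and uniformity in $\eta$ is automatic since $\eta$ only appears additively on the right.
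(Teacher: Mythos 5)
Your ``clean route'' reverses the direction of the nontrivial inequality. By the semigroup property we always have $T_tu(x)\le T_su(y)+c_{t-s}(y,x)$, so exhibiting $y=\gamma(s')$ as a competitor only reproduces this trivial estimate; it does \emph{not} show that the right-hand infimum is $\le T_tu(x)$, which is what is actually at stake. Calibration of $\gamma$ gives equality $T_{s'}u(\gamma(s'))+c_{t-s'}(\gamma(s'),\gamma(t))=T_tu(\gamma(t))$ only when $x=\gamma(t)$ lies \emph{on} the calibrated curve; for an arbitrary $x\in\overbar B_\ep(\gamma(t_1))$ the quantity $T_{s'}u(\gamma(s'))+c_{t-s'}(\gamma(s'),x)$ may be strictly larger than $T_tu(x)$ (the true near-optimal source for $x$ could be an entirely different point), so this single competitor does not realize the infimum. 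The timelike speed bound $|\dot\gamma|_h\ge\kappa$ you invoke locates $\gamma(s')$, but the relevant issue is the location of the near-optimal source $\sigma(s)$ for a general $x$, which it does not control.

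Your contradiction route via $\sigma$ is the right starting point, but it stalls: from $\sigma(s)\in B_\ep(\gamma(t_1))$ you bound $c_{t-s}(\sigma(s),x)=O(\ep)$ and obtain $T_su(\sigma(s))\le T_tu(x)+\eta$, which is not yet a contradiction with anything. The missing ingredient is a comparison along the \emph{forward} leg of the calibrated curve. The paper's proof combines the calibration identity $T_{t_0}u(y_0)=T_{t_1}u(\gamma(t_1))+c_{t_0-t_1}(\gamma(t_1),y_0)$, the semigroup bound $T_{t_0}u(y_0)\le T_su(y)+c_{t_0-s}(y,y_0)$, and the continuity of $Tu$ at $(t_1,\gamma(t_1))$ to deduce
\begin{align*}
T_tu(x)\;\le\; T_su(y)\;+\;\Big(c_{t_0-s}(y,y_0)-c_{t_0-t}(x,y_0)\Big)\;+\;\omega\big(|t-t_1|+d_h(x,\gamma(t_1))\big),
\end{align*}
and the decisive observation is that the bracketed difference is \emph{uniformly negative} for $s\le t_1-2\ep_1<t$ and $x,y$ near $\gamma(t_1)$: since $c_\tau=c/\tau$ and $\gamma(t_1)\in I^-(y_0)$ (this is where the timelikeness of $\gamma$ enters), the longer elapsed time $t_0-s>t_0-t$ strictly decreases the cost while $c(y,y_0)\approx c(x,y_0)$ by continuity. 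This forces $T_su(y)>T_tu(x)$ for such $(s,y)$ and rules them out as near-minimizers, after which equality only needs to be established, as in the paper, by noting a minimizer of the concatenation $y_1\leadsto\gamma(t_1)\leadsto y_0$ cannot switch from null to timelike (Lemma~\ref{minimizer}). None of this forward-leg comparison appears in your argument.
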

\begin{proof}
First, let $\ep_1>0$ be given by the above lemma. Then we know that, for all $(t,x)\in [t_1-\ep_1,t_1+\ep_1]\times \overbar B_{\ep_1}(\gamma(t_1))$, we have
\begin{align}
    T_tu(x)=\inf\{&T_su(y)+c_{t-s}(y,x)\mid \nonumber
    \\
    &(s,y)\in [t_1-\delta,t_1-2\ep_1]\times \overbar B_{\delta}(\gamma(t_1))\}. \label{plkjhgfd}
\end{align}
Clearly, it suffices to prove that there exists $\ep<\ep_1$ such that, for $(t,x)\in [t_1-\ep,t_1+\ep]\times \overbar B_{\ep}(\gamma(t_1))$, we have
\begin{align}
        T_tu(x)<\inf\{T_su(y)\mid (s,y) \in [t_1-\delta,t_1-2\ep_1]\times   B_\ep(\gamma(t_1))\}. \label{a1}
 \end{align}
By Lemma \ref{w}, $\gamma$ is $Tu$-calibrated, so
    \begin{align}
        T_{t_0}u(y_0)=T_{t_1}u(\gamma(t_1))+c_{t_0-t_1}(\gamma(t_1),y_0). \label{v}
    \end{align}
    Now $Tu$ is continuous at $(t_1,\gamma(t_1))$ thanks to Corollary \ref{xx}. Moreover, since $y_0\in I^+(\gamma(t_1))$, we also have $y_0\in I^+(x)$ if $x$ is sufficiently close to $\gamma(t_1)$, implying that $c_{t_0-t}(x,y_0)\to c_{t_0-t_1}(\gamma(t_1),y_0)$ as $(t,x)\to (t_1,\gamma(t_1))$. Combining these facts with \eqref{v}, we obtain
    \begin{align*}
      T_{t}u(x)+c_{t_0-t}(x,y_0) \to T_{t_1}u(\gamma(t_1))+c_{t_0-t_1}(\gamma(t_1),y_0)=T_{t_0}u(y_0)
    \end{align*}
    as $(t,x)\to (t_1,\gamma(t_1))$. In particular, there exists a modulus of continuity $\omega$ such that
    \begin{align}
        T_{t_0}u(y_0)\geq T_{t}u(x)+c_{t_0-t}(x,y_0) - \omega(|t-t_1|+d_h(x,\gamma(t_1))). \label{dosifsk}
    \end{align}
    Additionally, we also have the following estimate:
    \begin{align*}
        T_{t_0}u(y_0)\leq T_su(y)+c_{t_0-s}(y,y_0)
    \end{align*}
    for all $y\in M$ and $s\in [0,t_0)$. 
    Comparing this inequality with \eqref{dosifsk} we have shown that
    \begin{align}
        T_{t}u(x)& \leq  T_su(y) +c_{t_0-s}(y,y_0) -c_{t_0-t}(x,y_0) \nonumber
        \\[10pt]
        & +\omega(|t-t_1|+d_h(x,\gamma(t_1))) \label{a2}
    \end{align}
    Using the formula $c_\tau=c/\tau$ for $\tau>0$ and the continuity of $c$ at $(\gamma(t_1),y_0)\in I^+$, it is easy to check that
    \begin{align*}
       \limsup_{\substack{x,y\to \gamma(t_1)\\ t\to t_1}} \sup_{s\leq t_1-2\ep_1} \bigg(c_{t_0-s}(y,y_0) -c_{t_0-t}(x,y_0) + \omega(|t-t_1|+d_h(x,\gamma(t_1)))\bigg)<0.
    \end{align*}
    This, together with \eqref{a2}, implies that there exists $\ep<\ep_1$ such that \eqref{a1} holds.
\end{proof}

\begin{lemma}
 Let $u:M\to \overbar \R$ be any function, and let $y_0\in I^+(x_0)$ and $t_0>0$. Suppose that $u(x_0)$ and $T_{t_0}u(y_0)$ are finite. Additionally, assume that 
    $T_{t_0}u(y_0)=u(x_0)+c_{t_0}(x_0,y_0)$. Let $\gamma:[0,t_0]\to M$ be a minimizer connecting $x_0$ with $y_0$,
     and let $t_1\in (0,t_0)$.
     Suppose that $\delta>0$ is such that
\begin{align*}
    [t_1-\delta,t_1+\delta]\times \overbar B_{\delta}(\gamma(t_1))\subseteq (0,t_0)\times (J^+(x_0)\cap J^-(y_0)).
\end{align*}
Then there exists $\ep>0$ with $2\ep<\delta$, such that, for all $(t,x)\in [t_1-\ep,t_1+\ep]\times \overbar B_{\ep}(\gamma(t_1))$, we have
\begin{align}
    T_tu(x)=\inf\{&T_su(y)+c_{t-s}(y,x)\mid \nonumber
    \\
    &(s,y)\in [t_1-\delta,t_1-2\ep]\times \overbar B_{\delta}(\gamma(t_1)),\ d(y,\gamma(t_1))\geq \ep\}. \label{a}
\end{align}
\end{lemma}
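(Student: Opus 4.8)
The plan is to run the argument of the preceding lemma one more time. First I would apply that lemma to fix $\ep_1>0$ (with $2\ep_1<\delta$) such that, for all $(t,x)\in[t_1-\ep_1,t_1+\ep_1]\times\overbar B_{\ep_1}(\gamma(t_1))$,
\[
T_tu(x)=\inf\{T_su(y)+c_{t-s}(y,x)\mid(s,y)\in[t_1-\delta,t_1-2\ep_1]\times(\overbar B_\delta(\gamma(t_1))\setminus B_{\ep_1}(\gamma(t_1)))\}.
\]
By the semigroup property, $T_tu(x)=T_{t-s}(T_su)(x)\le T_su(y)+c_{t-s}(y,x)$ for every $(s,y)$ with $s\le t$, so restricting the index set above keeps the infimum $\ge T_tu(x)$, and terms with $y\not\le x$ (where $c_{t-s}(y,x)=+\infty$) are irrelevant. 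Hence, once the constants are reconciled, it suffices to find $\ep\le\ep_1$ such that points $y$ which are Lorentzian-close to $\gamma(t_1)$ never enter the competition: $T_su(y)+c_{t-s}(y,x)>T_tu(x)$ for all $(t,x)\in[t_1-\ep,t_1+\ep]\times\overbar B_\ep(\gamma(t_1))$ and all $(s,y)\in[t_1-\delta,t_1-2\ep_1]\times(\overbar B_\delta(\gamma(t_1))\setminus B_{\ep_1}(\gamma(t_1)))$ with $y\le x$ and $d(y,\gamma(t_1))<\ep$.

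To prove this I would reuse inequality \eqref{a2} from the preceding proof: with a modulus of continuity $\omega$ (furnished by the continuity of $Tu$ at $(t_1,\gamma(t_1))$, Corollary \ref{xx}, and of $c$ near $(\gamma(t_1),y_0)\in I^+$) one has, for $(t,x)$ close to $(t_1,\gamma(t_1))$ and all $y\le y_0$, $s\in[0,t_0)$,
\[
T_tu(x)\le T_su(y)+c_{t_0-s}(y,y_0)-c_{t_0-t}(x,y_0)+\omega(|t-t_1|+d_h(x,\gamma(t_1))).
\]
If no such $\ep$ existed, there would be sequences $\ep_n\downarrow0$, $(t_n,x_n)\to(t_1,\gamma(t_1))$ and $(s_n,y_n)\in[t_1-\delta,t_1-2\ep_1]\times(\overbar B_\delta(\gamma(t_1))\setminus B_{\ep_1}(\gamma(t_1)))$ with $y_n\le x_n\le y_0$, $d(y_n,\gamma(t_1))<\ep_n$ and $T_{s_n}u(y_n)+c_{t_n-s_n}(y_n,x_n)\le T_{t_n}u(x_n)$; combining with the displayed estimate and cancelling the finite term $T_{s_n}u(y_n)$ (finite by Lemma \ref{ww}) gives
\[
c_{t_n-s_n}(y_n,x_n)+c_{t_0-t_n}(x_n,y_0)\le c_{t_0-s_n}(y_n,y_0)+\omega(|t_n-t_1|+d_h(x_n,\gamma(t_1))).
\]
After passing to a subsequence, $s_n\to s_*\in[t_1-\delta,t_1-2\ep_1]$ and $y_n\to y_*$ with $d_h(y_*,\gamma(t_1))\ge\ep_1$; continuity of $d$ and closedness of $J^+$ force $d(y_*,\gamma(t_1))=0$, $y_*\le\gamma(t_1)\le y_0$ and $y_*\neq\gamma(t_1)$. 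Letting $n\to\infty$ and using continuity of $(\sigma,a,b)\mapsto c_\sigma(a,b)$ on $(0,\infty)\times J^+$, we obtain $c_{t_1-s_*}(y_*,\gamma(t_1))+c_{t_0-t_1}(\gamma(t_1),y_0)\le c_{t_0-s_*}(y_*,y_0)$; since the reverse inequality always holds (subadditivity of the minimal action, cf.\ the proof of Lemma \ref{y}), equality holds.

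The step I expect to be the crux is ruling out this equality — this is exactly where the Lorentzian case is genuinely harder than the Tonelli case of \cite{FathiHJ} and than the preceding lemma (there, smallness of $d_h(y,\gamma(t_1))$ by itself already made $c_{t_0-s}(y,y_0)<c_{t_0-t}(x,y_0)$ for $s\le t_1-2\ep_1$). Equality means $\gamma(t_1)$ realises $c_{t_0-s_*}(y_*,y_0)=\inf_z\big(c_{t_1-s_*}(y_*,z)+c_{t_0-t_1}(z,y_0)\big)$, so gluing a minimizer from $y_*$ to $\gamma(t_1)$ to one from $\gamma(t_1)$ to $y_0$ yields a minimizer $\rho:[0,t_0-s_*]\to M$ from $y_*$ to $y_0$ with $\rho(t_1-s_*)=\gamma(t_1)$. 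By Lemma \ref{minimizer}, $\rho$ is a reparametrised length-maximising geodesic, and its sub-arc from $y_*$ to $\gamma(t_1)$ is also length-maximising, hence has Lorentzian length $d(y_*,\gamma(t_1))=0$; being a non-constant causal geodesic of zero length it is null, and therefore $\rho$ is null throughout and $d(y_*,y_0)=\ell_g(\rho)=0$. But $y_*\le\gamma(t_1)\ll y_0$ (here the timelikeness of $\gamma$, equivalently $y_0\in I^+(x_0)$, is used) forces $y_*\ll y_0$, so $d(y_*,y_0)>0$ — a contradiction. This produces the desired $\ep$, and combining with the preceding lemma proves \eqref{a}.
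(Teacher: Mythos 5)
Your proposal is correct and follows essentially the same strategy as the paper's proof: reduce to excluding competitors $y$ with small $d(y,\gamma(t_1))$ by a compactness/contradiction argument, pass to a limit point $y_*\in\partial J^-(\gamma(t_1))\setminus\{\gamma(t_1)\}$ satisfying the equality $c_{t_1-s_*}(y_*,\gamma(t_1))+c_{t_0-t_1}(\gamma(t_1),y_0)=c_{t_0-s_*}(y_*,y_0)$, and derive the contradiction from the fact that the resulting concatenated minimizer would be a pregeodesic with a null initial segment but timelike final segment (violating the constant causal character of geodesics via Lemma \ref{minimizer}). The only cosmetic difference is that you package the estimate through the modulus of continuity $\omega$ already introduced in the preceding lemma, whereas the paper reuses the calibration identities directly; the limit and the final Lorentzian-geometric contradiction are the same.
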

\begin{proof}
By the foregoing lemma, we know that there exists $\ep_1<\delta/2$ such that,  
for all $(t,x)\in [t_1-\ep_1,t_1+\ep_1]\times \overbar B_{\ep_1}(\gamma(t_1))$, we have
\begin{align*}
    T_tu(x)=\inf\{&T_su(y)+c_{t-s}(y,x)\mid 
    \\
    &(s,y)\in [t_1-\delta,t_1-2\ep_1]\times (\overbar B_{\delta}(\gamma(t_1))\backslash  B_{\ep_1}(\gamma(t_1)))\}.
\end{align*}
Thus, it suffices to prove that there exists $\ep<\ep_1$ such that, for $(t,x)\in [t_1-\ep,t_1+\ep]\times \overbar B_{\ep}(\gamma(t_1))$, we have
\begin{align*}
   T_tu(x)<&\inf\{T_su(y)+c_{t-s}(y,x)\mid 
   \\
   &(s,y)\in [t_1-\delta,t_1-2\ep_1]\times  (\overbar B_{\delta}(\gamma(t_1))\backslash  B_{\ep_1}(\gamma(t_1))),\ d(y,\gamma(t_1))< \ep\}. 
\end{align*}
We argue by contradiction and assume that we find a sequence $(t_k,x_k)\to (t_1,\gamma(t_1))$ and, for each $k$, a pair 
    \begin{align*}
        (s_k,y_k)\in [t_0-\delta,t_0-2\ep_1]\times (\overbar B_{\delta}(\gamma(t_1))\backslash  B_{\ep_1}(\gamma(t_1))) \text{ such that } d(y_k,x_k)\to 0 
    \end{align*}
    and such that
    \begin{align}
        T_{s_k}u(y_k) +c_{t_k-s_k}(y_k,x_k)-\frac{1}{k}\leq T_{t_k}u(x_k). \label{ww'}
    \end{align}
		Of course we must have $y_k\in J^-(x_k)$ for large $k$.
 Since $Tu$ is continuous at $(t_1,\gamma(t_1))$, the latter converges to $T_{t_1}u(\gamma(t_1))$ as $k\to \infty$.
    Since $\gamma$ is $Tu$-calibrated, we have
    \begin{align}
        T_{t_0}u(y_0)=T_{t_1}u(\gamma(t_1))+c_{t_0-t_1}(\gamma(t_1),y_0).\label{poiu1}
    \end{align}
    On the other hand, by the semigroup property of $Tu$ it also holds that
    \begin{align}
        T_{t_0}u(y_0)\leq T_{s_k}u(y_k)+c_{t_0-s_k}(y_k,y_0).\label{poiu2}
    \end{align}
    Now, from \eqref{ww'},\eqref{poiu1} and \eqref{poiu2} we obtain
    \begin{align*}
        T_{s_k}u(y_k)+c_{t_k-s_k}(y_k,x_k)-\frac 1k
        \leq & T_{t_k}u(x_k)-T_{t_1}u(\gamma(t_1)+T_{t_1}u(\gamma(t_1)
        \\
        \leq &
        T_{t_k}u(x_k)-T_{t_1}u(\gamma(t_1)+ T_{s_k}u(y_k)
        \\
        &+c_{t_0-s_k}(y_k,y_0)-c_{t_0-t_1}(\gamma(t_1),y_0).
    \end{align*}
   Rearranging this inequality leads to
    \begin{align*}
        c_{t_k-s_k}(y_k,x_k)+c_{t_0-t_1}(\gamma(t_1),y_0) \leq T_{t_k}u(x_k)-T_{t_1}u(\gamma(t_1)+c_{t_0-s_k}(y_k,y_0).
    \end{align*}
    Since the metric $h$ is complete and due to the construction of our sequence, $(s_k,y_k)$ is precompact. Thus, we can assume that, along a subsequence that we do not relabel, $(s_k,y_k)\to (s_1,y_1)\in [t_1-\delta,t_1-2\ep_1]\times (\overbar B_\delta(\gamma(t_1))\backslash B_{\ep_1}(\gamma(t_1)))$.
    Since $J^+$ is closed, it follows $y_1\in J^-(\gamma(t_1))$, and by the continuity of the Lorentzian distance, $d(y_1,\gamma(t_1))=0$. Taking limits in the above inequality and regarding the continuity of $Tu$ at $(t_1,\gamma(t_1))$, we conclude
    \begin{align*}
        c_{t_1-s_1}(y_1,\gamma(t_1))+c_{t_0-t_1}(\gamma(t_1),y_0)\leq c_{t_0-s_1}(y_1,y_0).
    \end{align*}
    Since the reverse inequality always holds, this must be an equality. This implies that the curve obtained by concatenating a minimizer from $y_1$ to $\gamma(t_1)$ with the curve $\gamma_{|[t_1,t_0]}$ is still a minimizer. However, the first curve is not timelike, since $d(y_1,\gamma(t_1))=0$, and non-trvial ($y_1\neq \gamma(t_1)$ (this is where we need the last lemma)). The second curve, on the other hand, is timelike because $(\gamma(t_1),y_0)\in I^+$. This contradicts Lemma \ref{minimizer}, completing the proof.
\end{proof}

\begin{remark}\rm
    We are now ready to prove Theorem 3.6.
\end{remark}

\begin{proof}[Proof of Theorem \ref{m}]
    Let $\delta>0$ be such that 
    \begin{align*}
    [t_1-\delta,t_1+\delta]\times \overbar B_{\delta}(\gamma(t_1))\subseteq (0,t_0)\times (J^+(x_0)\cap J^-(y_0)).
\end{align*}
We can then apply the preceding lemma. Thus, there exists $\ep<\delta/2$ such that, for all $(t,x)\in [t_1-\ep,t_1+\ep]\times \overbar B_{\ep}(\gamma(t_1))$, we have
\begin{align*}
    T_tu(x)=\inf\{&T_su(y)+c_{t-s}(y,x)\mid
    \\
    &(s,y)\in [t_1-\delta,t_1-2\ep]\times \overbar B_{\delta}(\gamma(t_1)),\ d(y,\gamma(t_1))\geq \ep\}. 
\end{align*}  
Due to the continuity of the Lorenzian distance, it readily follows that there exists $\ep>r>0$ and a compact set $K$ such that $B_r(\gamma(t_1))\times K\subseteq I^+$ and such that, for all $(t,x)\in [t_1-\ep,t_1+\ep]\times B_r(\gamma(t_1))$, we have
\begin{align}
T_tu(x)=\inf\{T_su(y)+c_{t-s}(y,x)\mid s\in [t_1-\delta,t_1-2\ep],\ y\in K\}. \label{c1}
\end{align}
Since $\C$ is locally semiconcave on $(0,\infty)\times I^+$ it follows that the mapping 
\[
\{(t,s)\in \R^2\mid t>s\}\times I^+\to \R,\ (t,s,x,y)\mapsto c_{t-s}(x,y),
\]
is locally semiconcave. Thus, the family of functions 
\begin{align*}
&(t_1-\ep,t_1+\ep)\times B_r(\gamma(t_1))\to \R,
\\[10pt]
&(t,x)\mapsto T_su(y)+c_{t-s}(y,x),\ (s,y)\in [t_1-\delta,t_1-2\ep]\times K,
\end{align*}
is uniformly locally semiconcave in the sense of \cite{Fathi/Figalli}, Definition A15. Thus, the infimum (if finite) is also locally semiconcave (\cite{Fathi/Figalli}, Corollary A14). However, the infimum is, by \eqref{c1}, precisely
 $T_tu(x)$. This completes the proof.
\end{proof}

 \section{Lax-Oleinik evolution for $C^1$ Lipschitz functions}
In this section, we aim to study the Lax-Oleinik evolution for $C^1$ Lipschitz functions. As already mentioned in Remark \ref{vv}, it is known that for a Tonelli-Lagrangian on a possibly non-compact manifold $N$, the Lax-Oleinik evolution of a Lipschitz function $f$ is locally semiconcave. This follows from the fact that, locally in $(t,x)$, the infimum in the definition of $T_tf(x)$ can be taken over a compact set. Therefore, since the mapping $(0,\infty)\times N^2\to \R, \ (t,y,x)\mapsto h_t(y,x)$ is locally semiconcave, also the family of mappings
\[
(0,\infty)\times N,\ (t,x)\mapsto f(y)+h_t(y,x),\ y\in K,
\]
is uniformly locally semiconcave, implying that their infimum is locally semiconcave (\cite{Fathi/Figalli}, Proposition A16). In the Lorentzian case, the same issue as explained in Remark \ref{yy} arises, since the functions $c_t(x,y)$ are not even finite or continuous. Again we have to show that, locally in $(t,x)$, the $y$ in the infimum in the definition of $T_tf(x)$ can be taken uniformly bounded away from $\partial J^-(x)$. Under additional assumptions, we will prove that the Lax-Oleinik evolution of a $C^1$ Lipschitz function is locally semiconcave. Let us now state the precise formulation of our main result:

\begin{theorem}\label{cc}
Let $f:M\to \R$ be $C^1$ and Lipschitz. Let $x_0\in M$, and suppose that there exists $r>0$ such that for every $x\in \overbar B_{r}(x_0)$, $d_xf\in T_x^*M\backslash \C_x^*$.
    
    Then, the function $Tf$ is locally semiconcave on $(0,\infty)\times B_r(x_0)$. 
    Furthermore, if $x\in B_r(x_0)$ and $t>0$, there is $y\in M$ such that $T_tf(x)=f(y)+c_t(y,x)$, and we necessarily have $y\in I^-(x)$. If $\gamma:[0,t]\to M$ is a minimizer with $\gamma(0)=y$ and $\gamma(t)=x$ then $(\partial_t c_t(y,x),\frac{\partial L}{\partial v}(x,\dot \gamma(t)))$ is a super-differential for $Tf$ at $(t,x)$.
    Finally, $Tf$ is differentiable at $(t,x)$ if and only if the there is a unique minimizing curve $\gamma:[0,t]\to M$ with $\gamma(t)=x$ such that $T_tf(x)=f(\gamma(0))+c_t(\gamma(0),x)$.
\end{theorem}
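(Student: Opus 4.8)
The plan is to mimic the argument of Section 3, but with the calibrated curve replaced by information coming from the hypothesis $d_xf \notin \C_x^*$ on the ball $\overbar B_r(x_0)$. The first task is the \emph{existence of a minimizer realizing $T_tf(x)$}, together with the claim that it starts in $I^-(x)$. Fix $x\in B_r(x_0)$ and $t>0$. Take a minimizing sequence $y_k$ with $f(y_k) + c_t(y_k,x) \to T_tf(x)$; since $f$ is Lipschitz (w.r.t.\ $h$) and $c_t(y_k,x) \geq \frac{1}{4t} d_h(y_k,x)^2$ by \eqref{splitting}, the quadratic growth of the cost beats the linear growth of $f$, so $(y_k)$ is bounded, hence (by completeness of $h$) precompact; let $y$ be a limit point. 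Lower semicontinuity of $c_t(\cdot,x)$ on $J^+$ together with continuity of $f$ gives $T_tf(x) = f(y) + c_t(y,x)$, and $T_tf(x)$ is finite (bounded above by $f(x)$, taking $y=x$, and below by $f(y)+0$). Let $\gamma:[0,t]\to M$ be a minimizer from $y$ to $x$ (Lemma \ref{minimizer}). If $\gamma$ were not timelike, i.e.\ $d(y,x)=0$ (or $y=x$), I claim this contradicts $d_xf\notin\C_x^*$: by minimality of $y$, for every $z$ we have $f(z) \geq T_tf(x) - c_t(z,x) = f(y) + c_t(y,x) - c_t(z,x)$, so $x\mapsto f(x)$ is ``supported from below'' in a way forcing a subgradient condition. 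More precisely, using the first-variation/gluing trick: perturb $y$ slightly to $y_\eta := \exp_x^{g}(-\eta w)$ for $w$ close to $\dot\gamma(t)$, reparametrize; when $\gamma$ is lightlike one can find a timelike competitor of strictly smaller cost (as in the last lemma of Section 3), contradicting optimality unless $d_xf$ is on the boundary. I expect the cleanest route is: since $f$ is $C^1$, $d_xf \in \partial^- (T_\cdot f)(t,\cdot)|_x$ is forced, and combined with the analogue of Lemma \ref{ddd} this pins $\dot\gamma(t)$ via $\mathcal L(x,\dot\gamma(t)) = $ (something built from $d_xf$), which lies outside $\C^*$ only if $\dot\gamma(t)\in\op{int}\C_x$, i.e.\ $\gamma$ timelike, hence $y\in I^-(x)$.

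The \emph{local semiconcavity} of $Tf$ on $(0,\infty)\times B_r(x_0)$ then follows the Section 3 template: I would show that, locally in $(t,x)\in (t_1-\ep,t_1+\ep)\times B_\rho(x_1)$ (with $\overbar B_\rho(x_1)\subseteq B_r(x_0)$), the infimum defining $T_tf(x)$ can be restricted to $y$ ranging over a fixed compact set $K$ with $K\times B_\rho(x_1)\subseteq I^+$, i.e.\ $y$ stays uniformly bounded away from $\partial J^-(x)$. Boundedness of $y$ away from infinity is the coercivity estimate above (uniform in $(t,x)$ on compacts). Boundedness away from the light cone $\partial J^-(x)$ is the key new point: one argues by contradiction as in the last lemma of Section 3 — if $y_k \to y_1$ with $d(y_1,x_1)=0$ but $f(y_k)+c_{t_k}(y_k,x_k)$ still (nearly) optimal, then passing to the limit produces a minimizer from $y_1$ to a point which concatenated with a timelike minimizer stays minimizing while containing a nontrivial lightlike piece, contradicting Lemma \ref{minimizer}; here the role played before by the $Tf$-calibrated curve $\gamma$ is replaced by the hypothesis $d_{x_1}f\notin\C_{x_1}^*$, which is exactly what prevents the optimal $y$ from being lightlike-related to $x_1$ (this is where the ball $\overbar B_r(x_0)$ and the $C^1$-ness of $f$ enter). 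Once the infimum is over the fixed compact $K$, the family $(t,x)\mapsto f(y)+c_{t-0}(y,x) = f(y) + \mathcal C$-type terms is uniformly locally semiconcave on $(t_1-\ep,t_1+\ep)\times B_\rho(x_1)$ because $\mathcal C$ is locally semiconcave on $(0,\infty)\times I^+$ (Theorem \ref{A}); hence the infimum $T_tf$ is locally semiconcave (\cite{Fathi/Figalli}, Corollary A14).

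The \emph{super-differential formula} $(\partial_t c_t(y,x),\frac{\partial L}{\partial v}(x,\dot\gamma(t)))\in\partial^+ Tf(t,x)$ is immediate from $T_sf(z)\leq f(y)+c_s(y,z)$ for all $(s,z)$ with equality at $(t,x)$, together with the superdifferential of $\mathcal C$ at $(t,y,x)\in(0,\infty)\times I^+$ given by Theorem \ref{A} (restricting the first-slot derivative, which is legitimate since we freeze $y$). For the \emph{differentiability characterization}: $Tf$ locally semiconcave means it is differentiable at $(t,x)$ iff it has a unique super-differential there; by Theorem \ref{A} the reaching gradients of $\mathcal C(\cdot,y,\cdot)$ are classified by minimizers, and by a compactness argument (Corollary \ref{ee}, exactly as in the proof of Theorem \ref{A}) every element of $\partial^+Tf(t,x)$ arises from some optimal $y$ together with a minimizer $\gamma$ from $y$ to $x$; since $\mathcal L$ is a diffeomorphism, distinct minimizers give distinct $\frac{\partial L}{\partial v}(x,\dot\gamma(t))$, so uniqueness of the super-differential is equivalent to uniqueness of the optimal pair $(y,\gamma)$. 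I expect the main obstacle to be the ``bounded away from $\partial J^-(x)$'' step — making rigorous how the hypothesis $d_xf\notin\C_x^*$ rules out lightlike-related optimal $y$'s, uniformly for $x$ in a neighborhood, without having a calibrated curve to lean on; the substitute is a first-variation argument showing a lightlike optimal connection would let $d_xf$ be pushed onto $\C_x^*$.
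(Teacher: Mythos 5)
Your proposal correctly identifies the overall architecture, which matches the paper's: (a) show the infimum defining $T_tf(x)$ can locally be restricted to $y$ in a compact set with $K\times B_\rho(x_1)\subseteq I^+$, so that Theorem~\ref{A} gives uniform local semiconcavity and hence semiconcavity of the infimum; (b) read off the super-differential from the envelope inequality $T_sf(z)\leq f(y)+c_s(y,z)$; (c) characterize differentiability via reaching gradients, Corollary~\ref{ee}, and injectivity of the Legendre transform. Points (b) and (c) are fine and essentially the paper's argument.

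The genuine gap is in step (a), and you rightly flag it as the main obstacle but do not actually supply the argument. Two issues. First, your route to $y\in I^-(x)$ is circular: you propose to invoke ``the analogue of Lemma~\ref{ddd}'' to relate $\mathcal L(x,\dot\gamma(t))$ to $d_xf$, but Lemma~\ref{ddd} already assumes $y_0\in I^\pm(x_0)$, which is exactly what you want to conclude; also the correct relation from that lemma is $d_yf=\frac{\partial L}{\partial v}(y,\dot\gamma(0))$ (derivative of $f$ at the \emph{starting} point), not something involving $d_xf$ directly, so the intended chain $d_xf\notin\C^*_x\Rightarrow\dot\gamma(t)$ timelike does not go through as sketched. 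Second, you attribute the exclusion of lightlike optimizers entirely to the hypothesis $d_xf\notin\C_x^*$. In the paper's proof the hypothesis is used only to rule out optimizers $y$ too close to $x$ in $h$-distance (the unnamed proposition before Proposition~\ref{i1}, using the directional-decrease competitor $\phi^{-1}(z-sw)$), i.e.\ it prevents $y=x$. Ruling out genuine null-related optimizers $z\in\partial J^-(x)\setminus\{x\}$ is a separate, purely geometric second-variation argument in Proposition~\ref{i1}: take the null geodesic from $z$ to $x$, parallel-transport a vector $\xi$ with $g(\dot\gamma(0),\xi)>0$, form the variation with variational field $(t-s)\xi(s)$ fixing the endpoint $x$, compute $\frac{d}{dr}\big|_{r=0} g(\partial_s c,\partial_s c)=-2g(\xi(0),\dot\gamma(0))<0$ so the nearby curves $c(r,\cdot)$ are future-timelike for small $r>0$, and then balance the $\sqrt{r}$ gain in Lorentzian length against the $O(r)$ loss from the Lipschitz term $f$ to get a strictly better competitor. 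This quantitative scaling (length gain $\sim t\sqrt{ar}$ vs.\ $f$-loss $\sim Lr$) is what makes the contradiction work, and it uses only Lipschitzness of $f$, not $d_xf\notin\C_x^*$. Without supplying this computation, the claim ``$y$ stays uniformly bounded away from $\partial J^-(x)$'' remains unproved.

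A minor point: you should also state Lemma~\ref{bbb} (lower semicontinuity of $Tf$, and continuity when the optimizer is in $I^-(x)$) explicitly, since it is used to pass to the limit in the contradiction sequence and to establish the claim $T_tf(x)=\min\{f(z)+c_t(z,x):z\in\partial J^-(x)\setminus\{x\}\}$ when $Tf$ is not a priori known to be continuous at the limit point.
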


Recall (Notation \ref{notation}) that $B_r(x_0)$ denotes the open ball w.r.t.\ the complete metric $h$. As in Section 2, we need some preliminary steps for the proof. We first give the following definition.
\begin{definition}\rm \label{e1}
For $K>0$ let $C(K)>0$ such that
\begin{align*}
L(x,v)\geq K|v|_h-C(K) \text{ for all } (x,v)\in \C.
\end{align*}
Such a constant exists thanks to the uniform superlinearity of $L$ on $\C$ (due to \eqref{splitting}).
\end{definition}

The proof of the next lemma is similar to the proof of Proposition 3.3 in \cite{FathiWK}.

\begin{lemma}\label{t}
   Let $f:M\to \R$ be an $L$-Lipschitz function. Then $Tf$ is everywhere finite. Given $(t,x)\in (0,\infty)\times M$, we have
   \begin{align*}
       T_tf(x)=\inf\{f(y)+c_t(y,x)\mid d_h(y,x) \leq C(L+1)t\},
   \end{align*}
   and the infimum is attained.
   Furthermore,
   \begin{align}
       T_tf(x)<\inf\{f(y)+c_t(y,x)\mid d_h(y,x)> C(L+1)t+1\}. \label{uu}
   \end{align}
\end{lemma}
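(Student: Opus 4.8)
The statement to prove is Lemma \ref{t}: for an $L$-Lipschitz $f:M\to\R$, the evolution $Tf$ is finite everywhere, the infimum defining $T_tf(x)$ can be restricted to $y$ with $d_h(y,x)\le C(L+1)t$ and is attained there, and the strict inequality \eqref{uu} holds when one instead looks at $y$ with $d_h(y,x)>C(L+1)t+1$. The key input is the uniform superlinearity bound from Definition \ref{e1}: $L(x,v)\ge K|v|_h-C(K)$ for all causal $(x,v)$, applied with $K=L+1$. The strategy is to compare, for a fixed competitor $y$, the value $f(y)+c_t(y,x)$ against the trivial competitor $y=x$ (which gives $f(x)+c_t(x,x)=f(x)$, since $c_t(x,x)=0$), and to use the Lipschitz bound on $f$ together with a lower bound on $c_t(y,x)$ coming from the action estimate.

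First I would establish the core estimate. Fix $(t,x)$ and any $y\in J^-(x)$, and let $\gamma:[0,t]\to M$ be a minimizer from $y$ to $x$, so $c_t(y,x)=\LL(\gamma)=\int_0^t L(\gamma,\dot\gamma)\,ds$. By Definition \ref{e1} with $K=L+1$ we get $\LL(\gamma)\ge (L+1)\int_0^t|\dot\gamma|_h\,ds-C(L+1)t\ge (L+1)\,d_h(y,x)-C(L+1)t$, using that $h$-length dominates $h$-distance. On the other hand $f(x)-f(y)\le L\,d_h(y,x)$ by Lipschitzness. Hence
\begin{align*}
f(y)+c_t(y,x)-f(x)\ \ge\ -L\,d_h(y,x)+(L+1)\,d_h(y,x)-C(L+1)t\ =\ d_h(y,x)-C(L+1)t.
\end{align*}
This shows that if $d_h(y,x)>C(L+1)t$ then $f(y)+c_t(y,x)>f(x)\ge T_tf(x)$ is not automatic — rather, it shows such a $y$ contributes a value $>f(x)-0$; more precisely, combining with the competitor $y=x$, the infimum $T_tf(x)\le f(x)$, and any $y$ with $d_h(y,x)>C(L+1)t+1$ gives $f(y)+c_t(y,x)\ge f(x)+1>f(x)\ge T_tf(x)$, which is exactly \eqref{uu}. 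The same computation restricted to $d_h(y,x)\le C(L+1)t$ shows the infimum over that set equals $T_tf(x)$ and is bounded below by $f(x)-C(L+1)t>-\infty$, giving finiteness of $Tf$ (finiteness from above is immediate from the $y=x$ competitor).

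It remains to see the infimum is \emph{attained} on the compact-parameter set $\{d_h(y,x)\le C(L+1)t\}$. Here I would invoke the machinery of Section 2: take a minimizing sequence $y_k$ with $d_h(y_k,x)\le C(L+1)t$ (so $(y_k)$ lies in a $d_h$-bounded, hence by completeness of $h$ precompact, set) and minimizers $\gamma_k:[0,t]\to M$ from $y_k$ to $x$. After passing to a subsequence $y_k\to y_\infty$ with $d_h(y_\infty,x)\le C(L+1)t$, Corollary \ref{ee} gives that the $\gamma_k$ converge in $C^1$ to a minimizer $\gamma_\infty$ from $y_\infty$ to $x$, so $c_t(y_k,x)\to c_t(y_\infty,x)$ by continuity of $c$ on $(0,\infty)\times J^+$; combined with continuity of $f$ this yields $f(y_\infty)+c_t(y_\infty,x)=\lim_k f(y_k)+c_t(y_k,x)=T_tf(x)$, so the infimum is attained at $y_\infty$. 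The only mild subtlety — and the step I'd flag as the one needing care rather than a genuine obstacle — is making sure the competitor $y=x$ is legitimate (it is, since $c_t(x,x)=0$ by the explicit formula, as $d(x,x)=\tau(x)-\tau(x)=0$ and $(x,x)\in J^+$), and that the constant $C(L+1)$ from Definition \ref{e1} is indeed available with $K=L+1$, which is exactly how the definition is set up. Everything else is a direct chaining of the superlinearity bound, the Lipschitz bound, and the compactness result of Corollary \ref{ee}.
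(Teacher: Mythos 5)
Your proof is correct and follows essentially the same strategy as the paper: compare a competitor $y$ against the trivial choice $y=x$, use the superlinearity bound from Definition \ref{e1} with $K=L+1$ together with the Lipschitz bound on $f$ to force near-minimizers into the ball $d_h(y,x)\le C(L+1)t$, and then deduce attainment by compactness. The only cosmetic differences are that you phrase the estimate directly in terms of $c_t$ (via a minimizer) rather than in terms of $\varepsilon$-minimizing curves, and you invoke Corollary \ref{ee} for attainment where the paper merely needs closedness of $J^+$ together with continuity of $c_t$ on $J^+$ — both slightly heavier than necessary but perfectly valid.
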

\begin{proof}
It is clear that $T_tf(x)\leq f(x)<\infty$ for all $(t,x)$. If $(t,x)\in (0,\infty)\times M$, and if $\gamma:[0,t]\to M$ is a causal curve with $\gamma(t)=x$, we have
\begin{align*}
    f(\gamma(0))+\int_0^t L(\gamma(s),\dot \gamma(s))\, ds 
    &\geq f(\gamma(0))+ \int_0^t L |\dot \gamma(s)|_h\, ds-C(L)t
    \\
    &\geq f(\gamma(0))+ L d_h(\gamma(0),\gamma(t))-C(L)t
    \\
    &\geq f(x)-C(L)t.
\end{align*}
Taking the infimum over all possible causal curves, we obtain $T_tf(x)\geq f(x)-C(L)t$, proving the first part of the lemma.

Moreover, if $\ep>0$ and $\gamma:[0,t]\to M$ is a curve with $T_tf(x)\geq f(x)+\int_0^t L(\gamma(s),\dot \gamma(s))\, ds-\ep$, then $\gamma$ must be causal and the same computation as above yields
\begin{align*}
    f(x)\geq T_tf(x)\geq f(x)+d_h(\gamma(0),x)-C(L+1)t-\ep.
\end{align*}
Hence, $d_h(\gamma(0),x)\leq C(L+1)t+\ep$. This proves \eqref{uu} (for $\ep=1$). In particular, any minimizing sequence $(y_k)$ for $T_tf(x)$ is necessarily precompact and converges to some $y\in M$ with $d_h(y,x)\leq C(L+1)t$. By the continuity of $f$, the closedness of $J^+$, and the continuity of $c_t$ on $J^+$, we obtain
\[
T_tf(x)=f(y)+c_t(y,x).
\]
This completes the proof of the lemma.
\end{proof}

\begin{lemma} \label{bbb}
Let $f:M\to \R$ be a Lipschitz function. Then $Tf$ is lower semi-continuous. Moreover, if $(t,x)\in (0,\infty)\times M$ and the infimum in the definition of $T_tf(x)$ is attained at some $y\in I^-(x)$, then $Tf$ is continuous at $(t,x)$.
\end{lemma}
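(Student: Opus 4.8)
The plan is to establish lower semicontinuity of $Tf$ everywhere first, and then, at the distinguished point $(t,x)$, to upgrade this to continuity by exploiting that $y$ is chronologically---not merely causally---below $x$.

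For lower semicontinuity I would fix $(t_0,x_0)\in(0,\infty)\times M$, take a sequence $(s_k,z_k)\to(t_0,x_0)$ realising $\liminf_{(s,z)\to(t_0,x_0)}T_sf(z)$, and show this liminf is $\ge T_{t_0}f(x_0)$. Writing $L$ for a Lipschitz constant of $f$, Lemma \ref{t} provides for each $k$ a point $y_k$ with $d_h(y_k,z_k)\le C(L+1)s_k$ and $T_{s_k}f(z_k)=f(y_k)+c_{s_k}(y_k,z_k)$, so in particular $y_k\in J^-(z_k)$. Since $s_k\to t_0$ and $z_k\to x_0$, the points $y_k$ eventually lie in one fixed bounded $h$-ball, which is compact because $h$ is complete; passing to a subsequence, $y_k\to y_0$, and $y_0\in J^-(x_0)$ since $J^+$ is closed. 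Then, using continuity of $f$, of $s\mapsto 1/s$ on $(0,\infty)$, and of $c$ as a map $(0,\infty)\times J^+\to\R$, one gets
\[
T_{s_k}f(z_k)=f(y_k)+c_{s_k}(y_k,z_k)\longrightarrow f(y_0)+c_{t_0}(y_0,x_0)\ge T_{t_0}f(x_0),
\]
which is the bound we want.

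For continuity at the point $(t,x)$ where the infimum is attained at some $y\in I^-(x)$, it then suffices to check upper semicontinuity there. Because $x\in I^+(y)$ and $I^+(y)$ is open, every $z$ near $x$ still lies in $I^+(y)\subseteq J^+(y)$, so for $(s,z)$ in such a neighbourhood (with $s>0$) we have $T_sf(z)\le f(y)+c_s(y,z)$; letting $(s,z)\to(t,x)$ and again using continuity of $c$ on $(0,\infty)\times J^+$ yields $\limsup_{(s,z)\to(t,x)}T_sf(z)\le f(y)+c_t(y,x)=T_tf(x)$. Together with the lower semicontinuity already proved, this gives continuity of $Tf$ at $(t,x)$.

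I do not expect a serious obstacle here: the argument is essentially bookkeeping on top of Lemma \ref{t} and the continuity of $c$ on $J^+$. The one point that needs care---and the reason the hypothesis $y\in I^-(x)$ cannot be relaxed to $y\in J^-(x)$---is that $c_s$ is finite and continuous only on $(0,\infty)\times J^+$ and equals $+\infty$ off $J^+$; the upper bound therefore requires that perturbations of $x$ stay inside $J^+(y)$, which is exactly what openness of $I^+(y)$ guarantees, while for the lower bound the corresponding difficulty is sidestepped by Lemma \ref{t}, which confines the minimisers to a compact set, together with the closedness of $J^+$.
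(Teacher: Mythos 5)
Your proof is correct and takes essentially the same route as the paper: extract the minimising point $y_k$ via Lemma \ref{t}, use the Lipschitz bound to confine the $y_k$ to a compact set, pass to a limit $y_0\in J^-(x_0)$ (closedness of $J^+$), and invoke continuity of $c$ on $(0,\infty)\times J^+$ for the lower bound, then exploit openness of $I^+(y)$ for the upper bound at the distinguished point. The only cosmetic difference is that you start from a sequence realising the liminf, whereas the paper argues from an arbitrary sequence and passes to a subsequence; both yield the same conclusion.
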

\begin{proof}
Let $(t,x)\in (0,\infty)\times M$, and let $(t_k,x_k)$ a sequence converging to $(t,x)$. For each $k$, choose $y_k\in J^-(x_k)$ such that
\[T_{t_k}f(x_k)=f(y_k)+c_{t_k}(y_k,x_k).\]
By the preceding lemma, the sequence $(y_k)$ is relatively compact. Thus, by passing to a subsequence if necessary, we can assume that $y_k\to y$. Since $J^+$ is closed, $(y,x)\in J^+$. This, together with the continuity of $c$ gives
\begin{align}
\lim_{k\to \infty} T_{t_k}f(x_k)= f(y)+c_t(y,x)\geq T_tf(x). \label{p}
\end{align}
Since this holds true for any sequence, this proves the lower semi-continuity. 

Now, suppose that the infimum in the definition of $T_tf(x)$ is attained at some point $y\in I^-(x)$. For large $k$, it follows that $y\in I^-(x_k)$. Therefore, we have
\begin{align*}
T_{t_k}f(x_k)\leq f(y)+c_{t_k}(y,x_k)\xrightarrow{k\to \infty} f(y)+c_t(y,x)=T_tf(x).
\end{align*}
Together with \eqref{p}, this proves the continuity at $(t,x)$.
\end{proof}

\begin{proposition}
Let $f:M\to \R$ be $C^1$ and $L$-Lipschitz. Let $x_0\in M$ and suppose that $d_{x_0}f\in T_{x_0}^*M\backslash \C_{x_0}^*$, i.e.\ there exists $v\in \C_{x_0}$ with $d_{x_0}f(v)>0$. Given $t_0>0$, there exists a constant $\delta>0$ and a neighbourhood $U$ of $x_0$ such that, for every $x\in U$ and $t\geq t_0$, it holds
\begin{align}
        T_tf(x)<\inf\{&f(y)+c_t(y,x)\mid  \nonumber 
        \\
        &d_h(y,x)> C(L+1)t+1 \text{ or }  d_h(y,x) < \delta\}. \label{e''}
    \end{align}
\end{proposition}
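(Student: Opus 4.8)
The plan is to reduce the statement to a single uniform upper bound on $T_tf$ and then invoke Lemma~\ref{t} for the remainder. The inequality for $y$ with $d_h(y,x)>C(L+1)t+1$ is exactly \eqref{uu}, so what is left is to produce a radius $\delta>0$ and a neighbourhood $U$ of $x_0$ with $T_tf(x)<f(y)+c_t(y,x)$ whenever $x\in U$, $t\geq t_0$ and $d_h(y,x)<\delta$. Since $c_t\geq 0$ and $f$ is $L$-Lipschitz — with $L>0$, because $d_{x_0}f\neq 0$ as $0\in\C_{x_0}^*$ — for such $y$ one has $f(y)+c_t(y,x)\geq f(x)-L\,d_h(y,x)>f(x)-L\delta$ (the left-hand side being $+\infty$ when $y\notin J^-(x)$). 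Hence it suffices to find a constant $c_0>0$ with $T_tf(x)\leq f(x)-c_0$ for all $x\in U$ and $t\geq t_0$, and then take $\delta:=c_0/(2L)$.

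To get such a $c_0$, I would build a cheap competitor for $T_tf(x)=\inf_y\{f(y)+c_t(y,x)\}$ that uses the hypothesis. By assumption there is $v\in\C_{x_0}$ with $d_{x_0}f(v)>0$; since $\{p:d_{x_0}f(p)>0\}$ is open and $\op{int}(\C_{x_0})$ is dense in $\C_{x_0}$, I may take $v$ timelike. Because $L(x_0,\lambda v)=\lambda^2 L(x_0,v)$ while $d_{x_0}f(\lambda v)=\lambda\,d_{x_0}f(v)$, after replacing $v$ by $\lambda v$ for a small $\lambda>0$ I may also assume $L(x_0,v)\leq\tfrac{a}{4}$, where $2a:=d_{x_0}f(v)>0$. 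Extend $v$ to a vector field $W$ near $x_0$ (say constant in a chart) with $W(x_0)=v$, set $\eta:=t_0$, and choose $\lambda$ small and a neighbourhood $U\ni x_0$ small enough that for every $x\in U$ the integral curve $\beta_x\colon[0,\eta]\to M$ of $W$ with $\beta_x(\eta)=x$ is defined and stays inside a fixed small set on which $W$ is timelike and where $L(y,W(y))\leq\tfrac{a}{2}$ and $d_yf(W(y))\geq a$ for all $y$; this is possible by continuity, since all three estimates hold with strict slack at $x_0$.

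Then, for $x\in U$ and $t\geq t_0=\eta$, put $y_x:=\beta_x(0)\in I^-(x)$ and let $\gamma_x\colon[0,t]\to M$ be constant equal to $y_x$ on $[0,t-\eta]$ and equal to $\beta_x(\,\cdot\,-(t-\eta))$ on $[t-\eta,t]$. This is a causal curve joining $y_x$ to $x$ in time $t$; since $L(y_x,0)=0$, its action is $\LL(\beta_x)=\int_0^\eta L(\beta_x,\dot\beta_x)\,ds\leq\tfrac{a\eta}{2}$, hence $c_t(y_x,x)\leq\tfrac{a\eta}{2}$, whereas $f(x)-f(y_x)=\int_0^\eta d_{\beta_x(s)}f(W(\beta_x(s)))\,ds\geq a\eta$. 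Therefore $T_tf(x)\leq f(y_x)+c_t(y_x,x)\leq f(x)-\tfrac{a\eta}{2}$, so $c_0:=\tfrac{a\eta}{2}$ works; and then, for $d_h(y,x)<\delta=c_0/(2L)$, one gets $f(y)+c_t(y,x)>f(x)-\tfrac{c_0}{2}>f(x)-c_0\geq T_tf(x)$ with a uniform gap. Together with \eqref{uu}, this is \eqref{e''}.

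The crux — and the reason a new argument is needed here rather than the classical semiconcavity estimates for Tonelli systems — is the uniformity in $t\geq t_0$: it is exactly for this that the competitor is taken \emph{constant} on the long initial interval $[0,t-\eta]$, using that $L$ vanishes on zero vectors, so that $c_0$ depends only on $t_0$ and not on $t$. The second, minor, point is the rescaling $v\mapsto\lambda v$: the $f$-gain along the moving part is linear in the velocity and its action quadratic, so shrinking the velocity makes the gain beat the cost — which is legitimate because $L$, though smooth only on $\op{int}(\C)$, is continuous on all of $\C$ and the chosen velocities stay timelike, so every quantity varies continuously with the base point.
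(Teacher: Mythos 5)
Your proof is correct, and the overall skeleton matches the paper's: reduce to producing a uniform gap $T_tf(x)\leq f(x)-c_0$ on a neighbourhood of $x_0$, independent of $t\geq t_0$, and then use $c_t\geq 0$ together with the Lipschitz bound on $f$ to defeat all $y$ with $d_h(y,x)<\delta$ (the far regime being already covered by Lemma~\ref{t}). Where you diverge from the paper is in \emph{how} the gap is produced. The paper does not construct a competitor curve: it fixes a chart direction $w=d_{x_0}\phi(v)$ and estimates the cost of the displacement $z\mapsto z-sw$ directly from the explicit Lorentzian formula, $c_t(y,x)\leq\bigl(\tau(x)-\tau(y)\bigr)^2/t\leq C_1^2s^2|w|^2/t_0$, so the cost is visibly quadratic in the small parameter $s$ while the $f$-gain is linear in $s$ by the mean value theorem, and the factor $1/t$ delivers the uniformity in $t$ for free. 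You instead build an explicit causal competitor of duration $t$: a constant initial phase of duration $t-t_0$ (zero action, because $L(x,0)=0$) glued to a short timelike arc of fixed duration $t_0$ along an integral curve of a constant-in-chart field $W$, and you make the arc's action lose to the linear $f$-gain by the quadratic homogeneity $L(x,\lambda v)=\lambda^2L(x,v)$ of the Lorentzian Lagrangian after rescaling $v\mapsto\lambda v$. These are the same linear-versus-quadratic balance in a single small parameter ($s$ for the paper, $\lambda$ for you), and both achieve the $t$-uniformity through what is really the monotonicity $c_t\leq c_{t_0}$ — the paper reads it off the $1/t$ factor, you encode it in the idle initial phase. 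Your version leans on structural facts about $L$ ($L(x,0)=0$, continuity on $\C$, quadratic homogeneity) rather than the closed-form expression for $c_t$, so it is slightly more portable; the paper's is slightly leaner since it avoids any reasoning about curves or flows. Both are sound and of comparable length, and the uses of the hypothesis $d_{x_0}f\notin\C_{x_0}^*$ (to find a strictly timelike cheap direction and give continuity room) coincide.
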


\begin{proof}
 By Lemma \ref{t}, we know that for all $(t,x)\in (0,\infty)\times M$, the following holds:
 \begin{align*}
        T_tf(x)<\inf\{f(y)+c_t(y,x)\mid d_h(y,x)> C(L+1)t+1\}.
    \end{align*}
    Thus, it suffices to prove that there exists a neighbourhood $U$ of $x_0$ and $\delta>0$ such that
    \begin{align*}
        T_tf(x)<\inf\{f(y)+c_t(y,x)\mid d_h(y,x)<\delta\}
    \end{align*}
    for $x\in U$, $t\geq t_0$.

  Let $v\in \op{int}(\C_{x_0})$ such that $d_{x_0}f(v)>0$ (such a $v$ exists by the assumption $d_{x_0}f\in T_{x_0}^*M\backslash \C_{x_0}^*$). Let $(V,\phi)$ be a chart around $x_0$ and let $w:=d_{x_0}\phi(v)$ and $z_0:=\phi(x_0)\in \R^n$. It is easy to see that there exist two constants $r,T>0$ such that $B_{r+T|w|}(z_0)\subseteq \subseteq \phi(V)$,
  \begin{align}
  \inf_{z\in B_{r+T|w|}(z_0)} d_z(f\circ \phi^{-1})(w)=:M>0, \label{a11}
  \end{align}
  and such that, for $z\in B_r(z_0)$, the curve
    \begin{align*}
        [0,T]\to M,\ s\mapsto \phi^{-1}(z-Tw+sw),
    \end{align*}
    is timelike. In particular, $\phi^{-1}(z-sw)\in I^-(\phi^{-1}(z))$ sor $0<s\leq T$.
   
    Let $z\in B_r(z_0)$ and $t\geq t_0$. For $0<s\leq T$, denoting by $C_1$ a Lipschitz constant of $\tau \circ \phi^{-1}$ on $B_{r+T|w|}(z_0)$, we have
    \begin{align*}
        c_t(\phi^{-1}(z-sw),\phi^{-1}(z))
        \leq \frac{(\tau(\phi^{-1}(z))
        -\tau(\phi^{-1}(z-sw)))^2}{t}
        &\leq \frac{C_1^2  s^2|w|^2}{t}
        \\[10pt]
        &\leq \frac{C_1^2  s^2|w|^2}{t_0}.
    \end{align*}
    Using \eqref{a11} and the mean value theorem, we obtain
    \begin{align}
        &f(\phi^{-1}(z-sw))+c_t(\phi^{-1}(z-sw),\phi^{-1}(z)) \nonumber
        \\
        &\leq f(\phi^{-1}(z))-sM+\frac{C_1}{t_0}s^2|w|^2. \label{g}
    \end{align}
  Now, pick $0<s\leq T$ such that $-sM+\frac{C_1}{t_0}s^2|w|^2<0$. Then, if $\delta>0$ is such that 
    \[
    |f(y)-f(x)| <  -sM+\frac{C_1}{t_0}s^2|w|^2 \text{ for all } x,y\in B_{2\delta}(x_0) , 
    \]
 inequality \eqref{g} implies 
    \begin{align*}
        T_tf(\phi^{-1}(z))< \inf\{f(y)+c_t(y,\phi^{-1}(z))\mid d_h(y,\phi^{-1}(z))\leq \delta\}
  \end{align*}
 for $z\in B_r(z_0)$ such that and $\phi^{-1}(z)\in B_\delta(x_0)$. This proves \eqref{e''} for $U:=\phi^{-1}(B_r(z_0))\cap B_\delta(x_0)$.
\end{proof}

\begin{corollary}
    Let $f:M\to \R$ be $C^1$ and $L$-Lipschitz. Let $x_0\in M$ and suppose that $r>0$ is such that for every $x\in \overbar B_{r}(x_0)$, $d_xf\in T_x^*M\backslash \C_x^*$, i.e. there is $v\in \C_x$ with $d_xf(v)>0$.
    
    Given $t_0>0$, there exists a constant $\delta>0$ such that for every $x\in \overbar B_r(x_0)$ and $t\geq t_0$ we have
    \begin{align}
        T_tf(x)<\inf\{&f(y)+c_t(y,x)\mid \nonumber
        \\
        &d_h(y,x)> C(L+1)t+1 \text{ or }  d_h(y,x) < \delta\}. \label{e'}
    \end{align} 
\end{corollary}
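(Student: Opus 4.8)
The plan is to upgrade the preceding proposition—which produces, for a single point $x_0$, a neighbourhood $U$ and a constant $\delta>0$ on which the infimum in the definition of $T_tf$ avoids small distances—to a uniform statement over the compact set $\overbar B_r(x_0)$. First I would note that, by hypothesis, $d_xf\in T_x^*M\setminus\C_x^*$ for every $x\in\overbar B_r(x_0)$, so the proposition applies at each such $x$: it yields an open neighbourhood $U_x\ni x$ and a constant $\delta_x>0$ such that, for all $x'\in U_x$ and all $t\geq t_0$,
\begin{align*}
T_tf(x')<\inf\{f(y)+c_t(y,x')\mid d_h(y,x')>C(L+1)t+1 \text{ or } d_h(y,x')<\delta_x\}.
\end{align*}
The family $\{U_x\mid x\in\overbar B_r(x_0)\}$ is an open cover of the compact set $\overbar B_r(x_0)$, hence admits a finite subcover $U_{x_1},\dots,U_{x_N}$ with associated constants $\delta_{x_1},\dots,\delta_{x_N}$.

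Next I would set $\delta:=\min\{\delta_{x_1},\dots,\delta_{x_N}\}>0$ and check that this uniform $\delta$ works. Let $x\in\overbar B_r(x_0)$ and $t\geq t_0$ be arbitrary. Then $x\in U_{x_i}$ for some $i\in\{1,\dots,N\}$, so the proposition (applied at $x_i$) gives
\begin{align*}
T_tf(x)<\inf\{f(y)+c_t(y,x)\mid d_h(y,x)>C(L+1)t+1 \text{ or } d_h(y,x)<\delta_{x_i}\}.
\end{align*}
Since $\delta\leq\delta_{x_i}$, the set $\{y\mid d_h(y,x)<\delta\}$ is contained in $\{y\mid d_h(y,x)<\delta_{x_i}\}$, so the infimum over the former (together with the far part $d_h(y,x)>C(L+1)t+1$) is taken over a smaller set and is therefore no smaller than the one above; hence
\begin{align*}
T_tf(x)<\inf\{f(y)+c_t(y,x)\mid d_h(y,x)>C(L+1)t+1 \text{ or } d_h(y,x)<\delta\},
\end{align*}
which is exactly \eqref{e'}. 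This completes the argument.

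There is essentially no obstacle here: the only thing to be careful about is the direction of the monotonicity of the infimum under shrinking the index set—passing from $\delta_{x_i}$ to the smaller $\delta$ removes the points with $\delta\leq d_h(y,x)<\delta_{x_i}$ from the competing family, so the infimum can only increase, which is the direction we want for a strict upper bound on $T_tf(x)$. The compactness of $\overbar B_r(x_0)$ (as a closed ball in the complete metric $h$, hence compact by Hopf–Rinow) is what turns the pointwise-in-$x$ constants $\delta_x$ into a single uniform $\delta$, and the uniformity in $t\geq t_0$ is already built into the proposition.
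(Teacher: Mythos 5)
Your proposal is correct and follows essentially the same route as the paper: the paper's proof simply says the corollary follows from the previous proposition together with the compactness of $\overbar B_r(x_0)$, and you have spelled out exactly that compactness/finite-subcover argument, including the (correct) monotonicity check that shrinking $\delta$ only enlarges the infimum.
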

\begin{proof}
This follows immediately from the previous proposition and the compactness of $\overbar B_r(x_0)$. 
\end{proof}

\begin{proposition}\label{i1}
   Let $f:M\to \R$ be $C^1$ and $L$-Lipschitz. Let $x_0\in M$ and suppose that $r>0$ is such that for every $x\in \overbar B_{r}(x_0)$, $d_xf\in T_x^*M\backslash \C_x^*$.
    
    Given $t_1>t_0>0$, there exists a constant $\delta>0$ such that for every $x\in B_r(x_0)$ and $t_1\geq t\geq t_0$ we have
    \begin{align*}
        T_tf(x)<\inf\{f(y)+c_t(y,x)\mid d_h(y,x)> C(L+1)t+1 \text{ or }   d(y,x) <\delta\}.
    \end{align*} 
\end{proposition}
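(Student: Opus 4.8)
The plan is to combine the preceding corollary---which already pushes the minimizing $y$ away from $x$ and away from infinity---with a compactness argument that additionally keeps $y$ away from the null boundary $\partial J^-(x)$. First, apply the preceding corollary with $t_0$ to get $\delta_1>0$ such that for every $x\in\overbar B_r(x_0)$ and $t\geq t_0$ one has $T_tf(x)<\inf\{f(y)+c_t(y,x)\mid d_h(y,x)>C(L+1)t+1 \text{ or } d_h(y,x)<\delta_1\}$. By Lemma \ref{t} the infimum defining $T_tf(x)$ is attained, and only by $y$ with $d_h(y,x)\leq C(L+1)t$; hence it is attained by some $y$ with $\delta_1\leq d_h(y,x)\leq C(L+1)t$. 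It then remains to find $\delta>0$ such that, for $x\in B_r(x_0)$ and $t\in[t_0,t_1]$,
\[
T_tf(x)<\inf\{f(y)+c_t(y,x)\mid d_h(y,x)>C(L+1)t+1\ \text{ or }\ d(y,x)<\delta\},
\]
with $d$ the Lorentzian distance, and I would prove this by contradiction.

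So suppose no such $\delta$ works: for every $k$ there are $t_k\in[t_0,t_1]$, $x_k\in B_r(x_0)$, $y_k\in M$ with $d_h(y_k,x_k)>C(L+1)t_k+1$ or $d(y_k,x_k)<1/k$, and $f(y_k)+c_{t_k}(y_k,x_k)\leq T_{t_k}f(x_k)+1/k$. Estimate \eqref{uu} of Lemma \ref{t} rules out the first alternative (it forces $f(y_k)+c_{t_k}(y_k,x_k)>T_{t_k}f(x_k)+1$), so $d(y_k,x_k)<1/k$. Arguing as in the proof of Lemma \ref{t} (via Definition \ref{e1} and $T_{t_k}f\leq f$) one gets $d_h(y_k,x_k)\leq C(L+1)t_1+1$; since $h$ is complete we may pass to a subsequence with $t_k\to t_\infty\in[t_0,t_1]$, $x_k\to x_\infty\in\overbar B_r(x_0)$, $y_k\to y_\infty$. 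As $J^+$ is closed and $d$ is continuous, $y_\infty\in J^-(x_\infty)$ and $d(y_\infty,x_\infty)=0$, i.e.\ $y_\infty\in\partial J^-(x_\infty)$.

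The crux is then to produce $\tilde y\in I^-(x_\infty)$ with $f(\tilde y)+c_{t_\infty}(\tilde y,x_\infty)<f(y_\infty)+c_{t_\infty}(y_\infty,x_\infty)$. If $y_\infty=x_\infty$, then $c_{t_\infty}(y_\infty,x_\infty)=0$ and, since $x_\infty\in\overbar B_r(x_0)$, there is $V\in\op{int}(\C_{x_\infty})$ with $d_{x_\infty}f(V)>0$; a Taylor expansion of $\tilde y_\ep:=\exp_{x_\infty}(-\ep V)\in I^-(x_\infty)$ (using $d(\tilde y_\ep,x_\infty)=\ep|V|_g+o(\ep)$, $\tau(x_\infty)-\tau(\tilde y_\ep)=\ep\, d_{x_\infty}\tau(V)+o(\ep)$, hence $c_{t_\infty}(\tilde y_\ep,x_\infty)=O(\ep^2)$) yields $f(\tilde y_\ep)+c_{t_\infty}(\tilde y_\ep,x_\infty)=f(x_\infty)-\ep\, d_{x_\infty}f(V)+o(\ep)<f(x_\infty)$ for $\ep$ small. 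If $y_\infty\neq x_\infty$, set $a:=\tau(x_\infty)-\tau(y_\infty)$; then $a\geq d_h(y_\infty,x_\infty)>0$ by \eqref{splitting}, and $c_{t_\infty}(y_\infty,x_\infty)=a^2/t_\infty$. Here the key geometric input is that along a null geodesic $\sigma$ from $y_\infty$ to $x_\infty$ no interior point is conjugate to $y_\infty$ (otherwise $y_\infty\ll x_\infty$, contradicting $d(y_\infty,x_\infty)=0$); fixing a small $s_0>0$ and $q:=\sigma(s_0)$, the map $p\mapsto d(p,q)^2=-g_q(\exp_q^{-1}p,\exp_q^{-1}p)$ is smooth near $y_\infty$, vanishes on $\partial J^-(q)$, and has nonzero gradient there pointing into $I^-(q)\subseteq I^-(x_\infty)$; choosing $p_n\to y_\infty$ in $I^-(q)$ along that direction gives $d(p_n,x_\infty)\geq d(p_n,q)\geq\alpha\sqrt{d_h(p_n,y_\infty)}$ for some $\alpha>0$ (reverse triangle inequality plus the square-root behaviour of $d(\cdot,q)$ at the null boundary). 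Writing $\ell_n:=d_h(p_n,y_\infty)\to0$ and $d_n:=d(p_n,x_\infty)\to0$, and using that $f$ is $L$-Lipschitz and $\tau$ locally Lipschitz, $c_{t_\infty}(p_n,x_\infty)=\tfrac1{t_\infty}(a+O(\ell_n)-d_n)^2$, hence $f(p_n)+c_{t_\infty}(p_n,x_\infty)\leq f(y_\infty)+\tfrac{a^2}{t_\infty}-\tfrac{a}{t_\infty}d_n+O(\ell_n)$; since $d_n\geq\alpha\sqrt{\ell_n}\gg\ell_n$, the right-hand side is $<f(y_\infty)+a^2/t_\infty$ for $n$ large, and we take $\tilde y:=p_n$.

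Finally, $\tilde y\in I^-(x_\infty)$ gives $\tilde y\in J^-(x_k)$ for large $k$, so $T_{t_k}f(x_k)\leq f(\tilde y)+c_{t_k}(\tilde y,x_k)$ and, by continuity of $c$ on $(0,\infty)\times J^+$, $\limsup_k T_{t_k}f(x_k)\leq f(\tilde y)+c_{t_\infty}(\tilde y,x_\infty)$; on the other hand $T_{t_k}f(x_k)\geq f(y_k)+c_{t_k}(y_k,x_k)-1/k$ gives $\liminf_k T_{t_k}f(x_k)\geq f(y_\infty)+c_{t_\infty}(y_\infty,x_\infty)$. With the inequality of the previous paragraph these two bounds are incompatible, so the required $\delta$ exists. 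I expect the main obstacle to be precisely the case $y_\infty\neq x_\infty$: ruling out that the limiting minimizer sits on the null cone is a genuinely Lorentzian point, resting on the square-root growth of the Lorentzian distance near $\partial J^-(x_\infty)$ (equivalently, on the behaviour of the Lorentzian exponential away from conjugate points), which has no analogue in the Tonelli setting.
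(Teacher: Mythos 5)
Your proposal is correct and follows the same overall strategy as the paper: argue by contradiction, extract a convergent subsequence $(t_k,x_k,y_k)\to(t_\infty,x_\infty,y_\infty)$ with $y_\infty\in\partial J^-(x_\infty)$, and then produce a competitor $\tilde y\in I^-(x_\infty)$ that is strictly cheaper by exploiting the square-root growth $d(\tilde y,x_\infty)\gtrsim\sqrt{d_h(\tilde y,y_\infty)}$ of the Lorentzian distance near the null boundary. The key geometric step is done differently, though. The paper constructs an explicit variation $c(r,s)$ of a null geodesic $\gamma:[0,t]\to M$ from $z:=y_\infty$ to $x_\infty$, with variational vector field $\tilde\xi(s)=(t-s)\xi(s)$ for a parallel $\xi$ with $g(\xi(0),\dot\gamma(0))>0$, and directly computes $\tfrac{d}{dr}|_{r=0}\,g(\partial_s c,\partial_s c)<0$ to deduce $L_g(c(r,\cdot))\geq t\sqrt{ar}$ — a self-contained second-variation argument. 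You instead appeal to the absence of interior conjugate points on the maximizing null geodesic and the resulting smoothness of the world function $\Gamma(p):=-g_q(\exp_q^{-1}p,\exp_q^{-1}p)$ based at an interior point $q=\sigma(s_0)$; this is morally equivalent but requires a little care that you only sketch: the identity $d(p,q)^2=\Gamma(p)$ for $p\in I^-(q)$ near $y_\infty$ needs the exponential geodesic to be the maximizer (which holds once $s_0$ is chosen small enough that $y_\infty$ lies in a convex normal neighbourhood of $q$), and the object that is actually smooth is $\Gamma$, not $d(\cdot,q)^2=\max\{0,\Gamma\}$. Two smaller deviations from the paper: (1) you treat a separate case $y_\infty=x_\infty$ using $d_{x_\infty}f(V)>0$; the paper avoids this case entirely by building the lower bound $\delta'\leq d_h(y_k,x_k)$ from the preceding corollary into the contradiction sequence (in fact your extra case essentially reproves that corollary, so you could have used the lower bound you already extracted in your first paragraph and skipped it). (2) Your conclusion via a $\limsup$/$\liminf$ comparison of $T_{t_k}f(x_k)$ is a clean alternative to the paper's intermediate claim that $T_{t_\infty}f(x_\infty)$ equals the minimum of $f(z)+c_{t_\infty}(z,x_\infty)$ over $z\in\partial J^-(x_\infty)\setminus\{x_\infty\}$ (established through the continuity dichotomy in Lemma~\ref{bbb}). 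Both routes are valid; the paper's second-variation argument has the advantage of not invoking the exponential map or conjugate-point theory, while yours is shorter to state but leans on more machinery than it spells out.
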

\begin{proof}
    Recall that, from the above corollary, there exists $\delta'>0$ such that for $(t,x)\in [t_0,\infty)\times \overbar B_r(x_0)$ we have
    \begin{align}
        T_tf(x)<\inf\{&f(y)+c_t(y,x)\mid \nonumber
        \\
        &d_h(y,x)> C(L+1)t+1 \text{ or }  d_h(y,x) < \delta'\}. \label{dd}
    \end{align}
Let us assume, by contradiction, that there is a sequence $(t_k,x_k)\in [t_0,t_1]\times B_r(x_0)$ and a sequence $(y_k)$ with $\delta'\leq d_h(y_k,x_k)\leq C(L+1)t_k+1$ and $d(y_k,x_k)\to 0$ such that
\begin{align}
f(y_k)+c_{t_k}(y_k,x_k)\leq T_{t_k}f(x_k)+1/k. \label{e}
\end{align}
In particular, $y_k\in J^-(x_k)$.
By the completeness of $M$ w.r.t.\ the metric $h$, it follows that, along a subsequence that we do not relabel, $x_k\to x\in \overbar B_r(x_0)$, $y_k\to y\in \partial J^-(x)\backslash \{x\}$ and $t_k\to t\in [t_0,t_1]$. 

We claim that 
\begin{align}
T_tf(x)=\min\{f(z)+c_t(z,x)\mid z\in \partial J^-(x)\backslash \{x\}\}. \label{q}
\end{align}
In fact, if $Tf$ is continuous at $(t,x)$, then it follows from \eqref{e} that
\begin{align*}
T_tf(x)=f(y)+c_t(y,x).
\end{align*}
Since $y\in \partial J^-(x)\backslash \{x\}$, this proves \eqref{q} in the first case. 
In the second case, assume that $Tf$ is not continuous at $(t,x)$. By Lemma \ref{t} and \eqref{dd}, there exists $z\in J^-(x)\backslash \{x\}$ attaining the infimum in the definition of $T_tf(x)$. However, due to the fact that $Tf$ is discontinuous at $(t,x)$, Lemma \ref{bbb} implies that $z\in \partial J^-(x)\backslash \{x\}$, proving the claim. We have proved that there always exists a minimizer in \eqref{q}, which we will denote by $z\in \partial J^-(x)$.

Let $\gamma:[0,t]\to M$ be a causal null geodesic connecting $z$ with $x$. Choose $\xi\in T_zM$ with $g(\dot \gamma(0),\xi)>0$. Let $\xi:[0,t]\to TM$ the parallel vector field along $\gamma$ with $\xi(0)=\xi.$ Consider the vector field $\tilde \xi(s):=(t-s)\xi(s)$. We can now pick a smooth variation $c:(-\ep,\ep)\times [0,t]\to M$ of $\gamma$ such that its variational vector field is $\tilde \xi$ and such that $c(r,t)=\gamma(t)=x$ for all $r\in (-\ep,\ep)$. Our goal is to show that, for $r$ small enough,
\begin{align*}
    f(c(r,0))+c_t(c(r,0),x)<f(z)+c_t(z,x),
\end{align*}
contradicting the definition of $z$ and, thus, proving the claim. 

We compute
    \begin{align*}
        \frac{d}{dr}\Big\vert_{r=0} g(\partial_s c(r,s),\partial_s c(r,s))
        &= 2 g\bigg(\frac{\nabla}{dr}\Big\vert_{r=0}\frac{\partial c}{\partial s}(r,s),\frac{\partial c}{\partial s}(0,s)\bigg)
        \\[10pt]
        &=
        2 g\bigg(\frac{\nabla}{ds}\frac{\partial c}{\partial r}(0,s),\dot \gamma(s)\bigg)
        \\[10pt]
        &=
        2 g\bigg(\frac{\nabla}{ds}((t-s)\xi(s)),\dot \gamma(s)\bigg)
        \\[10pt]
        &= -2 g(\xi(s),\dot \gamma(s))
        \\[10pt]
        &= -2 g(\xi(0),\dot \gamma(0))=:-2a<0,
    \end{align*}
    where in the step to the last line we used the fact that $\xi$ and $\dot \gamma$ are parallel along $\gamma$.
    Thus, Taylor expansion and the fact that $\gamma$ is a null geodesic yield
    \begin{align}
        g(\partial_s c(r,s),\partial_s c(r,s))\leq -|\dot \gamma(s)|^2_g -2ar +O(r^2)\leq -ar \label{hhhh}
    \end{align}
    for small values $r>0$. In particular, $c(r,\cdot)$ is either (future-directed) timelike or past-directed timelike for small $r$.
    Since $z\neq x$ and $z\in J^-(x)$ we must have $z\notin J^+(x)$. Thus $z\approx c(r,0)\notin J^+(x)$ for small $r$, so that $c(r,\cdot)$ is in fact (future-directed) timelike for small $r$.
This, together with
\begin{align*}
    L_g(c(r,\cdot))\geq t \sqrt{ar},
\end{align*}
as follows from \eqref{hhhh}, gives
\begin{align*}
    d(c(r,0),x)\geq  t\sqrt{ar}.
\end{align*}
On the other hand, since $\partial_r c(0,0)=t\xi$, it holds $d_h(c(r,0),z)\leq 2t|\xi|_hr$, if $r$ is small.

Recall that $L$ is a Lipschitz constant for $f$. It follows for small $r>0$ that
    \begin{align*}
        &f(c(r,0))+c_t(c(r,0),x)
        \\[10pt]
        \leq \ & f(z) +L d_h(c(r,0),z)+\frac{(\tau(x)-\tau(c(r,0))-d(c(r,0),x))^2}{t}
        \\[10pt]
        \leq \ &f(z) +2Lt|\xi|_hr +\frac{\Big([\tau(x)-\tau(z)]+[\tau(z)-\tau(c(r,0))]-[d(c(r,0),x)]\Big)^2}{t}
        \\[10pt]
        \leq \ &f(z)+2Lt|\xi|_hr +\frac{\Big(\tau(x)-\tau(z)+2\op{Lip}(\tau)t|\xi|_hr-t\sqrt{ar}\Big)^2}{t}
    \end{align*}
   In the last line, we abused the notation $\op{Lip}(\tau)$ to denote a Lipschitz constant of $\tau$ in a small neighbourhood of $z$. For small values of $r$, the sum of the last two quantities in the bracket is less than $-r^{3/4}$. Thus,
    \begin{align*}
        f(c(r,0))+c_t(c(r,0),x)
         \leq 
        f(z)+2Lt|\xi|_hr +\frac{(\tau(x)-\tau(z)-r^{3/4})^2}{t}
    \end{align*}
    Since $z\in \partial J^-(x)$, it holds $c_t(z,x)=(\tau(x)-\tau(z))^2/t$. Then, factoring out gives
    \begin{align*}
         &f(c(r,0))+c_t(c(r,0),x)
         \\[10pt]
         \leq \ & f(z)+2Lt|\xi|_hr+ c_t(z,x)-\frac 2t(\tau(x)-\tau(z))r^{3/4}+\frac{r^{3/2}}t \nonumber
        \\[10pt]
        \leq\ 
        & f(z)+c_t(z,x) +\left(2Lt|\xi|_hr-\frac 2t(\tau(x)-\tau(z))r^{3/4}+\frac{r^{3/2}}t\right).
    \end{align*}
    Clearly, this quantity is less than $f(z)+c_t(z,x)$, if $r$ is small enough (note that at this point we use $z\in J^-(x)\backslash \{x\}$ since then $\tau(x)-\tau(z)>0$). This is a contradiction to the definition of $z$.
\end{proof}

We are now in the position to prove Theorem \ref{cc}. With the tools we have in hand by now, the proof is similar to the proof of Corollary 4.23 in \cite{FathiHJ}.

\begin{proof}[Proof of Theorem \ref{cc}]
Let $t_1>t_0>0$ be given. The foregoing lemma states that there exists $\delta>0$ such that for all $x\in B_r(x_0)$ and $t_1\geq t\geq t_0$, we have
    \begin{align*}
        T_tf(x)=\inf\{f(y)+c_t(y,x)\mid d_h(y,x)\leq C(L+1)t+1,\  d(y,x) \geq\delta\}.
    \end{align*} 
One argues as in the proof of Theorem \ref{m} to show that $Tf$ is locally semiconcave on $(t_0,t_1)\times B_r(x_0)$, and, hence, locally semiconcave on $(0,\infty)\times B_r(x_0)$.

    If $x\in B_r(x_0)$ and $t>0$ are given, and $y\in M$ is such that $T_tf(x)=f(y)+c_t(y,x)$, then the above proposition tells us that necessarily $y\in I^-(x)$. Let $\gamma:[0,t]\to M$ be a (timelike) minimizer connecting $y$ to $x$. It is a consequence from Lemma \ref{ddd} that a super-differential for $Tf$ at $(t,x)$ is given by
    \begin{align*}
        \left(\partial_t c_{t}(x,y),\frac{\partial L}{\partial v}(y,\dot \gamma(t))\right).
    \end{align*}

    If $Tf$ is differentiable at $(t,x)$, then there exists a unique super-differential. Given two minimizers $\gamma_1,\gamma_2:[0,t]\to M$ with $\gamma_1(t)=\gamma_2(t)=x$ such that both $\gamma_1(0)$ and $\gamma_2(0)$ attain the infimum in the definition of $T_tf(x)$, they must be timelike. Hence, from what we just proved,
    
\begin{align*}
     &\left(\partial_t c_{t}(\gamma_1(0),x),\frac{\partial L}{\partial v}(x,\dot \gamma_1(t))\right)\in \partial^+(Tf)(t,x),\
     \\[10pt]
     &\left(\partial_t c_{t}(\gamma_2(0),x),\frac{\partial L}{\partial v}(x,\dot \gamma_2(t))\right)\in \partial^+(Tf)(t,x).
\end{align*}
In particular, both super-differentials have to coincide. Since the Legendre transform is a diffeomorphism by Lemma \ref{hhhhh}, it follows that $\dot \gamma_1(t)=\dot \gamma_2(t)$. The unique integrability of the Euler-Lagrange flow implies $\gamma_1=\gamma_2$.

    Conversely, assume that there exists exactly one minimizing curve $\gamma:[0,t]\to M$ with $\gamma(t)=x$ such that $T_tf(x)=f(\gamma(0))+c_t(\gamma(0),x)$. Let $y:=\gamma(0)$. Since $Tf$ is locally semiconcave in a neighborhood of $(t,x)$, it is differentiable almost everywhere and the set of super-differentials at $(t,x)$ is given by the closure of the convex hull of reaching gradients:
    \begin{align*}
        \partial^+(Tf)(t,x)=\overline {\op{conv}\left\{\lim_{k\to \infty} \left(\partial_t c_{t_k}(y_k,x_k),\frac{\partial L}{\partial v}(x_k,\dot \gamma_k(t_k))\right)\right\}},
    \end{align*}
    where we take all sequences $(t_k,x_k)\in  (0,\infty)\times B_r(x_0)$ converging to $(t,x)$ such that $Tf$ is differentiable at $(t_k,x_k)$. Here, $y_k:=\gamma_k(0)$ and $\gamma_k:[0,t_k]\to M$ is the unique minimizing curve with $\gamma_k(t_k)=x_k$ such that $T_{t_k}f(x_k)=f(y_k)+c_{t_k}(y_k,x_k)$.
    
    Now, let $(t_k,x_k)$ be such a sequence. Note that, by the foregoing proposition, we have $d_h(y_k,x_k)\leq C(L+1)t+2$ and $d(y_k,x_k)\geq \delta$ for sufficiently large $k$ and some $\delta>0$. Hence, along a subsequence that we do not relabel, $y_k\to \tilde y\in I^-(x)$. Due to the continuity of $Tf$ at $(t,x)$ it follows that $T_tf(x)=f(\tilde y)+c_t(\tilde y,x)$. We are now in position to apply Corollary \ref{ee}. Let $\tilde \gamma:[0,t]\to M$ be the minimizing (timelike) curve guaranteed by the corollary, and note that $\tilde \gamma(0)=\tilde y$, $\tilde \gamma(t)=x$ and $T_tf(x)=f(\tilde \gamma(0))+c_t(\tilde \gamma(0),x))$. In particular, by assumption, we must have $\tilde \gamma=\gamma$ and $\tilde y=y$. Then, clearly, $\partial_t c_{t_k}(y_k,x_k)\to \partial_t c_t(y,x)$ as $k\to \infty$. Moreover, by the corollary,
    \begin{align*}
        \frac{\partial L}{\partial v}(x_k,\dot \gamma_k(t_k)) \to \frac{\partial L}{\partial v}(x,\dot \gamma(t)) \text{ as } k\to \infty.
    \end{align*}
    Thus, the super-differential $\partial^+(Tf)(t,x)$ reduces to the singleton \linebreak $\{(\partial_t c_{t}(y,x),\frac{\partial L}{\partial v}(x,\dot \gamma(t)))\}$, that is, $Tf$ is differentiable at $(t,x)$.
\end{proof}

\section{Application to Optimal transport}
In this section we want to prove our main theorem. We start with the basic definition and well-known facts, also to fix our notation.

Let $\mu_0,\mu_1\in {\cal P}(M)$ be two Borel probability measures. A \emph{coupling} of $\mu_0$ and $\mu_1$ is a Borel probability measure $\pi\in {\cal P}(M\times M)$ such that its marginals are $\mu_0$ and $\mu_1$, that is,
\[ 
\forall B\subseteq M \text{ Borel }: \mu_0(B)=\pi(B\times M) \text{ and } \mu_1(B)=\pi(M\times B).
\]
In other words, $\mu_0$ and $\mu_1$ are the push-forward measures of $\pi$ w.r.t.\ the projections $(x,y)\mapsto x$ and $(x,y)\mapsto y$.
The set of couplings is denoted by $\Gamma(\mu_0,\mu_1)$ and the \emph{total cost} for the cost function $c$ is defined as
\begin{align*}
   C(\mu_0,\mu_1)=\inf\bigg\{\int_{M\times M} c(x,y)\, d\pi(x,y)\mid \pi \in \Gamma(\mu_0,\mu_1)\bigg\}.
\end{align*}  
A coupling is said \emph{optimal} if it minimizes the right-hand side of the above expression. It is a standard result in the theory of optimal transportation, using some compactness arguments (Prokhorov's theorem), that there always exists an optimal coupling.

\textbf{From now on}, fix $\mu_0$ and $\mu_1$ as above. We assume that $C(\mu_0,\mu_1)<\infty$.

\begin{definition}\rm
A \emph{dynamical optimal coupling} is a Borel probability measure $\Pi$ on the metric space of minimizing curves $\Gamma^1$ such that $(e_0,e_1)_\#\Pi\in \Gamma(\mu_0,\mu_1)$ is an optimal coupling.  

Here, for $t\in [0,1]$, the map $e_t$ is defined by $e_t(\gamma):=\gamma(t)$.
\end{definition}

\begin{lemma}\label{a0}
    If $\pi\in \Gamma(\mu_0,\mu_1)$ is an optimal coupling, then there exists a dynamical optimal coupling $\Pi$ with $(e_0,e_1)_\#\Pi=\pi$.
\end{lemma}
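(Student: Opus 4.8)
The plan is to realize the dynamical optimal coupling $\Pi$ as a measure on $\Gamma^1$ disintegrated over the optimal plan $\pi$, using a measurable selection of minimizers. First I would invoke Lemma \ref{minimizer}, which guarantees that for every $(x,y)\in J^+$ there exists a minimizer $\gamma:[0,1]\to M$ connecting $x$ to $y$, i.e.\ the evaluation map $(e_0,e_1):\Gamma^1\to M\times M$ is surjective onto $J^+$. Since $\pi$ is optimal and $C(\mu_0,\mu_1)<\infty$, the set $J^+$ has full $\pi$-measure (any $(x,y)\notin J^+$ has $c(x,y)=\infty$, which would force the integral to diverge), so it suffices to lift $\pi$ along $(e_0,e_1)$ restricted to $J^+$.

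The key step is a measurable selection argument. By Corollary \ref{eeef}, $\Gamma^1$ is a Polish space and $(e_0,e_1):\Gamma^1\to M\times M$ is continuous, hence Borel, with Borel (indeed closed, by Corollary \ref{ee}-type compactness) image containing $J^+$. The fibers $(e_0,e_1)^{-1}(x,y)$ are closed, hence $\sigma$-compact (using the equicontinuity/compactness estimates from Corollary \ref{eeef}, the set of minimizers joining $x$ to $y$ in time $1$ is compact in $C([0,1],M)$). Therefore one may apply a Kuratowski--Ryll-Nardzewski / Federer--Morse type measurable selection theorem to obtain a Borel map $S:J^+\to \Gamma^1$ with $(e_0,e_1)\circ S=\Id$. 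Then I would simply set $\Pi:=S_\#\pi$. By construction $\Pi$ is a Borel probability measure on $\Gamma^1$, and $(e_0,e_1)_\#\Pi=(e_0,e_1)_\#S_\#\pi=\big((e_0,e_1)\circ S\big)_\#\pi=\pi$, so $\Pi$ is a dynamical optimal coupling associated with $\pi$.

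Alternatively, and perhaps more in the spirit of \cite{Villani}, Chapter 7, one can avoid an explicit selection by a disintegration argument: the push-forward under $(e_0,e_1)$ of the (normalized, if nonempty) restriction of a fixed reference measure to each fiber gives a measurable family of probability measures $(\Pi_{x,y})_{(x,y)\in J^+}$ on the fibers, and one glues them via $\Pi:=\int_{M\times M}\Pi_{x,y}\,d\pi(x,y)$; measurability of this family again rests on the Polish structure of $\Gamma^1$ together with the continuity of $(e_0,e_1)$ and the compactness of fibers. Either way, the main obstacle is purely the measurability bookkeeping --- confirming that the multifunction $(x,y)\mapsto (e_0,e_1)^{-1}(x,y)$ is Borel-measurable with closed values so that a Borel selection exists --- rather than anything geometric; the geometric content (existence and compactness of minimizers) has already been supplied by Lemma \ref{minimizer}, Corollary \ref{eeef} and Corollary \ref{ee}.
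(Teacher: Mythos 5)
Your proof is correct and follows essentially the same approach as the paper: both arguments establish that $(e_0,e_1):\Gamma^1\to J^+$ has compact fibers between Polish spaces, invoke a measurable selection theorem to obtain a Borel selection $S:J^+\to\Gamma^1$, note that $\pi$ is concentrated on $J^+$ by finiteness of the total cost, and set $\Pi:=S_\#\pi$. The additional disintegration variant you sketch is not needed; the paper simply cites a selection theorem where you name Kuratowski--Ryll-Nardzewski/Federer--Morse.
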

\begin{proof}
    We borrow the proof from \cite{Villani}, 7.16. Consider the map
    \[
    (e_0,e_1):\Gamma^1 \to J^+,\ \gamma \mapsto (\gamma(0),\gamma(1)).
    \]
    Then $(e_0,e_1)^{-1}(x,y)$ is compact for all $(x,y)\in J^+$ thanks to Corollary \ref{ee}. Moreover, both $\Gamma^1$ and $J^+$ are complete separable metric spaces thanks to Corollary \ref{eeef} and the closedness of $J^+$. Thus, a well-known selection theorem (see \cite{selection}) states that there exists a measurable map
    $S:J^+\to \Gamma^1$ such that $S(x,y):[0,1]\to M$ is a minimizing curve connecting $x$ to $y$.
    Since the optimal coupling $\pi$ must be supported on $J^+$ (here we use the fact that $C(\mu_0,\mu_1)$ is finite), $S$ is defined $\pi$-a.e., hence we can define the measure
    \[
    \Pi:=S_\#\pi.
    \]
    It is easy to check that this measure is indeed a  dynamical optimal coupling with $(e_0,e_1)_\#\Pi=\pi$.
\end{proof}

Now, \textbf{for the remainder of this section}, let $\pi\in \Gamma(\mu_0,\mu_1)$ be an optimal coupling, and let $\Pi$ be a dynamical optimal coupling associated with $\pi$, i.e. $(e_0,e_1)_\#\Pi=\pi$. We define the curve of probability measures
\begin{align}
    [0,1]\to {\cal P}(M),\ t\mapsto \mu_t:=(e_t)_\#\Pi. \label{aaaa}
\end{align}
This curve is also referred to as \emph{displacement interpolation}.
For $0\leq s <t\leq 1$, the probability measure $\pi_{s,t}:=(e_s,e_t)_\#\Pi$ clearly couples $\mu_s$ with $\mu_t$. In fact, one can easily prove as in \cite{Villani}, Theorem 7.21 (i)$\Rightarrow (ii)$, that it is an optimal coupling (for $\mu_s$ and $\mu_t$) in the optimal transportation problem associated to the cost function $c_{t-s}$.

The following two definitions are adapted (with slight modifications) from \cite{Fathi/Figalli}, Definition 2.1 and 2.2.

\begin{definition}\rm
A pair of functions $\varphi,\psi:M\to \overbar \R$ is called \emph{$c$-subsolution} if
\begin{align*}
    \psi(y)-\varphi(x)\leq c(x,y) \text{ for all } x,y\in M.
\end{align*}
Here, we use the convention $\pm\infty\mp \infty:=-\infty$.
Equivalently(!), $\psi\leq T_1\varphi$.
\end{definition}

\begin{definition}\rm
    We say that a $c$-subsolution is \emph{$(c,\pi)$-calibrated} if 
    \begin{align*}
        \psi(y)-\varphi(x)=c(x,y)\ \text{ for } \pi\text{-a.e. } (x,y).
    \end{align*}
\end{definition}

\begin{remark}\rm
\begin{enumerate}[(i)]
\item
Note that this definition differs from the one in \cite{Fathi/Figalli} since we do not assume $\psi$ and $\varphi$ to be Borel and $\varphi$ and $\psi$ may attain the value $-\infty$ and $\infty$, respectively.
\item By definition, in the Riemannian case, $\varphi$ and $\psi$ map to $\R \cup \{\infty\}$ and $\R\cup \{-\infty\}$, respectively. The equivalence between the definition of a $c$-subsolution and the requirement $\psi\leq T_1\varphi$ is then trivial. However, with our conventions, it is not difficult to check the equivalence.

However, there is no equivalence in being calibrated and satisfying $\psi=T_1\varphi$. For instance, if $\varphi,\psi \equiv \infty$, we have $\psi= T_1\varphi$, but $(\varphi,\psi)$ is not $(c,\pi)$-calibrated. 
\end{enumerate}
\end{remark}

Next, \textbf{for the remainder of this chapter}, suppose that $(\varphi,\psi)$ is a $(c,\pi)$-calibrated pair and that $\pi(I^+)=1$. Then the set 
\begin{align}
    \{(x,y)\in I^+\mid \psi(y)-\varphi(x)=c(x,y)\} \label{hhhhhh}
\end{align}
is of full $\pi$-measure (in particular, $\psi(y),\varphi(x)\in \R$ for all these $(x,y)$!).

In \cite{Metsch} it was shown that, under suitable conditions on the measures $\mu_0$ and $\mu_1$, there always exists a $(c,\pi)$-calibrated pair, and any optimal coupling must be concentrated on $I^+$. 

Hence, there exists a Borel set $A\subseteq \Gamma^1$ consisting of timelike minimizers on which $\Pi$ is concentrated and such that
\begin{align*}
    \psi(\gamma(1))-\varphi(\gamma(0))=c(\gamma(0),\gamma(1)) \text{ for all } \gamma \in A.
\end{align*}
Indeed, denoting by $B\subseteq I^+$ a Borel subset of \eqref{hhhhhh} of full $\pi$-measure, we can set $A:=(e_0,e_1)^{-1}(B)$. Note once again that $\psi(\gamma(1)),\varphi(\gamma(0))\in \R$ for all $\gamma \in A$.
By definition, the measure $\mu_t$ is then concentrated on the set
 \begin{align*}
    A_t:= \{\gamma(t)\mid \gamma \in A\}\subseteq M.
 \end{align*}

Using the definitions, the following lemma is well-known and easy to prove, see \cite{Villani}, Lemma 7.36. 

\begin{lemma}\label{jjjjj}
   For any $0\leq s\leq t\leq 1$ the pair
    \begin{align*}
    &\varphi_s(x):=\inf_{z\in M} \varphi(z)+c_s(z,x) \text{ and }
    \\[10pt]
    &\psi_t(y):=\sup_{z\in M} \psi(z)-c_{1-t}(y,z).
\end{align*}
is a $c_{t-s}$-subsolution. Moreover, if $\gamma \in A$, then
\begin{align*}
    \psi_t(\gamma(t))-\varphi_s(\gamma(s))=c_{t-s}(\gamma(s),\gamma(t)).
\end{align*}
\end{lemma}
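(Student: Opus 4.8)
The plan is to establish two things: first, that $(\varphi_s,\psi_t)$ is a $c_{t-s}$-subsolution, i.e.\ $\psi_t(y)-\varphi_s(x)\le c_{t-s}(x,y)$ for all $x,y$; and second, the calibration identity along curves in $A$. For the subsolution property, I would unpack the definitions and use the Lax-Oleinik semigroup structure. Observe that $\varphi_s = T_s\varphi$ and $\psi_t = \hat T_{1-t}\psi$ in the notation of Section 3. Since $(\varphi,\psi)$ is a $c$-subsolution, $\psi \le T_1\varphi$, hence for any $x,y$
\begin{align*}
\psi_t(y)-\varphi_s(x) &= \hat T_{1-t}\psi(y) - T_s\varphi(x) \\
&\le \hat T_{1-t}(T_1\varphi)(y) - T_s\varphi(x) = \hat T_{1-t}(T_{1-t}(T_t\varphi))(y) - T_s\varphi(x),
\end{align*}
using the semigroup property (Lemma \ref{y}), $T_1 = T_{1-t}T_t = T_{1-t}T_{t-s}T_s$. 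By Lemma \ref{ineq}, $\hat T_{1-t}(T_{1-t}(\cdot)) \le (\cdot)$, so the right-hand side is at most $T_{t-s}(T_s\varphi)(y) - T_s\varphi(x) = T_{t-s}\varphi_s(y) - \varphi_s(x) \le c_{t-s}(x,y)$, where the last inequality is immediate from the definition of $T_{t-s}$ (take $z=x$). This gives the subsolution property cleanly, modulo carefully tracking the $\pm\infty$ conventions, which is where one must be a little careful.

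\textbf{For the calibration identity,} fix $\gamma\in A$, so that $\psi(\gamma(1))-\varphi(\gamma(0)) = c(\gamma(0),\gamma(1))$, all quantities finite, and $\gamma$ is a (timelike) minimizer on $[0,1]$. I would prove the two inequalities separately. The inequality $\psi_t(\gamma(t))-\varphi_s(\gamma(s)) \le c_{t-s}(\gamma(s),\gamma(t))$ is just the subsolution property already established. For the reverse, I would use that $\gamma$ is a minimizer, so the action splits: $c(\gamma(0),\gamma(1)) = c_s(\gamma(0),\gamma(s)) + c_{t-s}(\gamma(s),\gamma(t)) + c_{1-t}(\gamma(t),\gamma(1))$ (by Lemma \ref{minimizer} and the explicit form of $c_t$, or directly from the restriction of a minimizer being a minimizer). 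Now estimate from above: $\varphi_s(\gamma(s)) \le \varphi(\gamma(0)) + c_s(\gamma(0),\gamma(s))$ by taking $z = \gamma(0)$ in the infimum, and $\psi_t(\gamma(t)) \ge \psi(\gamma(1)) - c_{1-t}(\gamma(t),\gamma(1))$ by taking $z = \gamma(1)$ in the supremum. Subtracting,
\begin{align*}
\psi_t(\gamma(t)) - \varphi_s(\gamma(s)) &\ge \psi(\gamma(1)) - c_{1-t}(\gamma(t),\gamma(1)) - \varphi(\gamma(0)) - c_s(\gamma(0),\gamma(s)) \\
&= c(\gamma(0),\gamma(1)) - c_{1-t}(\gamma(t),\gamma(1)) - c_s(\gamma(0),\gamma(s)) = c_{t-s}(\gamma(s),\gamma(t)),
\end{align*}
using the calibration of $(\varphi,\psi)$ along $\gamma$ and the action splitting. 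Combined with the subsolution inequality, this forces equality.

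\textbf{The main obstacle} is not conceptual depth but bookkeeping: one must verify that all the intermediate quantities ($\varphi_s(\gamma(s))$, $\psi_t(\gamma(t))$, and the various $c_\tau$-values along $\gamma$) are genuinely finite, so that the additions and subtractions above are legitimate rather than collapsing to $-\infty$ via the sign conventions. Finiteness of $\varphi_s(\gamma(s))$ and $\psi_t(\gamma(t))$ follows from the chain of upper/lower bounds just written (they are squeezed between finite quantities), and finiteness of the $c_\tau$ pieces follows because $\gamma$ is timelike, hence $(\gamma(a),\gamma(b))\in I^+$ for $a<b$, where $c_\tau$ is finite. The only genuine subtlety is confirming, in the subsolution step, that the semigroup and Lemma \ref{ineq} manipulations respect the convention $\pm\infty\mp\infty := -\infty$; here one notes that whenever the left side $\psi_t(y)-\varphi_s(x)$ is $-\infty$ the inequality is trivial, and otherwise everything in sight is finite. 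Since the excerpt explicitly points to \cite{Villani}, Lemma 7.36 for the analogous statement, I would keep the write-up short and cite that for the structure, emphasizing only the points where the possibly-infinite values or the Lorentzian cost require extra care.
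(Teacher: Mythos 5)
Your proposal is correct and follows essentially the same route as the paper. The paper itself defers to \cite{Villani}, Lemma 7.36, and to the more general Lemma \ref{h1} (proved in the appendix), whose proof uses exactly the ingredients you deploy: the semigroup property (Lemma \ref{y}), the inequalities $\hat T_\tau T_\tau\le\operatorname{id}\le T_\tau\hat T_\tau$ (Lemma \ref{ineq}) for the subsolution property, and for the calibration identity the $T\varphi$-calibration of $\gamma$ together with the additivity of the minimal action along the minimizer (taking $z=\gamma(0)$, resp.\ $z=\gamma(1)$, in the infimum/supremum). Your explicit attention to the $\pm\infty$ conventions and to the finiteness of $\varphi_s(\gamma(s))$, $\psi_t(\gamma(t))$ is exactly the small bookkeeping the paper's appendix handles implicitly.
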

\begin{proof}
   For a proof, see \cite{Villani}. The lemma also follows from the following more general statement (whose proof is similar).
\end{proof}

\begin{lemma}\label{h1}
Let $\gamma \in A$ and $s,t\in [0,1]$.
\begin{enumerate}[(i)]
    \item 
    If $0\leq \tau\leq 1-s$ and $0\leq r\leq s+\tau$ then
    \[
    \hat T_{r}\varphi_{s+\tau}(\gamma(s+\tau-r))=\varphi(\gamma(0))+c_{s+\tau-r}(\gamma(0),\gamma(s+\tau-r)).
    \]
    \item 
   If $0\leq \tau\leq t$ and $0\leq r\leq 1-t+\tau$ then
    \[
    \ T_{r}\psi_{t-\tau}(\gamma(t-\tau+r))=\varphi(\gamma(0))+c_{t-\tau+r}(\gamma(0),\gamma(t-\tau+r)).
    \]
    \item 
    If $\tau_1,\tau_2\geq 0$ are such that $s+\tau_1\leq t-\tau_2$ then
     the pair $(\hat T_{\tau_2}\varphi_{s+\tau_2},T_{\tau_1}\psi_{t-\tau_1})$ is still $(c_{t-s},\pi_{s,t})$-calibrated.
     Additionally, if $\gamma \in A$, 
     \[
     T_{\tau_1}\psi_{t-\tau_1}(\gamma(t))-
     \hat T_{\tau_2}\varphi_{s+\tau_2}(\gamma(s))=c_{t-s}(\gamma(s),\gamma(t)).
     \]
\end{enumerate}
\end{lemma}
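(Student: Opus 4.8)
The plan is to reduce all three parts to two symmetric \q{calibration-along-$\gamma$} identities plus a short abstract manipulation of the semigroups. The only ingredients are: the (super-)additivity of the minimal action, $c_{a+b}(x,z)\le c_a(x,y)+c_b(y,z)$ for all $y$ (immediate from the definition, cf.\ the proof of Lemma \ref{y}), with equality when $y$ lies on a minimizer from $x$ to $z$ at the appropriate time (sub-arcs of minimizers are minimizers and the action is additive); the subsolution inequality $\psi(w)-\varphi(z)\le c(z,w)$, equivalently $\psi\le T_1\varphi$; the calibration $\psi(\gamma(1))-\varphi(\gamma(0))=c(\gamma(0),\gamma(1))$ for $\gamma\in A$, which, combined with the subsolution property, also guarantees that all quantities below are well-defined reals (e.g.\ $\varphi(w)\ne-\infty$ whenever $w\le\gamma(1)$); Lemma \ref{ineq}; and the monotonicity and semigroup property (Lemma \ref{y}) of $T_\bullet$, $\hat T_\bullet$.

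\textbf{Step 1 (auxiliary identities).} First I would show that for every $\gamma\in A$,
\begin{align*}
&\hat T_{\sigma-\rho}\varphi_{\sigma}(\gamma(\rho))=\varphi(\gamma(0))+c_{\rho}(\gamma(0),\gamma(\rho))\qquad(0\le\rho\le\sigma\le1), \\
&T_{\rho-\sigma}\psi_{\sigma}(\gamma(\rho))=\varphi(\gamma(0))+c_{\rho}(\gamma(0),\gamma(\rho))\qquad(0\le\sigma\le\rho\le1).
\end{align*}
Parts (i) and (ii) of the lemma are the special cases $(\sigma,\rho)=(s+\tau,\,s+\tau-r)$ and $(\sigma,\rho)=(t-\tau,\,t-\tau+r)$; one checks immediately that the stated ranges of $\tau$ and $r$ are exactly what makes $(\sigma,\rho)$ admissible. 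For the first identity, \q{$\le$} follows by testing each competitor $z\ge\gamma(\rho)$ via $\varphi_\sigma(z)\le\varphi(\gamma(0))+c_\sigma(\gamma(0),z)\le\varphi(\gamma(0))+c_\rho(\gamma(0),\gamma(\rho))+c_{\sigma-\rho}(\gamma(\rho),z)$ and taking the supremum; for \q{$\ge$} one tests $z=\gamma(\sigma)$ and uses the sub-identity $\varphi_\sigma(\gamma(\sigma))=\varphi(\gamma(0))+c_\sigma(\gamma(0),\gamma(\sigma))$, which comes from combining $\psi(\gamma(1))\le\varphi(w)+c_1(w,\gamma(1))\le\varphi(w)+c_\sigma(w,\gamma(\sigma))+c_{1-\sigma}(\gamma(\sigma),\gamma(1))$ with the calibration-rewriting $\psi(\gamma(1))=\varphi(\gamma(0))+c_\sigma(\gamma(0),\gamma(\sigma))+c_{1-\sigma}(\gamma(\sigma),\gamma(1))$ to cancel the common tail, and then $c_\sigma(\gamma(0),\gamma(\sigma))=c_\rho(\gamma(0),\gamma(\rho))+c_{\sigma-\rho}(\gamma(\rho),\gamma(\sigma))$ since $\gamma$ minimizes. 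The second identity is the mirror image (interchange inf/sup; use $\psi_\sigma(z)\ge\psi(\gamma(1))-c_{1-\sigma}(z,\gamma(1))$, valid since $z\le\gamma(\rho)\le\gamma(1)$).

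\textbf{Step 2 (part (iii)).} For the calibration: applying the first identity with $(\sigma,\rho)=(s+\tau_2,s)$ (admissible because $s+\tau_2\le t-\tau_1\le1$) gives $\hat T_{\tau_2}\varphi_{s+\tau_2}(\gamma(s))=\varphi(\gamma(0))+c_s(\gamma(0),\gamma(s))$, and the second with $(\sigma,\rho)=(t-\tau_1,t)$ (admissible because $0\le t-\tau_1\le t$) gives $T_{\tau_1}\psi_{t-\tau_1}(\gamma(t))=\varphi(\gamma(0))+c_t(\gamma(0),\gamma(t))$; subtracting and using $c_t(\gamma(0),\gamma(t))=c_s(\gamma(0),\gamma(s))+c_{t-s}(\gamma(s),\gamma(t))$ yields the stated equality along every $\gamma\in A$, hence $\pi_{s,t}$-a.e.\ since $\Pi(A)=1$ and $\pi_{s,t}=(e_s,e_t)_\#\Pi$. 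For the $c_{t-s}$-subsolution property, i.e.\ $T_{\tau_1}\psi_{t-\tau_1}\le T_{t-s}\bigl(\hat T_{\tau_2}\varphi_{s+\tau_2}\bigr)$, I would argue abstractly: since $\tau_2\le t-s$, Lemma \ref{ineq} and the semigroup property give
\[
T_{t-s}\bigl(\hat T_{\tau_2}\varphi_{s+\tau_2}\bigr)=T_{t-s-\tau_2}\bigl(T_{\tau_2}\hat T_{\tau_2}(T_{s+\tau_2}\varphi)\bigr)\ge T_{t-s-\tau_2}T_{s+\tau_2}\varphi=T_t\varphi=\varphi_t;
\]
on the other hand $\psi\le T_1\varphi=T_{1-t+\tau_1}T_{t-\tau_1}\varphi$ and Lemma \ref{ineq} yield $\psi_{t-\tau_1}=\hat T_{1-t+\tau_1}\psi\le T_{t-\tau_1}\varphi=\varphi_{t-\tau_1}$, whence $T_{\tau_1}\psi_{t-\tau_1}\le T_{\tau_1}\varphi_{t-\tau_1}=\varphi_t$ by monotonicity and the semigroup property. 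Chaining, $T_{\tau_1}\psi_{t-\tau_1}\le\varphi_t\le T_{t-s}\bigl(\hat T_{\tau_2}\varphi_{s+\tau_2}\bigr)$. Together with the calibration above and the optimality of $\pi_{s,t}$ for $c_{t-s}$ (established earlier), this gives (iii).

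\textbf{Main obstacle.} There is no essential difficulty beyond careful bookkeeping: the whole proof is a telescoping of the subadditivity inequality. The point requiring the most attention is checking that every auxiliary time parameter --- $s+\tau$, $t-\tau$, $t-s-\tau_2$, $1-t+\tau_1$, $\sigma-\rho$, and so on --- stays in the range where the semigroup identities, Lemma \ref{ineq}, and the \q{minimizer splits the action} identity apply; this is exactly where the hypotheses $0\le s\le t\le1$, $0\le\tau\le1-s$ (resp.\ $\le t$), $0\le r\le s+\tau$ (resp.\ $\le1-t+\tau$), and $s+\tau_1\le t-\tau_2$ get consumed. The other subtle point is that the relevant infima and suprema are attained at points on $\gamma$, which is the only place where the defining property of $A$ (the calibration identity) enters.
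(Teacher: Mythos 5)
Your proof is correct and follows essentially the same route as the paper's: parts (i) and (ii) reduce to the two symmetric calibration identities along $\gamma$ (the paper phrases them as \q{$\gamma$ is $T\varphi$-calibrated} via Lemma \ref{w} and the analogous statement for $\psi_{t'}$; your $(\sigma,\rho)$ identities are the same thing), with the upper/lower bounds obtained by testing the inf/sup at a point on $\gamma$ on one side and by $\psi\le T_1\varphi$ combined with Lemma \ref{ineq} on the other. For (iii) you chain $T_{\tau_1}\psi_{t-\tau_1}\le\varphi_t\le T_{t-s}(\hat T_{\tau_2}\varphi_{s+\tau_2})$, which is the same inequality the paper proves by the factorization $T_{t-s}=T_{\tau_1}\circ T_{t-s-\tau_1-\tau_2}\circ T_{\tau_2}$ --- your explicit identification of $\varphi_t$ as the middle term is a slightly tidier way to organize the same chain, but not a different argument.
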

\begin{proof}
The proof boils down to elementary properties of the Lax-Oleinik semigroups and the fact that $\psi(\gamma(1))-\varphi(\gamma(0))=c(\gamma(0),\gamma(1))$. The details can be found in subsection \ref{proofs}.
\end{proof}
 
Using Theorem \ref{m}(i) and the fact that $T_1\varphi(\gamma(1))=\varphi(\gamma(0))+c(\gamma(0),\gamma(1))$, it is not difficult to show that, for $s\in (0,1)$, $\varphi_s$ is locally semiconcave on a neighborhood of $A_s$. Similarly, by Theorem \ref{m}(ii), for $t\in (0,1)$, the function $\psi_t$ is locally semiconvex on a neighborhood of $A_t$. While we do not require this result, the following lemma establishes a similar statement.

\begin{corollary}\label{s}
Fix $0<s<t<1$. For $0<\tau\leq 1-s$ the function $\hat T_\tau\varphi_{s+\tau}$ is locally semiconvex on a neighbourhood of $A_{s}$. 
For $0<\tau\leq t$ the function $T_\tau\psi_{t-\tau}$ is locally semiconcave on a neighbourhood of $A_t$.
\end{corollary}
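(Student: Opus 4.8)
The plan is to deduce both assertions from Theorem~\ref{m}, applied with $u=\varphi_{s+\tau}$ for the first statement and $u=\psi_{t-\tau}$ for the second, using for each $\gamma\in A$ an appropriate sub-arc of $\gamma$ as the calibrated minimizer required there, and then to patch the resulting local conclusions over all $\gamma\in A$. The point is that $\varphi_{s+\tau}=T_{s+\tau}\varphi$ and $\psi_{t-\tau}=\hat T_{1-t+\tau}\psi$, so $\hat T_\tau\varphi_{s+\tau}$ and $T_\tau\psi_{t-\tau}$ are time slices of $\hat T(\varphi_{s+\tau})$ and $T(\psi_{t-\tau})$, to which Theorem~\ref{m} applies once we locate the data $(x_0,y_0,t_0,t_1)$ and verify the calibration identities; these are exactly what Lemma~\ref{h1} (together with $\psi(\gamma(1))-\varphi(\gamma(0))=c(\gamma(0),\gamma(1))$ for $\gamma\in A$) provides.

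\emph{First assertion.} Fix $\gamma\in A$ and $0<\tau\le 1-s$, and set $u:=\varphi_{s+\tau}$, $t_0:=s+\tau$, $y_0:=\gamma(0)$, $x_0:=\gamma(s+\tau)$ and $t_1:=\tau$. The sub-arc $\delta:=\gamma|_{[0,s+\tau]}$ is a minimizer joining $y_0$ to $x_0$ in time $t_0$ (a sub-arc of a minimizer is a minimizer by Lemma~\ref{minimizer}), $y_0\in I^-(x_0)$ since $\gamma$ is timelike, and $t_1=\tau\in(0,s+\tau)=(0,t_0)$ because $s,\tau>0$. By Lemma~\ref{h1}(i) with $r=0$ we get $u(x_0)=\varphi(\gamma(0))+c_{s+\tau}(\gamma(0),\gamma(s+\tau))$, finite since $\gamma(0)\in I^-(\gamma(s+\tau))$ and $\varphi(\gamma(0))\in\R$; moreover $u(x_0)-c_{t_0}(y_0,x_0)=\varphi(\gamma(0))$, whereas $\hat T_{t_0}u(y_0)=\hat T_{s+\tau}T_{s+\tau}\varphi(\gamma(0))\le\varphi(\gamma(0))$ by Lemma~\ref{ineq} and $\hat T_{t_0}u(y_0)\ge u(x_0)-c_{t_0}(y_0,x_0)$ by the definition of $\hat T_{t_0}$, so $\hat T_{t_0}u(y_0)=u(x_0)-c_{t_0}(y_0,x_0)$ and is finite. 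Theorem~\ref{m}(ii) then shows $\hat Tu$ is locally semiconvex on a neighbourhood of $(t_1,\delta(t_0-t_1))=(\tau,\gamma(s))$ in $(0,\infty)\times M$; restricting to the slice $t=\tau$ (the restriction of a locally semiconvex function to a coordinate hyperplane is again locally semiconvex) gives an open $U_\gamma\ni\gamma(s)$ on which $\hat T_\tau\varphi_{s+\tau}$ is locally semiconvex. As local semiconvexity is a local property and $A_s=\{\gamma(s)\mid\gamma\in A\}\subseteq\bigcup_{\gamma\in A}U_\gamma$, the function $\hat T_\tau\varphi_{s+\tau}$ is locally semiconvex on the open neighbourhood $\bigcup_{\gamma\in A}U_\gamma$ of $A_s$.

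\emph{Second assertion.} This is symmetric, via Theorem~\ref{m}(i). Fix $\gamma\in A$ and $0<\tau\le t$, and set $u:=\psi_{t-\tau}$, $t_0:=1-t+\tau$, $x_0:=\gamma(t-\tau)$, $y_0:=\gamma(1)$ and $t_1:=\tau$; then $\delta(r):=\gamma(t-\tau+r)$, $r\in[0,t_0]$, is a minimizer joining $x_0$ to $y_0$ in time $t_0$, $y_0\in I^+(x_0)$ since $\gamma$ is timelike, and $t_1=\tau\in(0,1-t+\tau)=(0,t_0)$ because $\tau>0$ and $t<1$. Lemma~\ref{h1}(ii) with $r=0$ and with $r=1-t+\tau$ yields $u(x_0)=\varphi(\gamma(0))+c_{t-\tau}(\gamma(0),\gamma(t-\tau))$ and $T_{t_0}u(y_0)=T_{1-t+\tau}\psi_{t-\tau}(\gamma(1))=\varphi(\gamma(0))+c_1(\gamma(0),\gamma(1))$, both finite (for $\tau=t$ the first identity reads $\hat T_1\psi(\gamma(0))=\varphi(\gamma(0))$, which holds since $(\varphi,\psi)$ is a $c$-subsolution and $\psi(\gamma(1))-\varphi(\gamma(0))=c(\gamma(0),\gamma(1))$). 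Since $\gamma$ is a minimizer, $c_1(\gamma(0),\gamma(1))=c_{t-\tau}(\gamma(0),\gamma(t-\tau))+c_{t_0}(\gamma(t-\tau),\gamma(1))$, hence $T_{t_0}u(y_0)=u(x_0)+c_{t_0}(x_0,y_0)$. Theorem~\ref{m}(i) then gives that $Tu$ is locally semiconcave near $(t_1,\delta(t_1))=(\tau,\gamma(t))$; restricting to $t=\tau$ and patching over $\gamma\in A$ exactly as before shows $T_\tau\psi_{t-\tau}$ is locally semiconcave on an open neighbourhood of $A_t$.

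The only real content is the verification of the hypotheses of Theorem~\ref{m} for the already-evolved functions $\varphi_{s+\tau}$ and $\psi_{t-\tau}$ — this is precisely where Lemma~\ref{h1} and the additivity of the minimal action along $\gamma$ are used — and one must be slightly careful about the finiteness at the boundary values $\tau=1-s$ and $\tau=t$. Once this is in place, the passage to a fixed time slice and the patching over $A$ are routine, since local semiconcavity and local semiconvexity are local properties preserved under both operations.
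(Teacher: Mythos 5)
Your proof is correct and follows essentially the same route as the paper: identify the data $(u,x_0,y_0,t_0,t_1)$ from $\gamma\in A$, verify the calibration identity for $\varphi_{s+\tau}$ (resp.\ $\psi_{t-\tau}$) via Lemma~\ref{h1}, and invoke Theorem~\ref{m}(ii) (resp.\ (i)). The only cosmetic difference is that you re-derive the endpoint equality $\hat T_{t_0}u(y_0)=u(x_0)-c_{t_0}(y_0,x_0)$ directly from Lemma~\ref{ineq} and the definition of $\hat T$, whereas the paper reads it off from Lemma~\ref{h1}(i) at $r=s+\tau$ and $r=0$; these are equivalent.
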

\begin{proof}
    If $\gamma\in A$, then from part (i) of the preceding lemma (applied to $r=s+\tau$ and to $r=0$) and the fact that $\gamma$ is a minimizer,
    \begin{align*}
        \hat T_{s+\tau}\varphi_{s+\tau}(\gamma(0)) =\varphi_{s+\tau}(\gamma(s+\tau)) -c_{s+\tau}(\gamma(0),\gamma(s+\tau))
    \end{align*}
   Hence, we can apply Theorem \ref{m}(ii) to $u:=\varphi_{s+\tau}$, $x_0:=\gamma(s+\tau)$, $y_0:=\gamma(0)$ and $t_0:=s+\tau$, proving that $\hat T_\tau\varphi_{s+\tau}$ is locally semiconvex in a neighbourhood of $\gamma(s)$. Since $\gamma$ was arbitrary, this proves the first claim. The second statement can be derived similarly. Indeed, part (ii) of the above lemma applied to $r=1-t+\tau$ and $r=0$
   yields
    \[
    T_{1-t+\tau}\psi_{t-\tau}(\gamma(1))=\psi_{t-\tau}(\gamma-\tau))+c_{1-t+\tau}(\gamma(t-\tau),\gamma(1)).
    \]
   The claim follows from Theorem \ref{m}(i) with $x_0:=\gamma(t-\tau)$ and $y_0:=\gamma(1)$ and $t_0:=1-t+\tau$.
\end{proof}

Let us recall our main result (with a slightly different formulation and with an additional property) in this paper (Theorem \ref{main2}):

\begin{theorem}\label{n}
    Fix $0<s<t<1$. There exists a pair of functions $\Phi_s,\Psi_t:M\to \overbar \R$ that is $(c_{t-s},\pi_{s,t})$-calibrated and such that $\Phi_s$ is $C_{loc}^{1,1}$ on a neighbourhood of $A_s$ while $\Psi_t$ is $C_{loc}^{1,1}$ on a neighbourhood of $A_t$.

    In addition, $\Phi_s=\varphi_s$ on $A_s$ and $\Psi_t=\psi_t$ on $A_t$.
\end{theorem}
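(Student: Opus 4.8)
The plan is to prove the statement first \emph{locally}, near $\gamma(s)$ and $\gamma(t)$ for each individual calibrated curve $\gamma\in A$, and then to glue the local pieces by a partition of unity. The device that keeps the glued pair compatible with the subsolution and calibration constraints is Lemma~\ref{h1}(iii), which will be invoked both for matched regularization times (yielding calibration on $A_s\times A_t$) and for mismatched ones (controlling the cross terms).

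\textbf{Step 1 (the local $C^{1,1}_{loc}$ regularity — the main obstacle).} Fix $\gamma\in A$. Since $T_1\varphi(\gamma(1))=\varphi(\gamma(0))+c(\gamma(0),\gamma(1))$, Theorem~\ref{m}(i) (with $u=\varphi$, $x_0=\gamma(0)$, $y_0=\gamma(1)$, $t_0=1$) shows that $\varphi_{s+\tau}=T_{s+\tau}\varphi$ is locally semiconcave on a neighbourhood of $\gamma(s+\tau)$ for every $\tau\in(0,1-s)$, and Corollary~\ref{xx} shows it is in fact differentiable there, with $d_{\gamma(s+\tau)}\varphi_{s+\tau}=\frac{\partial L}{\partial v}(\gamma(s+\tau),\dot\gamma(s+\tau))$ by Lemma~\ref{ddd}. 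On the other hand, Corollary~\ref{s} already gives that $\hat T_\tau\varphi_{s+\tau}$ is locally semiconvex near $\gamma(s)$ for every $\tau\in(0,1-s]$. The crux is to show that for all sufficiently small $\tau=\tau_\gamma>0$ this function is \emph{also} locally semiconcave near $\gamma(s)$; being then both locally semiconcave and locally semiconvex it is $C^{1,1}_{loc}$ there (see \cite{Fathi}). This is the Lorentzian adaptation of Bernard's argument~\cite{Bernard}: the local semiconcavity of $\varphi_{s+\tau}$ must ``survive'' the backward regularization $\hat T_\tau$ for small $\tau$. Writing
\[
\hat T_\tau\varphi_{s+\tau}(x)=\sup_{y}\bigl(\varphi_{s+\tau}(y)-c_\tau(x,y)\bigr),
\]
one first shows, by boundary-avoidance estimates in the spirit of Section~3 and using that $\gamma$ is timelike (so the relevant maximizing geodesics stay in $I^+$), that for $x$ near $\gamma(s)$ and $\tau$ small the supremum is attained at a point $y(x)\in I^+(x)$ inside the neighbourhood of $\gamma(s+\tau)$ where $\varphi_{s+\tau}$ is semiconcave, joined to $x$ by a timelike minimizer. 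On that neighbourhood $y\mapsto\varphi_{s+\tau}(y)-c_\tau(x,y)$ is the sum of a semiconcave function and $-c_\tau(x,\cdot)$, which on the relevant timelike set is uniformly \emph{strongly} concave with constant of order $1/\tau$ (this forces $\tau$ small); hence $y(x)$ is unique and depends locally Lipschitz-continuously on $x$. The first-order terms then cancel — the gradient of $\varphi_{s+\tau}$ at $\gamma(s+\tau)$ matches $\partial_yc_\tau(\gamma(s),\gamma(s+\tau))$ (Theorem~\ref{A}), and the Legendre transform is a diffeomorphism (Lemma~\ref{hhhhh}, Proposition~\ref{flow}) — leaving a quadratic upper bound for $\hat T_\tau\varphi_{s+\tau}$ near $\gamma(s)$, i.e.\ local semiconcavity. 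The symmetric argument, using Theorem~\ref{m}(ii), Corollary~\ref{s} and Lemma~\ref{h1}(ii), treats $T_\tau\psi_{t-\tau}$ near $\gamma(t)$. Shrinking, we obtain for each $\gamma\in A$ a number $\tau_\gamma\in(0,(t-s)/2]$ and open sets $U_\gamma\ni\gamma(s)$, $V_\gamma\ni\gamma(t)$ on which $\hat T_{\tau_\gamma}\varphi_{s+\tau_\gamma}$ and $T_{\tau_\gamma}\psi_{t-\tau_\gamma}$, respectively, are $C^{1,1}_{loc}$ (in particular finite).

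\textbf{Step 2 (gluing and verification).} Since $\{U_\gamma\}_\gamma$ covers $A_s$, $\{V_\gamma\}_\gamma$ covers $A_t$, and $M$ is second countable, choose a countable subfamily $(\gamma_i)_{i\in\N}$ with $\Omega_s:=\bigcup_iU_{\gamma_i}\supseteq A_s$ and $\Omega_t:=\bigcup_iV_{\gamma_i}\supseteq A_t$, and set $\tau_i:=\tau_{\gamma_i}\in(0,(t-s)/2]$. Let $(\rho_i)_i$ be a smooth locally finite partition of unity on $\Omega_s$ subordinate to $\{U_{\gamma_i}\}$, and $(\sigma_i)_i$ one on $\Omega_t$ subordinate to $\{V_{\gamma_i}\}$. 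Define
\[
\Phi_s:=\sum_i\rho_i\,\hat T_{\tau_i}\varphi_{s+\tau_i}\ \text{ on }\Omega_s,\qquad \Phi_s:=\varphi_s\ \text{ on }M\setminus\Omega_s,
\]
and analogously $\Psi_t:=\sum_i\sigma_i\,T_{\tau_i}\psi_{t-\tau_i}$ on $\Omega_t$, $\Psi_t:=\psi_t$ off $\Omega_t$. On $\Omega_s$ this is a locally finite combination of functions $C^{1,1}_{loc}$ on the corresponding $U_{\gamma_i}$, so $\Phi_s$ is $C^{1,1}_{loc}$ on the neighbourhood $\Omega_s$ of $A_s$, and similarly $\Psi_t$ on $\Omega_t\supseteq A_t$. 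For the subsolution inequality $\Psi_t(y)-\Phi_s(x)\le c_{t-s}(x,y)$, expand $\Phi_s$ and $\Psi_t$ as convex combinations (a single term $\varphi_s$, resp.\ $\psi_t$, when $x\notin\Omega_s$, resp.\ $y\notin\Omega_t$); every resulting summand has the form $T_{\tau_1}\psi_{t-\tau_1}(y)-\hat T_{\tau_2}\varphi_{s+\tau_2}(x)$ with $\tau_1,\tau_2\in\{0\}\cup\{\tau_i\}_i$ and $\tau_1+\tau_2\le t-s$ (here $\hat T_0\varphi_s=\varphi_s$ and $T_0\psi_t=\psi_t$), hence is $\le c_{t-s}(x,y)$ by Lemma~\ref{h1}(iii), and the convex combination inherits the bound (the conventions covering the degenerate infinite values). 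For the equality on $\pi_{s,t}$-almost every point, let $\gamma\in A$: then $\gamma(s)\in A_s\subseteq\Omega_s$, $\gamma(t)\in A_t\subseteq\Omega_t$, and Lemma~\ref{h1}(i),(ii) (with $\tau=r=\tau_i$) give $\hat T_{\tau_i}\varphi_{s+\tau_i}(\gamma(s))=\varphi(\gamma(0))+c_s(\gamma(0),\gamma(s))$ and $T_{\tau_i}\psi_{t-\tau_i}(\gamma(t))=\varphi(\gamma(0))+c_t(\gamma(0),\gamma(t))$ independently of $i$, so $\Phi_s(\gamma(s))=\varphi(\gamma(0))+c_s(\gamma(0),\gamma(s))$ and $\Psi_t(\gamma(t))=\varphi(\gamma(0))+c_t(\gamma(0),\gamma(t))$; since $\gamma$ is a minimizer this yields $\Psi_t(\gamma(t))-\Phi_s(\gamma(s))=c_t(\gamma(0),\gamma(t))-c_s(\gamma(0),\gamma(s))=c_{t-s}(\gamma(s),\gamma(t))$, and as $\pi_{s,t}=(e_s,e_t)_\#\Pi$ with $\Pi(A)=1$ this is precisely the calibration identity. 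Finally, $\varphi_s(\gamma(s))=\varphi(\gamma(0))+c_s(\gamma(0),\gamma(s))$ by Lemma~\ref{w}, so $\Phi_s=\varphi_s$ on $A_s$, and symmetrically $\Psi_t=\psi_t$ on $A_t$.

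The whole difficulty is concentrated in Step~1, namely controlling the backward Lax--Oleinik regularization of the merely semiconcave function $\varphi_{s+\tau}$ in a setting where $L$ and $c_\tau$ fail to be smooth off the timelike region; this is exactly where the timelikeness of the curves in $A$ — equivalently the hypothesis $\pi(I^+)=1$ — enters.
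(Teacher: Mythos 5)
Your Step 2 (the gluing) is essentially the paper's argument: partition of unity over the local neighbourhoods, Lemma~\ref{h1}(iii) for the cross terms, and Lemma~\ref{h1}(i),(ii) for the values on $A_s,A_t$. The change of extension off $\Omega_s,\Omega_t$ (you use $\varphi_s,\psi_t$, the paper uses $\pm\infty$) is harmless. The problem is Step~1, which you rightly identify as the crux but whose sketch rests on a claim that is not available.

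You assert that on the relevant timelike set $-c_\tau(x,\cdot)$ is ``uniformly strongly concave with constant of order $1/\tau$,'' and use this to deduce uniqueness and Lipschitz dependence of the maximizer $y(x)$ and a quadratic upper bound for the envelope. But Theorem~\ref{A} gives only that $(t,x,y)\mapsto c_t(x,y)$ is locally \emph{semiconcave} on $(0,\infty)\times I^+$, i.e.\ $-c_\tau(x,\cdot)$ is locally semi\emph{convex}; no semiconvexity of $c_\tau(x,\cdot)$, let alone strong convexity, is established anywhere in the paper (it is a feature of the Riemannian $d^2$, not of the Lorentzian $c$). So $y\mapsto\varphi_{s+\tau}(y)-c_\tau(x,y)$ is a sum of a locally semiconcave and a locally semiconvex function with no definite curvature, and the envelope argument collapses at this point. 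This is exactly the regularity gap that the paper addresses by a different route: Proposition~\ref{o} first writes $\psi_{t_0-\tau}$ (resp.\ $\varphi_{s+\tau}$) as a supremum (resp.\ infimum) of a uniformly $C^2$-bounded family of smooth $L$-Lipschitz functions $f_{i}$ whose differentials stay in a compact subset of $T^*M\setminus\C^*$ (Lemma~\ref{r}, exploiting the local semiconvexity and the differentiability of $\hat T\psi$ at $(1-t_0,\gamma(t_0))$), then shows via the Hamiltonian flow that for $\tau$ small the $T_\tau f_{i}$ are smooth and remain uniformly $C^2$-bounded on a fixed ball (Lemma~\ref{llllll}), and finally that $T_\tau\psi_{t_0-\tau}=\sup_i T_\tau f_{i}$ there (the ``Claim'' inside its proof, which uses the boundary-avoidance estimates you allude to). Local semiconvexity near $\gamma(t_0)$ then follows because a supremum of a $C^2$-bounded family is locally semiconvex, and combined with Corollary~\ref{s} this yields $C^{1,1}_{loc}$. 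Your Step~1 must be replaced by this (or an equivalent) Bernard-style argument that does not presuppose coercivity of $c_\tau(x,\cdot)$.
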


    For the proof we need some preparation. Let us first prove a corollary concerning the dual optimal transporation problem (Theorem \ref{main}). Let us denote the total cost of the optimal transportation problem associated with the cost function $c_{t-s}$ by $C^{t-s}$.

\begin{corollary}
    Fix $0<s<t<1$ and assume that $\varphi\in L^1(\mu_0)$ and $\psi\in L^1(\mu_1)$. Then $\Phi_s\in L^1(\mu_s)$, $\Psi_t\in L^1(\mu_t)$ and they are optimal for the dual problem of optimal transportation associated with the cost function $c_{t-s}$, i.e.
    \[
    C^{t-s}(\mu_s,\mu_t)=\int_M \Psi_t(y)\, d\mu_t(y)-\int_M \Phi_s(x)\, d\mu_s(x).
    \]
\end{corollary}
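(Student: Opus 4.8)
The plan is to combine the calibration property of $(\Phi_s,\Psi_t)$ from Theorem \ref{n} with an explicit evaluation of $\varphi_s$ and $\psi_t$ along the curves of $A$, and then to invoke the optimality of $\pi_{s,t}$ for the cost $c_{t-s}$. First I would evaluate, for $\gamma\in A$, the two functions along $\gamma$: Lemma \ref{h1}(i) and (ii) (both with $\tau=r=0$) give $\varphi_s(\gamma(s))=\varphi(\gamma(0))+c_s(\gamma(0),\gamma(s))$ and $\psi_t(\gamma(t))=\varphi(\gamma(0))+c_t(\gamma(0),\gamma(t))$. Since $\gamma$ is a minimizer, $L(\gamma(r),\dot\gamma(r))$ is constant equal to $\LL(\gamma)=c(\gamma(0),\gamma(1))$ by Lemma \ref{minimizer}, and the restrictions $\gamma_{|[0,s]}$, $\gamma_{|[0,t]}$ are again minimizers, so $c_s(\gamma(0),\gamma(s))=s\,c(\gamma(0),\gamma(1))$ and $c_t(\gamma(0),\gamma(t))=t\,c(\gamma(0),\gamma(1))$. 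Using $c(\gamma(0),\gamma(1))=\psi(\gamma(1))-\varphi(\gamma(0))$, valid for $\gamma\in A$, this yields
\[
\varphi_s(\gamma(s))=(1-s)\varphi(\gamma(0))+s\,\psi(\gamma(1)),\qquad \psi_t(\gamma(t))=(1-t)\varphi(\gamma(0))+t\,\psi(\gamma(1))\quad(\gamma\in A).
\]

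The integrability assertions, which I expect to be the only point needing genuine care, follow from this. By Theorem \ref{n} we have $\Phi_s=\varphi_s$ on $A_s$ and $\Psi_t=\psi_t$ on $A_t$, and $\mu_s$, $\mu_t$ are concentrated on $A_s$, $A_t$. Because $\mu_s,\mu_t$ are the push-forwards $(e_s)_\#\Pi$, $(e_t)_\#\Pi$ and $\mu_0,\mu_1$ those of $(e_0)_\#\Pi$, $(e_1)_\#\Pi$, and $\Pi(A)=1$, the change-of-variables formula together with the displayed identity gives
\[
\int_M|\Phi_s|\,d\mu_s=\int_A|\varphi_s(\gamma(s))|\,d\Pi(\gamma)\le(1-s)\,\|\varphi\|_{L^1(\mu_0)}+s\,\|\psi\|_{L^1(\mu_1)}<\infty,
\]
and symmetrically $\int_M|\Psi_t|\,d\mu_t\le(1-t)\,\|\varphi\|_{L^1(\mu_0)}+t\,\|\psi\|_{L^1(\mu_1)}<\infty$; hence $\Phi_s\in L^1(\mu_s)$ and $\Psi_t\in L^1(\mu_t)$. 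This is exactly the step for which a purely abstract weak-duality argument does not suffice, since it uses the explicit form of $\varphi_s,\psi_t$ on calibrated curves and the fact that $\mu_s,\mu_t$ are push-forwards of $\Pi$.

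Finally, using the integrability just obtained to split the difference of the integrals, then the change of variables under $(e_s,e_t)$ and the calibration identity of Lemma \ref{jjjjj},
\[
\int_M\Psi_t\,d\mu_t-\int_M\Phi_s\,d\mu_s=\int_A\bigl(\psi_t(\gamma(t))-\varphi_s(\gamma(s))\bigr)\,d\Pi(\gamma)=\int_A c_{t-s}(\gamma(s),\gamma(t))\,d\Pi(\gamma)=\int_{M\times M}c_{t-s}\,d\pi_{s,t}.
\]
Since $\pi_{s,t}=(e_s,e_t)_\#\Pi$ is an optimal coupling for the cost $c_{t-s}$ (as recalled after \eqref{aaaa}) and since the common value above is finite — indeed it equals $(t-s)\,C(\mu_0,\mu_1)$ by the same restriction-of-minimizers computation — the right-hand side equals $C^{t-s}(\mu_s,\mu_t)$, which is the claimed identity.
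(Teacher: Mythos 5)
Your proof is correct and follows essentially the same route as the paper: reduce optimality to $L^1$-integrability of $\Phi_s$, $\Psi_t$ (since the $(c_{t-s},\pi_{s,t})$-calibration then gives equality upon integration), and then compute via the change-of-variables formula along $\Pi$ using the explicit value of $\varphi_s$, $\psi_t$ on calibrated curves. The paper additionally spells out that $\Phi_s$ is $\mu_s$-measurable because it is $C^{1,1}_{loc}$ on a set of full $\mu_s$-measure — a small point you should state before invoking the change-of-variables formula — and records its $L^1$ bound as $\|\varphi\|_{L^1(\mu_0)}+t\,C(\mu_0,\mu_1)$ rather than your (equivalent) convex-combination bound $(1-t)\|\varphi\|_{L^1(\mu_0)}+t\|\psi\|_{L^1(\mu_1)}$.
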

\begin{proof}
Since the pair $(\Phi_s,\Psi_t)$ is $(c_{t-s},\pi_{s,t})$-calibrated, it is enough to show that $\Phi_s\in L^1(\mu_s)$ and $\Psi_t\in L^1(\mu_t)$. 
Since $\mu_s$ is concentrated on $A_s$ and $\Phi_s$ is $C_{loc}^{1,1}$ on some open neighbourhood of $A_s$, it follows that $\Phi_s$ is $\mu_s$-measurable. Similarly, $\Psi_t$ is $\mu_t$-measurable.

    Recall that we denote by $\Pi$ our dynamical optimal coupling and that $\mu_t=(e_t)_\#\Pi$ and $\mu_s=(e_s)_\#\Pi$. 
    For $\gamma \in A$ we have by the theorem and Lemma \ref{h1}
    \[
    \Psi_t(\gamma(t))=\psi_t(\gamma(t))=\varphi(\gamma(0))+c_t(\gamma(0),\gamma(t))=\varphi(\gamma(0))+t c(\gamma(0),\gamma(1)).
    \]
   Thus, the change of variables formula implies
   \begin{align*}
   \int_M |\Psi_t|\, d\mu_t &= \int_{\Gamma^1} |\Psi_t(\gamma(t))|\, d\Pi(\gamma)
   \\[10pt]
   &\leq
   \int_{\Gamma^1} |\varphi(\gamma(0))|\, d\Pi(\gamma)+ t \int_{\Gamma^1} c(\gamma(0),\gamma(1))\, d\Pi(\gamma)
   \\[10pt]
   & =  \int_M |\varphi|\, d\mu_0 +t C(\mu_0,\mu_1)<\infty.
   \end{align*}
   This shows that $\Psi_t\in L^1(\mu_t)$ and, similarly, one proves that $\Phi_s\in L^1(\mu_s)$.

   Thus, we can integrate the identity $\Psi_t(y)-\Phi_s(x)=c_{t-s}(x,y)$, which holds $\pi_{s,t}$-a.e., and obtain that $(\Phi_s,\Psi_t)$ is an optimal pair for the dual formulation.
\end{proof}

\subsection*{Proof of Theorem \ref{n}}

 As mentioned above, to prove Theorem \ref{n} we need some preliminary results. The following proposition extends the results on the regularity of solutions to the Hamilton-Jacobi equation presented in \cite{Bernard}, and it plays the key role in the proof of our main theorem. We split the proof (as given in \cite{Bernard}) into two parts: the proof of the proposition and the proof of Lemma \ref{llllll}.

\begin{proposition}\label{o}
    Let $t_0\in (0,1)$ and $\gamma \in A$. Then, for $\tau>0$ small enough, the function $T_\tau\psi_{t_0-\tau}$ is $C_{loc}^{1,1}$ on an open neighborhood of $\gamma(t_0)$.

   Similarly, for $s_0\in (0,1)$, $\hat T_\tau \varphi_{s_0+\tau}$ is $C^{1,1}_{loc}$ on an open neighbourhood of $\gamma(s_0)$.
\end{proposition}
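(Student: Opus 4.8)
The plan is to adapt Bernard's argument \cite{Bernard} to the Lorentzian setting, exploiting the calibrated curve $\gamma$ in place of the critical sub-solution. I will argue for $\Psi := T_\tau \psi_{t_0-\tau}$ around $\gamma(t_0)$; the statement for $\hat T_\tau \varphi_{s_0+\tau}$ around $\gamma(s_0)$ is entirely symmetric (replace the forward semigroup by the backward one, $I^+$ by $I^-$, and run the argument on the reversed curve). The key point is that $C^{1,1}_{loc}$ will follow from simultaneous local semiconcavity and local semiconvexity (see \cite{Fathi}), so the whole task is to establish these two one-sided regularity properties on a common neighbourhood of $\gamma(t_0)$, for all sufficiently small $\tau>0$.

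The semiconcavity half is essentially already in hand. By Corollary~\ref{s}, $T_\tau \psi_{t_0-\tau}$ is locally semiconcave on a neighbourhood of $A_{t_0}$, hence in particular near $\gamma(t_0)$, for every $0<\tau\le t_0$; equivalently this follows from Theorem~\ref{m}(i) applied to the calibrated data $u=\psi_{t_0-\tau}$, $x_0=\gamma(t_0-\tau)$, $y_0=\gamma(1)$, $t_0=1-t_0+\tau$, together with the fact (Lemma~\ref{h1}(ii)) that $\gamma$ is a calibrating curve for this evolution. So the real content is the semiconvexity of $\Psi$ near $\gamma(t_0)$ for small $\tau$. For this I follow Bernard: write $\Psi = T_\tau\psi_{t_0-\tau} = T_\tau \hat T_\tau (\hat T_\tau^{-?}\,\cdots)$ — more precisely, I first observe via Lemma~\ref{ineq} and the semigroup property (Lemma~\ref{y}) that $\psi_{t_0-\tau} \ge \hat T_\tau (T_\tau \psi_{t_0-\tau})$ pointwise, with equality exactly along the calibrated curve at $\gamma(t_0-\tau)$, and then exploit that $\psi_{t_0-\tau}$ itself is locally \emph{semiconvex} near $\gamma(t_0-\tau)$ (this is the backward analogue from Theorem~\ref{m}(ii) / the first step alluded to after Theorem~\ref{main2}, using $y_0=\gamma(1)\in I^+(\gamma(t_0-\tau))$ and $x_0$ taken from the future). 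The function $\Psi = T_\tau\psi_{t_0-\tau}$ is then an inf-convolution of a locally semiconvex function with the semiconcave kernel $c_\tau(\cdot,\cdot)$; the mechanism of Appendix~B in \cite{Fathi/Figalli/Rifford} (or the corresponding statement in \cite{FigalliPHD}) shows that for the minimizer $y$ realizing $T_\tau\psi_{t_0-\tau}(x)$ with $x$ near $\gamma(t_0)$, one has $y$ near $\gamma(t_0-\tau)\in I^-(x)$, the connecting minimizer stays timelike, and hence $c_\tau(y,\cdot)$ is smooth near $x$ with a second-derivative bound that beats the semiconvexity modulus of $\psi_{t_0-\tau}$ once $\tau$ is small — giving local semiconvexity of $\Psi$.

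The steps, in order: (1) record that $\gamma(t_0-\tau)\in I^-(\gamma(1))$ and that $\psi_{t_0-\tau}$ is locally semiconvex on a neighbourhood $V$ of $\gamma(t_0-\tau)$, with an explicit calibrated curve — this is Theorem~\ref{m}(ii) plus Lemma~\ref{h1}(ii); (2) using the completeness of $h$ and Lemma~\ref{t}-type control (the backward analogue, via the superlinearity constants $C(K)$ of Definition~\ref{e1}) plus the argument of Proposition~\ref{i1}, show that for $x$ in a small neighbourhood $W$ of $\gamma(t_0)$ and $\tau$ small the infimum defining $T_\tau\psi_{t_0-\tau}(x)$ is attained at some $y\in V\cap I^-(x)$, with the connecting minimizer timelike and with $\dot\gamma$-data converging (as $x\to\gamma(t_0)$, $\tau\to 0$) to that of $\gamma$ — here Corollary~\ref{ee} is the compactness tool; (3) at such $(x,y)$, combine the quadratic lower barrier for $\psi_{t_0-\tau}$ near $y$ (semiconvexity) with the quadratic upper barrier for $z\mapsto c_\tau(y,z)$ near $x$ (which exists because $(y,x)\in I^+$, so Theorem~\ref{A} applies and $\partial^+ c_\tau(y,x)$ is nonempty with a modulus scaling like $1/\tau$ times a fixed constant — but the relevant competition is with the \emph{fixed} semiconvexity modulus of $\psi_{t_0-\tau}$, which the Bernard argument handles by using that the semiconvex lower bound for $\psi_{t_0-\tau}$ and the backward evolution cancel to leading order along the calibrated curve); conclude via the inf-convolution regularity lemma that $\Psi$ is locally semiconvex on $W$; (4) intersect with the neighbourhood from Corollary~\ref{s} and invoke semiconcave $+$ semiconvex $\Rightarrow C^{1,1}_{loc}$.

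The main obstacle is Step~(3): in Bernard's Tonelli setting the kernel $h_\tau$ is uniformly semiconcave with a modulus $O(1/\tau)$ and the sub-solution is Lipschitz, so the cancellation is clean; here one must instead play the \emph{fixed} local semiconvexity of $\psi_{t_0-\tau}$ (whose modulus near $\gamma(t_0-\tau)$ does not blow up as $\tau\to0$, by Step~(1)) against the $O(1/\tau)$ semiconcavity of $c_\tau$, and verify that the relevant minimizers stay uniformly timelike and uniformly bounded away from $\partial J^-(x)$ so that all the second-order estimates on $c_\tau$ are legitimate — this is exactly where the hypothesis $\gamma\in A$ (timelike, calibrated) and the compactness Corollary~\ref{ee} are indispensable, and where a naive attempt without the calibrated curve breaks down.
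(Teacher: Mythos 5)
Your overall scaffolding matches the paper: you get local semiconcavity of $T_\tau\psi_{t_0-\tau}$ from Corollary~\ref{s}, reduce to showing local semiconvexity for small $\tau$, and appeal to Bernard's idea that the forward regularization should not destroy the semiconvexity inherited from $\psi_{t_0-\tau}$. Steps (1) and (2) and (4) are sound. However, the core of Step~(3) does not capture the actual mechanism, and this is a genuine gap. The paper does not argue by ``competing moduli'' (fixed semiconvexity of $\psi_{t_0-\tau}$ vs.\ $O(1/\tau)$ semiconcavity of $c_\tau$) and does not invoke an ``inf-convolution regularity lemma'' of the type you describe. The $O(1/\tau)$ bound on $D^2 c_\tau$ \emph{worsens} as $\tau \to 0$, so the naive competition runs in the wrong direction; the relevant uniform bound is not on $c_\tau$ at all but on the evolved functions. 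What the paper actually does is (a) decompose $\psi_s$, for all $s$ near $t_0$, as $\sup_i f_{i,s}$ with $(f_{i,s})$ smooth, equi-Lipschitz, locally bounded in $C^2$, and --- this is the Lorentzian-specific constraint --- with $\{(x,d_xf_{i,s})\}$ precompact in $T^*M\backslash\mathcal C^*$ (Lemma~\ref{r}, whose hypothesis requires the differentiability of $\hat T\psi$ at $(1-t_0,x_0)$, which you never establish); (b) prove Lemma~\ref{llllll}: for this family, $T_t f_{i,s}$ is smooth for $0\le t\le\varepsilon$, and the family $\{T_t f_{i,s}\}$ remains locally bounded in $C^2$ uniformly --- this goes through the Hamiltonian flow $\psi^H_t$ on $T^*M\backslash\mathcal C^*$ and the inverse function theorem (Lang's quantitative lemma) applied to $x\mapsto \pi\circ\psi^H_t(x,d_xf_{i,s})$, with uniformity over $i,s,t$; and (c) prove the nontrivial ``Claim'' that $T_\tau\psi_{t_0-\tau}=\sup_i T_\tau f_{i,t_0-\tau}$ on a small ball, which uses Lemma~\ref{r}(iv) together with an argument (reprising Proposition~\ref{i1}) that the minimizer realizing $T_\tau\psi_{t_0-\tau}(y)$ is attained in $B_\varepsilon(x_0)\cap I^-(y)$ and not at $y$ itself. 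Your proposal omits both (b) and (c) entirely, and (b) is where all the work is.

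A second, more subtle omission: you assert that the semiconvexity modulus of $\psi_{t_0-\tau}$ near $\gamma(t_0-\tau)$ ``does not blow up as $\tau\to 0$,'' but what the proof actually needs is a \emph{single} chart and constant controlling $\psi_s$ for all $s$ in an interval around $t_0$, \emph{together with} uniform control keeping the sub-differentials $\partial^-\psi_s(x)$ inside a compact subset of $T^*M\backslash\mathcal C^*$. The latter is not a regularity bound but a causal-cone condition, and it is the reason Lemma~\ref{r} is stated the way it is and why it needs the differentiability point. Without this cone condition, the flow $\psi^H_t$ in Lemma~\ref{llllll} is not even defined on the relevant set, and the whole argument collapses. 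Your Step~(3) gestures at ``staying uniformly timelike and away from $\partial J^-(x)$,'' which is the right concern, but it is resolved in the paper by (a) above, not by the inf-convolution heuristic.
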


\begin{lemma}\label{llllll}
   Let $f_i:M\to \R$, $i\in I$, be a family of smooth $L$-Lipschitz functions that is locally bounded in $C^2$. Let $x_0\in M$ and $r>0$ such that
    \begin{align*}
        \{(x,d_xf_i)\mid x\in  B_{r}(x_0),\ i\in I\} \subseteq \subseteq  T^*M\backslash \C^*.
    \end{align*}
    Then there exist constants $0<\ep,\delta<r$ such that:
    
    Given any $0\leq t\leq \ep$ and $i\in I$, the map $\psi^H_{i,t}:U_{i,t}\to B_\delta(x_0),\ x\mapsto \pi\circ \psi^H_t(x,d_xf_i),$ is a diffeomorphism from an open neighbourhood $U_{i,t}\supseteq B_\ep(x_0)$ of $x_0$ and satisfies, for $x\in U_{i,t}$, 
    \begin{align}
        T_tf_i(\psi^H_{i,t}(x))=f(x)+\int_0^t L(\psi^H_{i,s}(x,d_xf_i),\frac{d}{ds}\psi^H_{i,s}(x,d_xf_i))\, ds. \label{c}
    \end{align}
    It follows that $T_tf_i$ is smooth on $B_\delta(x_0)$. In addition, the family of maps $\{T_tf_i\mid 0\leq t\leq \ep,\ i\in I\}$ is locally bounded in $C^2(B_\delta(x_0))$.
\end{lemma}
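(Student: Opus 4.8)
The plan is to identify $T_tf_i$, near $x_0$ and for small $t$, with the \emph{classical} solution of the Hamilton--Jacobi equation produced from the initial datum $f_i$ by the method of characteristics, and to read off both smoothness and the uniform $C^2$ bound directly from the Hamiltonian flow.

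\emph{Construction of the characteristic foliation.} Put ${\cal K}:=\overline{\{(x,d_xf_i)\mid x\in B_r(x_0),\ i\in I\}}$; by hypothesis ${\cal K}$ is a compact subset of the open set $T^*M\setminus\C^*$, on which $H$ is smooth and the Hamiltonian flow $\psi^H$ is defined (Remark \ref{lllll}). Compactness of ${\cal K}$ yields $\ep_0>0$ with ${\cal K}\times[0,\ep_0]$ inside the domain of $\psi^H$ and $\bigcup_{t\in[0,\ep_0]}\psi^H_t({\cal K})$ relatively compact in $T^*M\setminus\C^*$. Fix a chart around $x_0$. The differential of $\psi^H_{i,t}(x)=\pi\circ\psi^H_t(x,d_xf_i)$ equals the identity at $t=0$, and since the $f_i$ are uniformly bounded in $C^2$ and $\psi^H$ is smooth on the relevant compact set, it is uniformly (in $i$) close to the identity for $t$ small. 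A quantitative inverse function theorem then produces $0<\ep,\delta<r$, independent of $i$, such that for every $0\le t\le\ep$ and $i$ the map $\psi^H_{i,t}$ is a diffeomorphism of an open neighbourhood $U_{i,t}\supseteq B_\ep(x_0)$ onto $B_\delta(x_0)$; after shrinking $\delta$ we may moreover assume each $\psi^H_{i,s}$, $s\le\ep$, is a diffeomorphism onto a fixed slightly larger ball $B_{\delta'}(x_0)$, which will be needed below.

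\emph{The classical solution.} For $x\in B_r(x_0)$ and $s\in[0,\ep]$ set $\sigma_{i,x}(s):=\psi^H_{i,s}(x)=\pi\circ\psi^H_s(x,d_xf_i)$; since $(x,d_xf_i)={\cal L}(x,v)$ with $v\in\op{int}(\C_x)$ and ${\cal L}$ conjugates $\phi$ to $\psi^H$ (Lemma \ref{hhhhh}, Remark \ref{lllll}), $\sigma_{i,x}$ is a timelike Euler--Lagrange extremal with $\frac{\partial L}{\partial v}(x,\dot\sigma_{i,x}(0))=d_xf_i$. Define, for $(s,y)\in[0,\ep]\times B_{\delta'}(x_0)$,
\begin{align*}
\tilde U_i(s,y):=f_i(x)+\int_0^s L\big(\sigma_{i,x}(\rho),\dot\sigma_{i,x}(\rho)\big)\,d\rho,\qquad x:=(\psi^H_{i,s})^{-1}(y),
\end{align*}
so that $\tilde U_i(0,\cdot)=f_i$ and the integral formula agrees with \eqref{c}. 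The first variation of the action, integrating the Euler--Lagrange equation by parts and using $\frac{\partial L}{\partial v}(x,\dot\sigma_{i,x}(0))=d_xf_i$ to cancel the boundary term at $\rho=0$, gives
\begin{align*}
d_y\tilde U_i(s,\cdot)=\frac{\partial L}{\partial v}\big(\sigma_{i,x}(s),\dot\sigma_{i,x}(s)\big)=\psi^H_s(x,d_xf_i),
\end{align*}
i.e.\ $d_y\tilde U_i(s,\cdot)=\psi^H_s\circ(\op{id},df_i)\circ(\psi^H_{i,s})^{-1}$; and differentiating $\rho\mapsto\tilde U_i(\rho,\sigma_{i,x}(\rho))$ together with the relation $pw=L+H$ for Legendre-dual $(p,w)$ (Remark \ref{lllll}) yields the Hamilton--Jacobi equation $\partial_s\tilde U_i+H(y,d_y\tilde U_i)=0$. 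Since $d_y\tilde U_i(s,\cdot)$ is a composition of maps each bounded in $C^1$ uniformly in $i$ — here the $C^2$ bound on $f_i$ and the uniform $C^1$ bound on $(\psi^H_{i,s})^{-1}$ from the quantitative inverse function theorem enter, and it is essential that one differentiates the momentum $d_y\tilde U_i$ rather than $\tilde U_i$ itself, which would call for a spurious $C^3$ bound — each $\tilde U_i(s,\cdot)$ is smooth, and $\{\tilde U_i(t,\cdot)\mid t\le\ep,\ i\in I\}$ is locally bounded in $C^2(B_\delta(x_0))$.

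\emph{Identification with $T_tf_i$.} Fix $y\in B_\delta(x_0)$, $t\le\ep$, and $x:=(\psi^H_{i,t})^{-1}(y)$. Since $\sigma_{i,x}$ joins $x$ to $y$ in time $t$,
\begin{align*}
T_tf_i(y)\le f_i(x)+c_t(x,y)\le f_i(x)+\int_0^t L\big(\sigma_{i,x},\dot\sigma_{i,x}\big)=\tilde U_i(t,y).
\end{align*}
For the reverse inequality, use Lemma \ref{t} to pick $z$ with $d_h(z,y)\le C(L+1)t$ attaining $T_tf_i(y)=f_i(z)+c_t(z,y)$, and (Lemma \ref{minimizer}) a minimizer $\beta:[0,t]\to M$ from $z$ to $y$; since $|\dot\beta|_h\le2L(\beta,\dot\beta)^{1/2}$ and $L(\beta,\dot\beta)\equiv c_t(z,y)/t$ is bounded (because $c_t(z,y)=c(z,y)/t=O(t)$ by the Lipschitz bound on $\tau$ and $d_h(z,y)=O(t)$), the curve $\beta$ stays within $h$-distance $O(t)$ of $z$, hence in $B_{\delta'}(x_0)$ for $t$ small, uniformly in $i$. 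Then, using the Hamilton--Jacobi equation for $\tilde U_i$ and the Fenchel inequality $pw-H(x,p)\le L(x,w)$,
\begin{align*}
\tilde U_i(t,y)-f_i(z)=\int_0^t\frac{d}{ds}\tilde U_i(s,\beta(s))\,ds=\int_0^t\big(d_y\tilde U_i(\dot\beta)-H(\beta,d_y\tilde U_i)\big)\,ds\le\int_0^t L(\beta,\dot\beta)\,ds=c_t(z,y),
\end{align*}
so $\tilde U_i(t,y)\le f_i(z)+c_t(z,y)=T_tf_i(y)$. Hence $T_tf_i=\tilde U_i(t,\cdot)$ on $B_\delta(x_0)$, which gives \eqref{c}, the smoothness of $T_tf_i$ on $B_\delta(x_0)$, and the uniform $C^2$ bound. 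I expect the main obstacle to be this last step: unlike in the Tonelli case there is no globally well-behaved Lagrangian with which to run a verification argument, so the crux is to confine, via the $L$-Lipschitz hypothesis and Lemma \ref{t}, both the optimal endpoint $z$ and the optimal curve $\beta$ to the neighbourhood of $x_0$ carrying the classical solution $\tilde U_i$; the other technical point is that the whole construction must be uniform over the index set $I$, which is not a manifold, so all uniformity must be extracted from the $C^2$ bound on the $f_i$ and the compactness of ${\cal K}$.
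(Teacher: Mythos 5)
Your proof is correct, and for the central identification you take a genuinely different route from the paper. The paper proves \eqref{c} by invoking Theorem~\ref{cc}: $T_tf_i$ is locally semiconcave on $(0,\infty)\times B_r(x_0)$, hence differentiable almost everywhere; at each differentiability point $(t,y)$ the unique optimal initial point $x$ is located inside the diffeomorphic chart via the $h$-bound of Lemma~\ref{t}, Lemma~\ref{ddd} identifies $d_yT_tf_i$ with the Hamiltonian image, and then the a.e.\ formula \eqref{k} is upgraded to everywhere-smoothness because a Lipschitz function whose gradient graph extends smoothly is smooth. You instead build the classical characteristic solution $\tilde U_i$ directly, prove it solves the Hamilton--Jacobi equation, and identify it with $T_tf_i$ by a two-sided verification argument (the easy inequality from $\sigma_{i,x}$ as a competitor; the reverse from Lemma~\ref{t} to trap the optimal endpoint $z$, the equi-boundedness of $L$ along the minimizer $\beta$ to confine $\beta$ to the region where $\tilde U_i$ lives, and Fenchel's inequality integrated along $\beta$). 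Your approach is more self-contained — it never invokes the semiconcavity machinery of Section~4, only Lemmas~\ref{minimizer} and~\ref{t} and the Euler--Lagrange/Hamilton duality — and it isolates more transparently the one place where the Lorentzian setting really bites (confining $z$ and $\beta$ to the chart, which you handle correctly using $c(z,y)=O(t^2)$ and $|\dot\beta|_h\le 2L(\beta,\dot\beta)^{1/2}$). The paper's approach instead reuses Theorem~\ref{cc} as a black box, which keeps the proof of the present lemma shorter at the cost of a heavier prerequisite. The $C^2$-boundedness argument — reading the Lipschitz constant of $d_yT_tf_i$ off the composition $\psi^H_s\circ(\op{id},df_i)\circ(\psi^H_{i,s})^{-1}$ rather than differentiating the Lax formula twice, which would import a spurious $C^3$ dependence — is essentially identical to the paper's Step~3, and your remark about why one must differentiate the momentum rather than the value matches the paper's reasoning. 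One small point worth making explicit in a write-up: the Hamilton--Jacobi equation for $\tilde U_i$ is first obtained along each characteristic $\sigma_{i,x}$, and one should note that since the characteristics fill the slab $[0,\ep]\times B_{\delta'}(x_0)$ and both $\partial_s\tilde U_i$ and $d_y\tilde U_i$ were computed at arbitrary points of that slab, the PDE holds pointwise everywhere there, which is what the verification along the (generally different) curve $\beta$ requires.
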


\begin{remark}\rm
\begin{enumerate}[(a)]
\item
In the lemma, $\pi:T^*M\to M$ is the canonical projection. Our proof uses the inverse function theorem. Since the theorem will be applied for a family of maps, we need some control over the size of the diffeomorphic neighbourhoods.  This is provided by the following lemma, which can be found in \cite{Lang}, Chapter 14, Lemma 1.3.
\item 
For a detailed proof of some variant in the non-compact Riemannian case, which does not use the inverse function theorem, see Appendix B in \cite{Fathi/Figalli/Rifford}.
\item
For the definition and further properties of $C^2$-boundedness, see Subsection \ref{semiconcavity} in the appendix.
\end{enumerate}
\end{remark}

\begin{lemma}
    Let $V\subseteq \R^n$ be open, and let $f:V\to \R^n$ be $C^1$. Let $x_0\in V$ and $R>0$ such that $B_R(x_0)\subseteq V$. Suppose that $C\geq 1$ is such that $f$ is $C$-Lipschitz and $C\geq \norm{Df(x_0)^{-1}}$. Additionally, assume that
    \[
    \norm{Df(x)-Df(x_0)}\leq \frac 1{4C}
    \]
    for all $x\in  B_R(x_0)$. If $r\leq \frac{R}{2C}$ and $B_r(f(x_0))\subseteq W\subseteq B_{\frac{R}{2C}}(x_0)$ is any open set, then $B_{\frac rC}(x_0)\subseteq U:=f^{-1}(W)$, and the map $f_{|U}:U\to W$ is a diffeomorphism. 
\end{lemma}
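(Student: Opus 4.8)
The plan is to prove this quantitative inverse function theorem by running the Banach fixed point argument while tracking radii. First I would dispose of the easy inclusion: since $B_{r/C}(x_0)\subseteq B_R(x_0)$ and $f$ is $C$-Lipschitz there, $f$ maps $B_{r/C}(x_0)$ into $B_r(f(x_0))\subseteq W$, so $B_{r/C}(x_0)\subseteq f^{-1}(W)=U$. (Here the ball on the right of the hypothesis should be read as centered at $f(x_0)$, i.e. $B_r(f(x_0))\subseteq W\subseteq B_{R/(2C)}(f(x_0))$.) Next I would record that $Df(x)$ is invertible for every $x\in B_R(x_0)$: writing $Df(x)=Df(x_0)\bigl(I+Df(x_0)^{-1}(Df(x)-Df(x_0))\bigr)$ and using $\norm{Df(x_0)^{-1}}\le C$ together with $\norm{Df(x)-Df(x_0)}\le\tfrac1{4C}$, the bracketed operator lies within distance $\tfrac14<1$ of the identity, hence is invertible by the Neumann series, and $\norm{Df(x)^{-1}}\le\tfrac43 C$.

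The core step is the contraction. For $y\in\R^n$ I would introduce $T_y\colon B_R(x_0)\to\R^n$, $T_y(x):=x-Df(x_0)^{-1}\bigl(f(x)-y\bigr)$, whose fixed points in $B_R(x_0)$ are precisely the solutions of $f(x)=y$ there. Since $DT_y(x)=Df(x_0)^{-1}\bigl(Df(x_0)-Df(x)\bigr)$ has norm $\le\tfrac14$ on the convex set $B_R(x_0)$, the mean value inequality gives $\norm{T_y(x)-T_y(x')}\le\tfrac14\norm{x-x'}$ there. Injectivity of $f$ on $B_R(x_0)$ follows at once: if $f(x)=f(x')=:y$ then $x,x'$ are both fixed points of the $\tfrac14$-contraction $T_y$, so $x=x'$.

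For surjectivity onto $W$ I would fix $y\in W$, so $\norm{y-f(x_0)}<\tfrac{R}{2C}$, and iterate $x_{n+1}:=T_y(x_n)$ from $x_0$. Then $\norm{x_1-x_0}=\norm{Df(x_0)^{-1}(f(x_0)-y)}\le C\cdot\tfrac{R}{2C}=\tfrac R2$, and inductively $\norm{x_{n+1}-x_n}\le 4^{-n}\tfrac R2$, so all iterates remain in $\overline{B_{2R/3}(x_0)}\subseteq B_R(x_0)$ and the sequence is Cauchy; its limit $x$ satisfies $T_y(x)=x$, i.e. $f(x)=y$. The same fixed-point uniqueness shows every point of $f^{-1}(W)\cap B_R(x_0)$ already lies in $\overline{B_{2R/3}(x_0)}$, so after replacing $U$ by $f^{-1}(W)\cap B_R(x_0)$ if necessary (these agree when $f^{-1}(W)\subseteq B_R(x_0)$) the map $f|_U\colon U\to W$ is a bijection, $U$ is open, and $B_{r/C}(x_0)\subseteq U$.

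For the $C^1$ inverse: comparing the fixed points of $T_y$ and $T_{y'}$ gives $\norm{(f|_U)^{-1}(y)-(f|_U)^{-1}(y')}\le\tfrac43 C\,\norm{y-y'}$, so $(f|_U)^{-1}$ is continuous; since $Df$ is invertible at every point of $U$, the classical (local) inverse function theorem then upgrades this to $(f|_U)^{-1}\in C^1$, so $f|_U$ is a $C^1$ diffeomorphism. I expect the only real friction to be the bookkeeping that keeps every Picard iterate inside $B_R(x_0)$, where the contraction estimate is available, together with the minor point of identifying the open set $U$ the conclusion refers to; the radius choices $r\le R/(2C)$ and the $\tfrac14$-bound handle both.
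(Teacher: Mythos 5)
Your proof is correct, and it follows what is almost certainly the standard route (Lang's own Chapter~14, Lemma~1.3, which the paper only cites and does not prove): set up the Newton-type iteration $T_y(x)=x-Df(x_0)^{-1}\bigl(f(x)-y\bigr)$, verify it is a $\tfrac14$-contraction on $B_R(x_0)$ using $\norm{Df(x_0)^{-1}}\le C$ and the oscillation bound $\norm{Df(x)-Df(x_0)}\le\tfrac1{4C}$, then run Picard iteration from $x_0$ and track the radii. The numerics check out: $\norm{x_1-x_0}<R/2$ and the geometric series keeps all iterates in $\overline{B_{2R/3}(x_0)}\subseteq B_R(x_0)$, which is exactly what is needed for both surjectivity onto $W$ and uniqueness in $B_R(x_0)$; the $\tfrac43 C$-Lipschitz bound on the inverse and the upgrade to $C^1$ via the pointwise invertibility of $Df$ are standard and correct. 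You also correctly flagged the typo in the paper's statement (the outer containment should be $W\subseteq B_{R/(2C)}(f(x_0))$, centered at $f(x_0)$, not at $x_0$ --- otherwise the hypothesis mixes domain and codomain).

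The one point worth being a little more careful about, which you did notice and address, is that $U:=f^{-1}(W)$ literally means the preimage in all of $V$, and nothing in the hypotheses prevents $f$ from hitting $W$ again somewhere in $V\setminus B_R(x_0)$; the contraction argument only controls solutions inside $B_R(x_0)$. The statement should therefore be read with $U:=f^{-1}(W)\cap B_R(x_0)$ (indeed, in the paper's own application in Lemma~\ref{llllll} the conclusion is explicitly phrased as giving an open set $U_i$ with $U_i\subseteq(-T,T)\times B_r(x_0)$). Your remark ``after replacing $U$ by $f^{-1}(W)\cap B_R(x_0)$ if necessary'' is exactly the right fix; with that reading the argument is complete and no step is missing.
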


\begin{proof}[Proof of Lemma \ref{llllll}]

By assumption, choosing $r$ even smaller if neccessary, there is a chart $(\phi,B_{2r}(x_0))$ such that the family $(f_i\circ \phi^{-1})_i$ is $K$-bounded in $C^2$.
 Let $T>0$ be such that 
 \begin{align}
\psi^H\left( [-T,T]\times \overbar C\right)\subseteq T^*B_{2r}(x_0),\ C:={\{(x,d_xf_i)\mid x\in B_r(x_0), i\in I\}}. \label{d1}
 \end{align}
Such a $T$ exists because, by assumption, $\overbar C\subseteq T^*M\backslash \C^*$ and $\psi^H$ is the Hamiltonian flow on $T^*M\backslash \C^*$. We split the proof into three steps. 
\medskip

\noindent \textbf{First step: There exists a constant $0<\delta_0<r$ such that:
If $\delta\leq \delta_0$, then there exists $\ep=\ep(\delta)<\delta$ such that, for any $0\leq t\leq \ep$ and $i\in I$, the map 
    \[\psi^H_{i,t}:U_{i,t}\to B_{\delta}(x_0),\ x\mapsto \pi\circ \psi^H_t(x,d_xf_i),\]
     is a diffeomorphism from an open neighbourhood $B_{r}(x_0)\supseteq U_{i,t}=U_{i,t}(\delta)\supseteq B_{\ep}(x_0)$. Moreover, if $\delta\leq \delta'$, then $\ep(\delta)\leq \ep(\delta')$ and $U_{i,t}(\delta)\subseteq U_{i,t}(\delta')$ for all $0\leq t \leq \ep(\delta)$.}

\noindent Due to \eqref{d1} and since the result is local in nature, we may assume\footnote{We just set $\tilde M:=\phi(B_{2r}(x_0))\subseteq \R^{2n}$ and take $\tilde r>0$ with $B_{2\tilde r}(\phi(x_0))\subseteq \tilde M$. Then we find $\tilde T$ such that \eqref{d1} holds (with $T$ and $r$ replaced by $\tilde T$ and $\tilde r$). If we can prove the result for $\tilde r$ and $\tilde M$, we can go back via the chart to conclude the proof of the step.}
that $M$ is an open subset of $\R^n$,
that the family of maps $f_i:\R^n\supseteq B_{2r}(x_0)\to \R$ is $K$-bounded in $C^2$ and $\psi^H$ (which is now the Hamiltonian flow w.r.t.\ the standard symplectic structure on $\R^{2n}$ w.r.t.\ the Hamiltonian $H$ in local coordinates) is defined on $[-T,T]\times \overbar C$.

 We set 
\[
O:=\psi^H([-T,T]\times \overbar C)\subseteq \R^{2n}.
\]
Let us also define, for $i\in I$, the map
\[
\psi^H_i:(-T,T)\times B_r(x_0)\to \R^n,\ (t,x)\mapsto \pi \circ \psi^H_t(x,Df_i(x)).
\]
	
\noindent \textbf{Claim:} On $(-T,T)\times  B_r(x_0)$, we have
 \[
 D_x\psi^H_{i}(t,x)=\Id+R_i(t,x),
  \]
    where $\sup\{|R_i(t,x)|/|t|\mid (t,x)\in (-T,T)\times  B_r(x_0),\ t\neq 0,\ i\in I\}<\infty.$ Here, $D_x$ stands for differentiation w.r.t\ $x$.
	\medskip
    
	\noindent \textbf{Proof:}
	Let us differentiate on $(-T,T)\times B_r(x_0)$ the identity (which holds by definition of the Hamiltonian flow)
	\[ \psi^H_{i}(t,x)=x+\int_0^t \nabla_p H( \psi^H_{s}(x,Df_i(x)))\, ds,\]
   w.r.t. $x$, yielding
	\begin{align*}
	D_x\psi^H_{i}(t,x)=\Id
    +\int_0^t &D(\nabla_p H)( \psi^H_{s}(x,Df_i(x)))
    \\[10pt]
    &\circ \bigg(D_x \psi^H_{s}(x,Df_i(x))
	+ D_p \psi^H_s(x,Df_i(x))\circ D^2f_i(x)\bigg)\, ds.
        \end{align*}
	We define $R_i(t,x)$ as the integral. The integrand can be bounded by
    \[
    \sup_O \norm{D(\nabla_p H)} \Big(\sup_{ [-T,T]\times \overbar C} \norm{D_x\psi^H_s}+K\sup_{ [-T,T]\times \overbar C}  \norm{D_p \psi^H_s}\Big).
    \]
   Since the bound does not depend on $i$, this proves the claim.
    \hfill \checkmark
    
    Now consider the family of maps
    \[
     \Psi^H_i:(-T,T)\times B_{r}(x_0)\to \R\times \R^n,\ (t,x)\mapsto (t, \psi^H_{i}(t,x)),\ i\in I.
    \]
    The derivative is represented by the matrix
    \[
   D\Psi^H_i(t, x)= \left(
    \begin{array}{c|c}
      1 & 0 \\[3pt]
      \hline 
    \nabla_p H(\psi^H_{t}(x,Df_i(x))) & D_x\psi^H_{i}(t,x) 
    \end{array}
    \right)\in \R^{(1+n)\times (1+n)}.
    \]
    Thanks to the claim and the two facts that (1) the family of maps $(x\mapsto (x,Df_i(x)))_{i\in I}$ is equi-continuous at $x_0$ (because of the $K$-boundedness), and (2) the map $\nabla_pH\circ \psi^H$ is uniformly continuous on the compact subset $[-T,T]\times \overbar C$, we conclude that the family $(D\Psi^H_i)_{i\in I}$ is equi-continuous at $(0, x_0)$. 	

    Moreover, $D \Psi^H_i(0, x_0)^{-1}$ is given by the matrix
\[
   D\Psi^H_i(0, x_0)^{-1}= \left(
    \begin{array}{c|c}
      1 & 0 \\[3pt]
      \hline 
    -\nabla_p H(\psi^H_{t}(x_0,Df_i(x_0))) & \op{I}_n 
    \end{array}
    \right)\in \R^{(1+n)\times (1+n)},
    \]
which is bounded (uniformly in $i$) by some constant $C_1\geq 1$, thanks to the compactness of $\overbar C$. Thanks to the claim and the fact that, easily verified, $\partial_t \psi_i^H(t,x)$ is uniformly bounded on $(-T,T)\times B_r(x_0)$, we can choose $C_1$ such that each $\Psi^H_i$ is $C_1$-Lipschitz.

Let $0<\delta_0<r$ with $B_{2\sqrt{2}C_1\delta_0}(0,x_0)\subseteq (-T,T)\times B_{r}(x_0)$ and 
\[ \norm{D\Psi^H_i(t,x)-D \Psi^H_i(0,x_0)}\leq \frac 1{4C_1} \text{ for } (t,x)\in B_{2\sqrt{2}C_1\delta_0}(0,x_0),
\]
which is possible thanks to the equi-continuity.

If $\delta\leq \delta_0$ and $\ep:=\delta/(\sqrt{2}C_1)<\delta$,
then, according to the previous lemma, for each $i\in I$, the map 
\[
{\Psi^H_i}_{|U_i}:U_i\to  (-\delta,\delta)\times B_{\delta}(x_0) 
\]
is a diffeomorphism from an open set $(-\ep,\ep)\times B_{\ep}(x_0)\subseteq U_i\subseteq (-T,T)\times B_{r}(x_0)$. In particular, for each $t\in (-\ep,\ep)$ and $i\in I$, 
$\psi^H_{i,t}$ maps an open neighbourhood $B_{\ep}(x_0)\subseteq U_{i,t}\subseteq B_{r}(x_0)$ of $x_0$ diffeomorphically onto $B_{\delta}( x_0)$. 
It is clear that $\ep(\delta)\leq \ep(\delta')$ and $U_{i,t}(\delta)\subseteq U_{i,t}(\delta')$ whenever $\delta\leq \delta'$.

For later purposes (as required in step 3), note that it follows from the claim that, by choosing $\ep$ (and thus $t$) suficiently small, we can assume that the family of maps $\{\psi^H_{i,t}:U_{i,t}\to B_{\delta}(x_0)\mid |t|\leq \ep,\ i\in I\}$ is uniformly bi-Lipschitz. 
\medskip

 \noindent \textbf{Second step:  Definition of $\ep$ and $\delta$, \eqref{c} holds and $T_tf_i$ is smooth on $B_\delta(x_0)$}

\noindent  First, note that \eqref{c} is no longer local, as it involves the non-local Lax-Oleinik semigroup evolution\footnote{Even though \eqref{k} shows that the problem can be made local for $\delta$ small enough.}.
 
Fix $i\in I$ and denote $f:=f_i$. Theorem \ref{cc} ensures that the function $Tf$ is locally semiconcave on $(0,\infty)\times B_r(x_0)$. In particular, $Tf$ is differentiable almost everywhere and, again by Theorem \ref{cc}, if $(t,y)$ is a differentiability point of $Tf$, there is exactly one $x\in M$ such that
 \begin{align}
     T_tf(y)=f(x)+c_t(x,y). \label{j}
 \end{align}
 Moreover, $x\in I^-(y)$, and by Lemma \ref{ddd}, we have
 \begin{align*}
     d_y(T_tf)=\frac{\partial L}{\partial v}(y,\dot \gamma(t)), \ \frac{\partial L}{\partial v}(x,\dot \gamma(0))= d_xf,
 \end{align*}
 where $\gamma:[0,t]\to M$ is the unique (necessarily timelike) minimizing curve connecting $x$ with $y$. In particular, 
 \begin{align}
 \gamma(s)=\psi^H_s(x,d_xf) \text{ for } s\in [0,t], \label{eeee}
 \end{align}
 and, thanks to Remark \ref{lllll},
 \begin{align*}
     (y,d_y(T_tf))
     =(\gamma(t),\frac{\partial L}{\partial v}(\gamma(t),\dot \gamma(t)))
     ={\cal L}(\gamma(t),\dot \gamma(t))
     &={\cal L}(\phi_t(\gamma(0),\dot \gamma(0)))
     \\
     &=\psi^H_t(x,\frac{\partial L}{\partial v}(x,\dot \gamma(0)))
     \\
     &=\psi^H_t(x,d_{x}f).
 \end{align*}
Denote $\ep_0:=\ep(\delta_0)$ and set $\delta:=\min\{\delta_0,\ep_0/2,\ep_0/(2C(L+1))\}$, where $C(L+1)$ is as Definition \ref{e1} and $\delta_0$ is as in Step 1. Pick $\ep>0$ according to Step 1, then the first claim in the lemma holds by Step 1. Without loss of generality, we assume $\ep\leq T$.

We now prove \eqref{c}. Denote
\begin{align*}
    &\psi^{H,0}_{i,t}:U_{i,t}^0:=U_{i,t}(\delta_0)\to B_{\delta_0}(x_0),\ 0\leq t\leq \ep_0, \text{ and }
    \\[10pt]
    &\psi^H_{i,t}:U_{i,t}:=U_{i,t}(\delta)\to B_\delta(x_0),\ 0\leq t\leq \ep.
\end{align*}
Fix $0\leq t\leq \ep$. Let $y\in B_\delta(x_0)$ be a differentiability point of $Tf$, and let $x\in I^-(y)$ be the unique point such that
\begin{align*}
    T_tf(y)=f(x)+c_t(x,y).
\end{align*}
Using Lemma \ref{t} and the fact that, due to step 1, $\ep\leq \delta$, it follows that $d_h(x,y)\leq C(L+1)\ep \leq C(L+1)\delta\leq \ep_0/2$. Thus, 
\[
d_h(x,x_0)\leq d_h(x,y)+d_h(y,x_0)< \ep_0/2+\delta \leq \ep_0.
\]
Therefore, $x\in U_{i,t}^0$. Then we have
\begin{align*}
    \psi^{H,0}_{i,t}(x)=\pi\circ \psi^H_t(x,d_xf)=\pi(y,d_yTf)=y.
\end{align*}
But $\psi^H_{i,t}(U_{i,t})=B_\delta(x_0)$ and $\psi^{H,0}_{i,t}$ is an extension of $\psi^H_{i,t}$, so we must have $x\in U_{i,t}$. 
Thus, $x=(\psi^H_{i,t})^{-1}(y)$, and we have proved
\begin{align}
    (y,d_yT_tf)=\psi^H_t ((\psi^H_{i,t})^{-1}(y),d_{(\psi^H_{i,t})^{-1}(y)}f), \label{k}
\end{align}
where the latter expression is smooth on $B_\delta(x_0)$. So, $T_tf$ is locally Lipschitz continuous on $B_\delta(x_0)$ (since $\delta<r$), differentiable almost everywhere, and the (graph of the) differential can be extended smoothly to $B_\delta(x_0)$. Thus, $T_tf$ must be smooth. 

Since this argument applies to an arbitrary differentiability point $y\in B_\delta(x_0)$, a repetition of the argument shows that \eqref{k} holds everywhere on $B_\delta(x_0)$. By \eqref{eeee},
\[
T_tf(\psi^H_{i,t}(x))=f(x)+\int_0^t L(\psi^H_{i,s}(x,d_xf),\frac{d}{ds}\psi^H_{i,s}(x,d_xf))\, ds \text{ for all } x\in U_{i,t}.
\]

\noindent \textbf{Step 3: The family $\{T_tf_i\mid 0\leq t\leq \ep,\ i\in I\}$, is locally bounded on $B_\delta(x_0)$ in $C^2$}

\noindent Thanks to \eqref{c} and the fact that $U_{i,t}\subseteq B_r(x_0)$, going back with the chart $\phi$, this problem is again local. Thus, we can again assume that $M$ is an open subset of $\R^n$ and that the family $f_i:B_{2r}(x_0)\to \R$ is bounded in $C^2$.

By \eqref{k} it holds:
\begin{align}
    (\psi^H_{i,t}(x),D(T_tf_i)({\psi^H_{i,t}(x)}))=\psi^H_t(x, Df_i(x)) \text{ on } U_{i,t}. \label{eee}
\end{align}
To prove the $C^2$-boundedness, it clearly suffices to prove that the maps
\[
B_\delta(x_0)\to \R^{2n},\ y\mapsto (y,D(T_tf_i)(y)),
\]
are equi-Lipschitz.
However, as mentioned at the end of Step 1, the family of maps $(\psi^H_{i,t})^{-1}:B_\delta(x_0)\to U_{i,t}$, $i\in I$, $0\leq t\leq \ep$, is equi-Lipschitz. Moreover, also the family of maps
\begin{align*}
   B_{r}(x_0)\ni x\mapsto\psi^H_t(x,D f_i(x)),\ i\in I,\ 0\leq t\leq \ep,
\end{align*}
is equi-Lipschitz. Indeed, the family $(x\mapsto (x,Df_i(x)))_i$ is equi-Lipschitz on $B_{r}(x_0)$ due to the $C^2$-boundedness, and $\psi^H$ is Lipschitz on the compact set $[0,\ep]\times \overbar C$ because $\ep\leq T$. Thus, the claim follows from \eqref{eee}.
\end{proof}

\begin{proof}[Proof of Proposition \ref{o}]
We will only prove the result for $\psi$.
Thanks to Corollary \ref{s}, we already know that $T_\tau\psi_{t_0-\tau}$ is locally semiconcave on an open neighbourhood of $\gamma(t_0)$. Therefore, due to the well-known Lemma \ref{gggg}, it suffices to prove that $T_\tau\psi_{t_0-\tau}$ is locally semiconvex on an open neighbourhood of $\gamma(t_0)$.

We denote $x_0:=\gamma(t_0)$. By Lemma \ref{m} and Lemma \ref{h1}(ii), the map $\hat T\psi$ is locally semiconvex on a neighborhood of $(1-t_0,x_0)$. It is differentiable at $(1-t_0,x_0)$ due to the dual version of Lemma \ref{xx}. Furthermore, by Lemma \ref{ddd}, we have
$d_{x_0}\psi_{t_0}=\frac{\partial L}{\partial v}(x_0,\dot \gamma(t_0))$, so that
\[
d_{x_0}\psi_{t_0}(\dot \gamma(t_0))=2L(x_0,\dot \gamma(t_0))>0.
\]
In particular, $d_{x_0}(\hat T_{1-t_0}\psi)\in T_{x_0}^*M\backslash \C_{x_0}^*$.
Thus, we can apply Lemma \ref{r} in the appendix: 

There exists $r>0$ and a family of smooth equi-Lipschitz functions $\{f_{i,s}:M\to \R\mid i\in I_s,\ s\in (t_0-r,t_0+r)\}$ such that:

\begin{enumerate}[(i)]
    \item $(f_{i,s})$ is locally bounded in $C^2$.
    \item 
    $\psi_s= \sup_{i\in I_s} f_{i,s} \text{ on } B_{r}(x_0)$ for $s\in (t_0-r,t_0+r)$.
    \item 
    The set
\[
    \{(x,d_xf_{i,s})\mid x\in  B_{r}(x_0),\ s\in (t_0-r,t_0+r),\ i\in I_s\}
    \]
is relatively compact in $T^*M\backslash \C^*$.
\item 
For any $x\in B_{r}(x_0)$, $s\in (t_0-r,t_0+r)$ and $p\in \partial^- \psi_s(x)$, we find $i\in I_s$ with $f_{i,s}(x)=\psi_s(x)$ and $d_xf_{i,s}=p$.
\end{enumerate} 
Note that $\psi_s=\hat T_{1-s}\psi$, so that (as given by Lemma \ref{r}) if $1-s\in (1-t_0-r,1-t_0+r)$, then $s\in (t_0-r,t_0+r)$.

Due to the Lipschitz property, as well as properties (i) and (iii), this family meets the assumptions of the previous lemma. This ensures the existence of $0<\ep,\delta<r$ such that: 
Given $0\leq t\leq \ep$, $s\in (t_0-r,t_0+r)$ and $i\in I_s$, the map
\begin{align*}
    \psi^H_{i,s,t}:U_{i,s,t}\to B_\delta(x_0),\ x\mapsto \pi \circ \psi^H_t(x,d_xf_{i,s}),
\end{align*}
is a diffeomorphism from an open set $U_{i,s,t}$ containing $B_\ep(x_0)$ and satisfies
\begin{align*}
    T_tf_{i,s}(\psi^H_{i,s,t}(x))=f_{i,s}(x)+c_t(x,\psi^H_{i,s,t}(x)),\ x\in U_{i,s,t}.
\end{align*}
Moreover, the family $\{T_tf_{i,s}\mid s\in (t_0-r,t_0+r),\ i\in I_s,\ 0\leq t\leq \ep\}$ is locally bounded in $C^2(B_\delta(x_0))$.
To conclude the proof, it suffices to show that, for some $\delta'<\delta$ and sufficiently small $\tau<\ep$, $T_\tau\psi_{t_0-\tau}$ is finite on $B_{\delta'}(x_0)$ and
\begin{align}
\sup_{i\in I_{t_0-\tau}}  T_\tau f_{i,t_0-\tau} = T_\tau\psi_{t_0-\tau} \text{ on } B_{\delta'}(x_0). \label{mmmmm}
\end{align}
Indeed, since the family $(T_\tau f_{i,t_0-\tau})_{i\in I_{t_0-\tau}}$ is locally bounded in $C^2(B_\delta(x_0))$, it follows that $ T_\tau\psi_{t_0-\tau}$ is locally semiconvex on $B_{\delta'}(x_0)$ (see Remark \ref{pppp}). This would finish the proof.
Let us now verify \eqref{mmmmm}.
\medskip

\noindent \textbf{Claim:} There exists $\delta'<\delta$ such that, for sufficiently small $\tau>0$, the function $T_\tau\psi_{t_0-\tau}$ is finite on $B_{\delta'}(x_0)$ and we have 
\[
 T_\tau\psi_{t_0-\tau}(y)= \min\{\psi_{t_0-\tau}(x)+c_\tau(x,y)\mid x\in B_\ep(x_0)\cap I^-(y)\} \text{ for all } y\in B_{\delta'(x_0)}.
\]

\noindent \textbf{Proof:} By the local semiconvexity of $\hat T\psi$ near $(1-t_0,x_0)$, we find $\ep'<\ep$ such that the map $\hat T\psi$ is Lipschitz on $[1-t_0-\ep',1-t_0+\ep']\times \overbar B_{\ep'}(x_0)$. Set 
\[
M:=\inf\{\psi_{t_0-\tau}(x)\mid (\tau,x)\in [-\ep',\ep']\times \overbar B_{\ep'}(x_0)\}.
\]
Now, let $\delta''<\min\{\ep',\delta\}$ be fixed. Moreover, let $\tau<\ep'$.

If $y\in B_{\delta''}(x_0)$ and $x\in J^-(y)$ with $x\notin B_{\ep'}(x_0)$, pick a minimizer $\tilde \gamma:[0,\tau]\to M$ connecting $x$ with $y$, and pick $t\in [0,\tau)$ with $\tilde \gamma(t)\in \partial B_{\ep'}(x_0)$. Then, using the semigroup property and \eqref{splitting}, we have
\begin{align*}
\psi_{t_0-\tau}(x)+c_\tau(x,y)&\geq \psi_{t_0-\tau+t}(\tilde \gamma(t))-c_{t}(x,\tilde \gamma(t))+c_{t}(x,\tilde \gamma(t))+ c_{\tau-t}(\tilde \gamma(t),y)
\\[10pt]
&
= \psi_{t_0-\tau+t}(\tilde \gamma(t))+c_{\tau-t}(\tilde \gamma(t),y).
\\[10pt]
& \geq \psi_{t_0-\tau+t}(\tilde \gamma(t))+\frac{d_h(\tilde \gamma(t),y)^2}{4(\tau-t)}
\\[10pt]
& \geq M +\frac{(\ep'-\delta'')^2}{4\tau}.
\end{align*}
The latter term is independent of $x$ and $y$ and converges to $\infty$ as $\tau\to 0$. Thus,
\[
\inf\{\psi_{t_0-\tau}(x)+c_\tau(x,y)\mid y\in B_{\delta''}(x_0),\ x\notin B_{\ep'}(x_0)\} \xrightarrow{\tau\to 0} \infty.
\]
However, $T_\tau\psi_{t_0-\tau}\leq \psi_{t_0-\tau}$ is uniformly bounded from above on $B_{\delta''}(x_0)$ for $|\tau|\leq \ep'$ since $\hat T\psi$ is continuous on $[1-t_0+\ep',1-t_0+\ep']\times \overbar B_{\ep'}(x_0)$ (recall that $\delta''<\ep'$). Thus, for a sufficiently small $\tau\leq \ep'$, which we now fix, we have 
\[
T_\tau\psi_{t_0-\tau}(y)< \inf\{\psi_{t_0-\tau}(x)+c_r(x,y)\mid x\notin  B_{\ep'}(x_0)\} \text{ for all } y\in B_{\delta''}(x_0).
\]
Since $\tau\leq \ep'$, $\psi_{t_0-\tau}$ is continuous on $\overbar B_{\ep'}(x_0)$, so that a simple compactness argument shows that the infimum is attained at a point $x\in B_{\ep'}(x_0)$. In particular, $T_\tau\psi_{t_0-\tau}$ is finite on $B_{\delta''}(x_0)$. Moreover, we can redo the proof of Proposition \ref{i1} (which uses the Lipschitz property of $\psi_{t_0-\tau}$ on $B_{\ep'}(x_0)$) to obtain that $x\notin \partial J^-(y)\backslash \{y\}$. It remains to prove that there exists $\delta'<\delta''$ such that, for any $y\in B_{\delta'}(x_0)$, we have $x\neq y$. Indeed, assume that there exists a sequence $y_k\to x_0$ such that $T_{\tau}\psi_{t_0-\tau}(y_k)=\psi_{t_0-\tau}(y_k)$. Since $T_{\tau}\psi_{t_0-\tau}$ is locally semiconcave on a neighbourhoud of $x_0$ (Corollary \ref{s}) and $\psi_{t_0-\tau}$ is continuous at $x_0$, it follows from Lemma \ref{h1} that
\begin{align*}
\varphi(\gamma(0))+c_{t_0}(\gamma(0),\gamma(t_0))&=T_{\tau}\psi_{t_0-\tau}(x_0)=\psi_{t_0-\tau}(x_0)
\\
&=\varphi(\gamma(0))+c_{t_0-\tau}(\gamma(0),\gamma(t_0-\tau)).
\end{align*}
This is a contradiction and proves the claim.
\hfill \checkmark

Now, let $\delta'<\delta$ be as in the claim and $0<\tau<\min\{\ep,r\}$ sufficiently small to make the claim applicable. From the claim and the fact that $f_{i,t_0-\tau}\leq \psi_{t_0-\tau}$ on $B_{r}(x_0)\supseteq B_\ep(x_0)$ (see (ii)), it follows that 
\[
\sup_{i\in I_{t_0-\tau}} T_\tau f_{i,t_0-\tau}\leq T_\tau\psi_{t_0-\tau} \text{ on $B_{\delta'}(x_0)$.}
\]
We aim to prove that equality holds.

 Let $y\in B_{\delta'(x_0)}$ and $x\in B_{\ep}(x_0)\cap I^-(y)$ such that the identity
\[
T_\tau\psi_{t_0-\tau}(y)=\psi_{t_0-\tau}(x)+c_\tau(x,y)
\]
holds. Let $\tilde \gamma:[0,\tau]\to M$ be a minimizer connecting $x$ to $y$. Since $x\in I^-(y)$, $\tilde \gamma$ must be timelike, so that $v:=\frac{\partial L}{\partial v}(x,\dot {\tilde \gamma}(0))\in \partial^-\psi_{t_0-\tau}(x)$ thanks to Lemma \ref{ddd}. Since $x\in B_\ep(x_0)\subseteq B_r(x_0)$, it follows from (iv) that there is $i\in I_{t_0-\tau}$ with $v=d_xf_{i,t_0-\tau}$ and $f_{i,t_0-\tau}(x)=\psi_{t_0-\tau}(x)$. As $\tilde \gamma$ is a minimizer, its speed curve is an orbit of the Euler-Lagrange flow, this means that $\tilde \gamma(t)=\pi \circ \psi^H_{t}(x,d_xf_{i,t_0-\tau})$. In particular, since $\tau\leq \ep$, the previous lemma gives 
\[
 T_\tau f_{i,t_0-\tau}(y)= f_{i,t_0-\tau}(x)+c_\tau(x,y) = \psi_{t_0-\tau}(x)+c_\tau(x,y)= T_\tau\psi_{t_0-\tau}(y).
\]
This finally proves \eqref{mmmmm}.
\end{proof}

\begin{proof}[Proof of Theorem \ref{n}]
     For each $\gamma\in A$, fix $\tau=\tau(\gamma)<(t-s)/2$ such that the statement of Proposition \ref{o} is applicable, i.e. the function 
        $\hat T_{\tau(\gamma)} \varphi_{s+\tau(\gamma)}$ is $C_{loc}^{1,1}$ on an open neighbourhood $U_\gamma$ of $\gamma(s)$, and the function $T_{\tau(\gamma)}\psi_{t-\tau(\gamma)}$ is $C_{loc}^{1,1}$ on an open neighbourhood $V_\gamma$ of $\gamma(t)$. 
        Let 
        \begin{align*}
            U:=\bigcup_{\gamma\in A} U_\gamma \text{ and } V:=\bigcup_{\gamma\in A} V_\gamma.
        \end{align*}
       By definition, the family $(U_\gamma)_{\gamma \in A}$ forms an open cover of the manifold $U$, so we can choose a smooth partition of unity subordinate to this open cover. That is, there exist smooth functions $(\rho_\gamma:U\to [0,1])_{\gamma\in A}$ such that 
       $\supp(\rho_\gamma)\subseteq U_\gamma$ (the support is taken w.r.t. the manifold $U$) and such that every $x\in U$ has a neighbourhood intersecting only finitely many $\supp(\rho_\gamma)$. We then define
    \begin{align*}
        U\ni x\mapsto \Phi_{s}(x):=\sum_{\gamma \in A} \rho_\gamma(x) \hat T_{\tau(\gamma)}\varphi_{s+\tau(\gamma)}(x),
    \end{align*}
    where, of course, $0\cdot (\pm \infty):=0$.
    Locally this is a finite sum of functions that are all $C_{loc}^{1,1}$, so also the sum itself is $C_{loc}^{1,1}$.
Similarly, we define $\Psi_{t}:V\to \R$ by
\[
\Psi_{t}(y):=\sum_{\gamma \in A} \eta_\gamma(y) T_{\tau(\gamma)}\psi_{t-\tau(\gamma)}(y),
\]
where $(\eta_\gamma)_\gamma$ is another partition of unity subordinate to the open cover $(V_\gamma)_\gamma$ of $V$. In particular, $\Psi_{t}$ is also $C_{loc}^{1,1}$. 

We need to prove that the pair $(\Phi_{s},\Psi_{t})$ is $(c_{t-s},\pi_{s,t})$-calibrated. More precisely, we mean that the functions obtained by extending $\Phi_{s}$ and $\Psi_{t}$ to the whole of $M$ by setting $\Phi_{s}(x)=\infty$ for $x\notin U$, and $\Psi_{t}(y)=-\infty$ for $y\notin V$, are $(c_{t-s},\pi_{s,t})$-calibrated. However, with this definition, it is clear that we have to check the subsolution property only for $(x,y)\in U\times V$.

 Thus, let $(x,y)\in U\times V$ and $\gamma,\gamma' \in A$ be arbitrary. Thanks to Lemma \ref{h1}(iii), we have
        \[
       T_{\tau(\gamma)}\psi_{t-\tau(\gamma)}(y)- \hat T_{\tau(\gamma')}\varphi_{s+\tau(\gamma')}(x)
       \leq
       c_{t-s}(x,y).
        \]
        Therefore,
        \begin{align*}
        \Psi_{t}(y)-\Phi_{s}(x) &= \sum_{\gamma \in A}  \eta_\gamma(y)T_{\tau(\gamma)}\psi_{t-\tau(\gamma)}(y)- \sum_{\gamma' \in A} \rho_{\gamma'}(x) \hat T_{\tau(\gamma')}\varphi_{s+\tau(\gamma')}(x)
        \\[10pt]
        &= \sum_{\gamma,\gamma' \in A} \rho_{\gamma'}(x) \eta_\gamma(y) \left( T_{\tau(\gamma)}\psi_{t-\tau(\gamma)}(y)-  \hat T_{\tau(\gamma')}\varphi_{s+\tau(\gamma')}(x)\right)
        \\[10pt]
        &\leq
        \sum_{\gamma,\gamma' \in A} \rho_{\gamma'}(x) \eta_\gamma(y) c_{t-s}(x,y)=c_{t-s}(x,y).
        \end{align*}
        Thus, $(\Phi_s,\Psi_t)$ is a $c_{t-s}$-subsolution. The $(c_{t-s},\pi_{s,t})$-calibration then follows from Lemma \ref{jjjjj} and the last claim of the theorem, namely that $\Phi_s=\varphi_s$ on $A_s$ and $\Psi_t=\psi_t$ on $A_t$. This, however, follows immediately from Lemma \ref{h1}(i),(ii) and the definitions of $\Phi_s$ and $\Psi_t$. 
\end{proof}

\section{Appendix}
\subsection{Semiconcave functions and boundedness in $C^2$}\label{semiconcavity}

In this section, we review some basic properties of semiconcavity and semiconvexity, as well as other related definitions. For proofs and additional results, we refer the reader to Appendix A in \cite{Fathi/Figalli}.

\begin{definition}\rm
    \begin{enumerate}[(a)]
\item
    A function $f:U\to \overbar \R$ defined on an open set $U\subseteq \R^n$ is said to be \emph{super-differentiable} (resp. \emph{sub-differentiable}) at $x\in U$ with \emph{super-differential} (resp. \emph{sub-differential}) $p\in (\R^n)^*$ if $f(x)\in \R$ and
    \begin{align*}
    &f(y)\leq f(x)+ \langle p,y-x\rangle+o(|y-x|) 
    \\[10pt]
    (\text{resp. }  &f(y)\geq f(x)+ \langle p,y-x\rangle+o(|y-x|)).
    \end{align*}
    \item A function $f:U\to \overbar \R$ defined on an open subset $U\subseteq N$ of a smooth manifold $N$ is said to be \emph{super-differentiable} at $x\in U$ with \emph{super-differential} $p\in T_x^*N$ if $f(x)\in \R$ and if, for one (or any) chart $(\phi,V)$ of $U$ around $x$, the function $f\circ \phi^{-1}$ is super-differentiable at $\phi(x)$ with super-differential $p\circ d_{\phi(x)}\phi^{-1}$.
We denote by $\partial^+f(x)$ the set of all super-differentials at $x$.

A similar definition holds for sub-differentiable functions, whose sub-differentials at $x$ are denoted by $\partial^-f(x)$.
    \end{enumerate}
\end{definition}

\begin{definition}\rm  \label{concave}
\begin{enumerate}[(a)]
\item A function $f:U\to \R$ defined on an open set $U\subseteq \R^n$ is said to be \emph{$K$-semiconcave} if, for all $x\in U$, there exists $p\in (\R^n)^*$ such that
    \begin{align}
         f(y)\leq f(x)+\langle p,y-x\rangle +K|y-x|^2 \ \forall y\in U. \label{ssss}
    \end{align}
		\item A family of functions $f_i:U\to \R$, $i\in I$, defined on an open set $U\subseteq \R^n$ is said to be \emph{uniformly semiconcave} if the constant $K$ in definition (a) can be chosen uniformly for all functions $f_i$.	
\item
    A function $f:U\to \R$ defined on an open set $U\subseteq \R^n$ is said to be \emph{locally semiconcave} if, for every $x\in U$, there exists an open neighborhood $V\subseteq U$ around $x$ such that $f$ is $K$-semiconcave on $U$ for some $K>0$.

    \item A function $f:U\to \R$ defined on an open subset $U\subseteq N$ of a smooth manifold $N$ is said to be \emph{locally semiconcave} if, for each chart $(\phi,V)$ of $U$, the function $f\circ \phi^{-1}$ is locally semiconcave.

    \item  A family of functions $f_i:U\to \R$, $i\in I$, defined on an open subset $U\subseteq N$ of a smooth manifold $N$ is said to be \emph{uniformly locally semiconcave} if, for each $x\in U$, there exists a chart $(\phi,V)$ of $U$ around $x$ such that the family of functions $f_i\circ \phi^{-1}$, $i\in I$, is uniformly semiconcave.

		\item All of the above definitions also apply to semiconvexity by replacing $f$ with $-f$.
    \end{enumerate}
\end{definition}

\begin{remark}\rm
\begin{enumerate}[(a)]
\item 
It is shown in \cite{Fathi/Figalli}, Corollary A14, that the infimum of a uniformly locally semiconcave family of functions is locally semiconcave, provided the infimum is finite.

An analogous result holds for semiconvex functions, replacing the infimum with the supremum.
\item Note that, in the situation of part (a) of the above definition, inequality \eqref{ssss} is only required to hold for some $p\in (\R^n)^*$, and necessarily $p\in \partial^+f(x)$. However, the inequality holds for any other $q\in \partial^+f(x)$ as well (for the same $K$). 
\item It is well-known that locally semiconcave and locally semiconvex functions are locally Lipschitz, see Lemma A5 in \cite{Fathi/Figalli}.
\end{enumerate}
\end{remark}

\begin{definition}\rm
\begin{enumerate}[(a)]
    \item A family of $C^2$ functions $f_i:U\to \R$, $i\in I$, defined on an open set $U\subseteq \R^n$ is said to be \emph{bounded in $C^2$} if 
    \[
    \sup_{i\in I,\ x\in U} \norm{D^2f_i(x)}<\infty.
    \]
    
    \item 
    A family of $C^2$ functions $f_i:U\to \R$, $i\in I$, defined on an open subset $U\subseteq N$ of a smooth manifold $N$ is said to be \emph{locally bounded in $C^2$} if, for each $x\in U$, there exists a chart $(\phi,V)$ around $x$ such that the family of maps $f_i\circ \phi^{-1}$, $i\in I$, is bounded in $C^2$.
\end{enumerate}
\end{definition}

\begin{remark}\rm\label{pppp}
\begin{enumerate}[(a)]
\item
Note that, similar to the situation in \cite{Fathi/Figalli}, in part (b) of the above definition, we require the property to hold for only one chart. It is not necessarily true that the property holds for all charts.
We refer to Example A12 in \cite{Fathi/Figalli}, which justifies the definition of uniform local semiconcavity.

\item
It is clear from the definition that a family of functions that is locally bounded in $C^2$ is uniformly locally semiconcave and semiconvex. In particular, the finite supremum (resp.\ infimum) of a family of functions that is locally bounded in $C^2$ is locally semiconvex (resp.\ semiconcave).
\end{enumerate}
\end{remark}

\begin{definition}\rm \label{eeeee}
\begin{enumerate}[(a)]
	\item
For an open subset $U\subseteq \R^n$, we define $C^{1,1}(U)$ as the set of all $C^1$ functions with Lipschitz derivative, i.e.\ the mapping
\[
U\to (\R^n)^*\cong \R^n,\ x\mapsto Df(x),
\]
is Lipschitz. With obvious modifications as in Definition \ref{concave}, we define $C^{1,1}_{loc}(U)$.
\item
If $U\subseteq N$ is an open subset of a smooth manifold $N$, we define $C_{loc}^{1,1}(U)$ as the set of $C^1$ functions $f:U\to \R$ such that, for any chart $(\phi,V)$ of $U$, we have $f\circ \phi^{-1}\in C_{loc}^{1,1}(\phi(V))$.
\end{enumerate}
\end{definition}

\begin{lemma}\label{gggg}
	A function $f:U\to \R$ defined on an open set of a smooth manifold belongs to $C_{loc}^{1,1}(U)$ if and only if it is locally semiconcave and locally semiconvex.
\end{lemma}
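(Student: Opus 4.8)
The plan is to reduce everything, right away, to a local Euclidean statement: all the notions in question are defined chart by chart (Definitions \ref{concave}(d) and \ref{eeeee}(b)), so it suffices to treat an open $U\subseteq\R^n$ and, in fact, to work on a fixed convex ball $V\subseteq U$ on which $f$ is, say, simultaneously $K$-semiconcave and $K$-semiconvex. The implication ``$C^{1,1}_{loc}\Rightarrow$ locally semiconcave and semiconvex'' is the easy half: if $Df$ is $K$-Lipschitz on the convex set $V$, then Taylor's formula with integral remainder yields $\big|f(y)-f(x)-\langle Df(x),y-x\rangle\big|=\big|\int_0^1\langle Df(x+t(y-x))-Df(x),\,y-x\rangle\,dt\big|\le\tfrac K2|y-x|^2$ for all $x,y\in V$, and the two one-sided versions of this inequality are exactly $\tfrac K2$-semiconcavity and $\tfrac K2$-semiconvexity (with $p=Df(x)$ admissible in both).

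For the converse I would first upgrade $f$ to a $C^1$ function. At each $x\in V$ the set $\partial^+f(x)$ is nonempty: the vector $p$ witnessing $K$-semiconcavity satisfies $f(y)\le f(x)+\langle p,y-x\rangle+K|y-x|^2=f(x)+\langle p,y-x\rangle+o(|y-x|)$, hence $p\in\partial^+f(x)$; symmetrically $\partial^-f(x)\neq\emptyset$. Any $p\in\partial^+f(x)$ and $q\in\partial^-f(x)$ must coincide: subtracting the two defining inequalities and testing along rays $y=x+tv$, $t\to0^+$, gives $\langle q-p,v\rangle\le0$ for every $v$. Thus $\partial^+f(x)=\partial^-f(x)=\{Df(x)\}$, so $f$ is differentiable on $V$; by the (well-known) upper semicontinuity of the superdifferential of a semiconcave function (\cite{Fathi/Figalli}, Appendix A; \cite{Philippis}, Appendix A), the singleton-valued map $x\mapsto Df(x)$ is then continuous, so $f\in C^1(V)$. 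Since the unique super-/sub-differential at $x$ is $Df(x)$, the defining inequalities now read, for all $x,y\in V$,
\[
\big|f(y)-f(x)-\langle Df(x),y-x\rangle\big|\le K|y-x|^2;
\]
call this estimate $(\ast)$.

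The substantive step is to deduce from $(\ast)$ that $Df$ is locally Lipschitz, and this is the point where care is needed: by itself $(\ast)$ is only equivalent to the ``radial'' bound $|\langle Df(x)-Df(y),x-y\rangle|\le 2K|x-y|^2$, which even for a gradient field does not force Lipschitz continuity, so one must feed a well-chosen third point into $(\ast)$ (this is the device used in \cite{Fathi/Figalli}). Concretely, fix $z_0\in V$; since $f\in C^1$, choose $r>0$ with $\overline{B(z_0,2r)}\subseteq V$ and $\mathrm{osc}_{B(z_0,2r)}Df<Kr$. For $x,y\in B(z_0,r)$ set $w:=Df(x)-Df(y)$ (assume $w\neq0$) and, with $s:=|w|/(4K)$, let $z:=\tfrac{x+y}2+s\,\tfrac{w}{|w|}$, which lies in $B(z_0,2r)\subseteq V$ by the oscillation bound. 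Applying $(\ast)$ with base point $y$ (upper bound) and with base point $x$ (lower bound) to the point $z$ and subtracting, then using the identity $|z-x|^2+|z-y|^2=\tfrac12|x-y|^2+2s^2$ together with the consequence $\big|f(y)-f(x)+\tfrac12\langle Df(x)+Df(y),x-y\rangle\big|\le K|x-y|^2$ of $(\ast)$, one arrives at $s|w|\le\tfrac{3K}2|x-y|^2+2Ks^2$, i.e. $\tfrac{|w|^2}{8K}\le\tfrac{3K}2|x-y|^2$, so $|Df(x)-Df(y)|\le 2\sqrt3\,K|x-y|$. Hence $Df$ is Lipschitz on $B(z_0,r)$ and $f\in C^{1,1}_{loc}(V)$, which completes the proof (alternatively, this converse direction can simply be quoted from \cite{Fathi/Figalli}, Appendix A). The main obstacle is precisely this last step — obtaining honest Lipschitz control of the gradient rather than only the weaker radial monotonicity estimate.
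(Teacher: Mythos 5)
Your proposal is correct. The paper itself does not prove this lemma but simply cites Fathi's weak KAM notes, so there is no in-paper argument to compare against; your write-up supplies the standard proof that the citation implicitly points to. The reduction to a convex Euclidean ball on which $f$ is simultaneously $K$-semiconcave and $K$-semiconvex is legitimate given Definitions \ref{concave}(d) and \ref{eeeee}(b), the differentiability and $C^1$ upgrade via uniqueness and upper semicontinuity of the super-/sub-differential is sound (local boundedness of $\partial^+f$ comes from the local Lipschitz property of semiconcave functions), and the three-point device with $z=\tfrac{x+y}{2}+s\,w/|w|$, $s=|w|/(4K)$, is exactly the right way to turn the two-sided quadratic estimate $(\ast)$ into genuine Lipschitz control of the gradient; the algebra $s|w|\le\tfrac{3K}{2}|x-y|^2+2Ks^2$ and the resulting bound $|Df(x)-Df(y)|\le 2\sqrt{3}\,K|x-y|$ both check out, and the oscillation bound $\operatorname{osc}_{B(z_0,2r)}Df<Kr$ (available once $Df$ is known continuous) correctly keeps the auxiliary point $z$ inside the domain. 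You also correctly identify where the naive approach would stall, namely that the radial estimate $|\langle Df(x)-Df(y),x-y\rangle|\le 2K|x-y|^2$ alone is insufficient.
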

\begin{proof}
    See \cite{Fathi}.
\end{proof}

The following theorem (in a slightly different setting) is taken from \cite{Bernard}, Proposition 10. We provide the proof here due to the minor differences. 

\begin{lemma}\label{r}
Suppose that $M$ is (as in the paper) a globally hyperbolic spacetime.

    Let $f:(0,\infty)\times M\to \overbar \R$ be a function, which is locally semiconvex in a neighbourhood of $(t_0,x_0)$ and differentiable at $(t_0,x_0)$ with $d_{x_0}f(t_0,\cdot)\in T_{x_0}^*M\backslash \C_{x_0}^*$. 
    
    Then there exists $r>0$ and a family of smooth equi-Lipschitz functions $f_{i,s}:M\to \R$, $i\in I_s$, $s\in (t_0-r,t_0+r)$, such that 
    \begin{enumerate}[(i)]
        \item $(f_{i,s})$ is locally bounded in $C^2$.
        \item 
    $f(s,\cdot)= \sup_{i\in I_s} f_{i,s} \text{ on } B_{r}(x_0)$
    \item 
    The set
    \[
    \{(x,d_xf_{i,s})\mid x\in  B_r(x_0),\ s\in (t_0-r,t_0+r),\ i\in I_s\}
    \]
    is relatively compact in $T^*M\backslash \C^*$.
    \item 
     For any $x\in B_{r}(x_0)$, $s\in (t_0-r,t_0+r)$ and $p\in \partial^-f_s(x)$, we find $i\in I_s$ with $f_{i,s}(x)=f(s,x)$ and $d_xf_{i,s}=p$.
    \end{enumerate}
\end{lemma}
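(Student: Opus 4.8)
The plan is to realise each slice $f(s,\cdot)$ locally as the supremum of the \q{downward paraboloids} that support it from below, and then multiply by a fixed cutoff so that every member of the family becomes a globally smooth, compactly supported (hence globally Lipschitz) function on $M$. First I would fix a chart $(\phi,V)$ around $x_0$ and, using that $f$ is locally semiconvex near $(t_0,x_0)$, choose $\rho>0$ and $K>0$ such that $f\circ\phi^{-1}$ (still denoted $f$, with $z_0:=\phi(x_0)$, and $|\cdot|$ the Euclidean norm) is $K$-semiconvex — equivalently $f+K|\cdot|^2$ is convex — on the box $(t_0-\rho,t_0+\rho)\times B_\rho(z_0)$, where it is also finite and, say, $\Lambda$-Lipschitz. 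Restricting the convex function $f+K|\cdot|^2$ to the affine slice $\{t=s\}$ shows that $f_s:=f(s,\cdot)$ is $K$-semiconvex on $B_\rho(z_0)$ with $K$ \emph{independent of $s$}; in particular $f_s$ is finite, continuous and subdifferentiable everywhere on $B_\rho(z_0)$, and by the remark following Definition \ref{concave} (applied to $-f_s$) every $p\in\partial^-f_s(x)$ satisfies the quadratic bound $f_s(y)\geq f_s(x)+\langle p,y-x\rangle-K|y-x|^2$ for all $y\in B_\rho(z_0)$.

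Then I would fix radii $0<r<r'<\rho$ and a smooth cutoff $\theta\colon\R^n\to[0,1]$ with $\theta\equiv1$ on $B_r(z_0)$ and $\supp\theta\subseteq B_{r'}(z_0)$, take $I_s:=\{(x,p)\mid x\in B_r(z_0),\ p\in\partial^-f_s(x)\}$, and define in the chart $f_{(x,p),s}(y):=\theta(y)\bigl(f_s(x)+\langle p,y-x\rangle-K|y-x|^2\bigr)$, extended by $0$ off $V$. Since $\theta$ is a fixed cutoff and, on $\supp\theta$, both this paraboloid and its gradient are bounded uniformly in $(x,p,s)$ (because $|f_s(x)|$, $|p|\leq\Lambda$ and $|y-x|\leq2r'$ are all bounded), the family $(f_{(x,p),s})$ is smooth, globally Lipschitz with a uniform constant, and locally bounded in $C^2$ — on $B_r(z_0)$ each $f_{(x,p),s}$ is just the quadratic, with Hessian $-2K\,\Id$, and outside $\supp\theta$ it vanishes, giving (i). Property (ii) is then immediate: on $B_r(z_0)$ each $f_{(x,p),s}$ equals that quadratic, which is $\leq f_s$ by the quadratic bound, while taking $x=y$ with any $p\in\partial^-f_s(y)\neq\emptyset$ gives $f_{(y,p),s}(y)=f_s(y)$. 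Property (iv) is built into the construction, since $f_{(x,p),s}(x)=f_s(x)$ and $d_xf_{(x,p),s}=p$ for $x\in B_r(z_0)$.

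The real work is (iii): the covectors $d_yf_{(x,p),s}(y)=p-2K(y-x)$ (for $y\in B_r(z_0)$, in the chart) must form a relatively compact subset of $T^*M\setminus\C^*$. Boundedness is clear from $|p|\leq\Lambda$; the point is to keep them off the closed cone $\C^*$. Here differentiability of $f$ at $(t_0,x_0)$ — hence of $x\mapsto f(t_0,x)$ at $x_0$ with derivative $d_{x_0}f(t_0,\cdot)$ — enters. I claim that for every neighbourhood $W$ of $d_{x_0}f(t_0,\cdot)$ there is $r>0$ so small that $\partial^-f_s(x)\subseteq W$ for all $(s,x)\in(t_0-r,t_0+r)\times B_r(x_0)$. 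This follows by a compactness argument: if $(s_n,x_n)\to(t_0,x_0)$ and $p_n\in\partial^-f_{s_n}(x_n)$, the $p_n$ are bounded by $\Lambda$, and any subsequential limit $p$ satisfies, by continuity of $f$ near $(t_0,x_0)$, the bound $f(t_0,y)\geq f(t_0,x_0)+\langle p,y-x_0\rangle-K|y-x_0|^2$ for $y$ near $x_0$, i.e.\ $p\in\partial^-f(t_0,\cdot)(x_0)$; since $f(t_0,\cdot)$ is semiconvex and differentiable at $x_0$, that subdifferential is the singleton $\{d_{x_0}f(t_0,\cdot)\}$, so $p_n\to d_{x_0}f(t_0,\cdot)$. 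As $\C^*$ is closed and $(x_0,d_{x_0}f(t_0,\cdot))\notin\C^*$, I fix a neighbourhood of $(x_0,d_{x_0}f(t_0,\cdot))$ in $T^*M$ disjoint from $\C^*$, and then shrink $r$ until both $B_r(x_0)$ and the corresponding $W$ — enlarged to absorb the perturbation $2K(y-x)$, whose coordinate size is $\leq4Kr\to0$ — lie inside that neighbourhood; this gives (iii).

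\textbf{The main obstacle} is precisely this last step: passing from differentiability of $f$ at the single point $(t_0,x_0)$ to uniform control of the partial subdifferentials $\partial^-f_s(x)$ for $(s,x)$ near $(t_0,x_0)$. It rests on the fact that a semiconvex function that is differentiable at a point is strictly differentiable there (so that its subdifferential there is a singleton and, by upper semicontinuity, stays in a prescribed neighbourhood of that singleton nearby), combined with the observation that the semiconvexity constant $K$ is the same for every slice $f_s$, which is what makes the limiting argument uniform in $s$.
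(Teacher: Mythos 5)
Your proof is correct and follows essentially the same route as the paper: a chart yields a uniform semiconvexity constant $K$ for the slices $f_s$, the family consists of cutoff supporting paraboloids, and the compactness in $T^*M\setminus\C^*$ comes from the fact that, a semiconvex function being strictly differentiable at a differentiability point, the subdifferentials $\partial^- f_s(x)$ converge to the singleton $\{d_{x_0}f(t_0,\cdot)\}$ as $(s,x)\to(t_0,x_0)$ (the paper simply cites \cite{Philippis} for this, whereas you give the compactness argument in full). One incidental observation: your paraboloid carries $-K|y-x|^2$, which is the sign consistent with the paper's own Definition \ref{concave} of $K$-semiconvexity and with the claimed inequality $f_s\geq\sup f_{x,p,s}$, whereas the displayed formula \eqref{rrrr} in the paper has $+K|z-\phi(x)|^2$, which appears to be a sign slip.
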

\begin{proof}
     
    Recall that locally semiconvex functions are locally Lipschitz. Hence, from this fact and by assumption, there exist constants $r',K>0$, as well as a chart $(U,\phi)$ centered at $x_0$ with $\phi(U)=B_3(0)$  such that:
     \begin{align*}
        \forall s\in (t_0-r',t_0+r'):\ f_s\circ \phi^{-1} \text{ is $K$-convex and $K$-Lipschitz}.
     \end{align*}
     Here, $f_s:=f(s,\cdot)$. Let us consider, for any $(s,x)\in (t_0-r',t_0+r')\times \phi^{-1}(B_1(0))$ and $p\in \partial^-f_s(x)$, the smooth function $f_{x,p,s}:M\to \R$ defined by
     \begin{align}
        &f_{x,p,s}\circ \phi^{-1}:B_3(0)\to \R, \nonumber
        \\[10pt]
        &z\mapsto \rho(z) \left(f_s(x)+p\circ d_{\phi(x)}\phi^{-1}(z-\phi(x))+K|z-\phi(x)|^2\right), \label{rrrr}
     \end{align}
    where $\rho:\R^n\to [0,1]$ is a smooth function satisfying $\rho(z)=1$ for $|z|\leq 1$ and $\rho(z)=0$ for $|z|\geq 2$. We set $f_{x,p,s}\equiv 0$ on $M\backslash U$.

The family is equi-Lipschitz because any sub-differential of $f_s\circ \phi^{-1}$, $s\in (t_0-r',t_0+r'),$ is bounded by $K$. For the same reason, the family $(f_{x,p,s}\circ \phi^{-1})$ is bounded in $C^2(B_3(0))$, so that $(f_{x,p,s})$ is locally bounded in $C^2$, proving (i) if $r<r'$. In addition, due to the $K$-convexity of $f_s\circ \phi^{-1}$, we obtain, for $s \in (t_0-r',t_0+r')$,
  \[
  f_s\geq \sup\{f_{x,p,s}\mid x\in B_1(0),\ p\in \partial^-f_s(x)\} \text{ on } \phi^{-1}(B_1(0)).
\]
Additionally, if $x\in \phi^{-1}(B_1(0))$, then $f_s(x)=f_{x,p,s}(x)$ for any $p\in \partial^-f_s(x)$ (recall that, since $f_s$ is locally semiconvex, every point admits a sub-differential). Thus, for $s \in (t_0-r',t_0+r')$, we proved that
\[
  f_s= \sup\{f_{x,p,s}\mid x\in B_1(0),\ p\in \partial^- f_s(x)\} \text{ on } \phi^{-1}(B_1(0)),
\]
proving (ii) for $r<r'$.

Next, since $f$ is locally semiconvex and differentiable at $(t_0,x_0)$, it is well-known that the set of sub-differentials $\partial^-f_s$ converge to $d_{x_0}f_{t_0}$ as $(s,x)\to (t_0,x_0)$ (\cite{Philippis}, Appendix A). Thus, from \eqref{rrrr}, it follows easily that $d_yf_{x,p,s}$ converges to $d_{x_0}f_{t_0}$ as $s\to t_0$ and $x,y\to x_0$. Since 
       $d_{x_0}f_{t_0}\in T^*_{x_0}M\backslash \C_{x_0}^*$, we can choose $r'>r>0$ with $B_{r}(x_0)\subseteq \phi^{-1}(B_1(0))$ such that the set
   \[
   \{(y,d_yf_{x,p,s})\mid y\in B_r(x_0),\ x\in B_r(x_0),\ s\in (t_0-r,t_0+r),\ p\in \partial^-f_s(x)\}
   \]
   is precompact in $T^*M\backslash \C^*$.  Then the family $(f_{x,p,s})$, $(s,x)\in (t_0-r,t_0+r)\times B_r(x_0)$, $p\in \partial^-f_s(x)$, obviously satisfies (i)-(iii). Since $f_s(x)=f_{x,p,s}(x)$ and $d_xf_{x,p,s}=p$ for any $x\in \phi^{-1}(B_1(0))$ and $p\in \partial^-f_s(x)$, also (iv) holds.
\end{proof}

\subsection{Proof of Lemma \ref{h1}}\label{proofs}

\begin{proof}[Proof of Lemma \ref{h1}]
\begin{enumerate}[(i)]
\item Let us first prove that $\gamma$ is $T\varphi$-calibrated. Since $\varphi(\gamma(0))\in \R$ and by Lemma \ref{w}, it suffices to check that $T_1\varphi(\gamma(1))=\varphi(\gamma(0))+c(\gamma(0),\gamma(1))$. However,
the inequality $\leq$ holds by definition. Moreover, since $(\varphi,\psi)$ is a $c$-subsolution, we have $\psi\leq T_1\varphi$, and
$\psi(\gamma(1))-\varphi(\gamma(0))=c(\gamma(0),\gamma(1)$
because $\gamma \in A$. This proves the required identity.

Now, using the semigroup property, Lemma \ref{ineq} and the fact that $\gamma$ is $T\varphi$-calibrated, we have 
\begin{align*}
&\hat T_r\varphi_{s+\tau}=\hat T_rT_r \varphi_{s+\tau-r}\leq \varphi_{s+\tau-r}
\\
\Rightarrow \ & \hat T_r\varphi_{s+\tau}(\gamma(s+\tau-r)\leq \varphi(\gamma(0))+c_{s+\tau-r}(\gamma(0),\gamma(s+\tau-r))
\end{align*}

Next, applying the definition of the Lax-Oleinik semigroup, the fact that $\gamma$ is $T\varphi$-calibrated and that $\gamma$ is a minimizer, we obtain
\begin{align*}
\hat T_r\varphi_{s+\tau}(\gamma(s+\tau-r))
&\geq \varphi_{s+\tau}(\gamma(s+\tau))-c_r(\gamma(s+\tau-r),\gamma(s+\tau))
\\[10pt]
&=\varphi(\gamma(0))+c_{s+\tau-r}(\gamma(0),\gamma(s+\tau-r)).
\end{align*}
Both inequalities combined prove part (i).
\item 
Let us first prove that, for any $t'\in [0,1]$, we have
\[
\psi_{t'}(\gamma({t'}))=\varphi(\gamma(0))+c_{t'}(\gamma(0),\gamma({t'})).
\]
Since $\psi_{t'}=\hat T_{1-{t'}}\psi$ and $\psi\leq {T}_1\varphi$, it follows from the semigroup property, Lemma \ref{ineq} and the $T\varphi$-calibration of $\gamma$ that
\[
\psi_{t'}(\gamma({t'}))\leq T_{t'}\varphi(\gamma({t'}))=\varphi(\gamma(0))+c_{t'}(\gamma(0),\gamma({t'})).
\]
On the other hand, since $\gamma \in A$,
\begin{align*}
\psi_{t'}(\gamma({t'}))
&\geq \psi(\gamma(1))-c_{1-{t'}}(\gamma({t'}),\gamma(1))
\\[10pt]
&=\varphi(\gamma(0))+c(\gamma(0),\gamma(1))-c_{1-{t'}}(\gamma({t'}),\gamma(1))
\\[10pt]
&=\varphi(\gamma(0))+c_{t'}(\gamma(0),\gamma({t'})).
\end{align*}
This proves our claim. Now, by the semigroup property and Lemma \ref{ineq}, 
\begin{align*}
T_r\psi_{t-\tau}(\gamma(t-\tau+r))
&= T_r\hat T_r \psi_{t-\tau+r}(\gamma(t-\tau+r)) \\[10pt]
&\geq \psi_{t-\tau+r}(\gamma(t-\tau+r))
\\[10pt]
&=\varphi(\gamma(0))+c_{t-\tau+r}(\gamma(0),\gamma(t-\tau+r)).
\end{align*}
On the other hand, by definition of the Lax-Oleinik semigroup, we have
\begin{align*}
    T_r\psi_{t-\tau}(\gamma(t-\tau+r)) 
    &\leq 
    \psi_{t-\tau}((\gamma(t-\tau))+c_r(\gamma(t-\tau),\gamma(t-\tau+r))
    \\[10pt]
    &=\varphi(\gamma(0))+c_{t-\tau}(\gamma(0),\gamma(t-\tau))+c_r(\gamma(t-\tau),\gamma(t-\tau+r))
    \\[10pt]
    &=
    \varphi(\gamma(0))+c_{t-\tau+r}(\gamma(0),\gamma(t-\tau+r)).
\end{align*}
\item 
Since $t-s\geq \tau_1+\tau_2$, we have
    \begin{align*}
        T_{t-s}(\hat T_{\tau_2}\varphi_{s+\tau_2})=T_{\tau_1} \circ T_{t-s-\tau_1-\tau_2} \circ T_{\tau_2} \circ \hat T_{\tau_2} (\varphi_{s+\tau_2})
        \geq T_{\tau_1} \circ T_{t-s-\tau_1-\tau_2}(\varphi_{s+\tau_2})
    \end{align*}
    Moreover, since $\psi\leq T_1\varphi$, also $\hat T_1\psi\leq \hat T_1T_1\varphi\leq \varphi$. Hence,
    \begin{align*}
        T_{t-s-\tau_1-\tau_2}(\varphi_{s+\tau_2})&\geq (T_{t-s-\tau_1-\tau_2} \circ T_{s+\tau_2}) \circ (\hat T_{t-\tau_1} \circ \hat T_{1-t+\tau_1})( \psi)
        \\
        &= T_{t-\tau_1} \circ \hat T_{t-\tau_1} \circ \psi_{t-\tau_1} 
        \\
        &\geq \psi_{t-\tau_1}.
    \end{align*}
    Both equations combined show that $T_{t-s}(\hat T_{\tau_2} \varphi_{s+\tau_2})\geq T_{\tau_1} \psi_{t-\tau_1}$. But this means that the pair $(\hat T_{\tau_2}\varphi_{s+\tau_2}, T_{\tau_1}\psi_{t-\tau_1})$ is a $c_{t-s}$-subsolution. 
    
    The other two claims follow from part (i) and (ii).
\end{enumerate}
\end{proof}

\subsection{Review of Lorentzian geometry}

\begin{definition}\rm
A \emph{spacetime} $(M,g)$ consists of a smooth and connected manifold (Hausdorff and second-countable) and a symmetric $(0,2)$-tensor field $g$ of constant signature $(-,+,...,+)$. Moreover, $M$ is time oriented, i.e.\ there exists a smooth global vector field $X:M\to TM$ such that, for each $x\in M$, $g_x(X(x),X(x))<0$.
\end{definition}

\begin{definition}\rm
A vector $v\in T_xM$ is called \emph{timelike, spacelike, or lightlike} if 
\begin{align*}
g_x(v,v)<0,\ g_x(v,v)>0 \text{ or } g_x(v,v)=0 \text{ and } v\neq 0.
\end{align*}
Timelike and lightlike vectors as well as the null-vector are referred to as \emph{causal}. A causal vector $v$ is said to be \emph{future-directed} (resp.\ \emph{past-directed}) if $g_x(v,X(x))\leq 0$ (resp.\ $\geq 0$).
For $x\in M$, we denote by $\C_x\subseteq T_xM$ the set of all future-directed causal vectors. Then, $\operatorname{int}(\C_x)$ consists of all future-directed timelike vectors, while $\partial \C_x$ consists of the future-directed lightlike vectors and $0$. We also set $\C:=\{(x,v)\in TM\mid v\in \C_x\}$.
\end{definition}

\begin{definition}\rm
We say that a locally absolutely continuous curve $\gamma:I\to M$, with $I\subseteq \R$ an interval, is \emph{future-directed causal}  if $\dot \gamma(t)$ is future-directed causal for almost every $t$.

A $C^1$-curve $\gamma:I\to M$, with $I\subseteq \R$ an interval, is called future-directed timelike if $\dot \gamma(t)$ is future-directed timelike for every $t$.
\end{definition}

\begin{definition}\rm
Two points $x,y\in M$ are said to be \emph{causally related} (resp.\ \emph{chronologically related}) if there exists a future-directed causal (resp. timelike) $C^1$-curve connecting them. In this case, we write $x<y$ (resp. $x\ll y$). We write $x\leq y$ if $x=y$ or $x<y$.
The relations $>,\gg$ and $\geq$ are defined analogously.

 We define the \emph{chronological future/past} and \emph{causal future/past} of a point $x\in M$ as follows:
\begin{align*}
&I^+(x):=\{y\in M\mid y\gg x\},
\\
&I^-(x):=\{y\in M\mid y\ll x\},
\\
&J^+(x):=\{y\in M\mid y\geq x\},
\\
&J^-(x):=\{y\in M\mid y \leq x\}.
\end{align*}
We also set 
\begin{align*}
J^+:=\{(x,y)\in M\mid y\in J^+(x)\} \text{ and } I^+:=\{(x,y)\in M\mid y\in I^+(x)\}.
\end{align*}
\end{definition}

\begin{remark}\rm
    An equivalent definition can be given by also considering absolutely continuous causal curves. The equivalence of these definitions is established in \cite{Minguzzi}, Theorem 2.9.
\end{remark}

\begin{definition}\rm
The \emph{length} of an absolutely continuous future-directed causal curve $\gamma:[a,b]\to M$ is defined by
\begin{align*}
L(\gamma):=\int_a^b \sqrt{-g(\dot \gamma(t),\dot \gamma(t))}\, dt \in [0,\infty).
\end{align*}
The \emph{(Lorentzian) distance function} or \emph{time seperation} is the function $d:M\times M\to \R\cup\{\infty\}$, defined such that for $x<y$, the distance $d(x,y)$ is the supremum of $L(\gamma)$ over all absolutely continuous (or $C^1$, see \cite{Minguzzi}) future-directed causal curves connecting $x$ with $y$, and such that $d(x,y)=0$ if $x\nless y$.

A curve $\gamma$ connecting $x$ with $y>x$ is said to be \emph{(length) maximizing} if $L(\gamma)=d(x,y)$.
\end{definition} 

In this paper we focus on globally hyperbolic spacetimes, which are defined as follows:

\begin{definition}\rm
A spacetime $M$ is said to be \emph{globally hyperbolic} if there is no causal loop (i.e. no closed future-directed causal curve), and if for any $x,y\in M$, the intersection $J^+(x)\cap J^-(y)$ is compact.
\end{definition}

\begin{proposition}
    Let $M$ be a globally hyperbolic spacetime. Then the distance function is real-valued and continuous. Moreover, for any points $x<y$, there exists a length maximizing geodesic connecting $x$ and $y$.
Additionally, the set $J^+$ is closed.
\end{proposition}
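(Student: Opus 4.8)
\emph{Strategy.} All three statements are classical (see \cite{ONeill}, Chapter 14, and \cite{Minguzzi}); the common engine is a limit-curve argument, with the required compactness supplied by the hypothesis that all causal diamonds $J^+(x)\cap J^-(y)$ are compact. First I would record two preliminary facts. (a) Global hyperbolicity --- causality together with compactness of diamonds --- implies strong causality, and hence that every compact set $K\subseteq M$ admits a constant $\Lambda_K$ such that every future-directed causal curve contained in $K$ has $h$-length at most $\Lambda_K$; since $g$ is bounded relative to $h$ on $K$, such a curve then also has Lorentzian length at most $C_K\Lambda_K$. (b) Using the time function $\tau$ of \eqref{splitting}, a future-directed causal curve $\gamma$ joining $x$ to $y$ satisfies both $\ell_h(\gamma)\leq \tau(y)-\tau(x)$ and $L(\gamma)\leq \tfrac{1}{2}(\tau(y)-\tau(x))$, the second giving at once that $d(x,y)\leq\tfrac{1}{2}(\tau(y)-\tau(x))<\infty$.

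\emph{Closedness of $J^+$.} Given $(x_k,y_k)\in J^+$ with $(x_k,y_k)\to(x,y)$, I would pick future-directed causal curves $\gamma_k$ from $x_k$ to $y_k$. Choosing $p\ll x$ and $q\gg y$ (possible in any spacetime) and using that $I^+(p)$ and $I^-(q)$ are open, we get $x_k\in I^+(p)$ and $y_k\in I^-(q)$ for large $k$, hence $\gamma_k\subseteq J^+(p)\cap J^-(q)=:K$, which is compact. Parametrising $\gamma_k$ by $h$-arclength on $[0,\ell_k]$ with $\ell_k\leq\Lambda_K$ and extending it constantly to the common interval $[0,\Lambda_K]$, we obtain $1$-Lipschitz (w.r.t.\ $h$) curves into the compact set $K$; by Arzelà--Ascoli a subsequence converges uniformly to a curve $\gamma$ from $x$ to $y$, and the limit curve theorem guarantees that $\gamma$ is again future-directed causal, so $(x,y)\in J^+$.

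\emph{Continuity of $d$.} Finiteness was noted above. Lower semicontinuity of $d$ holds on any spacetime. For upper semicontinuity, given $(x_k,y_k)\to(x,y)$ I would choose future-directed causal curves $\gamma_k$ from $x_k$ to $y_k$ with $L(\gamma_k)\geq d(x_k,y_k)-1/k$, confine them to a compact diamond and extract a causal uniform limit $\gamma$ from $x$ to $y$ as above; upper semicontinuity of the Lorentzian length functional along $C^0$-convergence of causal curves then yields
\[
d(x,y)\ \geq\ L(\gamma)\ \geq\ \limsup_{k\to\infty}L(\gamma_k)\ \geq\ \limsup_{k\to\infty}d(x_k,y_k),
\]
and together with lower semicontinuity this gives continuity.

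\emph{Maximizing geodesic and the main obstacle.} For fixed $x<y$, take $\gamma_k$ from $x$ to $y$ with $L(\gamma_k)\to d(x,y)$; all lie in the compact set $J^+(x)\cap J^-(y)$, so the same compactness step produces a future-directed causal limit $\gamma$ from $x$ to $y$ with $L(\gamma)\geq d(x,y)$ by upper semicontinuity, hence $L(\gamma)=d(x,y)$ and $\gamma$ is length maximizing; after reparametrisation it is a geodesic (\cite{Minguzzi}, Theorem 2.12 --- the fact already invoked in Lemma \ref{minimizer}; in the degenerate case $d(x,y)=0$ every causal curve from $x$ to $y$ is already a null geodesic). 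The routine parts here are the diamond-compactness confinement and Arzelà--Ascoli; the delicate ingredients, which I would cite from \cite{ONeill} and \cite{Minguzzi} rather than reprove, are (i) the uniform $h$-length bound for causal curves trapped in a compact set --- the only place where strong causality (i.e.\ global hyperbolicity) is genuinely used --- and (ii) that $C^0$-limits of future-directed causal curves are again future-directed causal and that Lorentzian length is upper semicontinuous along such convergence. Step (ii), the content of the limit curve theorem, is the point I expect to require the most care.
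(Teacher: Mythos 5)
The paper proves this proposition purely by citation (``See \cite{ONeill}, Chapter 14, Proposition 19 and Lemma 22''), so there is no in-text argument to compare against; your sketch is a correct reconstruction of the standard limit-curve argument found in exactly those references, and it is faithful in spirit to what the paper defers to.

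Two small remarks on your write-up. First, once you have the splitting time function $\tau$ with $d_x\tau(v)\geq\max\{|v|_h,2|v|_g\}$ on causal vectors (your item (b)), the bound $\ell_h(\gamma)\leq \tau(y)-\tau(x)$ already gives the uniform $h$-length bound for causal curves confined to a compact diamond, so the detour through strong causality in (a) is logically redundant here --- it is of course the classical route if one does not want to invoke the Bernal--S\'anchez splitting, but since the paper already assumes \eqref{splitting}, (b) is the cleaner argument. Second, in the degenerate case $d(x,y)=0$ with $x<y$, it is not automatic that ``every causal curve from $x$ to $y$ is already a null geodesic'': a causal curve of zero Lorentzian length is null, but to conclude it is a null \emph{pregeodesic} one needs the push-up argument (a non-pregeodesic causal curve can be deformed to a timelike one, contradicting $x\not\ll y$); this is the content of \cite{Minguzzi}, Theorem 2.12, which you do cite, so the gap is one of phrasing rather than substance. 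Apart from these points the proposal is sound.
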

\begin{proof}
    See \cite{ONeill}, Chapter 14, Proposition 19 and Lemma 22. 
\end{proof}

 \begin{remark}\rm
 It is often helpful to fix an arbitrary complete Riemannian metric on $M$, which we will denote by $h$.
Recall that, according to the Whitney embedding theorem, every manifold can be embedded in some $\R^n$ as a closed submanifold. Since closed submanifolds in $\R^n$ are complete w.r.t.\ the induced metric, we can define a complete Riemannian metric on $M$ by pulling back this metric.

We will denote the norm of a vector $v$ (w.r.t.\ $h$) by $|v|_h$. We also denote $|v|_g:=\sqrt{|g(v,v)|}$.
\end{remark}

\begin{remark}\rm
Since $M$ is globally hyperbolic, the proof of Theorem 3 in \cite{Bernard/Suhr} and the subsequent discussion, along with Corollary 1.8, show that there exists a smooth manifold $N$ and a diffeomorphism $M\cong \R\times N$, such that the projection $\tau:\R\times N\to \R, \ x\cong(t,z)\mapsto t,$ satisfies the following inequality:
\begin{align*}
d_x\tau(v)\geq \max\bigg\{2|v|_g,|v|_h\bigg\}
\end{align*}
for all causal vectors $v\in \C_x$. This function is called a \emph{splitting}.
\end{remark}

For further references on Lorentzian geometry, we refer the reader to \cite{ONeill}.

\section*{Acknowledgements}
I would like to thank my both advisors, Markus Kunze and Stefan Suhr, for all their support and guidance during my research. I truly appreciate the time they dedicated to our discussions and their willingness to help whenever I had questions.

\bibliography{Optimalpairs}

\begin{thebibliography}{10}

\bibitem{Ambrosio}
Luigi Ambrosio, Nicola Gigli, and Giuseppe Savar\'e.
\newblock {\em Gradient flows in metric spaces and in the space of probability measures}.
\newblock Lectures in Mathematics ETH Z\"urich. Birkh\"auser Verlag, Basel, 2005.

\bibitem{Bernard}
Patrick Bernard.
\newblock Existence of {$C^{1,1}$} critical sub-solutions of the {H}amilton-{J}acobi equation on compact manifolds.
\newblock {\em Ann. Sci. \'Ecole Norm. Sup. (4)}, 40(3):445--452, 2007.

\bibitem{Bernard/Suhr}
Patrick Bernard and Stefan Suhr.
\newblock Lyapounov functions of closed cone fields: from {C}onley theory to time functions.
\newblock {\em Comm. Math. Phys.}, 359(2):467--498, 2018.

\bibitem{Philippis}
Guido De~Philippis.
\newblock {\em Regularity of optimal transport maps and applications}, volume~17 of {\em Tesi. Scuola Normale Superiore di Pisa (Nuova Series) [Theses of Scuola Normale Superiore di Pisa (New Series)]}.
\newblock Edizioni della Normale, Pisa, 2013.

\bibitem{selection}
C.~Dellacherie.
\newblock Ensembles analytiques. {T}h\'eor\`emes de s\'eparation et applications.
\newblock In {\em S\'eminaire de {P}robabilit\'es, {IX}}, volume Vol. 465 of {\em Lecture Notes in Math.}, pages 336--372. Springer, Berlin-New York, 1975.
\newblock Seconde Partie, Univ Strasbourg, Strasbourg, ann\'ees universitaires 1973/1974 et 1974/1975,.

\bibitem{Fathi}
Albert Fathi.
\newblock {\em The weak Kam theorem in Lagrangian dynamics}.
\newblock Cambridge University Press, Cambridge, England, 2020.

\bibitem{FathiHJ}
Albert Fathi.
\newblock Viscosity solutions of the {H}amilton-{J}acobi equation on a noncompact manifold.
\newblock In {\em Hamiltonian systems: dynamics, analysis, applications}, volume~72 of {\em Math. Sci. Res. Inst. Publ.}, pages 111--172. Cambridge Univ. Press, Cambridge, 2024.

\bibitem{Fathi/Figalli}
Albert Fathi and Alessio Figalli.
\newblock Optimal transportation on non-compact manifolds.
\newblock {\em Israel J. Math.}, 175:1--59, 2010.

\bibitem{Fathi/Figalli/Rifford}
Albert Fathi, Alessio Figalli, and Ludovic Rifford.
\newblock On the {H}ausdorff dimension of the {M}ather quotient.
\newblock {\em Comm. Pure Appl. Math.}, 62(4):445--500, 2009.

\bibitem{FathiWK}
Albert Fathi and Ezequiel Maderna.
\newblock Weak {KAM} theorem on non compact manifolds.
\newblock {\em NoDEA Nonlinear Differential Equations Appl.}, 14(1-2):1--27, 2007.

\bibitem{FigalliPHD}
Alessio Figalli.
\newblock {\em Optimal transportation and action-minimizing measures}, volume~8 of {\em Tesi. Scuola Normale Superiore di Pisa (Nuova Series) [Theses of Scuola Normale Superiore di Pisa (New Series)]}.
\newblock Edizioni della Normale, Pisa, 2008.
\newblock Thesis, Scuola Normale Superiore, Pisa, 2007.

\bibitem{Kell}
Martin Kell and Stefan Suhr.
\newblock On the existence of dual solutions for {L}orentzian cost functions.
\newblock {\em Ann. Inst. H. Poincar\'e{} C Anal. Non Lin\'eaire}, 37(2):343--372, 2020.

\bibitem{Lang}
Serge Lang.
\newblock {\em Real and functional analysis}, volume 142 of {\em Graduate Texts in Mathematics}.
\newblock Springer-Verlag, New York, third edition, 1993.

\bibitem{McCann2}
Robert~J. McCann.
\newblock Displacement convexity of {B}oltzmann's entropy characterizes the strong energy condition from general relativity.
\newblock {\em Camb. J. Math.}, 8(3):609--681, 2020.

\bibitem{Metsch}
Alec Metsch.
\newblock Optimal transport and regularity of weak kantorovich potentials on a globally hyperbolic spacetime, 2024.

\bibitem{Minguzzi}
Ettore Minguzzi.
\newblock Lorentzian causality theory.
\newblock {\em Living Reviews in Relativity}, 22:1--202, 2019.

\bibitem{ONeill}
Barrett O'Neill.
\newblock {\em {Semi-Riemannian} geometry with applications to relativity: Volume 103}.
\newblock Academic Press, San Diego, CA, USA, 1983.

\bibitem{Villani}
Cedric Villani.
\newblock {\em Optimal Transport: Old and New}.
\newblock Springer, Berlin, 2009 edition, 2008.

\end{thebibliography}
\end{document}